\newtheorem{theorem}{Theorem}[section]
\newtheorem{corollary}[theorem]{Corollary}
\newtheorem{definition}[theorem]{Definition}
\newtheorem{lemma}[theorem]{Lemma}
\newtheorem{proposition}[theorem]{Proposition}
\numberwithin{equation}{section}
\numberwithin{table}{section}
\numberwithin{figure}{section}
\newtheorem{remark}[theorem]{Remark}
\newtheorem*{remark*}{Remark}
\newtheorem{example}[theorem]{Example}
\definecolor{jeffColor}{RGB}{102, 0, 204}
\definecolor{yaizaColor}{RGB}{0, 153, 153}
\definecolor{periodColor}{RGB}{255, 167, 105}
\definecolor{dark-green}{RGB}{135, 194, 130}
\tikzset{>=latex} 
\tikzset{font=\small}
\tikzset{mark size=1.5pt, mark options=thin}
\tikzset{pin distance=4pt,
  every pin edge/.style={<-, thin, shorten <= -2pt}}
\newcommand{\x}{\ensuremath{\times}}
\newcommand{\mc}[1]{\mathcal{#1}}
\newcommand{\WFh}{\operatorname{WF_{\h}}}
\newcommand{\comp}{\operatorname{comp}}
\newcommand{\w}{\omega}
\renewcommand{\a}{{\bf a}}
\def\XXint#1#2#3{{\setbox0=\hbox{$#1{#2#3}{\int}$} \vcenter{\hbox{$#2#3$}}\kern-.5\wd0}}
\DeclareMathOperator{\rank}{rank}
\DeclareMathOperator{\supp}{supp}
\newcommand{\e}{\varepsilon}
\newcommand{\tr}{{\operatorname{tr}}}
\newcommand{\tc}{{\cL_1}}
\renewcommand{\Re}{\operatorname{Re}}
\renewcommand{\Im}{\operatorname{Im}}
\renewcommand{\r}{{\bf{r}}}
\newcommand{\h}{\hbar}
\definecolor{uipoppy}{RGB}{221,128,71}
\definecolor{uipaleblue2}{RGB}{179,196,215}
\definecolor{uiviolet}{RGB}{86,86,99}
\definecolor{uiblack}{RGB}{0, 0, 0}
\definecolor{azul}{RGB}{0,128,255}
\definecolor{verde}{RGB}{50,180,50}
\definecolor{uipaleblue}{RGB}{108,199,220}
\definecolor{light-gray}{gray}{0.8}
\definecolor{light-blue}{rgb}{0.53,.8,98}
\definecolor{green1}{RGB}{50,180,50}
\definecolor{jeffColor}{RGB}{102, 0, 204}
\definecolor{yaizaColor}{RGB}{0, 153, 153}
\definecolor{pale-verde}{RGB}{155,207,145}
\definecolor{periodColor}{RGB}{255, 167, 105}
\definecolor{dark-green}{RGB}{135, 194, 130}
\DeclareMathOperator{\Oph}{Op_{\h}}
\newcommand{\loc}{\operatorname{loc}}
\newcommand{\beq}{\begin{equation}}
\newcommand{\eeq}{\end{equation}}
\newcommand{\beqs}{\begin{equation*}}
\newcommand{\eeqs}{\end{equation*}}
\newcommand{\bit}{\begin{itemize}}
\newcommand{\eit}{\end{itemize}}
\newcommand{\ben}{\begin{enumerate}}
\newcommand{\een}{\end{enumerate}}
\newcommand{\bal}{\begin{align}}
\newcommand{\eal}{\end{align}}
\newcommand{\bals}{\begin{align*}}
\newcommand{\eals}{\end{align*}}
\newcommand{\bse}{\begin{subequations}}
\newcommand{\ese}{\end{subequations}}
\newcommand{\bpr}{\begin{proposition}}
\newcommand{\epr}{\end{proposition}}
\newcommand{\bre}{\begin{remark}}
\newcommand{\ere}{\end{remark}}
\newcommand{\bpf}{\begin{proof}}
\newcommand{\epf}{\end{proof}}
\newcommand{\ble}{\begin{lemma}}
\newcommand{\ele}{\end{lemma}}
\newcommand{\bco}{\begin{corollary}}
\newcommand{\eco}{\end{corollary}}
\newcommand{\bex}{\begin{example}}
\newcommand{\eex}{\end{example}}
\newcommand{\bth}{\begin{theorem}}
\newcommand{\enth}{\end{theorem}}
\newcommand{\Rea}{\mathbb{R}}
\newcommand{\tendi}{\rightarrow \infty}
\newcommand{\tendo}{\rightarrow 0}
\newcommand{\cH}{{\mathcal H}}
\newcommand{\Otr}{{\Omega_{\rm tr}}}
\newcommand{\Oi}{{\Omega_-}}
\newcommand{\Gtr}{\Gamma_{\rm tr}}
\newcommand{\pdiff}[2]{\frac{\partial #1}{\partial #2}}
\newcommand{\ri}{{\rm i}}
\newcommand{\re}{{\rm e}}
\newcommand{\bx}{x}
\newcommand{\hatx}{\widehat{\bx}}
\newcommand{\DtN}{\mathcal{D}}
\newcommand{\eps}{\varepsilon}
\newcommand{\cO}{\mathcal{O}}
\newcommand{\cL}{\mathcal{L}}
\newcommand{\QMC}{\epsilon}
\newcommand{\indicatorR}{1^{\rm res}_{\Otr}}
\newcommand{\indicatorE}{1^{\rm ext}_{\Otr}}
\newcommand{\newP}{{\mathsf{P}}}
\newcommand{\projx}{\pi_{\Rea}}
\newcommand{\klower}{k_j^-}
\newcommand{\kupper}{k_j^+}
\newcommand{\Op}{\operatorname{Op}}
\definecolor{myblue}{rgb}{0,0,0.6}
\newcommand*{\N}[1]{\left\|#1\right\|}
\newcommand{\tfa}{\text{ for all }}
\newcommand{\tfor}{\text{ for }}
\newcommand{\tin}{\text{ in }}
\newcommand{\ton}{\text{ on }}
\newcommand{\tas}{\text{ as }}
\newcommand{\tand}{\text{ and }}
\newcommand{\tst}{\text{ such that }}
\newcommand{\mythmname}[1]{\textbf{\emph{(#1.)}}}
\definecolor{escol}{rgb}{0,0,0.8}
\definecolor{estcol}{rgb}{0,0.5,0}
\definecolor{esnewcol}{rgb}{0,0.5,0}
\begin{document}

\title{Eigenvalues of the truncated Helmholtz solution operator under strong trapping}

\author{Jeffrey Galkowski\footnotemark[1]\,\,, Pierre Marchand\footnotemark[2]\,\,, Euan A.~Spence\footnotemark[3]}

\renewcommand{\thefootnote}{\fnsymbol{footnote}}

\footnotetext[1]{Department of Mathematics, University College London, 25 Gordon Street, London, WC1H 0AY, UK, \tt J.Galkowski@ucl.ac.uk}
\footnotetext[2]{Department of Mathematical Sciences, University of Bath, Bath, BA2 7AY, UK, \tt pfcm20@bath.ac.uk}
\footnotetext[3]{Department of Mathematical Sciences, University of Bath, Bath, BA2 7AY, UK, \tt E.A.Spence@bath.ac.uk }

\date{\today}

\maketitle

\begin{abstract}
For the Helmholtz equation posed in the exterior of a Dirichlet obstacle, we prove that if there exists a family of quasimodes (as is the case when the exterior of the obstacle has stable trapped rays), then there exist near-zero eigenvalues of the standard variational formulation of the exterior Dirichlet problem (recall that this formulation involves truncating the exterior domain and applying the exterior Dirichlet-to-Neumann map on the truncation boundary). 

Our motivation for proving this result is that 
a) the finite-element method for computing approximations to solutions of the Helmholtz equation is based on the standard variational formulation, and 
b) the location of eigenvalues, and especially near-zero ones, plays a key role in understanding how
iterative solvers such as the generalised minimum residual method (GMRES) behave when
used to solve linear systems, in particular those arising from the finite-element method.
The result proved in this paper is thus the first step towards rigorously understanding how GMRES behaves when applied to 
discretisations of high-frequency Helmholtz problems under strong trapping (the subject of 
the companion paper \cite{MaGaSpSp:21}). 

\paragraph{Keywords.} Helmholtz equation, trapping, quasimodes, eigenvalues, resonances, semiclassical analysis.

\paragraph{AMS subject classifications.} 35J05, 35P15, 35B34, 35P25.

\end{abstract}

\section{Introduction}

\subsection{Preliminary definitions}

Let $\Omega_- \subset\Rea^d, d\geq 2$ be a bounded open set such that its open complement $\Omega_+:= \Rea^d\setminus \overline{\Omega_-}$ is connected. Let $\Gamma_D:= \partial \Omega_-$, where the subscript $D$ stands for ``Dirichlet''. Let $\Omega_1$ be another bounded open set with connected open complement and such that $\operatorname{conv}(\Omega_-)\Subset \Omega_1$, where 
$\operatorname{conv}$ denotes the convex hull and $\Subset$ denotes compact containment.
Let $\Otr:= \Omega_1 \setminus \Omega_-$, and $\Gamma_{\tr}:= \partial \Omega_1$, where the subscript $\tr$ stands for ``truncated''.
{We assume throughout that $\Gamma_D$ and $\Gamma_\tr$ are both $C^\infty$.}
Let $\gamma_0^D$ and $\gamma_0^{\rm tr}$ denote the Dirichlet traces on $\Gamma_D$ and $\Gtr$ respectively, and let 
$\gamma_1^D$ and $\gamma_1^{\rm tr}$ denote the respective Neumann traces, where the normal vector points out of $\Otr$ on both $\Gamma_D$ and $\Gamma_\tr$.
 Let
\beqs
H_{0,D}^1(\Otr):= \big\{ v\in H^1(\Otr) : \gamma_0^D v=0 \big\}.
\eeqs

Let $\DtN(k): H^{1/2}(\Gtr) \rightarrow H^{-1/2}(\Gtr)$ be the Dirichlet-to-Neumann map for the equation $\Delta u+k^2 u=0$ posed in the exterior of $\Omega_1$ with the Sommerfeld radiation condition
\beq\label{eq:src}
\pdiff{u}{r}(\bx) - \ri k u(\bx) = o \left( \frac{1}{r^{(d-1)/2}}\right)
\eeq
as $r:= |\bx|\tendi$, uniformly in $\hatx:= \bx/r$. We say that a function satisfying \eqref{eq:src} is \emph{$k$-outgoing}.
 When $\Gtr = \partial B_R$, for some $R>0$,  the definition of $\DtN(k)$ in terms of Hankel functions and polar coordinates (when $d=2$)/spherical polar coordinates (when $d=3$) is given in, e.g., \cite[Equations 3.7 and 3.10]{MeSa:10}.

\begin{definition}[Eigenvalues of the truncated exterior Dirichlet problem]\label{def:eigenvalue}
We say $\mu_\ell$ is an \emph{eigenvalue of the truncated exterior Dirichlet problem at frequency $k_\ell>0$}, with corresponding eigenfunction $u_\ell$, if $u_\ell\in H_{0,D}^1(\Otr)\setminus\{0\}$ and $\mu_\ell\in \mathbb{C}$ satisfies
\beqs
(\Delta+k_\ell^2)u_\ell=\mu_\ell u_\ell\quad\text{ in }\Omega_{\tr}\quad\tand\quad
\gamma_1^{\tr} u_\ell=\DtN(k_\ell) (\gamma_0^{\tr} u_\ell).
\eeqs
\end{definition}

\begin{definition}[Quasimodes]\label{def:quasimodes}
A \emph{family of quasimodes of quality $\QMC(k)$}
is a sequence $\{(u_\ell,k_\ell)\}_{\ell=1}^\infty\subset H^2(\Otr)\cap H_{0,D}^1(\Otr)\times \mathbb{R}$ such that the frequencies $k_\ell\tendi$ as $\ell \tendi$ and there is a compact subset $\mc{K}\Subset \Omega_1$ such that, for all $\ell$, $\supp\, u_\ell \subset \mc{K}$,
\beqs
\N{(\Delta +k_\ell^2) u_\ell}_{L^2(\Otr)} \leq \QMC(k_\ell) \quad\tand\quad\N{u_\ell}_{L^2(\Otr)}=1.
\eeqs
\end{definition}

\bre
\label{r:lowerBoundQuasi}
By~\cite[Theorem 2]{Bu:98}, we can assume that there exist $S_1,S_2>0$ such that $\QMC(k)\geq S_1\exp(-S_2 k)$. 
\ere

\begin{definition}[Quasimodes with multiplicity]\label{def:quasimodes_mult}
Let $\{(u_\ell,k_\ell)\}_{\ell=1}^\infty$ be a quasimode with quality $\QMC(k)$ and let $\{(m_j,k_j^-,k_j^+)\}_{j=1}^\infty\subset \mathbb{N}\times \mathbb{R}^2$ be such that $k_j^-\to \infty$ and $k_j^-\leq k_j^+$. 
Define  
$$
\mathcal{W}_j:=\big\{\ell\,:\, k_\ell\in[k_j^-,k_j^+]\big\}.
$$
We say $u_\ell$ has multiplicity $m_j$ in the window $[k_j^-,k_j^+]$ if
\begin{gather*}
|\mathcal{W}_j|=m_j,\qquad |\langle u_{\ell_1},u_{\ell_2}\rangle_{L^2(\Otr)}|\leq \epsilon(k_j^-)\quad\tfor \ell_1\neq \ell_2,\,\,\,\ell_1,\ell_2\in \mathcal{W}_j.
\end{gather*}
\end{definition}
We assume throughout that the quality, $\QMC(k)$, of a quasimode is a decreasing function of $k$; this can always be arranged by replacing $\epsilon(k)$ by $\tilde{\epsilon}(k):=\sup_{\tilde{k}\geq k} \epsilon(\tilde{k})$.

We use the notation that 
$A = \cO(k^{-\infty})$ as $k\tendi$ if, given $N>0$, there exists $C_N$ and $k_0$ such that $|A|\leq C_N k^{-N}$ for all $k\geq k_0$, i.e.~$A$ decreases superalgebraically in $k$.

\subsection{The main results}

\begin{theorem}[From quasimodes to eigenvalues]\label{thm:main1}
Let $\alpha> 3(d+1)/2$.
Suppose there exists a family of quasimodes of quality $\QMC(k)$ with 
\beqs
\QMC(k) \ll k^{1-\alpha}.
\eeqs
Then there exists $k_0>0$ (depending on $\alpha$) such that, if $\ell$ is such that $k_\ell\geq k_0$, then there exists 
an eigenvalue of the truncated exterior Dirichlet problem at frequency $k_\ell$ satisfying 
\beqs
|\mu_\ell| \leq k_\ell^\alpha\QMC(k_\ell),
\eeqs
\end{theorem}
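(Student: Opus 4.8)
The plan is to work with the closed operator $\mathcal A(k):=-(\Delta+k^2)$ on $L^2(\Otr)$, with domain those $u\in H_{0,D}^1(\Otr)$ satisfying $\Delta u\in L^2(\Otr)$ and $\gamma_1^{\tr}u=\DtN(k)(\gamma_0^{\tr}u)$, whose eigenvalues $\zeta$ are in bijection ($\mu=-\zeta$) with the eigenvalues of Definition~\ref{def:eigenvalue} at frequency $k$; from now on $k=k_\ell$. The standard equivalence between the truncated problem and the full exterior problem identifies $\mathcal A(k)^{-1}$ with a cutoff of the exterior Dirichlet resolvent $R(k)$, namely $\mathcal A(k)^{-1}=\mathbf 1_{\Otr}R(k)\mathbf 1_{\Otr}=:T(k)$, and $T(k)$ is compact on $L^2(\Otr)$; hence $\mathcal A(k)$ has discrete spectrum, and it suffices to exhibit an eigenvalue $\zeta$ with $|\zeta|\le k^\alpha\epsilon(k)$.

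The quasimode enters as an approximate eigenfunction: since $\supp u_\ell\Subset\Omega_1$, the function $u_\ell$ has vanishing Cauchy data on $\Gamma_{\tr}$, hence lies in the domain of $\mathcal A(k)$ and satisfies the $\DtN$-boundary condition trivially; with $r_\ell:=\mathcal A(k)u_\ell=-(\Delta+k^2)u_\ell$ one has $\N{r_\ell}_{L^2(\Otr)}\le\epsilon(k)$ and $\N{u_\ell}_{L^2(\Otr)}=1$, so $\N{(\mathcal A(k)-z)^{-1}}\ge(\epsilon(k)+|z|)^{-1}$ for all $z$, in particular $\N{\mathcal A(k)^{-1}}\ge\epsilon(k)^{-1}$. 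I would then prove two a priori bounds on $(\mathcal A(k)-z)^{-1}$. First, from $\Im\langle\DtN(k)\phi,\phi\rangle\ge 0$ and the energy identity, the numerical range of $\mathcal A(k)$ lies in a half-plane; thus all eigenvalues lie to one side of $\Rea$, and $\N{(\mathcal A(k)-z)^{-1}}_{L^2\to L^2}\le|\Im z|^{-1}$ on the complementary open half-plane. Second — the delicate input — a polynomial-in-$k$ control of the non-self-adjointness: writing $T(k)=S(k)+\mathrm i P(k)$ with $S(k),P(k)$ self-adjoint, the positivity of $\Im R(k)$ makes $P(k)$ of fixed sign, and the exterior resolvent estimates together with the trace inequalities on $\Gamma_{\tr}$ bound $\N{P(k)}_{L^2\to L^2}$ by a fixed power of $k$. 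Since $\epsilon(k)^{-1}$ dominates that power (this is exactly what the hypothesis $\epsilon(k)\ll k^{1-\alpha}$ guarantees), the large norm of $T(k)$ must be carried by $S(k)$, which then supplies an approximate eigenvalue of $T(k)$ with residual only a fixed power of $k$; transported through $\mathcal A(k)^{-1}=T(k)$ this controls $\N{(\mathcal A(k)-z)^{-1}}$ by a fixed power of $k$ times $\epsilon(k)^{-1}$ on the part of a small disc about $0$ lying in the ``eigenvalue'' half-plane but bounded away from $\Rea$, as long as that disc contains no eigenvalue.

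Finally I would run a Tang--Zworski-type subharmonicity argument: since $z\mapsto\log\N{(\mathcal A(k)-z)^{-1}}$ is subharmonic away from the spectrum, assume for contradiction there is no eigenvalue in $\overline{D(0,k^\alpha\epsilon(k))}$ and apply the sub-mean-value inequality at $0$ (or at a point just off $\Rea$), bounding the boundary average by $|\Im z|^{-1}$ on the dissipative arc and by the polynomial bound on the remainder, and using that $\log\N{(\mathcal A(k)-z)^{-1}}$ stays integrable on the short arc meeting $\Rea$; one obtains $\log\N{\mathcal A(k)^{-1}}\le\log\epsilon(k)^{-1}-c\,(\alpha-N(d))\log k+O(1)$ for a fixed $c>0$, which contradicts $\N{\mathcal A(k)^{-1}}\ge\epsilon(k)^{-1}$ once $\alpha>N(d)$ and $k$ is large. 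Exhibiting the pole then gives the eigenvalue, and carefully optimising the losses (the trace exponents on $\Gamma_{\tr}$, together with a Borel--Carath\'eodory-type step that costs a factor of $3$) pins the threshold at $N(d)=3(d+1)/2$, i.e.\ $\alpha>3(d+1)/2$. The main obstacle is precisely this second a priori bound and its interface with the complex-analytic lemma: one needs the non-self-adjointness of the truncated (equivalently, exterior) resolvent to cost only a \emph{polynomial}, not exponential, power of $k$ — otherwise nothing of the form $k^\alpha$ could absorb it — and one has to handle $\DtN(k)$ and the choice of contour carefully enough to reach the sharp exponent.
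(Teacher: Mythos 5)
Your reduction to the compact operator $T(k)=\indicatorR R(k)\indicatorE$, the use of the compactly supported quasimode to get $\|T(k)\|\ge \epsilon(k)^{-1}$, the dissipativity bound $\|(\mathcal A(k)-z)^{-1}\|\le |\Im z|^{-1}$ on one half--plane (coming from $\Im\langle\DtN(k)\phi,\phi\rangle\ge 0$), and the overall Tang--Zworski maximum--principle strategy in the $\mu$--plane are all in line with the paper's proof (which works with $R_{\Otr}(\lambda,z)$ and proves these as Lemmas \ref{l:inverseForm} and \ref{l:mainEstimate}). The problem is the ``delicate second input.'' You claim that writing $T(k)=S(k)+\ri P(k)$, with $P(k)$ of fixed sign and $\|P(k)\|\lesssim k^{N}$, yields a \emph{polynomial}-in-$k$ upper bound on $\|(\mathcal A(k)-z)^{-1}\|$ on the eigenvalue side of the disc once eigenvalues are excised. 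That step is not justified, and I do not believe it can be: a polynomial bound on $\Im T$ controls the numerical range and hence the resolvent in the opposite half--plane, but it says nothing about $\|(T-w)^{-1}\|$ at points of the plane where the spectrum lives. For non-normal compact (indeed trace--class) operators, the resolvent away from the spectrum is generically of size $\exp(C\|T\|_{\L^1}\log\delta^{-1})$, and the smallness of $\Im T$ does not improve this. Correspondingly, the bound the paper actually proves (the \eqref{eq:bound1new} part of Lemma \ref{l:mainEstimate}) is the \emph{exponential} estimate $\exp(C\h^{-d}\log\delta^{-1})$, and establishing even that is the technical core of the paper: it requires building a parametrix for $P_\theta-\lambda^2-z 1_{\Otr}$ using a boundary complex absorbing potential (\S\ref{sec:parametrix}), propagation/defect--measure estimates for the absorbed operator (Lemma \ref{l:absorbingInverse}), and Fredholm--determinant bounds in trace class. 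There is no shortcut to a polynomial estimate.

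The exponential bound is nevertheless usable because the semiclassical maximum principle (Theorem \ref{thm:scmp}) is a three--line theorem in a \emph{rectangle} of highly anisotropic dimensions ($\delta(\h)$ above the real axis, $\delta(\h)\h^{-L}$ below, width $\gtrsim \h^{-3L/2}\delta(\h)$), tuned so that the harmonic measure of the side carrying the exponential bound is small enough. Your disc--based sub-mean-value computation assigns comparable harmonic measure to both arcs, and the arithmetic only closes if you actually had the polynomial bound — so the gap in the a priori estimate propagates to the complex--analysis step. Finally, the numerology $\alpha>3(d+1)/2$ is not coming from trace inequalities on $\Gamma_{\tr}$ plus Borel--Carath\'eodory: the $\h^{-d}$ is the trace--class norm of the compactly supported pseudodifferential absorber (see \eqref{eq:Thursday5a} and the Remark after Theorem \ref{thm:main2}), the extra $+1$ is from converting $\log\delta^{-1}$ into $\h^{-1}$ via Burq's lower bound \eqref{eq:lowerboundBurq}, and the factor $3/2$ is the aspect--ratio constraint $\beta(\h)^2\gtrsim \h^{-3L}\delta(\h)^2$ in \eqref{eq:restrict1}. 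So while your framing of the problem and the final contradiction are right, the step that makes the theorem hard is missing, and the decomposition $T=S+\ri P$ does not substitute for it.
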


We now give three specific cases when the assumptions of Theorem \ref{thm:main1} hold. The first two cases are via the quasimode constructions of 
\cite[Theorem 2.8, Equations 2.20 and 2.21]{BeChGrLaLi:11} and \cite[Theorem 1]{CaPo:02} for obstacles whose exteriors support elliptic-trapped rays. The third case is via the ``resonances to quasimodes'' result of \cite[Theorem 1]{St:00}; recall that the resonances of the exterior Dirichlet problem are the poles of the meromorphic continuation of the solution operator 
from $\Im k\geq 0$ to $\Im k<0$; see, e.g., \cite[Theorem 4.4. and Definition 4.6]{DyZw:19}.

\begin{lemma}[Specific cases when the assumptions of  Theorem \ref{thm:main1} hold]\label{lem:specific}

\

(i) Let $d=2$. Given $a_1>a_2>0$, let 
\beq\label{eq:ellipse}
E:= \left\{(x_1,x_2) \, : \, \left(\frac{x_1}{a_1}\right)^2+\left(\frac{x_2}{a_2}\right)^2<1\right\}.
\eeq
If $\Gamma_D$ coincides with the boundary of $E$ in the neighborhoods of the points \((0,\pm a_2)\), 
and if $\Omega_+$ contains the convex hull of these neighbourhoods, 
then the assumptions of Theorem \ref{thm:main1} hold with 
\beqs
\QMC(k)= \exp( - C_1 k)
\eeqs
for some $C_1>0$ (independent of $k$).\footnote{
In \cite[Theorem 2.8]{BeChGrLaLi:11}, $\Omega_+$ is assumed to contain the whole ellipse $E$.
However, inspecting the proof, we see that the result remains unchanged if $E$ is replaced with the convex hull of the neighbourhoods of 
$(0,\pm a_2)$. Indeed, the idea of the proof is to consider a family of eigenfunctions of the ellipse localising around the periodic orbit $\{(0,x_2) : |x_2|\leq a_2\}$.
}

(ii) Suppose $d\geq 2$, $\Gamma_D\in C^\infty$, and $\Omega_+$ contains an elliptic-trapped ray such that (a) $\Gamma_D$ is analytic in a neighbourhood of the ray and (b) the ray satisfies the stability condition \cite[(H1)]{CaPo:02}. If  $q >11/2$ when $d=2$ and $q>2d+1$ when $d\geq 3$, then the assumptions of Theorem \ref{thm:main1} hold with 
\beqs
\QMC(k)= \exp( - C_2 k^{1/q})
\eeqs
for some $C_2>0$ (independent of $k$).

(iii) Suppose there exists a sequence of resonances $\{\lambda_\ell\}_{\ell=1}^\infty$ of the exterior Dirichlet problem with
\beq\label{eq:resonances}
0\leq -\Im \lambda_\ell = \mathcal{O}\big(|\lambda_\ell|^{-\infty}\big)  \quad\tand \quad \Re \lambda_\ell \tendi \quad\tas\quad \ell \tendi.
\eeq
Then there exists a family of quasimodes of quality $\QMC(k)=\mathcal{O}(k^{-\infty})$ and thus the assumptions of Theorem \ref{thm:main1} hold.
\end{lemma}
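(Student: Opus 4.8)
The plan is to prove all three parts by a single device. In each case I would take the quasimode construction indicated by the reference; it produces an approximate Dirichlet eigenfunction $v_\ell$ that is concentrated --- with a Gaussian or superalgebraically small transverse profile --- on a compact set contained in $\operatorname{conv}(\Omega_-)$, hence compactly contained in $\Omega_1$. I would then set $u_\ell:=\chi v_\ell$, where $\chi\in C_c^\infty(\Omega_1)$ equals $1$ on a neighbourhood of that concentration set. Then $u_\ell\in H^2(\Otr)\cap H_{0,D}^1(\Otr)$ with $\supp u_\ell\Subset\Omega_1$; it vanishes on $\Gamma_D$ since the underlying state already satisfies the Dirichlet condition there by construction, and since $\chi=0$ on the rest of $\Gamma_D$. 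The only new error is $[\Delta,\chi]v_\ell$, supported where $\chi$ is non-constant --- hence away from the concentration set, where $v_\ell$ is exponentially small in $k_\ell$ --- so $\|(\Delta+k_\ell^2)u_\ell\|_{L^2(\Otr)}$ is bounded by the quality of the cited construction plus an $\mathcal O(e^{-ck_\ell})$ term, while $\|u_\ell\|_{L^2(\Otr)}\geq 1-\mathcal O(e^{-ck_\ell})$; renormalising then gives a quasimode in the sense of Definition \ref{def:quasimodes}. Since $\alpha$ is fixed, each of $\exp(-C_1 k)$, $\exp(-C_2 k^{1/q})$, and any $\mathcal O(k^{-\infty})$ quantity is $\ll k^{1-\alpha}$, so Theorem \ref{thm:main1} applies once the concentration input and the (elementary) geometric check that a thin tube around the relevant ray lies in $\Otr$ have been supplied in each case.

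For (i) I would use \cite[Theorem 2.8, Equations 2.20 and 2.21]{BeChGrLaLi:11}: Dirichlet eigenfunctions of the ellipse $E$, normalised in $L^2(E)$, concentrating on the minor-axis bouncing-ball orbit $I=\{0\}\times[-a_2,a_2]$ and exponentially small outside any fixed neighbourhood of $I$. The footnote records the one nonroutine point: on inspecting that proof, the hypothesis $\Omega_+\supset E$ is used only through the behaviour of these eigenfunctions in a tube around $I$ and the requirement that the billiard reflection at $(0,\pm a_2)$ coincide with that of $\partial E$, so it may be weakened to the hypothesis in the statement. The template then applies with $\chi$ supported in such a tube, which meets $\Gamma_D$ only in small arcs near $(0,\pm a_2)$ where $\Gamma_D=\partial E$ and the eigenfunction vanishes.

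For (ii) I would use \cite[Theorem 1]{CaPo:02}: for an elliptic periodic billiard ray $\gamma$ satisfying conditions (a) and (b), a family of approximate Dirichlet eigenfunctions concentrated along $\gamma$ with Gaussian transverse profile and error $\mathcal O(\exp(-Ck^{1/q}))$ in the stated range of $q$. Since each leg of $\gamma$ is a segment joining two points of $\Gamma_D=\partial\Omega_-$, one has $\gamma\subset\operatorname{conv}(\Omega_-)\Subset\Omega_1$, so a thin tube around $\gamma$ lies in $\Otr$ and meets $\Gamma_D$ only near the reflection points; the template applies, the commutator error $\mathcal O(e^{-ck})$ being absorbed into $\mathcal O(\exp(-Ck^{1/q}))$. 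For (iii) I would simply quote \cite[Theorem 1]{St:00}: resonances with $-\Im\lambda_\ell=\mathcal O(|\lambda_\ell|^{-\infty})$ and $\Re\lambda_\ell\to\infty$ yield, by truncating the associated resonant states (superalgebraically concentrated on the trapped set, which for an obstacle is contained in $\operatorname{conv}(\Omega_-)\Subset\Omega_1$), a family of quasimodes of quality $\mathcal O(k^{-\infty})$.

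The main obstacle here is bookkeeping rather than analysis: the three constructions are off the shelf, and the genuine work is (1) extracting from \cite{BeChGrLaLi:11} the weakening recorded in the footnote --- the one step in (i) that needs an argument rather than a citation --- and (2) checking that truncating the cited states down to something supported in $\Otr$ and vanishing on $\Gamma_D$ costs only an exponentially- or superalgebraically-small amount. Step (2) is immediate in (i) and (ii), where the transverse profiles are explicit Gaussians, but in (iii) it is precisely the content of \cite[Theorem 1]{St:00} and relies on the quantitative localisation of resonant states with superalgebraically small width; that is the step I expect to need the most care when written out in full.
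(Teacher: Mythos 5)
Your proposal is correct and follows the same route as the paper: the paper gives no freestanding proof of Lemma~\ref{lem:specific}, treating each part as a direct consequence of the cited quasimode construction (with the footnote recording the one nonroutine step — weakening the hypothesis of \cite[Theorem 2.8]{BeChGrLaLi:11} in part (i)), together with the trivial observation that $\exp(-Ck)$, $\exp(-Ck^{1/q})$, and $\mathcal O(k^{-\infty})$ are each $\ll k^{1-\alpha}$. Your ``cutoff plus commutator error'' template is a correct elaboration of the bookkeeping the paper leaves implicit; note only that the cited constructions in (i) and (iii) already build in the cutoff and deliver objects matching Definition~\ref{def:quasimodes} directly, so that step is a re-derivation rather than new work.
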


\bre[Resonances $\iff$ quasimodes $\iff$eigenvalues]
Part (iii) of Lemma \ref{lem:specific} is the ``resonances to quasimodes'' result of \cite[Theorem 1]{St:00}. The converse implication, i.e.~that a family of quasimodes of quality $\QMC(k)=\mathcal{O}(k^{-\infty})$ implies a sequence of resonances satisfying \eqref{eq:resonances}, was proved in \cite{TaZw:98}, \cite{St:99} (following \cite{StVo:95, StVo:96}), see also \cite[Theorem 7.6]{DyZw:19}. Therefore the ``quasimodes to eigenvalues'' result of Theorem \ref{thm:main1} is equivalent to a ``resonances to eigenvalues'' result.
In fact, in Appendix~\ref{a:eToQ} we show that the existence of $\cO(k^{-\infty})$ eigenvalues implies the existence of quasimodes of quality $\cO(k^{-\infty})$. We therefore have that resonances $\iff$ quasimodes $\iff$ eigenvalues.
\ere

With $\{\mu_j(k)\}_j$ the set of eigenvalues,  counting multiplicities, of the truncated exterior Dirichlet problem at frequency $k$ (with $\mu_j(k)$ depending continuously on $k$ for each $j$), let
\beq\label{eq:mathcalE}
\mc{E}(\e_1,\e_0,k_-,k_+):=\Big\{j\,:\, \mu_j(k)\in (-2\e_1,2\e_1)-\ri(0,2\e_0)\text{ for some }k\in [k_-,k_+]\Big\};
\eeq
$|\mc{E}|$ is therefore the counting function of the eigenvalues, $\mu_j(k)$, that pass through a rectangle next to zero in $\mu$  as $k$ varies in the interval $[k_-,k_+]$; see 
Figure \ref{fig:box1}. \footnote{In Figure \ref{fig:box1} we have drawn the paths of the eigenvalues as arbitrary curves. We see later in Figure \ref{fig:flow} an example where the paths appear to be horizontal lines; this is consistent with the intuition that eigenvalues should be shifted resonances.}

\begin{figure}
\begin{center}
\begin{tikzpicture}
\def \w{1.8};
\def \h{1.2};
\draw[->] (-{1.5*\w},0)--({1.5*\w},0)node[right]{$\mathrm{Re}\, \mu$};
\draw[->] (0,{-2*\h})--(0,1)node[above]{$\mathrm{Im}\, \mu$};
\draw[dashed](0,-\h)--({1.8*\w},-\h)node[right]{$-2\e_0(k)$};
\draw[fill =light-gray,opacity=.5] (-\w,0)--(-\w,-\h)--(\w,-\h)--(\w,0)--cycle;
\draw (\w,0)node[above]{$2\e_1(k)$};
\draw [verde, opacity=.6] plot [smooth] coordinates { (-{1.5*\w},-{2*\h}) ({-.9*\w}, -{.75*\h}) (\w,-{1.5*\h}) };
\draw[verde] (-{1.5*\w},-{2*\h})node[below]{$\mu_1(k_-)$};
\draw [verde](\w,-{1.5*\h})node[below]{$\mu_1(k_+)$};
\draw [verde,opacity=.6] plot [smooth] coordinates { (-{1.5*\w},-{.3*\h}) ({-.1*\w}, -{.3*\h}) (.5*\w,-{2*\h}) };
\draw[verde] ({-1.5*\w},-{.3*\h})node[left]{$\mu_2(k_-)$};
\draw[verde] (.5*\w,-{2*\h})node[below]{$\mu_2(k_+)$};
\draw [verde,opacity=.6] plot [smooth] coordinates { ({1.3*\w},-{1.3*\h}) ({-.1*\w}, -{.3*\h}) (-1*\w,-{2.5*\h}) };
\draw [verde]({1.3*\w},-{1.3*\h})node[right]{$\mu_3(k_-)$};
\draw[verde](-1*\w,-{2.5*\h})node[below]{$\mu_3(k_+)$};
\draw [blue,opacity=.6] plot [smooth] coordinates { ({1.3*\w},-{.5*\h}) ({1*\w}, -{1.3*\h}) (0*\w,-{1.8*\h})  (-{.4*\w},{-2.5*\h})};
\draw[blue] ({1.3*\w},-{.5*\h})node[right]{$\mu_4(k_-)$};
\draw[blue](-{.4*\w},{-2.5*\h})node[below]{$\mu_4(k_+)$};
\end{tikzpicture}
\end{center}
\caption{Paths of the eigenvalues, $\mu_j$, of the truncated problem are shown as functions of $k \in [k_-,k_+]$. Those eigenvalues shown in green correspond to members of the box $\mc{E}$  defined by \eqref{eq:mathcalE} (shaded), while {the eigenvalue} in blue is not in $\mc{E}$.}\label{fig:box1}
\end{figure}
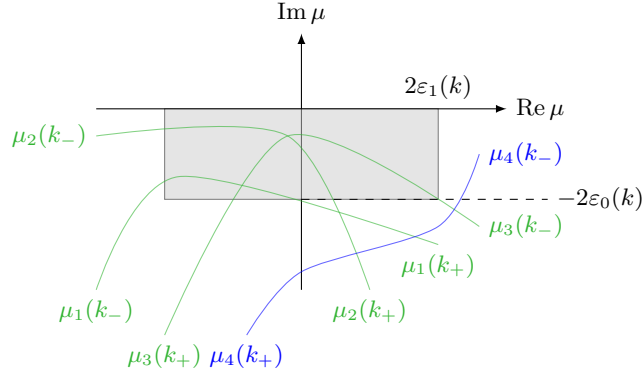

\begin{theorem}[From quasimodes to eigenvalues, with multiplicities]\label{thm:main2}
Let $k_j^-,k_j^+\to \infty$ such that there is $C>0$ satisfying $k_j^-\leq k_j^+\leq Ck_j^-$. Suppose there exists a family of quasimodes of quality $\QMC(k)\ll k^{-(5d+3)/2}$ and multiplicity $m_j$ in the window $[k_j^-,k_j^+]$ (in the sense of Definition \ref{def:quasimodes_mult}).
If $\QMC_0(k)$ is such that, for some $S>0$, 
\beqs
\QMC_0(k)\leq S k^{-(d+1)/2}\tfa k \quad\tand\quad \QMC_0(k) \gg k^{2d+1}\QMC(k) \tas k\tendi,
\eeqs
then there exists $k_0>0$ such that if $k_j^-\geq k_0$, 
$$
\Big|\mathcal{E}\Big((\klower)^{(d+1)/2}\QMC_0(\klower)\,,\,\QMC_0(\klower)\,,\,\klower\,,\,\kupper\Big)\Big|\geq m_j.
$$
\end{theorem}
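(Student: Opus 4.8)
The plan is to bootstrap Theorem~\ref{thm:main1} from the single-quasimode setting to the multiplicity setting by a near-orthogonality / dimension-counting argument. Fix a window $[\klower,\kupper]$ and write $\mc{W}_j = \{\ell : k_\ell \in [\klower,\kupper]\}$, so $|\mc{W}_j| = m_j$ and the quasimodes $u_{\ell}$, $\ell\in\mc{W}_j$, are nearly orthonormal in $L^2(\Otr)$ with pairwise inner products bounded by $\epsilon(\klower)$. The first step is to run the construction underlying Theorem~\ref{thm:main1} (whatever operator-theoretic mechanism produces an eigenvalue of size $k^{\alpha}\epsilon(k)$ from one quasimode) \emph{simultaneously} at a single reference frequency, say $k = \klower$, for each $\ell\in\mc{W}_j$. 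Since every $k_\ell$ lies in $[\klower,\kupper]$ and these are comparable ($\kupper\leq C\klower$), each $u_\ell$ is also a quasimode at frequency $\klower$ of quality $O(\klower\,\epsilon(\klower))$ — one picks up a factor $|k_\ell^2 - (\klower)^2| = O(\klower(\kupper-\klower))$, which is $O((\klower)^2)$, so replacing the frequency costs at most a polynomial factor in $k$, absorbed by the hypothesis $\epsilon_0(k) \gg k^{2d+1}\epsilon(k)$. I would therefore reduce to: given $m_j$ nearly-orthonormal approximate eigenfunctions of the (frequency-$\klower$) truncated operator with defect $O((\klower)^{2d+1}\epsilon(\klower))$, produce $m_j$ eigenvalues (with multiplicity) in the rectangle of half-width $\epsilon_0$ and depth $\epsilon_0$ around $0$.

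The core of the argument is then a spectral-subspace / min-max estimate for the non-selfadjoint operator $P(k) := \Delta + k^2 - \DtN(k)\gamma_0^{\tr}$ realized on $H^1_{0,D}(\Otr)$ with the transmission condition. Let $V_j := \operatorname{span}\{u_\ell : \ell\in\mc{W}_j\}$; by the near-orthogonality bound and $\epsilon(\klower)\to 0$, $\dim V_j = m_j$ for $\klower$ large. On $V_j$ the operator $P(\klower)$ acts as $O((\klower)^{2d+1}\epsilon(\klower))$ in $L^2$-operator norm after projecting back to $V_j$ (the Gram matrix is $I + O(\epsilon)$, invertible, and $\langle P u_{\ell_1}, u_{\ell_2}\rangle = O((\klower)^{2d+1}\epsilon(\klower))$). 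I would then invoke a quantitative spectral result — either a Schur-complement/Grushin reduction as presumably used in Theorem~\ref{thm:main1}, or a contour-integral projector $\Pi = \frac{1}{2\pi i}\oint_{\partial\mc{R}} (z - P(k))^{-1}\,dz$ around the rectangle $\mc{R} = (-2\epsilon_1, 2\epsilon_1) - i(0,2\epsilon_0)$ — to show that $\operatorname{rank}\Pi \geq \dim V_j = m_j$. This is where the resolvent bounds for $P(k)$ at distance $\sim\epsilon_0$ from the spectrum enter: one needs that on $\partial\mc{R}$, $(z-P(k))^{-1}$ is controlled by $C/\operatorname{dist}$ up to the polynomial losses in $k$ that force the various exponents $(d+1)/2$, $2d+1$, $(5d+3)/2$ in the statement. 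Tracking $k$ through the trace estimates on $\Gtr$ (the $\DtN$ map and the $H^1\to H^{1/2}$ trace) is what generates the $(d+1)/2$ powers; the quasimode-support cutoff and elliptic regularity on $\mc{K}\Subset\Omega_1$ give the rest.

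Finally I would let $k$ vary over $[\klower,\kupper]$ rather than sitting at $\klower$: the $m_j$ eigenvalues produced above at a fixed reference frequency, together with the continuity of each $\mu_j(k)$ in $k$, show that at least $m_j$ of the continuous eigenvalue curves $k\mapsto\mu_j(k)$ enter the rectangle for some $k$ in the window, which is exactly the definition of $|\mc{E}(\cdot)|$. Here one must be a little careful that "multiplicity $m_j$ eigenvalues at one frequency" translates to "$m_j$ \emph{distinct} curves each of which passes through the box somewhere in $[\klower,\kupper]$" — but if $\Pi$ at $k=\klower$ has rank $\geq m_j$, then counting eigenvalues-with-multiplicity inside $\mc{R}$ at $k=\klower$ already gives $m_j$, and each such eigenvalue is $\mu_j(\klower)$ for some index $j$, so those indices lie in $\mc{E}$. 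I expect the main obstacle to be the second step: establishing the resolvent estimate for the non-selfadjoint truncated operator $P(k)$ on the boundary of the rectangle with the correct polynomial-in-$k$ dependence, i.e.\ showing that away from an $\epsilon_0$-neighbourhood of its spectrum $P(k)^{-1}$ is polynomially bounded, so that the Riesz projector argument closes and the exponent $(5d+3)/2$ in the hypothesis on $\epsilon(k)$ is what is actually needed. The near-orthogonality bookkeeping (Gram matrix invertibility, $\dim V_j = m_j$) and the frequency-shift from $k_\ell$ to $\klower$ are routine by comparison.
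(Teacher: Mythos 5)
There is a genuine gap at the very first step of your plan, the ``reduce to a single reference frequency $\klower$'' reduction, and this step is not just imprecise but actually fails. If $k_\ell\in[\klower,\kupper]$ with $\kupper\leq C\klower$, then
\[
\big\|(\Delta+(\klower)^2)u_\ell\big\|_{L^2(\Otr)}\;\leq\;\big\|(\Delta+k_\ell^2)u_\ell\big\|_{L^2(\Otr)}+\big|k_\ell^2-(\klower)^2\big|\,\|u_\ell\|_{L^2(\Otr)}\;\leq\;\QMC(\klower)+O\big((\klower)^2\big),
\]
and the $(\klower)^2$ term is an \emph{absolute} polynomial-size contribution, not a polynomial multiple of $\QMC(\klower)$. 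Since $\QMC(k)\ll k^{-(5d+3)/2}$ and is typically exponentially small, the frequency-shifted ``quality'' is $O(k^2)$, i.e.\ the $u_\ell$ are not quasimodes at frequency $\klower$ in any useful sense. The hypothesis $\QMC_0(k)\gg k^{2d+1}\QMC(k)$ controls polynomial multiples of $\QMC$, not additive $O(k^2)$ errors, so it cannot absorb this loss. Once that reduction is dropped, your plan has no mechanism for producing eigenvalues associated with the quasimodes at the varying frequencies $k_\ell$.

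The paper avoids this by never fixing a reference frequency: Lemma~\ref{l:mainEstimate} and the semiclassical maximum principle are applied \emph{uniformly in $\lambda^2$ over the whole window} $[a(\h),b(\h)]$, the projector $\Pi(\h)$ is built from the Riesz projectors $\Pi_{z_p(\h,\lambda)}$ for \emph{all} $\lambda$ in the window (so that $(I-\Pi(\h))R_{\Otr}(\lambda,z)$ is $z$-holomorphic for every such $\lambda$), and the resulting operator bound \eqref{eq:boundlambdarange} is then evaluated at $\lambda^2=E_{j,\ell}$, $z=0$ separately for each $\ell$. You should also note a second ingredient your plan does not identify: a Weyl-type bound $m(\h_j)\leq C\h_j^{-d}$ (Lemma~\ref{lem:preparation}, proved via a comparison with the self-adjoint Dirichlet Laplacian on $\Otr$ and Weyl's law). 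This is not decorative — it is needed to bound the number of poles that can sit in the box, so that the $\delta$-balls around poles can be chosen small enough that their union cannot connect $\widetilde\Omega(\h)$ to $\partial\Omega(\h)$, which is what makes the maximum-principle extension of the resolvent bound across the pole-balls legitimate. Your ``Gram matrix + $\dim V_j=m_j$ + Riesz projector rank'' endgame is essentially the paper's Lemma~\ref{lem:linindep} argument and is fine, but it only becomes available after the two missing pieces above are supplied.
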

Observe that if $\kupper = \klower$, then (up to algebraic powers of $k$) Theorem \ref{thm:main2} reduces to Theorem \ref{thm:main1}, except that now multiplicities are counted; therefore the ``quasimodes to eigenvalues'' result holds with multiplicities (just as the ``quasimodes to resonances'' result of \cite{St:99} includes multiplicities).

The ideas used in the proof of Theorems \ref{thm:main1} and \ref{thm:main2} are discussed in \S\ref{sec:idea} below.

\begin{remark*} The reason why both the constant $\alpha$ in Theorem \ref{thm:main1} and the exponent in the bound on the quality Theorem \ref{thm:main2} depend on $d$ is because the right-hand side of the bound \eqref{eq:bound1new} below on the solution operator of the truncated problem depends on $d$, which in turn comes from the fact that the trace-class norm of compactly-supported pseudodifferential operators depends on $d$.
\end{remark*}

\subsection{Numerical experiments illustrating the main results}\label{sec:num}

{
\paragraph{Description of the obstacles $\Oi$.}
In this section, $\Oi$ is one of the two ``horseshoe-shaped'' 2-d domains shown in Figure \ref{fig:geometries}. 
We define the \emph{small cavity} as the region between the two elliptic arcs 
\begin{align*}
&(\cos (t), 0.5 \sin (t)), \quad t\in [-\phi_0,\phi_0]
\quad\tand \quad 
(1.3\cos (t), 0.6 \sin (t)),\quad t\in [-\phi_1,\phi_1] \\
&\qquad\qquad\text{ with } \phi_0=7\pi/10\quad\tand \quad \phi_1= \arccos \left(\frac{1}{1.3}\cos (\phi_0)\right);
\end{align*}
this corresponds to the interior of the solid lines in Figure \ref{fig:geometries}.
We define the \emph{large cavity} as the region between the two arcs now with \(\phi_0=9\pi/10\).
(Note that our small cavity is the same as the cavity considered in the numerical experiments in \cite[Section IV]{BeChGrLaLi:11}.)
Recall that Theorems \ref{thm:main1} and \ref{thm:main2} require $\Gamma_D$ to be smooth, and thus these results do not strictly apply to the small and large cavities; however they do apply to smoothed versions of these.

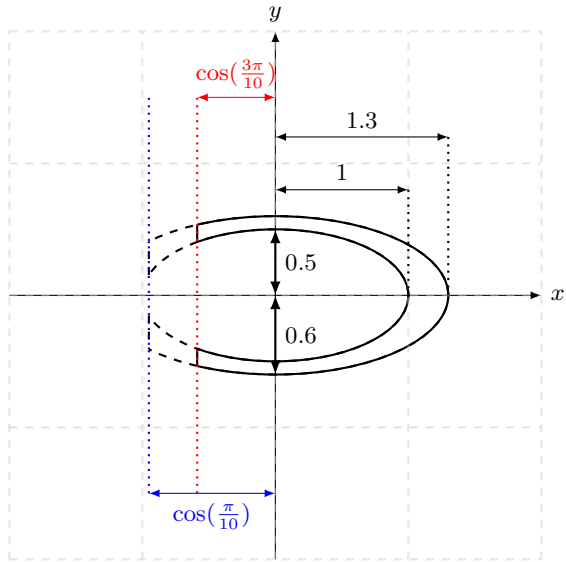
\begin{figure}
    \centering
    
    \begin{tikzpicture}
    \def \a{1.3};
    \def \b{.6};
    \begin{scope}[scale=1.75 ]
    \draw[thick,black] (0,0) ellipse (1 and .5);
    \draw[thick,black] (0,0) ellipse ({\a} and {\b});
	\fill[white] ({cos(126)},-2)-- ({cos(126)},2)-- (-2,2)--(-2,-2)--cycle;
    \draw[thick,black] ({cos(126)},{.5*sqrt(1-cos(126)^2)})--({cos(126)},{\b*sqrt(1-cos(126)^2/(\a)^2)});
    \draw[thick,black] ({cos(126)},-{.5*sqrt(1-cos(126)^2)})--({cos(126)},-{\b*sqrt(1-cos(126)^2/(\a)^2)});
    \draw[dashed,thick,black] (0,0) ellipse (1 and .5);
    \draw[dashed,thick,black] (0,0) ellipse ({\a} and {\b});
         \fill[white] ({cos(162)},-2)-- ({cos(162)},2)-- (-2,2)--(-2,-2)--cycle;
             \draw[dashed,thick,black] ({cos(162)},{.5*sqrt(1-cos(162)^2)})--({cos(162)},{\b*sqrt(1-cos(162)^2/(\a)^2)});
    \draw[dashed,thick,black] ({cos(162)},-{.5*sqrt(1-cos(162)^2)})--({cos(162)},-{\b*sqrt(1-cos(162)^2/(\a)^2)});

     \draw[blue,thick,dotted] ({cos(162)},-1.5)--({cos(162)},1.5);
     \draw[red,thick,dotted] ({cos(126)},-1.5)--({cos(126)},1.5);
     \draw [->,red] ({cos(126)/2},1.5)node[above]{$\cos(\tfrac{3\pi}{10})$}--({cos(126)},1.5);
     \draw [->,red] ({cos(126)/2},1.5)--(0,1.5);
     \draw [->,blue] ({cos(162)/2},-1.5)node[below]{$\cos(\tfrac{\pi}{10})$}--({cos(162)},-1.5);
     \draw [->,blue] ({cos(162)/2},-1.5)--(0,-1.5);
     \draw [->] (-2,0)--(2,0)node[right]{$x$};
    \draw [->] (0,-2)--(0,2)node[above]{$y$};
  \foreach \x in {-2,...,2}
    \draw[light-gray,opacity=.5,dashed,thick] (\x,-2)--(\x,2) (-2,\x)--(2,\x);
        \draw [->,thick] (0,.25)node[right]{$0.5$}--(0,.5);
     \draw [->,thick] (0,.25)--(0,0);
      \draw [->,thick] (0,-{\b/2})node[right]{$0\b$}--(0,-{\b});
     \draw [->,thick] (0,-{\b/2})--(0,0);
     \draw [->] ({\a/2},1.2)node[above]{$\a$}--(0,1.2);
     \draw [->]({\a/2},1.2)--(\a,1.2);
     \draw[dotted, thick] (\a,1.2)--(\a,0);
     \draw [->] (.5,.8)node[above]{$1$}--(0,.8);
     \draw [->](.5,.8)--(1,.8);
     \draw[dotted, thick] (1,.8)--(1,0);
    \end{scope}
    \end{tikzpicture}
    \caption{The two obstacles $\Oi$ considered in the numerical experiments}\label{fig:geometries}
\end{figure}

For both the small and large cavities, \(\Gamma_D\) coincides with the boundary of the ellipse \(E\) \eqref{eq:ellipse} with \(a_1=1\) and \(a_2=0.5\) in the neighbourhood of its minor axis. 
Part (i) of Lemma \ref{lem:specific} (i.e., the results of \cite{BeChGrLaLi:11}) then implies that there exist quasimodes with exponentially-small quality. 

We choose these particular $\Oi$ because we can compute the frequencies $k_\ell$ in the quasimode. Indeed, the functions $u_\ell$ in the quasimode construction in \cite{BeChGrLaLi:11} are based on 
the family of eigenfunctions of the ellipse localising around the periodic orbit $\{(0,x_2) : |x_2|\leq a_2\}$; 
when the eigenfunctions are sufficiently localised, the eigenfunctions multiplied by a suitable cut-off function form a quasimode, with frequencies $k_\ell$ equal to the square roots of eigenvalues of the ellipse. By separation of variables, $k_\ell$ can be expressed as the 
solution of a multiparametric spectral problem involving Mathieu functions; see see \cite[Appendix A]{BeChGrLaLi:11} and \cite[Appendix E]{MaGaSpSp:21}.

When giving specific values of $k_\ell$ below, we use the notation from \cite[Appendix A]{BeChGrLaLi:11} and \cite[Appendix E]{MaGaSpSp:21} that $k_{m,n}^e$ and $k_{m,n}^o$ are the frequencies associated with the eigenfunctions of the ellipse that are even/odd, respectively, in the angular variable, with $m$ zeros in the radial direction (other than at the centre or the boundary) and $n$ zeros in the angular variable in the interval $[0,\pi)$.

\begin{figure}[h!]
  \centering
  \includegraphics[width=0.99\textwidth]{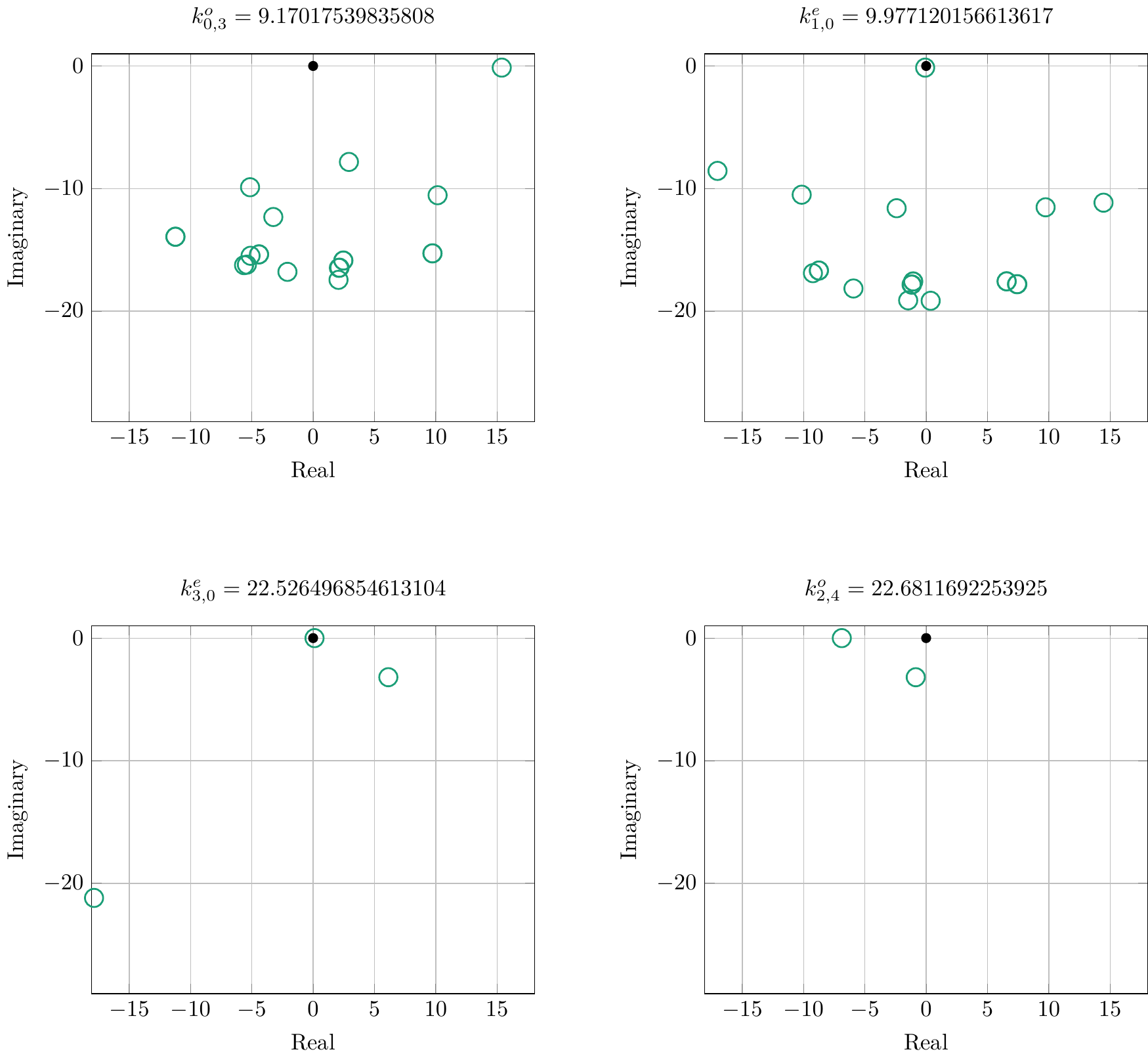}
  \caption{
  The eigenvalues of the truncated exterior Dirichlet problem (Definition~\ref{def:eigenvalue}) near the origin when $\Gamma_D$ is equal to the small cavity. The eigenvalues are plotted at several frequencies, $k$, corresponding to eigenvalues of the ellipse. 
 In each plot, the origin is marked with a black dot, and the eigenvalues are shown as green circles.}
 \label{fig:spectra_illustrations_elliptic_cavity_2D_dir}
\end{figure}

\begin{figure}[h!]
  \centering
  \includegraphics[width=0.99\textwidth]{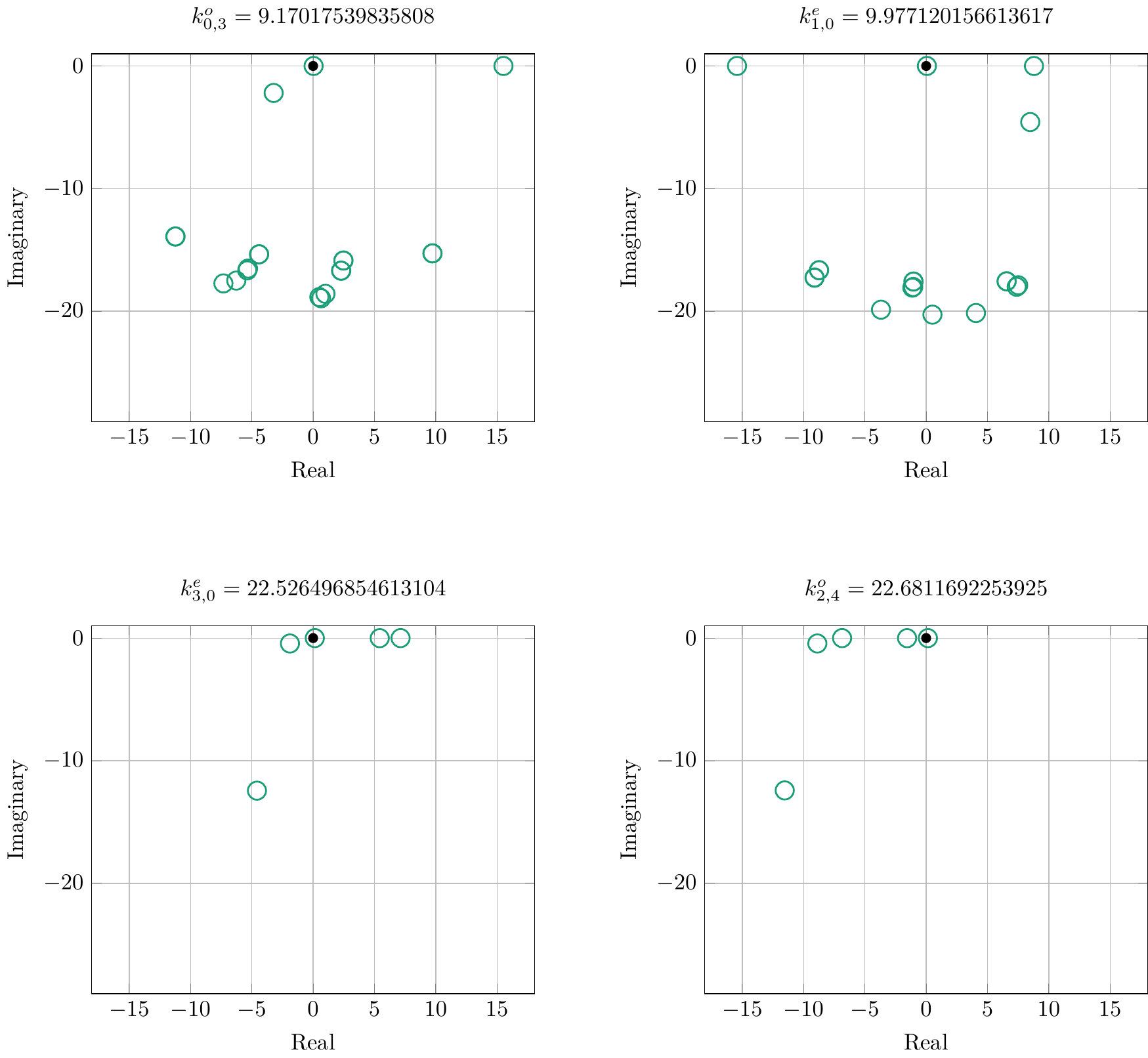}
  \caption{
   The eigenvalues of the truncated exterior Dirichlet problem (Definition~\ref{def:eigenvalue}) near the origin when $\Gamma_D$ is equal to the large cavity. The eigenvalues are plotted at several frequencies, $k$, corresponding to eigenvalues of the ellipse. 
 In each plot, the origin is marked with a black dot, and the eigenvalues are shown as green circles.}
\label{fig:spectra_illustrations_elliptic_cavity_bigger_2D_dir}
\end{figure}

\begin{figure}[h!]
  \begin{subfigure}[t]{0.45\textwidth}
      \includegraphics[width=\textwidth]{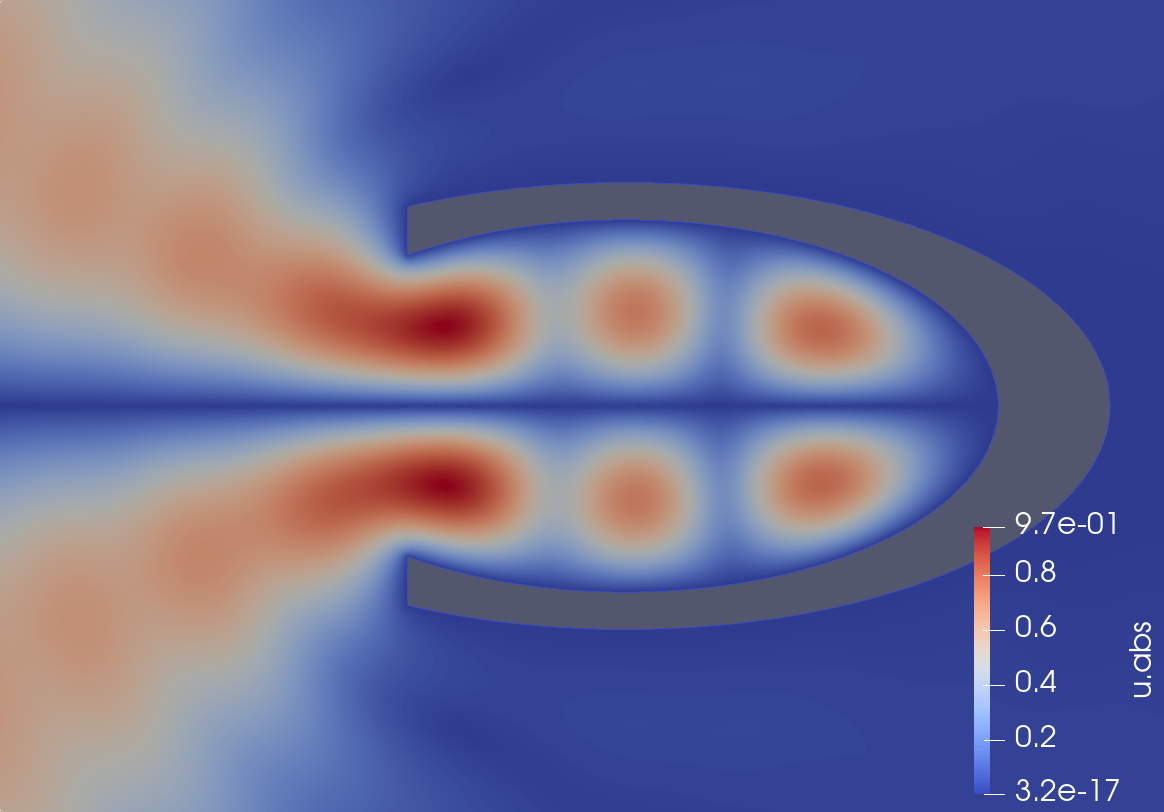}
      \subcaption{\(k^o_{0,3}=9.17017539835808\)}
  \end{subfigure}\hfill%
  \begin{subfigure}[t]{0.45\textwidth}
      \includegraphics[width=\textwidth]{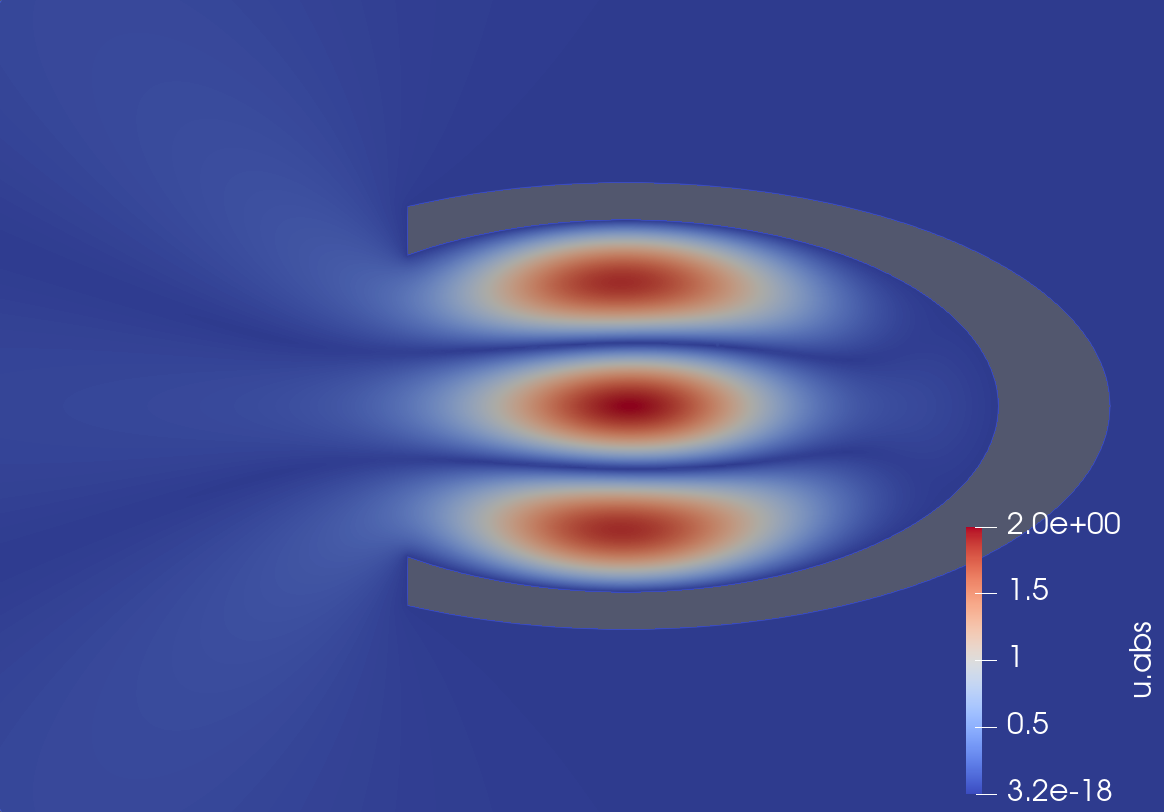}
      \subcaption{\(k^e_{1,0}=9.977120156613617\)}
  \end{subfigure}%
  \par\bigskip
  \begin{subfigure}[t]{0.45\textwidth}
      \includegraphics[width=\textwidth]{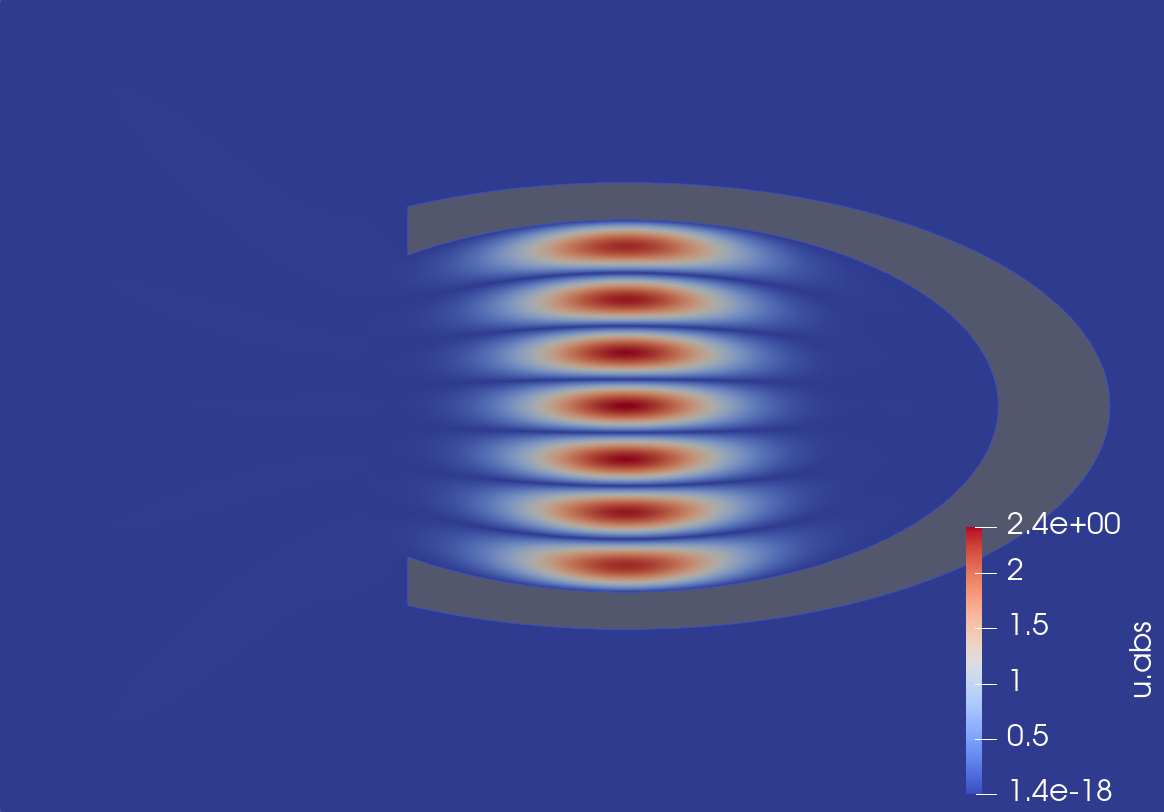}
        \subcaption{\(k^e_{3,0}=22.526496854613104\)}
  \end{subfigure}\hfill%
  \begin{subfigure}[t]{0.45\textwidth}
      \includegraphics[width=\textwidth]{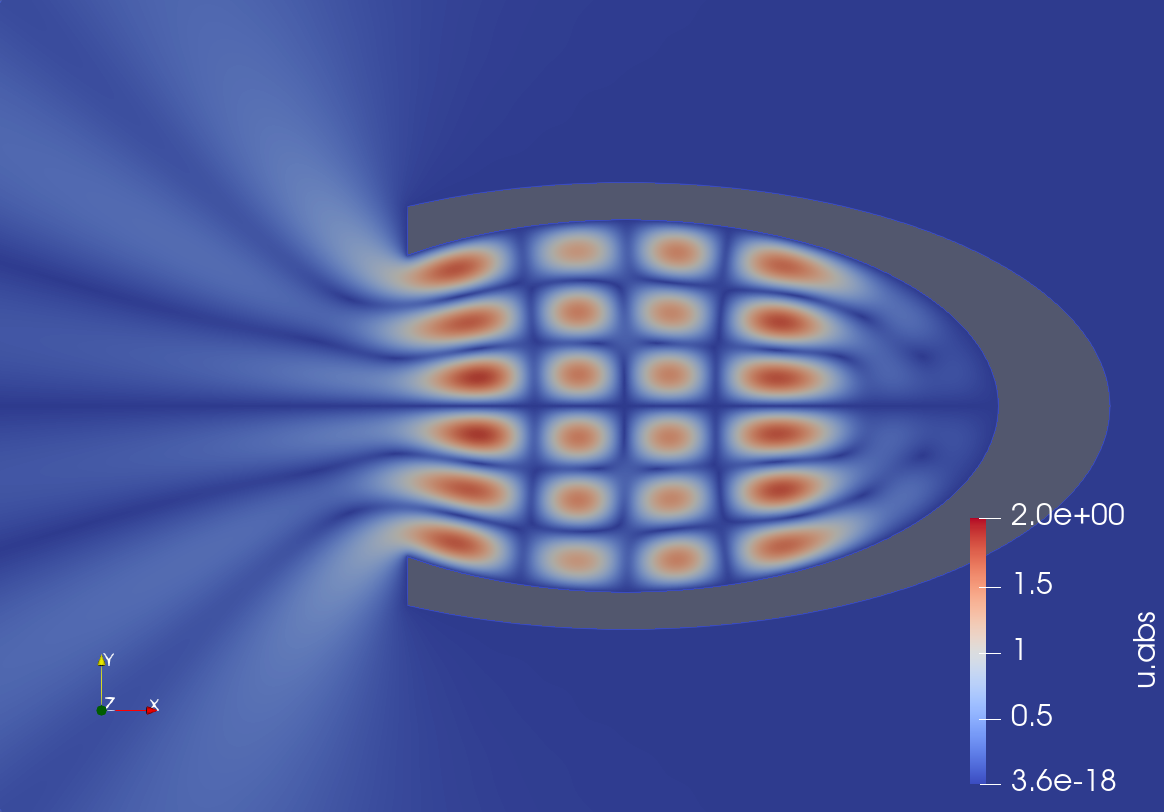}
      \subcaption{\(k^o_{2,4}=22.6811692253925\)}
  \end{subfigure}%
  \caption{Absolute value of the eigenfunction of the truncated exterior Dirichlet problem associated with the smallest eigenvalue 
  the small cavity.} 
 \label{fig:eig_illustrations_elliptic_cavity_2D_dir}
\end{figure}

\begin{figure}[h!]
  \begin{subfigure}[t]{0.45\textwidth}
      \includegraphics[width=\textwidth]{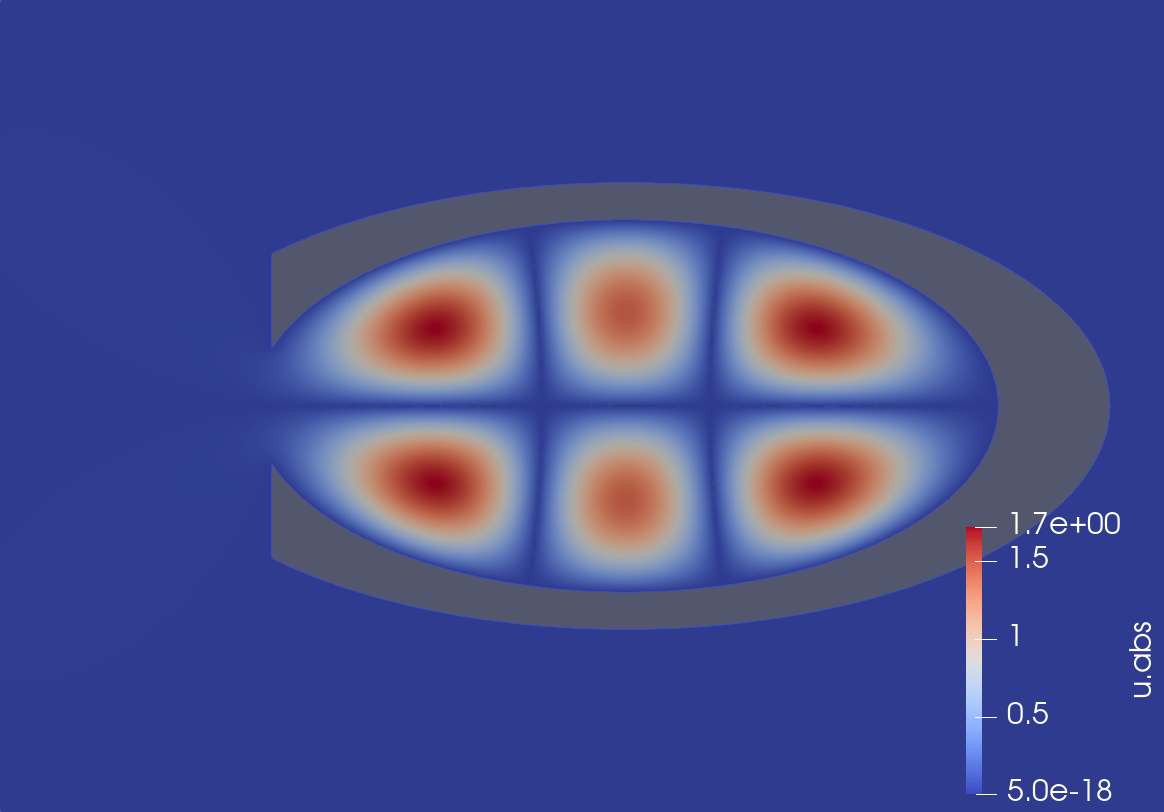}
      \subcaption{\(k^o_{0,3}=9.17017539835808\)}
  \end{subfigure}\hfill%
  \begin{subfigure}[t]{0.45\textwidth}
      \includegraphics[width=\textwidth]{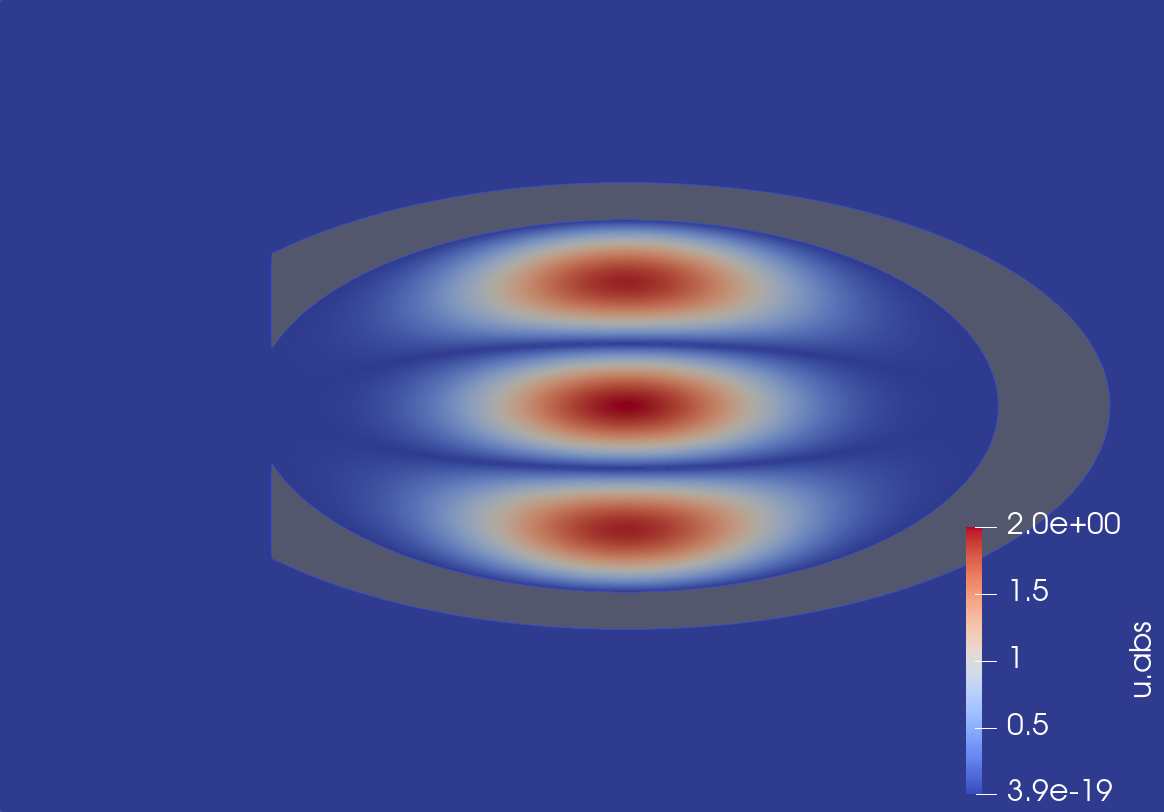}
      \subcaption{\(k^e_{1,0}=9.977120156613617\)}
  \end{subfigure}%
  \par\bigskip
  \begin{subfigure}[t]{0.45\textwidth}
      \includegraphics[width=\textwidth]{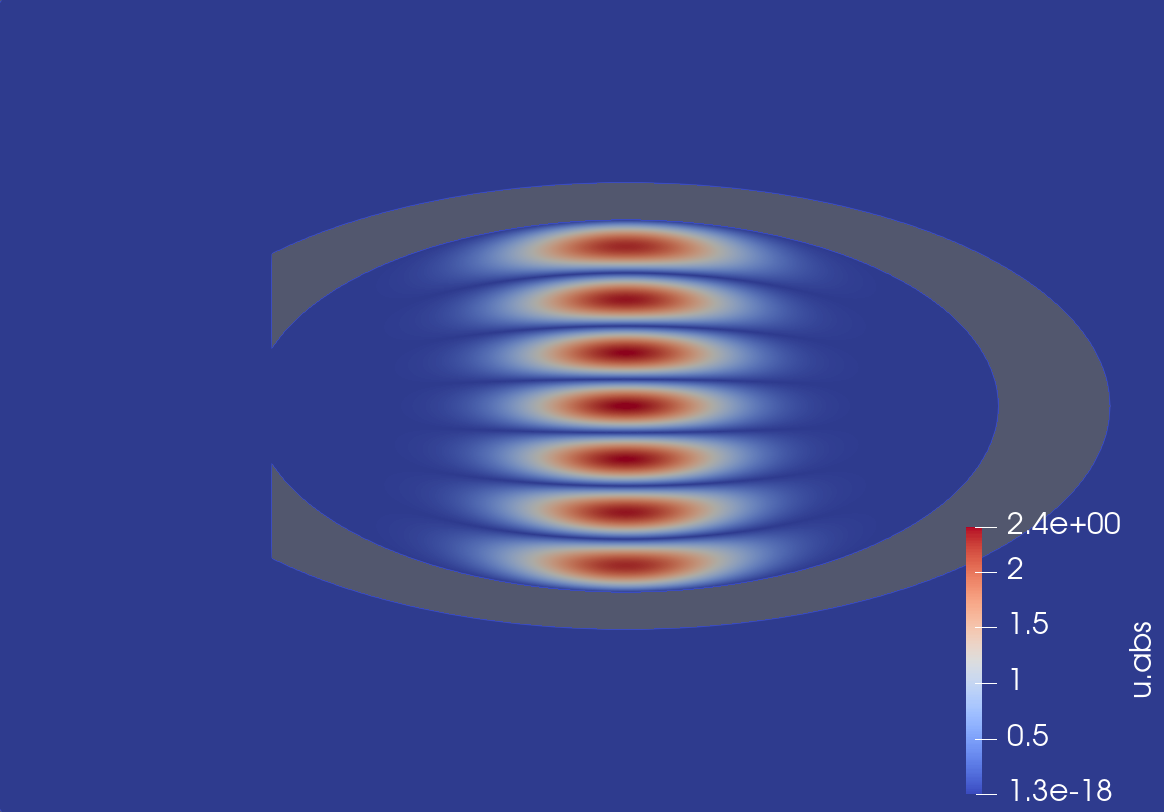}
      
      \subcaption{\(k^e_{3,0}=22.526496854613104\)}
  \end{subfigure}\hfill%
  \begin{subfigure}[t]{0.45\textwidth}
      \includegraphics[width=\textwidth]{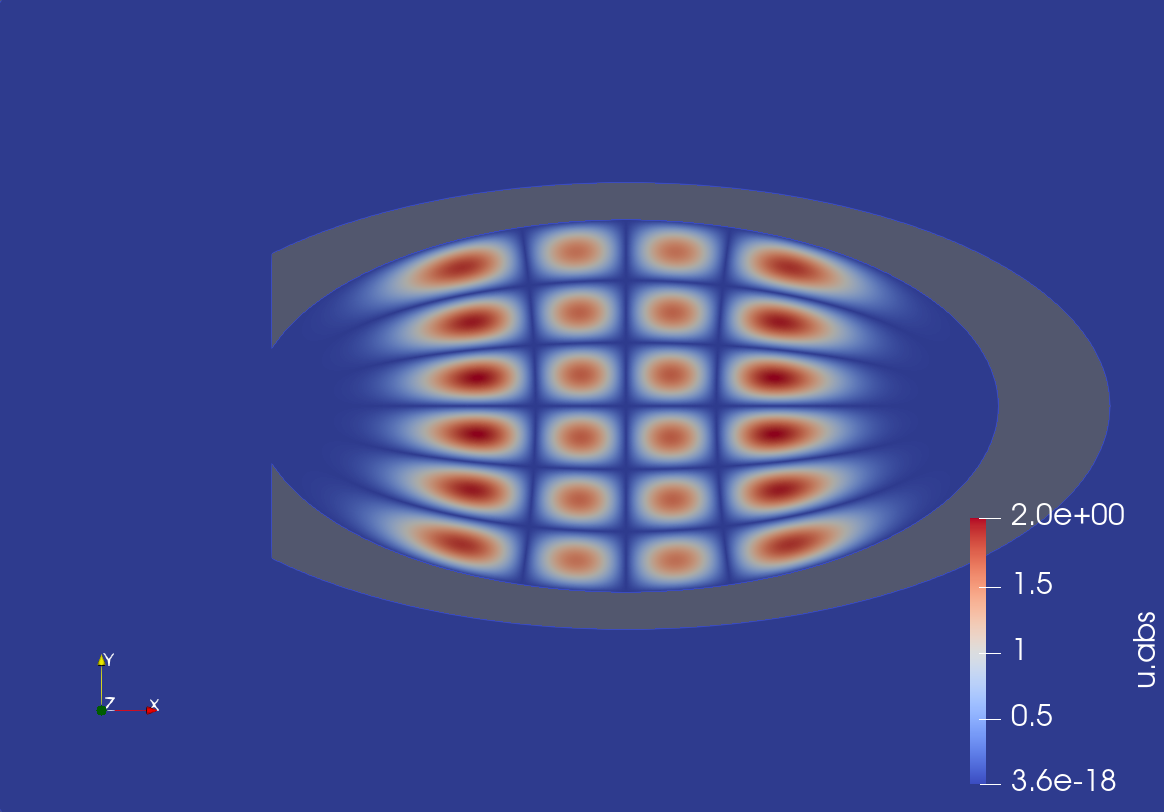}
      \subcaption{\(k^o_{2,4}=22.6811692253925\)}
  \end{subfigure}%
  \caption{Absolute value of the eigenfunction of the truncated exterior Dirichlet problem 
  associated with the smallest eigenvalue for the large cavity.
  }\label{fig:eig_illustrations_elliptic_cavity_bigger_2D_dir}
\end{figure}

\begin{figure}[h!]
  \centering
  \begin{minipage}{0.49\textwidth}
    \centering
    \includegraphics[width=\textwidth]{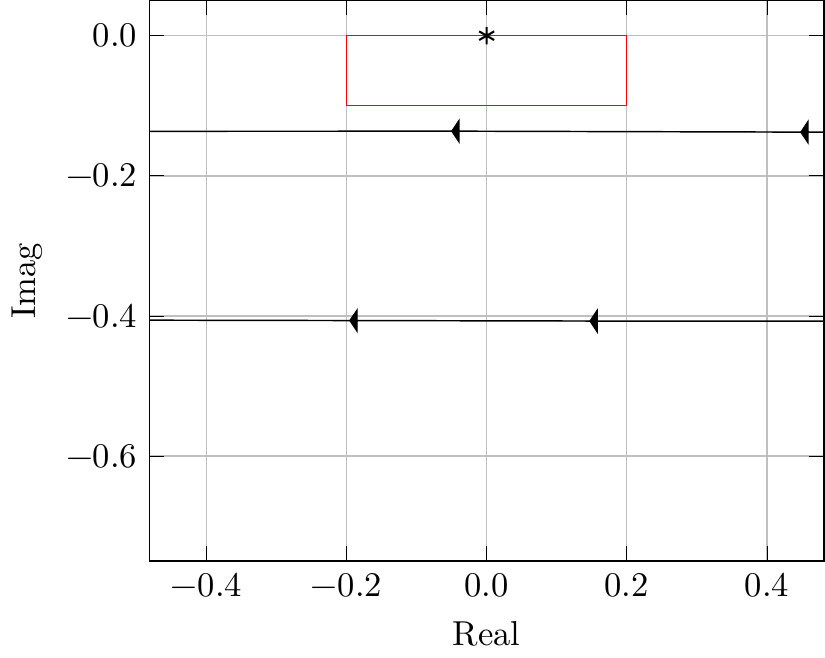}
  \end{minipage}%
  \hfill
  \begin{minipage}{0.49\textwidth}
    \centering
    \includegraphics[width=\textwidth]{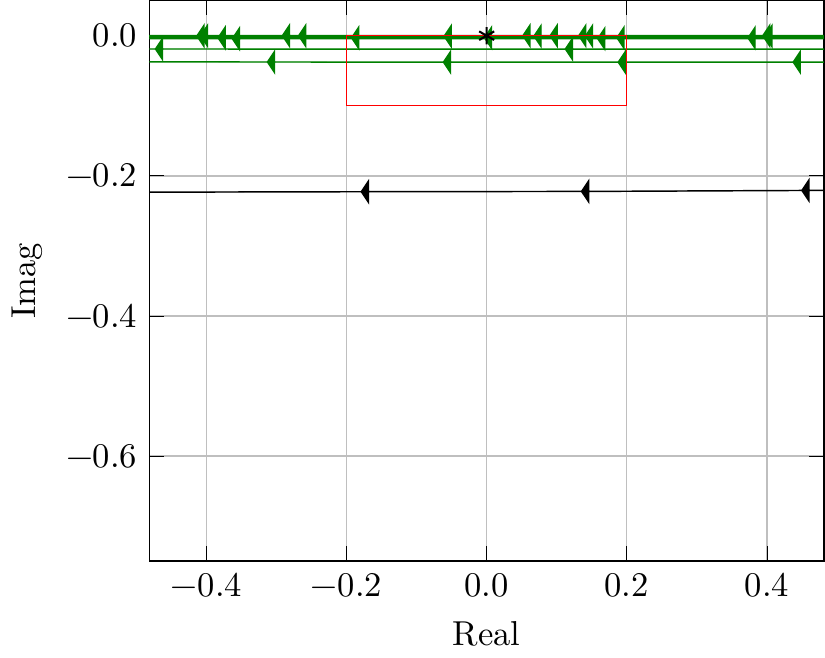}
  \end{minipage}
  \caption{Paths of the eigenvalues for \(k\in (2.5,12.5)\) for the small cavity (left) and the large cavity (right). 
  The eigenvalues that enter the red rectangle 
  are coloured green.}
  \label{fig:flow}
\end{figure}

\paragraph{Plots of the eigenvalues and eigenfunctions}

Figures~\ref{fig:spectra_illustrations_elliptic_cavity_2D_dir} and~\ref{fig:spectra_illustrations_elliptic_cavity_bigger_2D_dir} plot
the near-zero eigenvalues of the truncated exterior Dirichlet problem for the small and large cavity, respectively, at frequencies corresponding to eigenvalues of the ellipse. Figures~\ref{fig:eig_illustrations_elliptic_cavity_2D_dir} and~\ref{fig:eig_illustrations_elliptic_cavity_bigger_2D_dir} plot the corresponding eigenfunctions.
In all these figures $\Gtr = \partial B(0,2)$.

Figure \ref{fig:spectra_illustrations_elliptic_cavity_bigger_2D_dir} shows that the large cavity has an eigenvalue very close to zero at each of the four frequencies considered, qualitatively illustrating Theorem \ref{thm:main1}.
In contrast, Figure~\ref{fig:spectra_illustrations_elliptic_cavity_2D_dir} shows that the small cavity only has
an eigenvalue very close to zero at the frequencies $k_{1,0}^e$ and $k_{3,0}^e$ (top right and bottom left in the figures) and not at 
$k_{0,3}^e$ and $k_{2,4}^o$ (top left and bottom right). The reason for this is clear from the plots of the eigenfunctions
of the truncated exterior Dirichlet problem: 
looking at Figure~\ref{fig:eig_illustrations_elliptic_cavity_2D_dir}, we see that at $k_{0,3}^e$ and $k_{2,4}^o$ the eigenfunctions are not well localised around the minor axis of the ellipse to be inside the small cavity -- in the top left and bottom right of Figure \ref{fig:eig_illustrations_elliptic_cavity_2D_dir} we see them ``leaking out'' of the small cavity. However, 
looking at Figure~\ref{fig:eig_illustrations_elliptic_cavity_bigger_2D_dir}, we see that the corresponding 
eigenfunctions are localised sufficiently to be inside the large cavity, and thus generate an eigenvalue very close to zero. 
In these plots, the eigenfunctions are normalised so that their $L^2(\Otr)$ norm equals one.

Figure \ref{fig:flow} plots the trajectories of the near-zero eigenvalues as functions of $k$ for 
both the small cavity (left plot) and large cavity (right plot) for $k\in (2.5,12.5)$, with the spectra computed every $0.025$.
For Figure \ref{fig:flow}, $\Gtr=\partial B(0,1.5)$; this change (compared to $\Gtr=\partial B(0,2)$ for the earlier figures) is to reduce the cost of each eigenvalue solve, because each of the two plots in Figure \ref{fig:flow} requires $400$ such solves. 
Since we use the exact  (up to discretisation error) Dirichlet-to-Neumann map on $\Gtr$, we expect there to be no difference between choosing $\Gtr=\partial B(0,1.5)$ and $\Gtr=\partial B(0,2)$ (in particular 
Figures~\ref{fig:spectra_illustrations_elliptic_cavity_2D_dir} and~\ref{fig:spectra_illustrations_elliptic_cavity_bigger_2D_dir} are unchanged when $\Gtr$ is changed from $\partial B(0,2)$ to $\partial B(0,1.5)$).

The eigenvalues that enter the red rectangle in Figure \ref{fig:flow} are coloured green; these are members of $\mathcal{E}(0.2,0.05, 2.5, 12.5)$, where $\mathcal{E}$ is defined by \eqref{eq:mathcalE}.
Similar to the eigenvalues plots 
in Figures \ref{fig:spectra_illustrations_elliptic_cavity_2D_dir} and 
\ref{fig:spectra_illustrations_elliptic_cavity_bigger_2D_dir},
Figure \ref{fig:flow} shows that the large cavity has more near-zero eigenvalues for the range of $k$ considered than the small 
cavity. This is expected since a larger number of the eigenfunctions of the ellipse are localized in the large cavity than in the small cavity.

\paragraph{How the eigenvalues and eigenfunctions were computed.}

Definition \ref{def:eigenvalue} (of the eigenvalues of the truncated Dirichlet problem) implies that if $\mu_\ell$ is an eigenvalue at frequency $k_\ell$, and with corresponding eigenfunction $u_\ell$, then 
\beq\label{eq:eigenfunction}
a(u_\ell,v)= \mu_\ell (u_\ell,v)_{L^2(\Otr)} \quad\tfa v \in H^1_{0,D}(\Otr),
\eeq
where the sesquilinear form $a(\cdot,\cdot)$ is that appearing in the standard variational (i.e.~weak) formulation of the Helmholtz exterior Dirichlet problem. 

\begin{definition}[Variational formulation of Helmholtz exterior Dirichlet problem]\label{def:EDP}
Given $k>0$, $\Omega_-$ as above, and $F\in (H_{0,D}^1(\Otr))^*$,  let $u\in H_{0,D}^1(\Otr)$ be the solution of the variational problem
\beq\label{eq:vf}
\text{ find } u \in H_{0,D}^1(\Otr) \quad\tst \quad a(u,v)=F(v)
\quad \tfa v\in H_{0,D}^1(\Otr),
\eeq
where
\begin{align}\label{eq:sesqui}
a(u,v)&:= \int_{\Otr} 
\Big(\nabla u\cdot\overline{\nabla v}
 - k^2  u\overline{v}\Big) - \big\langle \DtN(k) (\gamma_0^{\tr} u),\gamma_0^{\tr} v\big\rangle_{\Gtr},
\end{align}
where $\langle\cdot,\cdot\rangle_{\Gtr}$ denotes the duality pairing on $\Gtr$ that is linear in the first argument and antilinear in the second.
\end{definition}

The figures above were created by solving the eigenvalue problem \eqref{eq:eigenfunction} using the finite-element method with continuous piecewise-linear elements (i.e.~the polynomial degree, $p$, equals one) and meshwidth $h$, equal $(2\pi/30) k^{-3/2}$.
The Dirichlet-to-Neumann map, $\DtN(k)$, in $a(\cdot,\cdot)$ was computed using boundary integral equations -- see Appendix \ref{sec:DtN} for details. 
The accuracy, uniform in frequency, of the finite-element applied the variational problem \eqref{eq:vf} with $p=1$ and $hk^{3/2}$ 
 sufficiently small has been known empirically for a long time, and was recently proved in \cite{LaSpWu:19} for the case when the Dirichlet-to-Neumann map is realised exactly.

Since computing the Dirichlet-to-Neumann map is relatively expensive, in practice one often approximates it using a perfectly-matched layer (PML) or an absorbing boundary condition (such as the impedance boundary condition). 
The plots of the eigenfunctions and near-zero eigenvalues 
of the corresponding truncated exterior Dirichlet problems 
are very similar to those above; this too is expected since the quasimode is supported in a neighbourhood of the obstacle.
}

\subsection{Implications of the main results for numerical analysis of the Helmholtz exterior Dirichlet problem}\label{sec:num2}

Theorems \ref{thm:main1} and \ref{thm:main2} are the first step towards rigorously understanding how 
iterative solvers such as the generalised minimum residual method (GMRES) behave when applied to discretisations of high-frequency Helmholtz problems under strong trapping (the subject of the companion paper \cite{MaGaSpSp:21}).
We now explain this in more detail.

As we saw in \eqref{eq:eigenfunction}, 
the eigenvalues of truncated exterior Dirichlet problem (in sense of Definition \ref{def:eigenvalue}) correspond to eigenvalues of sesquilinear form of standard variational formulation (Definition \ref{def:EDP}).
The standard variational formulation is the basis of the finite-element method for computing approximations to the solution of the variational problem \eqref{eq:vf}. Indeed, the finite-element method consists of choosing a piecewise-polynomial subspace of $H^1_{0,D}(\Otr)$ and solving the variational problem \eqref{eq:vf} in this subspace.

A very popular way of solving the linear systems resulting from the finite-element method applied to the Helmholtz scattering problems is via iterative solvers such as GMRES 
\cite{SaSc:86}; this choice is made because the linear systems are (i) large 
and (ii) non-self-adjoint. Regarding (i): the systems are large since the number of degrees of freedom must be $\gtrsim k^d$ to resolve the oscillations in the solution, see, e.g., the literature review in \cite[\S1.1]{LaSpWu:19}.
Regarding (ii): non-self-adjointness of the linear systems arises directly from the non-self-adjointness of the underlying Helmholtz scattering problem; GMRES is applicable to such systems, unlike the conjugate gradient method.

There is currently large research interest in understanding how iterative methods behave when applied to Helmholtz linear systems, and in designing good preconditioners for these linear systems; see the literature reviews \cite{Er:08,ErGa:12,GaZh:19}, \cite[\S1.3]{GrSpZo:20}.

The location of eigenvalues, especially near-zero ones, is crucial in understanding 
the behaviour of iterative methods.
In the Helmholtz context, eigenvalue analyses of iterative methods applied to nontrapping problems include, for finite-element discretisations, \cite{ElOl:99,ElErOl:01,ErVuOo:04,VaErVu:07,ErGa:12,ViGe:14,CoGa:17,LiXiSaDe:20}, and, for boundary-element discretisations, \cite{ChHa:01,DaDaLa:13,ChDaLe:17}.

The paper \cite{MaGaSpSp:21} analyses GMRES applied to discretisations of Helmholtz problems with strong trapping, 
using the ``cluster plus outliers'' GMRES convergence theory from \cite{CaIpKeMe:96} (with this idea arising in the context of the conjugate gradient method \cite{Je:77} and used subsequently in, e.g., \cite{ElSiWa:02}).
The paper \cite{MaGaSpSp:21} obtains bounds on how the number of GMRES iterations depends on the frequency, under various assumptions about the eigenvalues. In particular, Theorem \ref{thm:main1} 
rigorously justifies \cite[Observation O2(b)]{MaGaSpSp:21} for the standard variational formulation of the truncated exterior Dirichlet problem.
We highlight that, although the results in \cite{MaGaSpSp:21} are about unpreconditioned systems, they give insight into the design of preconditioners. Indeed, a successful preconditioner for Helmholtz problems with strong trapping will need to 
specifically deal with the near-zero eigenvalues created by trapping. 
Theorem \ref{thm:main1} and \ref{thm:main2} give information about the location and multiplicities of these eigenvalues, and \cite{MaGaSpSp:21} shows how these locations and multiplicities affect GMRES.

\subsection{The ideas behind the proof of Theorem \ref{thm:main1}}\label{sec:idea}

\paragraph{Semiclassical notation.}
Instead of working with the parameter $k$ and being interested in the
large-$k$ limit, the semiclassical literature usually works with a
parameter $h:= k^{-1}$ and is interested in the small-$h$ limit. So
that we can easily recall results from this literature, we also work
with the small parameter $k^{-1}$, but to avoid a notational clash
with the meshwidth of the FEM, we let $\h:= k^{-1}$ (the notation
$\h$ comes from the fact that the semiclassical parameter is sometimes related
to Planck's constant, which is written as $2\pi \h$; see, e.g., \cite[\S1.2]{Zw:12}).
Theorem \ref{thm:main1} is then restated in semiclassical notation as Theorem \ref{thm:main1h} below.

\paragraph{The solution operator of the truncated problem.}
Let $R_{\Otr}(\lambda,z):L^2(\Omega_{\tr})\to L^2(\Omega_{\tr})$ be the solution operator for the truncated problem
\begin{equation}
\label{e:prob}
\begin{cases}
(-\h^2\Delta-\lambda^2 -{z})u=f\quad\tin\Omega_{\tr}\\
\gamma_0^D u=0,\\
\gamma_1^\tr u=
\DtN(\lambda/\h)\gamma_0^\tr u; 
\end{cases}
\end{equation}
that is, $R_{\Otr}(\lambda,z)$ satisfies
\beqs
\begin{cases}
(-\h^2\Delta-\lambda^2 -z)R_{\Otr}(\lambda,z)f=f\quad\tin\Omega_{\tr}\\
\gamma_0^D R_{\Otr}(\lambda,z)f=0\\
\gamma_1^\tr
R_{\Otr}(\lambda,z)f=
\DtN(\lambda/\h)\gamma_0^DR_{\Otr}(\lambda,z) f.
\end{cases}
\eeqs
Note that, at this point, it is not clear that the problem~\eqref{e:prob} is well posed and that the family of operators $R_{\Otr}(\lambda,z)$ is well defined. We address this  in Lemma~\ref{l:inverseForm} below.

We study $R_{\Otr}(\lambda,z)$ by relating it to the solution operator of a more-standard scattering problem. Namely, let $V\in L^\infty(\Omega_+)$ with $\supp V \Subset \Rea^d$, 
and consider the problem
\begin{equation}
\label{e:probClassical}
\begin{cases}
(-\h^2\Delta-\lambda^2 +V)u=f \quad\ton\Omega_+,\\
\gamma_0^D u=0, \\
u\text{ is }\lambda/\h\text{ outgoing}.
\end{cases}
\end{equation}
By, e.g., \cite[Chapter 4]{DyZw:19}, the inverse of~\eqref{e:probClassical} is a meromorphic family of operators (for $\lambda\in \mathbb{C}$ when $d$ is odd or $\lambda$ in the logarithmic cover of $\mathbb{C}\setminus \{0\}$ when $d$ is even) $R_{V}(\lambda) :L^2_{\comp}(\Omega_+)\to L^2_{\loc}(\Omega_+)$ with finite-rank poles satisfying
\begin{equation}
\label{e:probClassical2}
\begin{cases}
(-\h^2\Delta-\lambda^2 +V)R_{V}(\lambda) f=f\quad\text{ in }\Omega\\
\gamma_0^D R_{V}(\lambda) f=0, \\
R_{V}(\lambda) f\text{ is }\lambda/\h\text{ outgoing}.
\end{cases}
\end{equation}
Observe that, although both $R_{\Otr}(\lambda,z)$ and $R_{V}(\lambda) $ depend on $\h$, we omit this dependence in the notation to keep expressions compact.

The following two lemmas (proved in \S\ref{sec:proofssolution}) relate $R_{\Otr}(\lambda,z)$ and $R_{V}(\lambda)$ and then characterise the eigenvalues of the truncated exterior Dirichlet problem as poles of $R_{\Otr}(\lambda,z)$ as a function of $z$. 

We use three indicator functions: $1_\Otr$ denotes the function in $L^\infty(\Omega_+)$ that is one on $\Otr$ and zero otherwise,
$\indicatorR$ denotes the restriction operator $L^2(\Omega_+)\rightarrow L^2(\Otr)$, and $\indicatorE$ denotes the extension-by-zero operator $L^2(\Otr)\rightarrow L^2(\Omega_+)$.

\begin{lemma}
\label{l:inverseForm}
Define
\beq\label{eq:Rdef}
R(\lambda,z):= R_V(\lambda) \quad\text{ with } V(z) = -z 1_{\Otr}.
\eeq
Then
\beq\label{eq:twoRs}
R_{\Otr}(\lambda,z)=\indicatorR
 R(\lambda,z)
\indicatorE ,
\eeq
and thus $R_{\Otr}(\lambda,z)$ is a meromorphic family of operators in $\lambda$ for $\lambda\in \mathbb{C}$ when $d$ is odd and $\lambda$ in the logarithmic cover of $\mathbb{C}\setminus \{0\}$ when $d$ is even.
\end{lemma}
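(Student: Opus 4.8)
\medskip

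\noindent\textbf{Overall strategy.} The plan is to prove \eqref{eq:twoRs} by showing that the right-hand side, $\indicatorR R(\lambda,z)\indicatorE$, actually solves the defining boundary value problem \eqref{e:prob} for $R_{\Otr}(\lambda,z)$, and then invoke uniqueness. The only subtle point is that the exterior scattering problem \eqref{e:probClassical2} "sees" the full exterior domain $\Omega_+$, whereas the truncated problem \eqref{e:prob} lives on $\Omega_{\tr}$ with the DtN map imposed on $\Gamma_{\tr}$. The bridge between the two is the defining property of $\DtN(k)$: a $k$-outgoing solution of the homogeneous Helmholtz equation in $\Rea^d\setminus\overline{\Omega_1}$ has Neumann trace on $\Gamma_{\tr}$ equal to $\DtN(k)$ applied to its Dirichlet trace. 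So the argument is essentially an exercise in matching traces across $\Gamma_{\tr}$.

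\medskip

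\noindent\textbf{Step 1: set-up and well-posedness via $R_V$.} Fix $f\in L^2(\Otr)$ and set $u := R(\lambda,z)\,\indicatorE f = R_V(\lambda)(\indicatorE f)$ with $V = V(z) = -z\,1_\Otr$ as in \eqref{eq:Rdef}. Because $\indicatorE f \in L^2_{\comp}(\Omega_+)$ and $V\in L^\infty(\Omega_+)$ with $\supp V\Subset\Rea^d$, the meromorphic family $R_V(\lambda)$ from \cite[Chapter 4]{DyZw:19} applies, so $u\in L^2_{\loc}(\Omega_+)$ is defined away from the (discrete, $\lambda$-dependent) poles of $R_V(\lambda)$, and satisfies \eqref{e:probClassical2}, i.e.
\beqs
(-\h^2\Delta - \lambda^2 - z\,1_\Otr)u = \indicatorE f \text{ in } \Omega_+,\qquad \gamma_0^D u = 0,\qquad u \text{ is } \lambda/\h\text{-outgoing}.
\eeqs
In particular $u$ is smooth (indeed $H^2_{\loc}$, then smooth by elliptic regularity since the coefficients are constant off $\Otr$) in $\Rea^d\setminus\overline{\Omega_1}$ where $V=0$ and $f$ is absent, so it genuinely satisfies $(-\h^2\Delta - \lambda^2)u = 0$ there and is $\lambda/\h$-outgoing. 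This is exactly the hypothesis needed to apply the definition of the exterior DtN map.

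\medskip

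\noindent\textbf{Step 2: the restricted function solves \eqref{e:prob}.} Let $v := \indicatorR u = u|_{\Otr}$. Since $\operatorname{conv}(\Omega_-)\Subset\Omega_1$, the obstacle boundary $\Gamma_D$ lies in $\overline{\Otr}$, and on $\Otr$ we have $1_\Otr \equiv 1$, so the PDE reads $(-\h^2\Delta - \lambda^2 - z)v = f$ in $\Otr$. The Dirichlet condition $\gamma_0^D v = \gamma_0^D u = 0$ is inherited directly. For the DtN condition on $\Gamma_{\tr}$: by Step 1, $u$ restricted to $\Rea^d\setminus\overline{\Omega_1}$ is a $\lambda/\h$-outgoing solution of the homogeneous Helmholtz equation, hence by the very definition of $\DtN(\lambda/\h)$ its Neumann trace on $\Gamma_{\tr}$ (taken from the exterior of $\Omega_1$) equals $\DtN(\lambda/\h)\,\gamma_0^{\tr} u$. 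Now $u$ is $H^2$ across $\Gamma_{\tr}$ (no source, no potential jump there), so the interior and exterior traces of $u$ on $\Gamma_{\tr}$ agree, for both the function and its normal derivative. Being careful with the orientation convention --- the normal points out of $\Otr$ on $\Gamma_{\tr}$, which is into $\Rea^d\setminus\overline{\Omega_1}$ --- this gives $\gamma_1^{\tr} v = \DtN(\lambda/\h)\,\gamma_0^{\tr} v$. Thus $v$ solves every line of \eqref{e:prob}, so $v = R_{\Otr}(\lambda,z) f$ provided \eqref{e:prob} has a unique solution.

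\medskip

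\noindent\textbf{Step 3: uniqueness, and the meromorphic conclusion.} For uniqueness of \eqref{e:prob}: if $w\in H^2(\Otr)$ solves the homogeneous version ($f=0$), extend it to $\widetilde w$ on $\Omega_+$ by solving the homogeneous $\lambda/\h$-outgoing Helmholtz equation in $\Rea^d\setminus\overline{\Omega_1}$ with Dirichlet data $\gamma_0^{\tr} w$ on $\Gamma_{\tr}$ (the standard exterior Dirichlet problem, uniquely solvable). The DtN condition satisfied by $w$ guarantees that the Neumann traces of $w$ and the exterior piece match on $\Gamma_{\tr}$, and the Dirichlet traces match by construction, so $\widetilde w\in H^2_{\loc}(\Omega_+)$ and solves the homogeneous version of \eqref{e:probClassical2}; this forces $\widetilde w$ to be in the kernel of $R_V(\lambda)^{-1}$. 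Hence $R_{\Otr}(\lambda,z)$ is well defined exactly where $R_V(\lambda)$ (with $V = -z 1_\Otr$) is, and \eqref{eq:twoRs} holds as an identity of operators $L^2(\Otr)\to L^2(\Otr)$ on that set. Since $R_V(\lambda)$ is a meromorphic family with finite-rank poles --- for $\lambda\in\mathbb{C}$ when $d$ is odd and on the logarithmic cover of $\mathbb{C}\setminus\{0\}$ when $d$ is even --- and $\indicatorR$, $\indicatorE$ are fixed bounded operators, the composition $\indicatorR R(\lambda,z)\indicatorE$ inherits the same meromorphic structure in $\lambda$, which is the second assertion.

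\medskip

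\noindent\textbf{Main obstacle.} The genuinely delicate part is the trace-matching at $\Gamma_{\tr}$ in Steps 2 and 3: one must be scrupulous about (a) the regularity of $u = R_V(\lambda)\indicatorE f$ near $\Gamma_{\tr}$ — enough to take $H^{\pm1/2}$ traces from both sides and know they coincide, which rests on $\indicatorE f$ and $V$ being supported inside $\Omega_1$ together with interior elliptic regularity — and (b) the sign/orientation bookkeeping in the Neumann traces, since $\gamma_1^{\tr}$ uses the normal pointing out of $\Otr$ whereas the standard DtN map is usually phrased with the normal pointing out of $\Omega_1$; here these two coincide, but the bookkeeping must be spelled out to match Definition~\ref{def:eigenvalue}. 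Everything else is a direct consequence of the defining properties of $R_V(\lambda)$ and of $\DtN$ already quoted in the excerpt.
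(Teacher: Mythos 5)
Your proof is correct and takes essentially the same route as the paper: the key content in both is using the $H^2_{\loc}$ regularity of $R_V(\lambda)\indicatorE f$ to match the Dirichlet and Neumann traces across $\Gamma_{\tr}$, and invoking the definition of $\DtN(\lambda/\h)$ together with the $\lambda/\h$-outgoing condition to verify (resp.~impose) the transparent boundary condition. The only difference is organizational---you phrase it as ``the restriction of $R_V(\lambda)\indicatorE f$ solves \eqref{e:prob}'' plus a separate uniqueness step obtained by extending a homogeneous solution of \eqref{e:prob} to an outgoing solution of the homogeneous \eqref{e:probClassical2}, whereas the paper packages the same two facts as a two-way correspondence between solutions of \eqref{e:prob} and restrictions of solutions of \eqref{e:probClassical}; these are logically equivalent (the paper's first direction applied to $f=0$ is exactly your uniqueness argument).
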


\begin{lemma}\label{lem:meromorphic}
For $\lambda \in \Rea\setminus\{0\}$, $z\mapsto  R(\lambda,z)$ is a meromorphic family of operators $L^2_{\comp}(\Omega_+)\to L^2_{\loc}(\Omega_+)$ with finite rank poles.
\end{lemma}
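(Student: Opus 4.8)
The plan is to use analytic Fredholm theory applied to the second-resolvent identity relating $R(\lambda,z)$ to $R(\lambda,0)=R_V(\lambda)$ with $V=0$ (i.e.\ the free exterior Dirichlet resolvent, which exists on $\R\setminus\{0\}$ by the limiting absorption principle recalled from \cite[Chapter 4]{DyZw:19}). Fix $\lambda\in\R\setminus\{0\}$ and write $R_0(\lambda):=R_V(\lambda)|_{V=0}$. Since $V(z)=-z1_{\Otr}$ acts by multiplication by a compactly supported $L^\infty$ function, a function $u$ solves the outgoing Dirichlet problem $(-\h^2\Delta-\lambda^2-z1_{\Otr})u=f$ if and only if $u=R_0(\lambda)f+z\,R_0(\lambda)(1_{\Otr}u)$. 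Restricting attention to the compact set $\Otr$ (equivalently, composing with $\indicatorR$ and $\indicatorE$ as in Lemma \ref{l:inverseForm}), this becomes the equation $(I-z\,K(\lambda))\,\tilde u=\tilde f$ on $L^2(\Otr)$, where $K(\lambda):=\indicatorR R_0(\lambda)\indicatorE$.

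The key point is that $K(\lambda):L^2(\Otr)\to L^2(\Otr)$ is compact: $R_0(\lambda)$ maps $L^2_{\comp}(\Omega_+)$ into $H^2_{\loc}(\Omega_+)$ (elliptic regularity for the Helmholtz operator, together with the boundary regularity afforded by $\Gamma_D\in C^\infty$ and the mapping properties of $\DtN(\lambda/\h)$), and $H^2(\Otr)\hookrightarrow L^2(\Otr)$ is compact by Rellich since $\Otr$ is bounded with smooth boundary. Moreover $z\mapsto zK(\lambda)$ is an entire (indeed linear) family of compact operators on $L^2(\Otr)$. At $z=0$ the operator $I-zK(\lambda)=I$ is invertible, so the analytic Fredholm theorem (see, e.g., \cite[Theorem C.8]{DyZw:19}) applies: $z\mapsto (I-zK(\lambda))^{-1}$ is a meromorphic family of bounded operators on $L^2(\Otr)$ with poles of finite rank. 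Composing with the bounded maps $\indicatorE$ on the right and $R_0(\lambda)\indicatorE$ plus $z R_0(\lambda)\indicatorE(\cdot)$ assembled as in the second-resolvent identity, and using $R(\lambda,z)f=R_0(\lambda)f+z R_0(\lambda)\indicatorE(I-zK(\lambda))^{-1}\indicatorR R_0(\lambda)f$, shows that $z\mapsto R(\lambda,z):L^2_{\comp}(\Omega_+)\to L^2_{\loc}(\Omega_+)$ is meromorphic with finite-rank poles, as claimed.

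The main obstacle is the bookkeeping around the nonlocal boundary condition $\gamma_1^\tr u=\DtN(\lambda/\h)\gamma_0^\tr u$ hidden inside the definition of $R_0(\lambda)=R_V(\lambda)$: one must check that the free exterior Dirichlet resolvent on $\R\setminus\{0\}$ really is available with the stated $L^2_{\comp}\to L^2_{\loc}$ and $L^2_{\comp}\to H^2_{\loc}$ mapping properties, and that conjugating it by the multiplication operator $1_{\Otr}$ (which is exactly the structure of $V(z)$) does not destroy compactness near $\partial\Otr$. This is standard once one invokes the meromorphic continuation and limiting absorption results of \cite[Chapter 4]{DyZw:19} for the exterior Dirichlet problem, but it is the step requiring care; everything after that is the routine application of analytic Fredholm theory together with the second-resolvent identity. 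Note also that restricting to $\lambda\in\R\setminus\{0\}$ is what guarantees $R_0(\lambda)$ itself is a genuine (pole-free) operator, so that $z=0$ is a point of invertibility and the Fredholm alternative can be seeded there.
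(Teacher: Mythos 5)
Your proposal is correct and follows essentially the same strategy as the paper: write $R(\lambda,z)$ via a perturbation formula in terms of $R(\lambda,0)$, extract a compact operator, and invoke the analytic Fredholm theorem. The only structural difference is bookkeeping: the paper works with $(I - z 1_{\Otr}R(\lambda,0))^{-1}$ acting on $L^2(\Omega_+)$ and introduces an auxiliary cutoff $\rho$ (with $\rho\equiv 1$ on $\Otr$) to replace $1_{\Otr}R(\lambda,0)$ by the genuinely compact operator $1_{\Otr}R(\lambda,0)\rho$ before applying Fredholm theory, whereas you instead sandwich by $\indicatorR$ and $\indicatorE$ to land directly on the Hilbert space $L^2(\Otr)$, where $K(\lambda)=\indicatorR R_0(\lambda)\indicatorE$ is compact by elliptic regularity plus Rellich. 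These are related by the standard $(I-AB)^{-1}=I+A(I-BA)^{-1}B$ identity with $A=\indicatorE$ and $B=\indicatorR R_0(\lambda)$, so the two formulations are algebraically equivalent; your version is marginally cleaner in that it avoids the $\rho$ manipulations that the paper needs because $L^2_{\comp}\to L^2_{\loc}$ is not a Banach-space setting.

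One small inaccuracy to correct: you describe the boundary condition $\gamma_1^{\tr}u=\DtN(\lambda/\h)\gamma_0^{\tr}u$ as being ``hidden inside the definition of $R_0(\lambda)=R_V(\lambda)$,'' and cite ``mapping properties of $\DtN(\lambda/\h)$'' when justifying the $H^2_{\loc}$ regularity. This is a conflation of $R(\lambda,z)$ (the subject of the lemma) with $R_{\Otr}(\lambda,z)$. The operator $R_V(\lambda)$ in \eqref{e:probClassical2} is the outgoing resolvent of the exterior Dirichlet problem on all of $\Omega_+$: the only boundary condition is $\gamma_0^D u=0$ on $\Gamma_D$, plus the Sommerfeld condition at infinity; there is no $\DtN$ condition and no $\Gtr$. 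The $H^2_{\loc}$ mapping property you need therefore comes purely from interior elliptic regularity for $-\h^2\Delta-\lambda^2$ and boundary regularity at the smooth $\Gamma_D$, and is available directly from \cite[Chapter 4]{DyZw:19}. Fortunately this confusion does not enter the actual argument, so the proof stands as written.
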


\begin{corollary}
If $z_j$ is a pole of $z\mapsto  R_{\Otr}(1,z)$, then $\mu_\ell:= -\h_j^{-2} z_j$ is an eigenvalue of the truncated exterior Dirichlet problem (in the sense of Definition \ref{def:eigenvalue}).
\end{corollary}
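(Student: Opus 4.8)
The plan is to invoke the standard analytic-Fredholm principle that at a pole of a meromorphic resolvent family the leading Laurent coefficient supplies a nontrivial solution of the associated homogeneous problem; here that homogeneous problem is exactly the eigenvalue problem of Definition~\ref{def:eigenvalue}.

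First I would pin down the frequency and the function spaces. Since $\lambda=1$ and $\h=\h_j$, the equation in \eqref{e:prob} reads $(-\h_j^2\Delta-1-z)u=f$ with $\gamma_1^\tr u=\DtN(\h_j^{-1})\gamma_0^\tr u$, so the relevant frequency is $k_\ell:=\h_j^{-1}$ and $-\h_j^2\Delta-1=-\h_j^2(\Delta+k_\ell^2)$, $\DtN(\h_j^{-1})=\DtN(k_\ell)$. By Lemmas~\ref{l:inverseForm} and~\ref{lem:meromorphic}, $z\mapsto R_{\Otr}(1,z)=\indicatorR R(1,z)\indicatorE$ is a meromorphic family of operators with finite-rank poles. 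Writing $-\h_j^2\Delta R_{\Otr}(1,z)f=f+(1+z)R_{\Otr}(1,z)f\in L^2(\Otr)$ and invoking elliptic regularity up to the smooth boundaries $\Gamma_D$, $\Gtr$ (the boundary conditions being the Dirichlet condition on $\Gamma_D$ and the first-order condition $\gamma_1^\tr=\DtN(k_\ell)\gamma_0^\tr$ on $\Gtr$), I would upgrade this to a meromorphic family in $\mathcal{L}(L^2(\Otr),H^2(\Otr))$; consequently $\gamma_0^D,\gamma_0^\tr,\gamma_1^\tr$ applied to $R_{\Otr}(1,z)f$ are meromorphic in $z$ as well.

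Next I would extract the eigenfunction. Since $z_j$ is a pole of the operator-valued family, there is $f\in L^2(\Otr)$ for which $z\mapsto R_{\Otr}(1,z)f$ has a pole at $z_j$; let $m\geq1$ be its order and $w:=\lim_{z\to z_j}(z-z_j)^m R_{\Otr}(1,z)f\in H^2(\Otr)$ its (nonzero) leading Laurent coefficient, so that also $(z-z_j)^{m+1}R_{\Otr}(1,z)f\to0$ in $H^2(\Otr)$ as $z\to z_j$. From the defining relations of $R_{\Otr}(1,z)$ one has, for $z$ near but distinct from $z_j$,
$$(-\h_j^2\Delta-1-z_j)\big[(z-z_j)^m R_{\Otr}(1,z)f\big]=(z-z_j)^m f+(z-z_j)^{m+1}R_{\Otr}(1,z)f,$$
$$\gamma_0^D\big[(z-z_j)^m R_{\Otr}(1,z)f\big]=0,\qquad \gamma_1^\tr\big[(z-z_j)^m R_{\Otr}(1,z)f\big]=\DtN(k_\ell)\,\gamma_0^\tr\big[(z-z_j)^m R_{\Otr}(1,z)f\big].$$
Passing to the limit $z\to z_j$ — using that $-\h_j^2\Delta-1-z_j:H^2(\Otr)\to L^2(\Otr)$, the trace maps, and $\DtN(k_\ell)$ are bounded and independent of $z$, and that $m\geq1$ kills the term $(z-z_j)^m f$ — I obtain $(-\h_j^2\Delta-1-z_j)w=0$, $\gamma_0^D w=0$, and $\gamma_1^\tr w=\DtN(k_\ell)\gamma_0^\tr w$.

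Finally, $w\in H^2(\Otr)\cap H^1_{0,D}(\Otr)\setminus\{0\}$, and dividing $(-\h_j^2\Delta-1-z_j)w=0$ by $-\h_j^2$ gives $(\Delta+k_\ell^2)w=-\h_j^{-2}z_j\,w=\mu_\ell w$; together with the two boundary conditions this is precisely Definition~\ref{def:eigenvalue} with eigenvalue $\mu_\ell=-\h_j^{-2}z_j$, frequency $k_\ell=\h_j^{-1}$, and eigenfunction $w$. The one delicate point — the step I expect to require the most care — is establishing the meromorphy in a topology strong enough ($\mathcal{L}(L^2,H^2)$, say) that the boundary traces of the Laurent coefficients are well defined and inherit the boundary conditions; this is where the smoothness of $\Gamma_D$ and $\Gtr$ and elliptic regularity enter, and it is essentially already contained in the proof of Lemma~\ref{l:inverseForm}.
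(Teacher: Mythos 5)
The paper states this corollary without proof, treating it as an immediate consequence of Lemma~\ref{lem:meromorphic}, so there is no official argument to compare against; your write-up is the natural fleshing-out and it is correct. You correctly reduce the problem to extracting the leading Laurent coefficient $w=\lim_{z\to z_j}(z-z_j)^m R_{\Otr}(1,z)f$ for a suitable $f$, pass to the limit in the equation and in the two boundary conditions, and translate $(-\h_j^2\Delta-1-z_j)w=0$ into $(\Delta+k_\ell^2)w=\mu_\ell w$ with $k_\ell=\h_j^{-1}$ and $\mu_\ell=-\h_j^{-2}z_j$.

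The one place that deserves the extra care you already flag is establishing that the Laurent coefficients live in $H^2(\Otr)$ so that the traces $\gamma_0^D$, $\gamma_0^\tr$, $\gamma_1^\tr$ make sense and commute with the limit. Your route via $-\h_j^2\Delta R_{\Otr}(1,z)f=f+(1+z)R_{\Otr}(1,z)f$ and elliptic regularity works, but it needs an extra sentence: near the pole both $R_{\Otr}(1,z)f$ and its Laplacian blow up in $L^2$, so one should apply the elliptic estimate to the bounded family $(z-z_j)^mR_{\Otr}(1,z)f$ (or extract $A_mf$ by a Cauchy integral over a small circle, on which the $H^2$ norm is uniformly bounded), and use weak continuity of the trace maps to pass to the limit. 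One also has to check that the $H^2$ estimate holds for the non-local DtN condition on $\Gtr$. A slightly cleaner route, available inside the paper's own proofs, is to use the factorization $\chi R(\lambda,z)\chi=\chi R(\lambda,0)\rho\,(I-z1_{\Otr}R(\lambda,0)\rho)^{-1}\chi$ established in the proof of Lemma~\ref{lem:meromorphic}: the right factor is meromorphic $L^2\to L^2$, the fixed left factor $\chi R(\lambda,0)\rho$ is bounded $L^2\to H^2_{\loc}$ by the standard mapping property of $R_V(\lambda)$ (cf.~\eqref{e:probClassical2} and the proof of Lemma~\ref{l:inverseForm}), so the composed family is manifestly meromorphic $L^2(\Otr)\to H^2(\Otr)$ with $z$-independent post-composition, and all Laurent coefficients land in $H^2(\Otr)$ directly. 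With either fix in place, the proof is complete. One final small nit: you cite Lemma~\ref{l:inverseForm} for the factorization, but it actually appears in the proof of Lemma~\ref{lem:meromorphic}; both are needed.
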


The key point is that we are interested in $R_{\Otr}(\lambda,z)$ as a meromorphic family in the variable $z$, in contrast to the more-familiar study of $R_{V}(\lambda) $ as a meromorphic family in the variable $\lambda$.

\paragraph{Recap of ``from quasimodes to resonances''.}

Recall that resonances of $-\h^2 \Delta +V$ are defined as poles of the meromorphic continuation of $R_V(w)$ into $\Im w <0$, see \cite[\S4.2, \S7.2]{DyZw:19}. The ``quasimodes to resonances'' argument of \cite{TaZw:98} (following \cite{StVo:95, StVo:96}; see also \cite[Theorem 7.6]{DyZw:19})
shows that existence of quasimodes (as in Definition \ref{def:quasimodes}) implies existence of resonances close to the real axis;
the additional arguments in \cite{St:99} then prove the corresponding result with multiplicities.

These arguments use the \emph{semiclassical maximum principle} (a consequence of the maximum principle of complex analysis, see Theorem \ref{thm:scmp} below) combined with the bounds
\beq\label{eq:bound1}
\N{\chi R_V(\lambda)\chi}_{L^2\rightarrow L^2} \leq C \exp\Big(C\h^{-d}\log \delta^{-1}\Big),\qquad \lambda^2\in \Omega
\,\,\Big\backslash
 \bigcup_{w \in {\rm Res}
(-\h^2 \Delta +V)}B(w,\delta),
\eeq
for $\Omega\Subset \{\Re w >0\}$,
and
\beq\label{eq:bound2}
\N{R_V(\lambda)}_{L^2\rightarrow L^2} \leq \frac{1}{
\Im (\lambda^2)}
 \quad\tfor \Im( \lambda^2 )>0;
\eeq
see \cite[Lemma 1]{TaZw:98},  \cite[Proposition 4.3]{TaZw:00}, \cite[Theorem 7.5]{DyZw:19}.

\paragraph{From quasimodes to eigenvalues.}

Theorems \ref{thm:main1} and \ref{thm:main2} are proved using the same ideas as in the quasimodes to resonances arguments, except that now we work in the complex $z$-plane (with real $\lambda$) instead of the complex $\lambda$-plane.
The analogue of the bounds \eqref{eq:bound1} and \eqref{eq:bound2} are given in the following lemma.

\begin{lemma}[Bounds on $R_{\Otr}(\lambda,z)$]\label{l:mainEstimate}
Let $0<a<b$ and let $z_j(\h,\lambda)$ be the poles of $R_{\Otr}(\lambda,z)$ (as a meromorphic function of $z$). Then there exist $C_1,\e_1>0$ such that for all $0<\h<1$, $\lambda^2\in[a,b]$ and $\delta>0$,
\beq\label{eq:bound1new}
\|R_{\Otr}(\lambda,z)\|_{L^2(\Otr)\to L^2(\Otr)}\leq \exp\Big(C_1\h^{-d}\log \delta^{-1}\Big)\quad\tfor z\in B(0,\e_1 \h)
\Big\backslash \bigcup_{j}B(z_j(\h,\lambda),\delta).
\eeq
Furthermore, there exists $C_2>0$ such that
\beq\label{eq:bound2new}
\|R_{\Otr}(\lambda,z)\|_{L^2(\Otr)\to L^2(\Otr)}\leq C_2\frac{ \langle z\rangle}{\Im z}
\quad\tfor  \Im z>0,
\eeq
where $\langle z\rangle:= (1+|z|^2)^{1/2}$.
\end{lemma}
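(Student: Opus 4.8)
The plan is to deduce both bounds from the representation \eqref{eq:twoRs}, $R_{\Otr}(\lambda,z) = \indicatorR R(\lambda,z)\indicatorE$ with $R(\lambda,z) = R_V(\lambda)$ for the potential $V(z) = -z\mathbf{1}_{\Otr}$, thereby reducing everything to statements about the free (or compactly-perturbed) scattering resolvent $R_V(\lambda)$. First I would handle \eqref{eq:bound2new}. For $\Im z > 0$ the operator $-\h^2\Delta - \lambda^2 + V(z) = -\h^2\Delta - \lambda^2 - z\mathbf{1}_{\Otr}$ has an absorbing (rather than radiating) zeroth-order term on $\Otr$, so the usual energy/Green's-identity argument applies: testing the equation against $\bar u$, integrating by parts on $\Otr$ using the Dirichlet condition on $\Gamma_D$ and the DtN condition on $\Gtr$, and taking imaginary parts kills the DtN contribution up to a sign (because $\Im\langle \DtN(\lambda/\h)\phi,\phi\rangle_{\Gtr} \geq 0$ for real frequency), leaving $\Im z \cdot \|u\|_{L^2(\Otr)}^2 \lesssim \|f\|_{L^2}\|u\|_{L^2}$, hence $\|u\|_{L^2(\Otr)} \lesssim (\Im z)^{-1}\|f\|_{L^2}$. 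The extra factor $\langle z\rangle$ rather than $1$ comes from being slightly careful when $|z|$ is large (one cannot simply drop a $|z|\|u\|^2$ term on $\Gamma_D$ or from cross terms); I expect $\langle z\rangle / \Im z$ to fall out of a routine Cauchy–Schwarz bookkeeping. This also incidentally proves Lemma \ref{l:inverseForm}/\ref{lem:meromorphic} are consistent, i.e.\ that $z\mapsto R_{\Otr}(\lambda,z)$ is genuinely meromorphic with the claimed pole structure.

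Next, for \eqref{eq:bound1new}, the idea is to transfer the known exponential resolvent bound \eqref{eq:bound1}, namely $\|\chi R_V(\lambda)\chi\|_{L^2\to L^2} \leq C\exp(C\h^{-d}\log\delta^{-1})$ away from $\delta$-neighbourhoods of resonances, into a bound in the $z$-variable. The cleanest route: fix $\lambda$ real with $\lambda^2\in[a,b]$, regard $-\h^2\Delta - \lambda^2 - z\mathbf{1}_{\Otr}$ as $-\h^2\Delta - (\lambda^2+z)\mathbf{1}_{\Otr^c} - \lambda^2\mathbf{1}_{\Otr} + \cdots$ — more precisely, note that perturbing $z$ is, on $\Otr$, the same as perturbing the spectral parameter, so for $z$ in a small complex ball $B(0,\e_1\h)$ the operator is an order-$\h$ perturbation of the resolvent at a nearby real energy. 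One then writes $R_{\Otr}(\lambda,z) = \indicatorR R_V(\lambda)\indicatorE$ where $V = -z\mathbf{1}_{\Otr}$ has $\|V\|_{L^\infty} = |z| \leq \e_1\h$, and uses a perturbative resolvent identity together with \eqref{eq:bound1} (applied with a cutoff $\chi$ adapted to $\Otr$, legitimate because everything is compactly supported) plus \eqref{eq:bound2} for the imaginary part, to absorb the perturbation. Because $\|V\| = O(\h)$ while the free resolvent on the real axis between resonances is at worst $\exp(O(\h^{-d}\log\delta^{-1}))$, the Neumann-series / second-resolvent argument survives provided $\e_1$ is chosen small relative to the implied constants, and the poles of the perturbed object are exactly the $z_j(\h,\lambda)$; excising $\delta$-balls around them yields \eqref{eq:bound1new}.

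The main obstacle I anticipate is precisely this transfer step: the bound \eqref{eq:bound1} is stated for $R_V$ as a meromorphic family in $\lambda$ with resonances in the $\lambda$-plane, whereas we need control of $R_{\Otr}(\lambda,z)$ as a meromorphic family in $z$ with its own poles $z_j(\h,\lambda)$, and the two pole sets are related but not identical. One has to argue that, for $z\in B(0,\e_1\h)$, the map $z\mapsto R_V(\lambda)$ with $V=-z\mathbf{1}_{\Otr}$ only picks up poles corresponding to genuine resonances of the $z$-dependent operator near $\lambda$, and that the $\delta$-neighbourhood exclusion in $z$ corresponds to a $\delta/\h$-type (hence still $\delta$-type up to constants, since $\h<1$) exclusion in the relevant $\lambda$-parameter — so the exponent $\h^{-d}\log\delta^{-1}$ is preserved. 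Carrying this out carefully requires invoking the Fredholm-analytic structure behind Lemma \ref{lem:meromorphic} (the poles are zeros of a holomorphic family of Fredholm operators $\mathrm{Id} + K(\lambda,z)$, $K$ compact), together with an estimate on $\|K\|$ and its derivatives, to conclude that near each pole the resolvent blows up no worse than $\delta^{-1}$ times the between-poles bound. Once that bookkeeping is in place, \eqref{eq:bound1new} follows, and Lemma \ref{l:mainEstimate} is proved.
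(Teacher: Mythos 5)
Your treatment of \eqref{eq:bound2new} is essentially correct and matches the paper's approach (Lemma \ref{l:upperHalf}): a Green's-identity computation on $\Otr$ using the Dirichlet trace on $\Gamma_D$, the DtN condition on $\Gtr$, and the sign $\Im\langle\DtN(\lambda/\h)\phi,\phi\rangle_{\Gtr}\geq 0$ for real $\lambda$, with the $\langle z\rangle$ factor arising from the bookkeeping you describe.

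However, your argument for \eqref{eq:bound1new} has a genuine gap, and it is not a bookkeeping issue — it is precisely the obstruction the paper's parametrix construction is designed to circumvent. You propose to view $-z\mathbf 1_{\Otr}$ as a small ($O(\h)$) potential perturbation and "absorb" it via a Neumann series or second-resolvent identity, starting from \eqref{eq:bound1} and \eqref{eq:bound2}. But the unperturbed resolvent here is $R(\lambda,0)$, i.e.\ the scattering resolvent for the Dirichlet obstacle at the real energy $\lambda^2$, and in the trapping regime that is the whole problem: for obstacles supporting quasimodes, there exist resonances within $O(\h^\infty)$ of the real axis, so $\|\chi R(\lambda,0)\chi\|$ can be as large as $\exp(C\h^{-d}\log\delta^{-1})$ for the very $\lambda$ of interest. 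Multiplying that by $|z|\leq \e_1\h$ does not make the product small, so the Neumann series $\sum_n(z\mathbf 1_{\Otr}R(\lambda,0))^n$ need not converge, and the "choose $\e_1$ small relative to the implied constants" step fails. Relatedly, \eqref{eq:bound1} is valid only after excising $\delta$-balls around resonances in the $\lambda^2$-plane; since $\lambda$ is real and fixed in Lemma \ref{l:mainEstimate}, and real $\lambda^2$ can lie inside such a ball under trapping, \eqref{eq:bound1} gives you nothing at those $\lambda$. The exclusion you are allowed is in the $z$-plane (around $z_j(\h,\lambda)$), not in the $\lambda$-plane, and these are not interchangeable in the way you suggest. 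Finally, if you instead try to base the Fredholm-determinant argument (as in Lemma \ref{lem:meromorphic}) on $K=z\mathbf 1_{\Otr}R(\lambda,0)\rho$, you run into the same issue: that $K$ can have trace norm of size $\exp(O(\h^{-d}))$, which would give a doubly exponential bound, not $\exp(C\h^{-d}\log\delta^{-1})$.

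What the paper actually does in \S\ref{sec:parametrix}--\S\ref{sec:proofbounds} is construct a \emph{different} factorisation. It introduces a boundary complex absorbing operator $Q_b$ on $\Gamma_D$, forms the modified operator $\mc P_{\theta,Q}(\lambda,z)$, and proves (Lemma \ref{l:absorbingInverse}, via defect-measure propagation) that its inverse $\mc R_{\theta,Q}(\lambda,z)$ satisfies a \emph{non-trapping} estimate $O(\h^{-1})$ — uniformly, including at trapped energies. Then $R_\theta(\lambda,z)=\mc R_{\theta,Q}(\lambda,z)(I+K(\lambda,z))^{-1}\begin{pmatrix}I\\0\end{pmatrix}$ with $K(\lambda,z)=Q\mc R_{\theta,Q}(\lambda,z)$ trace class of norm $O(\h^{-d})$, and the Jensen-type estimate (Lemma \ref{lem:complex}) applied to $\det(I+K(\lambda,z))$ in the $z$-variable delivers $\exp(C\h^{-d}\log\delta^{-1})$ away from the zeros $z_j(\h,\lambda)$. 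The crucial point you miss is that the "good" operator in the factorisation must itself be polynomially bounded in $\h^{-1}$ at real $\lambda$; neither $R(\lambda,0)$ nor $R_V(\lambda)$ has this property under trapping, which is why a new parametrix had to be built rather than a perturbative transfer performed.
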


The bound \eqref{eq:bound1new} is proved by finding a parametrix for 
$-\h^2\Delta-\lambda^2 -z 1_{\Otr}$ (i.e.~an approximation to $R_\Otr(\lambda,z)$) via a boundary complex absorbing potential. While parametrices based on complex absorption are often used in scattering theory (see, e.g.,~\cite{DyZw:16, DyGa:17}~\cite[Theorem 7.4]{DyZw:19}), parametrices based on boundary complex absorption appear to be new in the literature.  One of the main features of the argument below is that it relies on a comparison of the (in principle, trapping) billiard flow with the non-trapping free flow to obtain estimates on the parametrix. A similar argument should work for boundaries in \emph{any} non-trapping background.

We also highlight that, while we consider the scattering by Dirichlet obstacles in this paper and therefore must use boundary complex absorption,  smooth compactly-supported perturbations of $-\Delta $, e.g.~metric perturbations or semiclassical Schr\"odinger operators, can be handled similarly. Indeed, for these problems, the parametrix based on boundary absorption could be replaced by one based on simpler complex absorbing potentials.

\subsection{Outline of the rest of the paper}

In \S\ref{sec:prelim} we prove Lemmas \ref{l:inverseForm} and \ref{lem:meromorphic} and then collect preliminary results about the generalized bicharacteristic flow (\S\ref{sec:flow}), the geometry of trapping (\S\ref{sec:trapping}), complex scaling (\S\ref{sec:complexscaling}), and defect measures (\S\ref{sec:defect}).
In \S\ref{sec:parametrix} we find a parametrix for $R_\Otr(\lambda,z)$ via a boundary complex absorbing potential. 
In \S\ref{sec:proofbounds} we prove Lemma \ref{l:mainEstimate}.
In \S\ref{sec:mainproof} we prove Theorems \ref{thm:main1} and \ref{thm:main2} using Lemma \ref{l:mainEstimate} and the semiclassical maximum principle.

\section{Preliminary results}\label{sec:prelim}

\subsection{Restatement of Theorems \ref{thm:main1} and \ref{thm:main2} in semiclassical notation}

\begin{definition}[Quasimodes in $\h$ notation]\label{def:quasimodesh}
A \emph{family of quasimodes of quality $\eps(\h)$}
is a sequence $\{(u_\ell,\h_\ell)\}_{\ell=1}^\infty\subset H^2(\Otr)\cap H_{0,D}^1(\Otr)\times \mathbb{R}$ such that  $\h_\ell\tendo$ as $\ell \tendi$ and there is a compact subset $\mc{K}\Subset \Omega_1$ such that, for all $\ell$, $\supp\, u_\ell \subset \mc{K}$,
\beqs
\N{(-\h^2\Delta -1) u_\ell}_{L^2(\Otr)} \leq \eps(\h_\ell) \quad\tand\quad\N{u_\ell}_{L^2(\Otr)}=1.
\eeqs
\end{definition}

Let 
\beq\label{eq:epsepsilon}
\eps(\h) := \h^2 \QMC(\h^{-1}).
\eeq
Remark \ref{r:lowerBoundQuasi} implies that we can assume that there exist $\widetilde{S}_1,\widetilde{S}_2>0$ such that
\beq\label{eq:lowerboundBurq}
\eps(\h) \geq \widetilde{S}_1 \exp( - \widetilde{S}_2/\h).
\eeq
Theorem \ref{thm:main1} is then equivalent to the following result in the sense that the following result holds if and only if Theorem \ref{thm:main1} holds with $\mu_\ell := \h_\ell^{-2}{z_\ell}$.

\begin{theorem}[Analogue of Theorem \ref{thm:main1} in $\h$ notation]\label{thm:main1h}
Let $\alpha> 3(d+1)/2$.
Suppose there exists a family of quasimodes {in the sense of Definition \ref{def:quasimodesh}} 
such that the quality $\eps(\h)$ satisfies
\beq\label{eq:epslowerbound}
\eps(\h) \ll \h^{1+\alpha}.
\eeq
Then there exists $\h_0>0$ (depending on $\alpha$) such that, if $\ell$ is such that $\h_\ell\leq \h_0$ then there exists $z_\ell\in\mathbb{C}$ and $0\neq u_\ell \in H_{0,D}^1(\Otr)$ with 
\begin{equation}
\label{e:semiEigenvalue}
(-\h_\ell^2\Delta-1 +{z_\ell})u_\ell=0\tin\Omega_{\tr},  \quad
\gamma_1^{\tr} u_\ell=\DtN(\h_\ell^{-1})(\gamma_0^{\tr} u_\ell),
\quad\tand\quad|{z_\ell}| \leq \h_\ell^{-\alpha} \eps(\h_\ell).
\end{equation}
\end{theorem}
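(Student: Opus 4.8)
The plan is to run the classical ``quasimodes to resonances'' machinery of \cite{TaZw:98,St:99}, but in the complex $z$-plane with $\lambda$ fixed (say $\lambda = 1$), using Lemma~\ref{l:mainEstimate} in place of the usual estimates \eqref{eq:bound1} and \eqref{eq:bound2}. First I would fix $\ell$ with $\h_\ell$ small, abbreviate $\h = \h_\ell$, $u = u_\ell$, $\eps = \eps(\h)$, and set $f := (-\h^2\Delta - 1)u$, so $\|f\|_{L^2(\Otr)} \le \eps$ and $\|u\|_{L^2(\Otr)} = 1$. The function $v(z) := R_\Otr(1,z)f$ is a meromorphic $L^2(\Otr)$-valued function of $z$ away from the poles $z_j(\h) := z_j(\h,1)$, and $(-\h^2\Delta - 1 + z)v(z) = f$ with the correct boundary conditions, while $u$ itself solves $(-\h^2\Delta - 1 + z)u = f + zu$. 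Hence $w(z) := u - v(z)$ satisfies the homogeneous boundary-value problem with right-hand side $zu$, i.e.\ $w(z) = z\,R_\Otr(1,z)u$ wherever $R_\Otr(1,z)$ is defined; in particular $v(z) = u - z R_\Otr(1,z)u$. The idea is that if $R_\Otr(1,z)$ had no pole in a disc $B(0,r)$ with $r$ comparable to $\h^{-\alpha}\eps$, then $v(z)$ would be holomorphic there and, by the bound we are about to extract, small; but $v(z) \to u$ as the norm-one function $u$ up to $O(\eps)$ as we approach the real axis, giving a contradiction. So the absence of a pole in the target disc is impossible, and a pole $z_\ell$ there gives an eigenvalue by the Corollary after Lemma~\ref{lem:meromorphic}, with $\mu_\ell = \h_\ell^{-2} z_\ell$.

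To make the contradiction quantitative I would use the \emph{semiclassical maximum principle} (Theorem~\ref{thm:scmp}, cited but not yet displayed in the excerpt) exactly as in \cite{TaZw:98}: on a suitable box in $z$ near the origin we have, away from $\delta$-discs about the poles, the bound $\|R_\Otr(1,z)\| \le \exp(C_1\h^{-d}\log\delta^{-1})$ from \eqref{eq:bound1new}, and on the part of the box with $\Im z > 0$ we have the much better bound $\|R_\Otr(1,z)\| \le C_2\langle z\rangle/\Im z$ from \eqref{eq:bound2new}. Feeding the holomorphic function $g(z) := \langle \indicatorR R(1,z)\indicatorE f, \phi\rangle$ (for arbitrary unit $\phi$), or more directly the resolvent applied to $f$, into the semiclassical maximum principle interpolates between these two bounds and yields, on a disc $B(0,c\h)$ minus the union of $\delta$-discs about the $z_j(\h)$, a polynomial bound $\|R_\Otr(1,z)f\|_{L^2(\Otr)} \le C\h^{-M}\|f\|$ for some fixed power $M = M(d)$ — this is where the hypothesis $\alpha > 3(d+1)/2$ and the quasimode quality $\eps \ll \h^{1+\alpha}$ enter, ensuring the right-hand side beats $1$. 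Concretely: if there were no pole $z_j(\h)$ in $B(0, \h^{-\alpha}\eps)$, we could take $\delta$ of size slightly larger than $\h^{-\alpha}\eps$, apply the maximum principle on $B(0,c\h)$, and conclude $\|v(z)\| \le \|u\| \, |z| \, \|R_\Otr(1,z)\| + \|\text{hol.\ part}\| $ is $o(1)$ near $z = 0$ on the real axis; but $\|v(0^+)\| \ge \|u\| - \|z R_\Otr(1,z)u\| \to 1$, a contradiction. Therefore a pole lies in $B(0,\h^{-\alpha}\eps)$, i.e.\ $|z_\ell| \le \h_\ell^{-\alpha}\eps(\h_\ell)$, and the associated residue produces the nonzero $u_\ell \in H^1_{0,D}(\Otr)$ solving \eqref{e:semiEigenvalue}.

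I expect the main obstacle to be the bookkeeping in the interpolation/maximum-principle step: one must choose the box in $z$ (width $\sim \h$ by \eqref{eq:bound1new}, height going down to a scale where \eqref{eq:bound2new} is usable), track how $\exp(C_1\h^{-d}\log\delta^{-1})$ converts into an algebraic power of $\h$ after the two-constants theorem, and verify that the resulting exponent is dominated by $\h^{-\alpha}$ precisely when $\alpha > 3(d+1)/2$ (the factor $3(d+1)/2$ presumably being $3/2$ times the $d+1$ that, per the Remark after Theorem~\ref{thm:main2}, comes from the trace-class norm estimates underlying \eqref{eq:bound1new}, together with the $\h^{-d}$ in the exponent). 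A secondary technical point is ensuring the pole one finds is genuinely in the \emph{relevant} disc near $0$ and not spurious — i.e.\ that $R_\Otr(1,z)$ is not entire in $z$ on that disc — which is exactly what the quasimode forces via the lower bound on $\|v(0^+)\|$; one also uses the lower bound \eqref{eq:lowerboundBurq} on $\eps(\h)$ to guarantee $\delta$ can be chosen in the regime where $\log\delta^{-1} = O(\h^{-1})$, so that $\h^{-d}\log\delta^{-1}$ is still subexponential and the maximum-principle input is meaningful. The passage to Theorem~\ref{thm:main1} itself is then immediate from the stated equivalence with Theorem~\ref{thm:main1h} and the Corollary identifying poles of $z \mapsto R_\Otr(1,z)$ with eigenvalues.
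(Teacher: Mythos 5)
Your plan is essentially the paper's proof: set $\eps_0(\h):=\h^{-\alpha}\eps(\h)$, assume for contradiction that $z\mapsto R_\Otr(1,z)$ has no pole in $B(0,\eps_0(\h_j))$, feed \eqref{eq:bound1new} (with excluded-disc radius $\eps_0/2$) and \eqref{eq:bound2new} into the semiclassical maximum principle (Theorem~\ref{thm:scmp}) with $L=d+1$, use \eqref{eq:lowerboundBurq} to ensure $\log\delta^{-1}=O(\h^{-1})$, and contradict $\|u_\ell\|_{L^2(\Otr)}=1$ via $u_\ell=R_\Otr(1,0)(-\h_\ell^2\Delta-1)u_\ell$ together with $\|(-\h_\ell^2\Delta-1)u_\ell\|_{L^2}\le\eps(\h_\ell)$; the threshold $\alpha>3(d+1)/2$ comes, exactly as you diagnosed, from the constraint $\beta(\h)^2\ge C\h^{-3L}\delta(\h)^2$. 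One slip to fix in the write-up: the claimed bound $\|v(z)\|\le\|u\|\,|z|\,\|R_\Otr(1,z)\|+\|\text{hol.\ part}\|=o(1)$ is not what the identity $v(z)=u-zR_\Otr(1,z)u$ yields (that identity gives $\|v(z)\|\ge 1-|z|\,\|R_\Otr(1,z)u\|$, a lower bound); the $o(1)$ upper bound must instead be read from the other representation $v(z)=R_\Otr(1,z)f$ together with $\|f\|\le\eps$ and the polynomial bound on $\|R_\Otr(1,\cdot)\|$ that the maximum principle delivers, which is precisely the estimate the paper evaluates at $z=0$.
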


\begin{definition}[Quasimodes with multiplicity in $\h$ notation]\label{def:quasimodes_multh}
Let $0\leq a(\h)\leq b(\h)<\infty$ be two functions of $\h$.
A \emph{family of quasimodes of quality $\eps(\h)$ and multiplicity $m(\h)$} in the window $[a(\h),b(\h)]$ is a sequence 
$\{\h_j\}_{j=1}^\infty$ 
such that $\h_j\tendo$ as $j \tendi$ and for every $j$ there exist $\{(u_{j,\ell},E_{j,\ell})\}_{\ell=1}^{m(\h_j)} \subset H^2(\Otr)\cap H^1_{0,D}(\Otr)\times [a(\h_j),b(\h_j)] $ with 
\beqs
\N{(-\h_j^2\Delta -E_{j,\ell}) u_{j,\ell}}_{L^2(\Otr)} = \eps(\h_j),\,\,\N{u_{j,\ell}}_{L^2(\Otr)}=1, \,\,\big| \langle u_{j, \ell_1}, u_{j,\ell_2}\rangle_{L^2(\Otr)}\big| \leq \h_j^{-2}\eps(\h_j ) \tfor \ell_1 \neq \ell_2,
\eeqs
and $\supp\, u_{j,\ell} \subset \mc{K}$ for all $j$ and $\ell$, where $\mc{K}\Subset \Omega_1$.
\end{definition}

With $\{z_p(\h,\lambda)\}_p$ the set of poles of $z\mapsto R_\Otr(\lambda,z)$ counting multiplicities
(with $z_p(\h, \lambda)$ depending continuously on $\lambda$ for each $p$), let
\beq\label{eq:mathcalZ}
\mc{Z}(\e_1,\e_0,a,b;\h):=\Big\{p\,:\, z_p(\h,\lambda)\in (-2\e_1,2\e_1)-\ri(0,2\e_0)\text{ for some }\lambda^2\in[a,b]\Big\};
\eeq
$|\mc{Z}|$ is therefore the counting function of the poles of $z\mapsto R_\Otr(\lambda,z)$ that enter a rectangle next to zero in $z$  as $\lambda^2$ varies from $a$ to $b$.

\begin{theorem}[Analogue of Theorem \ref{thm:main2} in $\h$ notation]\label{thm:main2h}
Let $0<a_0\leq a(\h)\leq b(\h)<b_0<\infty$ and suppose there exists a family of quasimodes with quality 
\beq\label{eq:qualitysmall}
\eps(\h)\ll \h^{(5d+3)/2}
\eeq
 and multiplicity $m(\h)$ in the window $[a(\h),b(\h)]$ (in the sense of Definition \ref{def:quasimodes_multh}). If $\eps_0(\h)$ is such that, for some $\widetilde{S}>0$, 
\beq\label{eq:conditioneps0}
\eps_0(\h) \leq \widetilde{S}\,\h^{(d+1)/2} \quad\tfa \h,\,\,\tand\quad\eps_0(\h) \gg  \h^{-2d-1}\eps(\h)\quad \tas \h\tendo,
\eeq
then there exists $\h_0>0$ such that if $\h_j \leq \h_0$, then 
$$
\Big|\mathcal{Z}\Big(\h_j^{-(d+1)/2}\e_0(\h_j)\,,\,\e_0(\h_j)\,,\,a(\h_j)\,,\,b(\h_j)\,;\,\h_j\Big)\Big|\geq m(\h_j).
$$
\end{theorem}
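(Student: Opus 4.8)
The plan is to mimic the ``quasimodes to resonances with multiplicity'' argument of \cite{St:99}, but working in the complex $z$-plane with $\lambda$ real, and using Lemma \ref{l:mainEstimate} in place of the classical bounds \eqref{eq:bound1} and \eqref{eq:bound2}. First I would reduce to showing that, for each fixed large $j$, the poles $z_p(\h_j,\lambda)$ of $z\mapsto R_{\Otr}(\lambda,z)$ that lie in a rectangle of height $\e_0(\h_j)$ and half-width $\h_j^{-(d+1)/2}\e_0(\h_j)$ next to zero, as $\lambda^2$ ranges over $[a(\h_j),b(\h_j)]$, number at least $m(\h_j)$. The starting point is that each quasimode pair $(u_{j,\ell},E_{j,\ell})$ produces, for $\lambda^2 = E_{j,\ell}$, an approximate solution of $(-\h_j^2\Delta - \lambda^2 - z)u = 0$ with $z$ forced to be of size comparable to $\h_j^2 \eps(\h_j)$ (after the usual manipulation $(-\h^2\Delta-E_{j,\ell})u_{j,\ell} = g_{j,\ell}$ with $\|g_{j,\ell}\|\le\eps(\h_j)$, so that $u_{j,\ell}$ is a genuine eigenfunction only up to error $\eps(\h_j)$). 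Feeding this into $R_{\Otr}$ and using \eqref{eq:bound2new}, one gets that $R_{\Otr}(\lambda,\cdot)$ must have a pole within distance $\sim\h_j^{-2}\langle z\rangle\eps(\h_j)$ of a point of size $\ll \h_j^2$ — so within the target rectangle once the quality is small enough; this gives \emph{one} pole per quasimode.

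The multiplicity refinement is the substantive part. The strategy, following \cite{St:99}, is: if there were fewer than $m(\h_j)$ poles of $z\mapsto R_{\Otr}(\lambda,z)$ in a slightly larger rectangle $\mc{R}$ uniformly over $\lambda^2\in[a(\h_j),b(\h_j)]$, then by the argument principle / Weierstrass factorisation one can write $R_{\Otr}(\lambda,z) = \widetilde{R}(\lambda,z)/P(\lambda,z)$ on $\mc{R}$, where $P$ is a polynomial in $z$ of degree $<m(\h_j)$ with leading coefficient one, and $\widetilde{R}$ is holomorphic in $z$ on $\mc{R}$; here \eqref{eq:bound1new} controls $\widetilde{R}$ on the boundary of $\mc{R}$ and \eqref{eq:bound2new} controls it on the part of $\partial\mc{R}$ in $\{\Im z>0\}$. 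The semiclassical maximum principle (Theorem \ref{thm:scmp}, to be invoked as stated) then propagates the good polynomial-in-$\h$ bound from the upper edge into all of $\mc{R}$, yielding $\|P(\lambda,z)R_{\Otr}(\lambda,z)\|\lesssim \h_j^{-N}$ for $z$ in a sub-rectangle. Testing this inequality against the $m(\h_j)$ quasimode vectors $u_{j,\ell}$ at the nearby points $z$ forced by each $E_{j,\ell}$ — and using the near-orthogonality $|\langle u_{j,\ell_1},u_{j,\ell_2}\rangle|\le\h_j^{-2}\eps(\h_j)$ to see that the $u_{j,\ell}$ span an $m(\h_j)$-dimensional space on which $R_{\Otr}$ acts by something close to $(E_{j,\ell}\text{-dependent})$-large — produces a contradiction, because a degree-$<m(\h_j)$ polynomial cannot simultaneously kill $m(\h_j)$ nearly-independent large modes. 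Counting correctly the factors of $\h_j$ lost (the $\h_j^{-d}\log\delta^{-1}$ in the exponent, the $\h_j^{-2}$ in the inner-product error, the $\h_j^{-(d+1)/2}$ geometry of the rectangle, and the $2d+1$ coming from the trace-class estimate behind \eqref{eq:bound1new}) is exactly what forces the hypotheses $\eps(\h)\ll\h^{(5d+3)/2}$ and $\eps_0(\h)\gg\h^{-2d-1}\eps(\h)$, and the constraint $\eps_0(\h)\le\widetilde{S}\h^{(d+1)/2}$ keeps the rectangle inside the ball $B(0,\e_1\h)$ where \eqref{eq:bound1new} is valid; tracking these is bookkeeping once the structure is in place.

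The main obstacle I anticipate is the Weierstrass/argument-principle step combined with the application of the semiclassical maximum principle: one needs the factorisation $R_{\Otr} = \widetilde{R}/P$ to hold with $\widetilde{R}$ \emph{holomorphic and quantitatively bounded} on a rectangle whose size shrinks with $\h$, which requires that the number of poles in that rectangle be controlled a priori (a circular-looking difficulty resolved by a covering/pigeonhole argument as in \cite{St:99}), and one needs the hypotheses of the semiclassical maximum principle — exponential bound in the bulk plus $1/\Im z$ bound on the real boundary — to be met with the correct uniformity in $\lambda^2\in[a(\h),b(\h)]$ (this is where $a_0\le a(\h)\le b(\h)\le b_0$ is used, to keep all constants in Lemma \ref{l:mainEstimate} uniform). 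A secondary technical point is that $R_{\Otr}$ is operator-valued, so ``number of poles'' must be interpreted via the rank of the residues (equivalently, via $\det$ of $I + $ a trace-class correction), and the polynomial $P$ is really the characteristic-polynomial-like object from the Gohberg–Sigal theory; once one commits to that framework the rest is parallel to the scalar resonance-counting argument. I would therefore structure the write-up as: (1) the one-pole-per-quasimode estimate; (2) the operator-valued factorisation on a rectangle, with the a priori pole count; (3) the semiclassical maximum principle applied to $P R_{\Otr}$; (4) testing against the quasimode family and deriving the contradiction; (5) the arithmetic of exponents verifying that \eqref{eq:qualitysmall} and \eqref{eq:conditioneps0} are exactly what is needed.
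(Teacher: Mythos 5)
Your proposal gets the right workhorse (the semiclassical maximum principle in the complex $z$-plane, applied uniformly for $\lambda^2 \in [a(\h),b(\h)]$) and the right endgame (test against the near-orthonormal quasimodes and invoke a linear-independence lemma). But the device you propose to ``cancel'' the poles is not the one the paper uses, and this is more than cosmetic. You suggest a Weierstrass/Gohberg--Sigal factorisation $R_\Otr(\lambda,z) = \widetilde R(\lambda,z)/P(\lambda,z)$ with a scalar polynomial $P$ of degree $< m(\h)$, and then apply the maximum principle to $PR_\Otr$. The paper instead forms the \emph{orthogonal projection} $\Pi(\h)$ onto the span of the finite-rank residue projections $\Pi_{z_p}$ for $p \in \mc Z$ (Lemma~\ref{lem:singular} guarantees that the singular part of $R_\Otr$ near $z_p$ lands in $\ran\Pi_{z_p}$), and then applies the maximum principle to $(I - \Pi(\h))R_\Otr(\lambda,z)$, which is genuinely holomorphic on the box. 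The projection device is what makes the operator-valued argument clean; it avoids the question of how a single scalar $P$, holomorphic in $z$ but with $\lambda$-dependent roots, could simultaneously absorb the singular behaviour at every $\lambda$, and it is also what lets the endgame run: after the max-principle bound you get $\|(I-\Pi(\h))u_\ell\|$ small, hence the $\Pi(\h)u_\ell$ are nearly orthonormal, hence (by Lemma~\ref{lem:linindep}) linearly independent, hence $\rank\Pi(\h) \ge m(\h) = $ contradiction. With a scalar $P$, testing at the $m(\h)$ different pairs $(\lambda,z)=(\sqrt{E_\ell},0)$ imposes constraints at $m(\h)$ different $\lambda$'s, not at $m(\h)$ different roots of a single polynomial in $z$, so your ``degree $<m$ cannot kill $m$ modes'' conclusion does not follow as stated.

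The other gap is the missing preparatory estimate. The paper proves, via the Weyl law for the Dirichlet Laplacian and the near-orthogonality of the quasimodes, that
\begin{equation*}
m(\h_j) \le C \h_j^{-d}
\end{equation*}
(Lemma~\ref{lem:preparation}). This bound does double duty and the proof does not close without it: (a) it bounds the number of $\delta$-balls around poles in the rectangle, so that once $\delta$ is chosen with $2C\delta\h^{-d} < \min\{\eps_0,\eps_1\}$ no chain of $\delta$-balls can connect the inner sub-rectangle $\widetilde\Omega(\h)$ to $\partial\Omega(\h)$, allowing the maximum principle to fill in the bound $\exp(C\h^{-d}\log\delta^{-1})$ on all of $\widetilde\Omega(\h)$; and (b) it supplies the $N$ in Lemma~\ref{lem:linindep}, so that the near-orthonormality error $\ll \h^{d}$ beats $1/m(\h)$. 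You gesture at a ``covering/pigeonhole argument'' for the a priori pole count but do not land on this specific mechanism, and without it the exponent arithmetic you defer at the end cannot actually be checked — the appearance of $\h^{-2d-1}$ in \eqref{eq:conditioneps0} and $\h^{(5d+3)/2}$ in \eqref{eq:qualitysmall} comes precisely from combining $m(\h)\lesssim\h^{-d}$ with the $\h^{-2}$ in the near-orthogonality, the $\h^{-(d+1)}$ from the max principle, and the $\h^{(d+1)/2}$ rectangle geometry.

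Finally, a small point: your step (1), deducing a pole near zero for each fixed quasimode directly from \eqref{eq:bound2new}, is not a consequence of that bound alone — this is really a restatement of Theorem~\ref{thm:main1h}, which itself needs the maximum principle — and it is not actually used in the paper's proof of the multiplicity statement; the projection argument subsumes it.
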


\bpf[Proof of Theorem \ref{thm:main2} from Theorem \ref{thm:main2h}]
{We first show that if there exists a family of quasimodes $u_j$ with multiplicity $m_\ell$ in the window $[k_\ell^-,k_\ell^+]$ in $k$ notation (i.e.~in the sense of Definition \ref{def:quasimodes_mult}), then there exists a family of quasimodes in $\h$ notation (in the sense of Definition \ref{def:quasimodes_multh}).}

{Without loss of generality, each $k_\ell \in [\klower,\kupper]$ for some $j$ (if necessary by adding a window with $\klower=\kupper=k_\ell$), i.e.~given $\ell$ in the index set of the quasimode, there exists $j$ such that $\ell\in W_j$. We now index the quasimode with the index $j$ describing the windows $ [\klower,\kupper]$.} Let
\beqs
\begin{gathered}
\h_j:= (\klower)^{-1}, \,\, \quad m(\h_j):=m_j,\,\, \quad a(\h_j):=1, \,\, \quad b(\h_j):=\frac{(\kupper)^2}{(\klower)^2},\,\, \\
\quad\eps(\h_j):= \h^2_j \QMC(\h_j ^{-1}), \quad\tand\quad
E_{j,\ell}:= \frac{(k_{\ell})^2}{(\klower)^2} \, \tand\, {u_{j,\ell}:= u_\ell \tfor \ell \in \mathcal{W}_j}.
\end{gathered}
\eeqs
Then, 
\beqs
\N{ (\h_j^2 \Delta + E_{j,\ell})u_{j,\ell} }_{L^2(\Otr)} =( \klower)^{-2} \N{ (\Delta + k_\ell^2) u_\ell}_{L^2(\Otr)} = (\klower)^{-2} \QMC(k_\ell) \leq (\klower)^{-2} \QMC(\klower) = \eps(h_j),
\eeqs
where we have used that $\epsilon(k)$ is a decreasing function of $k$. Therefore, we have shown that 
there exists a family of quasimodes with multiplicity $m(\h)$ in the window $[a(\h),b(\h)]$ in $\h$ notation (i.e.~in the sense of Definition \ref{def:quasimodes_multh}).

The result of Theorem \ref{thm:main2} then follows from the 
result of Theorem \ref{thm:main2h} since (a) if $\lambda^2 \in [a(\h),b(\h)]$ and $\lambda/\h=k$, then $k\in [\klower,\kupper]$, and 
(b) if 
\beqs
z\in\mathcal{Z}\Big(\h_j^{-(d+1)/2}\e_0(\h_j)\,,\,\e_0(\h_j)\,,\,a(\h_j)\,,\,b(\h_j)\,;\,\h_j\Big),
\eeqs
then 
\beqs
\mu:= \h^{-2}_j z \in \mathcal{E}\Big((\klower)^{(d+1)/2}\QMC_0(\klower)\,,\,\QMC_0(\klower)\,,\,\klower\,,\,\kupper\Big).
\eeqs
\epf

\subsection{Results about meromorphic continuation}
\label{sec:proofssolution}

\bpf[Proof of Lemma \ref{l:inverseForm}]
Once we show \eqref{eq:twoRs}, the meromorphicity of $R_{\Otr}(\lambda,z)$ in $\lambda$ follows from the corresponding result for $R_{V}(\lambda) $ \cite[Theorem 4.4]{DyZw:19}.

We first show that the appropriate extension of a solution of \eqref{e:prob} is a solution of \eqref{e:probClassical} with $V(z)=-z1_\Otr$. We then show that the appropriate restriction of the solution of  \eqref{e:probClassical} with $V(z)=-z1_\Otr$ is a solution of \eqref{e:prob}.

Given $f\in L^2(\Otr)$, suppose that $u$ solves~\eqref{e:prob}.  Then, by the definition of the operator $\DtN$, there exists a $\lambda/\h$-outgoing function $v\in H^2_{\loc}(\mathbb{R}^d\setminus \Omega_1)$ such that 
$$
(-\h^2\Delta-\lambda^2)v=0\quad \text{on } \mathbb{R}^d\setminus \overline{\Omega_{1}},\quad \tand\quad \gamma_0^\tr v= \gamma_0^\tr u,\quad  \gamma_1^\tr  v= \gamma_1^\tr  u.
$$
Therefore, 
$$
\widetilde{v}:=\indicatorE u+1^{\rm ext}_{\mathbb{R}^d\setminus \Omega_1}v
$$
is in $H^2_{\loc}(\Omega_+)$ (since both its Dirichlet and Neumann traces match across $\partial \Omega_1$) and 
\begin{align*}
(-\h^2\Delta-\lambda^2)\widetilde{v}=z1_{\Otr}\widetilde{v} +\indicatorE f\quad\text{on }\Omega_+.
\end{align*}
By the definition of $R(\lambda,z)$ as the solution of \eqref{e:probClassical2} with $V(z) =- z 1_{\Otr}$,
$$
\widetilde{v}= R(\lambda,z)\indicatorE f,\qquad\text{ which implies that }\qquad u=\indicatorR R(\lambda,z)\indicatorE f.
$$

Now suppose $\widetilde{f}\in L^2(\Omega_+)$. Then, by \eqref{eq:Rdef} and \eqref{e:probClassical},
\beq\label{eq:Monday1}
\begin{cases}
(-\h^2\Delta-\lambda^2-z1_{\Otr} ) R(\lambda,z)\widetilde{f}=\widetilde{f}&\text{ in }\Omega,\\
 R(\lambda,z)\widetilde{f}=0 &\ton \Gamma_D,\\
 R(\lambda,z)\widetilde{f}\text{ is }\lambda/\h\text{-outgoing}.
\end{cases}
\eeq
Therefore, if $\widetilde{f}=\indicatorE f$ and  $v:= R(\lambda,z)\widetilde{f}$, then $(-\h^2\Delta-\lambda^2) R(\lambda,z)\widetilde{f}=0$ in $\mathbb{R}^d\setminus \overline{\Omega_{1}}$ and $v$ is $\lambda/\h$-outgoing. This last fact implies that
\beq\label{eq:Monday2}
\gamma_1^\tr (1^{\rm res}_{\mathbb{R}^d\setminus \overline{\Omega_{\tr}}}v)=\mc{D}(\lambda/\h)\gamma_0^\tr (1^{\rm res}_{\mathbb{R}^d\setminus \overline{\Omega_{\tr}}}v).
\eeq
Since $v=R(\lambda,z)\widetilde{f}\in H^2_{\loc}(\Omega_+)$,  the Dirichlet and Neumann traces of $v$ across $\Gamma_{\tr}$ do not have jumps, so that \eqref{eq:Monday2} implies that 
\beq\label{eq:Monday3}
\gamma_1^\tr (\indicatorR v)=\mc{D}(\lambda/\h)\gamma_0^\tr (\indicatorR v).
\eeq
Then, by \eqref{eq:Monday1} and \eqref{eq:Monday3}, $u:= \indicatorR v$ solves \eqref{e:prob} and the proof is complete.
 \end{proof}

\

\begin{proof}[Proof of Lemma \ref{lem:meromorphic}]
Since
\beqs
(-\h^2 \Delta-\lambda^2 - z1_{\Otr}) R(\lambda,0) = I - z 1_{\Otr} R(\lambda,0),
\eeqs
the definition of $R (\lambda,z)$ \eqref{eq:Rdef} implies that
\beq\label{eq:R3}
R (\lambda,z) = R (\lambda,0) \big(I-z1_{\Otr} R(\lambda,0)\big)^{-1}.
\eeq
We now claim that,  for any $\rho  \in C^\infty(\overline{\Omega_+})$ with $\supp\, \rho \Subset \Rea^d$ and $\rho\equiv 1$ on $\Omega_{\tr}$
\beq\label{eq:rho}
\big(I-z1_{\Otr} R(\lambda,0)\big)^{-1}=\big(I-z1_{\Otr} R (\lambda,0)\rho\big)^{-1}\big(I+z1_{\Otr} R (\lambda,0)(1-\rho)\big).
\eeq
{Indeed, 
\beqs
I- z1_{\Otr} R(\lambda,0) = \Big( I- z1_{\Otr}R(\lambda,0)(1-\rho)\big(I- z1_{\Otr} R(\lambda,0)\rho\big)^{-1}\Big)\big(I- z1_{\Otr} R(\lambda,0)\rho\big).
\eeqs
and thus 
\beq\label{eq:rho1}
\big(I- z1_{\Otr} R(\lambda,0)\big)^{-1} = \big(I- z1_{\Otr} R(\lambda,0)\rho\big)^{-1}\Big( I- z1_{\Otr}R(\lambda,0)(1-\rho)\big(I- z1_{\Otr} R(\lambda,0)\rho\big)^{-1}\Big)^{-1}.
\eeq
Observe that since $\rho R(\lambda,0)\rho :L^2(\Omega_+)\to L^2(\Omega_+)$ is compact, $1_{\Otr} R(\lambda,0)\rho:L^2(\Omega_+)\to L^2(\Omega_+)$ is compact, and the analytic Fredholm theorem~\cite[Theorem C.8]{DyZw:19} implies that 
\begin{equation}
\label{e:mero1}
z\mapsto (I-z1_{\Otr}R(\lambda,0)\rho )^{-1}\text{ is a meromorphic family of operators for $z\in \mathbb{C}$}
\end{equation}
 with finite rank poles.

Now, since $(1-\rho)1_{\Otr}=0$, for $|z|$ small enough,
\beq\label{eq:R1}
(1-\rho)\big(I- z1_{\Otr} R(\lambda,0)\rho\big)^{-1} = (1-\rho)\sum_{j=0}^\infty (z1_{\Otr}R(\lambda,0)\rho)^k=(1-\rho).
\eeq
However, by~\eqref{e:mero1} both the left- and right-hand sides of \eqref{eq:R1} are meromorphic for $z\in \mathbb{C}$. Therefore,  \eqref{eq:R1} holds for all $z\in \mathbb{C}$ and hence
\beq\label{eq:R2}
\big(I- z1_{\Otr} R(\lambda,0)(1-\rho)\big)^{-1} = I+ z1_{\Otr} R(\lambda,0)(1-\rho).
\eeq
Using \eqref{eq:R1} and \eqref{eq:R2} in \eqref{eq:rho1}, we obtain \eqref{eq:rho}.
}
Therefore, for $\chi\equiv 1$ on $\Omega_{\tr}$ and $\rho\equiv 1$ on $\supp \chi$,~\eqref{eq:R3},~\eqref{eq:rho} and~\eqref{eq:R1} imply that
$$
\chi  R(\lambda,z)\chi =\chi R(\lambda,0)\rho\big(I-z 1_{\Otr} R(\lambda,0)\rho\big)^{-1}\chi.
$$
Using~\eqref{e:mero1} again completes the proof.
\end{proof}

\

With $z_0(\h,\lambda)$ a pole of $R_\Otr(\lambda,z)$, let
\beq\label{eq:projection1}
\Pi_{z_0(\h,\lambda)}:= -\frac{1}{2\pi \ri} \oint_{z_0(\h,\lambda)} R_\Otr(\lambda,z)\, d z\quad\tand\quad
m_R\big(z_0(\h,\lambda)\big):= \rank \Pi_{z_0(\h,\lambda)},
\eeq
where $\oint_{z_0(\h,\lambda)}$ denotes integration over a circle containing $z_0$ and no other pole of $R_\Otr(\lambda,z)$. 

The following result then holds by, e.g., \cite[Theorem C.9]{DyZw:19}.

\ble\label{lem:projection}
For $\lambda \in \Rea\setminus\{0\}$, $\Pi_{z_0(\h,\lambda)}:L^2(\Otr) \rightarrow L^2(\Otr)$ is a bounded projection with finite rank.
 \ele

The next result concerns the singular behaviour of $R_\Otr(\lambda,z)$ near its poles in $z$, and is analogous to (parts of) \cite[Theorem 4.7]{DyZw:19} concerning the singular behaviour of $R_V(\lambda)$ near its poles in $\lambda$.

\ble\label{lem:singular}
For $\lambda \in \Rea\setminus\{0\}$, if $z_0= z_0(\h,\lambda)$ and $m_R(z_0)>0$, then there exists $M_{z_0}>0$ such that 
\beqs
R_\Otr(\lambda,z) =- \sum_{\ell=1}^{M_{z_0}}  \Pi_{z_0}\frac{ 
(-\h^2 \Delta-\lambda^2 -z)^{\ell-1}}{(z-z_0)^{\ell}}
+ A(z,z_0,\lambda)
\eeqs
where $z\mapsto A(z,z_0,\lambda)$ is holomorphic near $z_0$. 
\ele

\bpf
By Lemma \ref{lem:meromorphic}, for $\lambda \in \Rea\setminus\{0\}$, $z\mapsto R_\Otr(\lambda,z)$ is a meromorphic family of operators  (in the sense of \cite[Definition C.7]{DyZw:19}) from $L^2(\Otr) \rightarrow L^2(\Otr)$ and thus there exists $M_{z_0}>0$, finite-rank operators $A_\ell(\lambda): L^2(\Otr) \rightarrow L^2(\Otr)$, $\ell=1,\ldots,M_{z_0}$, and a family of operators $z\mapsto A(z,z_0,\lambda)$ from $L^2(\Otr) \rightarrow L^2(\Otr)$, holomorphic near $z_0$, such that 
\beqs
R_{\Otr}(\lambda,z) = \sum_{\ell=1}^{M_{z_0}} \frac{A_\ell(\lambda)}{(z-z_0)^\ell} + A(z,z_0,\lambda).
\eeqs
By integrating around $z_0$ and using the residue theorem, we have $A_1 = -\Pi_{z_0}$. Then, with $\equiv$ denoting equality up to holomorphic operators,
\begin{align*}
 R_\Otr(\lambda,z)(-\h^2 \Delta - \lambda^2 -z) &\equiv \sum_{\ell=1}^{M_{z_0}} \bigg(
\frac{A_\ell(-\h^2 \Delta - \lambda^2 -z_0)}{(z-z_0)^\ell} - \frac{A_\ell}{(z-z_0)^{\ell-1}}
\bigg),\\
& 
=\sum_{\ell=1}^{M_{z_0}} \frac{ A_\ell(-\h^2 \Delta - \lambda^2 -z_0) - A_{\ell+1}}{(z-z_0)^\ell},
\end{align*}
where we define $A_{M_{z_0}+1}:=0$. Since $R_\Otr(\lambda,z)(-\h^2 \Delta -\lambda^2 - z)= I$ on $H^2(\Otr)\cap H_{0}^1(\Omega_+)$, $A_{\ell+1} = A_\ell (-\h^2 \Delta -\lambda^2-z)$, $\ell=1,\ldots, M_{z_0}$, and the result follows from density of $H^2(\Otr)\cap H_{0}^1(\Omega_+)$ in $L^2(\Otr)$.
\epf

\subsection{The semiclassical maximum principle}

The following result is the semiclassical maximum principle of \cite[Lemma 2]{TaZw:98}, \cite[Lemma 4.2]{TaZw:00} (see also \cite[Lemma 7.7]{DyZw:19}).

\begin{theorem}[Semiclassical maximum principle]
\label{thm:scmp}
Let $\cH$ be an Hilbert space and $z\mapsto Q(z,\h)\in\mathcal{L}(\cH)$
an holomorphic family of operators in a neighbourhood of
\beq\label{eq:box}
\Omega(\h):=\big(w-2\beta(\h),w+2\beta(\h)\big)+\ri\big(-\delta(\h)\h^{-L},\delta(\h)\big),
\eeq
where 
\beq\label{eq:restrict1}
0<\delta(\h)<1,\qquad \tand \quad \beta(\h)^{2}\geq C\h^{-3L}\delta(\h)^{2}
\eeq
for some $L>0$ and $C>0$. Suppose that
\begin{align}\label{eq:expbound}
\Vert Q(z,\h)\Vert_{\cH\rightarrow\cH}&\leq\exp(C\h^{-L}),\qquad z\in\Omega,\\ \label{eq:absbound}
\Vert Q(z,\h)\Vert_{\cH\rightarrow\cH}&\leq  
\frac{C}{\Im \,z},
\qquad \Im \,z>0,\quad z\in\Omega.
\end{align}
Then
\beq\label{eq:scmp1}
\Vert Q(z,\h)\Vert_{\cH\rightarrow\cH}\leq \frac{C}{\delta(\h)}
\exp(C+1)\quad \tfa z\in\big[w-\beta(\h),w+\beta(\h)\big].
\eeq
\end{theorem}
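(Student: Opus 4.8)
The plan is to reduce \eqref{eq:scmp1} to a statement about scalar holomorphic functions and then deduce it from the maximum principle for subharmonic functions, using two explicit harmonic barriers; the width hypothesis \eqref{eq:restrict1} is used exactly once, to control the harmonic mass of the two vertical sides of a rectangle.

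First I would note that $\|Q(z,\h)\|_{\cH\to\cH}=\sup\{|\langle Q(z,\h)u,v\rangle|:\|u\|_\cH=\|v\|_\cH=1\}$, and that for fixed such $u,v$ the scalar function $F(z):=\langle Q(z,\h)u,v\rangle$ is holomorphic on a neighbourhood of $\Omega(\h)$ and satisfies $|F|\le e^{C\h^{-L}}$ on $\Omega$ and $|F|\le C/\Im z$ for $\Im z>0$ by \eqref{eq:expbound}--\eqref{eq:absbound}; hence it suffices to prove $|F(z_0)|\le\tfrac{C}{\delta(\h)}e^{C+1}$ for every $z_0\in[w-\beta,w+\beta]$, uniformly in $u,v$. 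After translating we may take $w=0$. I would then fix $z_0$ and work on the rectangle $\Omega':=(\Re z_0-\beta,\Re z_0+\beta)+\ri(-\delta\h^{-L},\delta)$, which is contained in $\Omega(\h)$ and has $z_0$ on its horizontal centre line. (Minor technical point: since \eqref{eq:expbound}--\eqref{eq:absbound} are assumed only on the open box, one takes $\Omega'$ slightly smaller and lets the shrinking tend to $0$ at the end.) Assuming $F\not\equiv0$, the function $\log|F|$ is subharmonic on $\Omega'$, so the maximum principle gives
\[
\log|F(z_0)|\le\int_{\partial\Omega'}\log|F|\,d\omega_{z_0}
\le \Big(\log\tfrac{C}{\delta}\Big)\omega_{z_0}(\mathrm{top})+C\h^{-L}\big(\omega_{z_0}(\mathrm{bot})+\omega_{z_0}(\mathrm{sides})\big),
\]
where $\omega_{z_0}$ is harmonic measure in $\Omega'$ at $z_0$, $\mathrm{top}=\{\Im z=\delta\}$ (on which $|F|\le C/\delta$ by \eqref{eq:absbound}), $\mathrm{bot}=\{\Im z=-\delta\h^{-L}\}$, and $\mathrm{sides}$ are the two vertical edges (on which $|F|\le e^{C\h^{-L}}$ by \eqref{eq:expbound}).

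It then remains to bound the three harmonic measures. Trivially $\omega_{z_0}(\mathrm{top})\le1$. For the bottom, the function $z\mapsto(\delta-\Im z)/(\delta(1+\h^{-L}))$ is harmonic, nonnegative on $\overline{\Omega'}$, equals $1$ on $\mathrm{bot}$ and is $\ge0$ elsewhere on $\partial\Omega'$, so $\omega_{z_0}(\mathrm{bot})\le\delta/(\delta(1+\h^{-L}))\le\h^{L}$, hence $C\h^{-L}\omega_{z_0}(\mathrm{bot})\le C$. For the sides I would set $\kappa:=\pi\h^{L}/(3\delta)$, chosen so that $\kappa|\Im z|\le\pi/3$ on $\overline{\Omega'}$, and use the harmonic barrier
\[
b(z):=\frac{2\cosh\!\big(\kappa(\Re z-\Re z_0)\big)\cos\!\big(\kappa\Im z\big)}{\cosh(\kappa\beta)},
\]
which is harmonic, nonnegative on $\overline{\Omega'}$ (since $\cos(\kappa\Im z)\ge\tfrac12$) and $\ge1$ on $\mathrm{sides}$; hence $\omega_{z_0}(\mathrm{sides})\le b(z_0)=2/\cosh(\kappa\beta)\le4e^{-\kappa\beta}$.

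The hypothesis \eqref{eq:restrict1} enters here and only here: $\beta^{2}\ge C\h^{-3L}\delta^{2}$ gives $\kappa\beta\ge\tfrac{\pi}{3}\sqrt{C}\,\h^{-L/2}$, so $C\h^{-L}\omega_{z_0}(\mathrm{sides})\le 4C\h^{-L}\exp(-\tfrac{\pi}{3}\sqrt{C}\,\h^{-L/2})$, which has the form $4Ct^{2}e^{-\frac{\pi}{3}\sqrt C\,t}$ with $t=\h^{-L/2}\ge1$ and is therefore bounded, and $\le1$ once $C$ is large enough. Combining the three estimates yields $\log|F(z_0)|\le\log(C/\delta)+C+1$, i.e.\ $|F(z_0)|\le\tfrac{C}{\delta}e^{C+1}$, and taking the supremum over $u,v$ gives \eqref{eq:scmp1}. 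The step I expect to require the most care — and the only place the precise form of the width hypothesis is needed — is the barrier estimate for the vertical sides; the rest is a routine use of subharmonicity, the remaining effort being bookkeeping of constants (and the innocuous shrinking of $\Omega$). As an alternative to constructing $b$ by hand, one could instead quote a standard exponential-decay estimate for harmonic measure in a strip, obtained via the conformal map $z\mapsto\exp(\pi z/(\delta\h^{-L}))$.
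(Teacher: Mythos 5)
Your argument is correct and follows the same route as the paper: both reduce to the scalar holomorphic function $F(z)=\langle Q(z,\h)u,v\rangle_{\cH}$ with $\|u\|_\cH=\|v\|_\cH=1$ and then apply a maximum-principle (``three-line-in-a-rectangle'') bound on a rectangle of horizontal half-width $\beta(\h)$ centred at $z_0$, with the width hypothesis \eqref{eq:restrict1} entering exactly to kill the harmonic-measure contribution of the two vertical sides. The only difference is presentational: the paper cites the rectangle estimate from \cite[Lemma D.1]{DyZw:19} with the parameter choices recorded in its proof, whereas you re-derive that lemma inline via subharmonicity of $\log|F|$ and explicit harmonic barriers (linear for the bottom, a $\cosh$-barrier for the sides).
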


\begin{proof}[References for proof]
Let $f,g\in \cH$ with $\|f\|_{\cH}=\|g\|_{\cH}=1$, and let
\[
F(z,\h):=\big\langle Q(z+w,h)g,f\big\rangle_{\cH}.
\]
The result \eqref{eq:scmp1} follows from the ``three-line theorem in a rectangle'' (a consequence of the maximum principle) stated as \cite[Lemma D.1]{DyZw:19} applied to the holomorphic family $(F(\cdot,h))_{0<h\ll1}$
with
\begin{align*}
R=2\beta(\h),\qquad \delta_{+}=\delta(\h),\qquad \delta_{-}=\delta(\h)h^{-L},\\
M=M_{-}=\exp(C\h^{-L}),\qquad M_{+}=C\,\delta(\h)^{-1}.
\end{align*}
\end{proof}

\subsection{The generalized bicharacteristic flow}\label{sec:flow}

Recall that 
\beqs
T_{\overline{\Omega_+}}^*\mathbb{R}^d:=  \big\{(x,\xi) \in T^*\mathbb{R}^{d}, x\in \overline{\Omega_+}\big\}=
\big\{ x\in \overline{\Omega_+}, \xi \in \Rea^d\big\}
 \eeqs
 and
\beqs
S^*_{\overline{\Omega_+}}\mathbb{R}^d:= \big\{(x,\xi) \in S^*\mathbb{R}^{d}, x\in \overline{\Omega_+}\big\}=
 \big\{ x\in \overline{\Omega_+}, \xi \in \Rea^d \text{ with } |\xi|=1\big\}.
 \eeqs
We write $\varphi_t:S_{\Omega_+}^*\mathbb{R}^d\to S_{\Omega_+}^*\mathbb{R}^d$ for the generalized bicharacteristic flow associated to 
a symbol $p$ (see e.g.~\cite[\S24.3]{Ho:85}). 
Since the flow over the interior is  generated by the Hamilton vector field $H_p$,  for any symbol $b \in C^\infty_c(T^*_{\Omega_+}\Rea^d)$,
\beq\label{eq:Hpb}
\partial_t( b \circ \varphi_t)= H_p b = \{p, b\},
\eeq
where $\{\cdot,\cdot\}$ denotes the Poisson bracket; see \cite[\S2.4]{Zw:12}.

We primarily consider the case when $p$ is the semiclassical principal symbol of the Helmholtz equation, namely 
$p=|\xi|^2-1$. 
 By Hamilton's equations, away from the boundary of $\Omega_+$, the corresponding flow satisfies $\dot{x}_i= 2 \xi_i$ and $\dot{\xi}_i=0$, and thus, for $\rho=(x,\xi)$ with $x$ away from $\Gamma_D$, $\varphi_t(\rho)= x+ 2 t\xi$ for $t$ sufficiently small; 
i.e., the flow has speed two.

We let $\projx$ denote the projection operator onto the spatial variables; i.e.
\beqs
\projx: T^*_{\overline{\Omega_+}}\Rea^d \rightarrow \overline{\Omega_+}, \quad\projx((x,\xi)) = x.
\eeqs

\subsection{Geometry of trapping}\label{sec:trapping}

Let $\chi \in C^\infty(\overline{\Omega_+};[0,1])$ with $\supp \chi \Subset \Rea^d$ and $\chi\equiv 1$ near $\Omega_-$ and define $\r:T_{\Omega_+}^*\mathbb{R}^d\to \mathbb{R}$ by
$$
\r(x,\xi):=(1-\chi(x))|x|
$$
so that there is $c>0$ such that for $r_0>c$,
$$
\{x\,:\,\r>r_0\}=\mathbb{R}^d\setminus B(0,r_0).
$$
Moreover, note that $\{\r\leq c\}$ is compact for every $c$. 
Next, define the \emph{directly escaping sets},
\begin{gather*}
\mc{E}_{\pm}:=\Big\{(x,\xi)\in S^*\mathbb{R}^d\mid \r(x,\xi)\geq r_0,\quad \pm\langle x,\xi\rangle_{\mathbb{R}^d}\geq 0\Big\},\\
\mc{E}_{\pm}^o:=\Big\{(x,\xi)\in S^*\mathbb{R}^d\mid \r(x,\xi)\geq r_0,\quad \pm\langle x,\xi\rangle_{\mathbb{R}^d}> 0\Big\}.
\end{gather*}
Then, 
\begin{equation}
\label{e:directEscape1}
\rho \in \mc{E}_{\pm}\quad\text{ implies that }\quad \varphi_{\pm t}(\rho)\in \mc{E}_{\pm}\quad\tand\quad \r(\varphi_{\pm t}(\rho))\geq \sqrt{\r(\rho)^2+4t^2},\quad\text{for all }t\geq 0.
\end{equation}
Therefore, $\r(\varphi_t(\rho))\to \infty$ as $t\to \pm\infty$ and hence $\rho\in \mc{E}_\pm$ escapes forward/backward in time. This, in particular implies that
\begin{equation}
\label{e:directEscape2}
\r(\rho)\geq r_0,\,\r(\varphi_{\mp t_0}(\rho))\leq \r(\rho)\text{ for some }t_0>0\quad\Rightarrow\quad \pm\langle x(\rho),\xi(\rho)\rangle>0.
\end{equation}
We now define the \emph{outgoing tail} $\Gamma_{+}\subset S^*_\Omega\mathbb{R}^d$,
the \emph{incoming tail} $\Gamma_-\subset S^*_\Omega\mathbb{R}^d$,  and the \emph{trapped set}, $K$ by 
\begin{equation}
\label{e:trappedSets}
\Gamma_{\pm}:=\{q\in S^*_{\Omega}\mathbb{R}^d\mid \r(\varphi_t(q))\not\to \infty,\, t\to \mp\infty\},\qquad K:=\Gamma_+\cap \Gamma_-;
\end{equation}
i.e.~the outgoing tail is the set of trajectories that do not escape as $t\rightarrow -\infty$, 
the incoming tail is the set of trajectories that do not escape as $t\tendi$, 
and the trapped set is the set of trajectories that do not escape in either time direction.

We now recall some basic properties of $\Gamma_\pm$ and $K$, with these proved in a more general setting in \cite[\S6.1]{DyZw:19}.
\begin{lemma}
\label{l:trapping}

\

(i) The sets $\Gamma_{\pm},K$ are closed in $S^*_\Omega\mathbb{R}^d$ and $K\subset \{\r<r_0\}$.

(ii) Suppose that $\rho_n\in S^*_{\Omega_+}\mathbb{R}^d$ with $\rho_n\to \rho$ and there are $t_n\to \infty$ such that $ \varphi_{\pm t_n}(\rho_n)\to \rho_\infty$. Then $\rho\in \Gamma_\mp$. 
\end{lemma}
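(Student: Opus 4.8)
The plan is to isolate a single \textbf{robust escape} fact and then read off both parts of the lemma from it, together with the direct-escape estimates \eqref{e:directEscape1}--\eqref{e:directEscape2}. The fact is: \emph{if $\r(\varphi_t(\rho))\to\infty$ as $t\to+\infty$, then there is $T>0$ with $\varphi_T(\rho)\in\mc U_+$}, where
\beqs
\mc U_\pm:=\big\{(x,\xi)\in S^*\mathbb{R}^d\ :\ \r(x,\xi)>r_0,\ \pm\langle x,\xi\rangle_{\mathbb{R}^d}>0\big\},
\eeqs
and symmetrically $\varphi_{-T}(\rho)\in\mc U_-$ for some $T>0$ if $\r(\varphi_t(\rho))\to\infty$ as $t\to-\infty$. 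The point is that $\mc U_\pm$ are \emph{open} subsets of the directly-escaping sets $\mc E_\pm$. To prove it in the $+$ direction: $\r(\varphi_t(\rho))\to\infty$ forces $\r(\varphi_t(\rho))>r_0$ for all $t\geq t_1$ and some $t_1$; since $\{\r>r_0\}=\mathbb{R}^d\setminus B(0,r_0)$ and $\overline{\Omega_-}\subset B(0,r_0)$, the trajectory does not meet $\Gamma_D$ for $t\geq t_1$, hence is a free line $x(\varphi_{t_1+s}(\rho))=x_1+2s\xi_1$ with $|\xi_1|=1$, $s\geq0$; as $\tfrac{d}{ds}|x_1+2s\xi_1|^2=4(\langle x_1,\xi_1\rangle+2s)>0$ for $s$ large, any such large $s$ gives $T:=t_1+s$ with $\varphi_T(\rho)\in\mc U_+$. (Alternatively one lands in $\mc E_+^o$ via \eqref{e:directEscape2} and then flows forward by a fixed amount using \eqref{e:directEscape1} to enter the open set $\mc U_+$.)

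For (i): suppose $q\in K$ with $\r(q)\geq r_0$. According to the sign of $\langle x(q),\xi(q)\rangle$ we have $q\in\mc E_+$ or $q\in\mc E_-$, and \eqref{e:directEscape1} then gives $\r(\varphi_t(q))\to\infty$ as $t\to+\infty$, respectively as $t\to-\infty$; either conclusion contradicts $q\in\Gamma_-$, respectively $q\in\Gamma_+$. Hence $K\subset\{\r<r_0\}$. For closedness of $\Gamma_-$, take $\rho_n\to\rho$ with $\rho_n\in\Gamma_-$ and assume for contradiction $\rho\notin\Gamma_-$, i.e.\ $\r(\varphi_t(\rho))\to\infty$ as $t\to+\infty$. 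Robust escape gives $T$ with $\varphi_T(\rho)\in\mc U_+$; since $\mc U_+$ is open and $\varphi_T$ is continuous, $\varphi_T(\rho_n)\in\mc U_+\subset\mc E_+$ for all large $n$, and \eqref{e:directEscape1} then yields $\r(\varphi_{T+t}(\rho_n))\geq\sqrt{r_0^2+4t^2}\to\infty$, so $\rho_n\notin\Gamma_-$ -- a contradiction. The identical argument with $\mc U_-$ (equivalently, conjugating by the time-reversal $(x,\xi)\mapsto(x,-\xi)$, which preserves $\r$, intertwines $\varphi_t$ with $\varphi_{-t}$, and swaps $\Gamma_+$ and $\Gamma_-$) shows $\Gamma_+$ is closed, and then $K=\Gamma_+\cap\Gamma_-$ is closed.

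For (ii): consider the $+$ case, $\varphi_{t_n}(\rho_n)\to\rho_\infty$ with $t_n\to\infty$. If $\rho\notin\Gamma_-$, then as above there is $T$ with $\varphi_T(\rho)\in\mc U_+$, so by continuity of $\varphi_T$ there is $N$ with $\varphi_T(\rho_n)\in\mc U_+\subset\mc E_+$ and $t_n>T$ for all $n\geq N$. Applying \eqref{e:directEscape1} to $\varphi_T(\rho_n)\in\mc E_+$ and time $t_n-T\geq0$,
\beqs
\r\big(\varphi_{t_n}(\rho_n)\big)=\r\big(\varphi_{t_n-T}(\varphi_T(\rho_n))\big)\geq\sqrt{r_0^2+4(t_n-T)^2},
\eeqs
which tends to $\infty$; but $\varphi_{t_n}(\rho_n)\to\rho_\infty$ and $\r$ is continuous with $\r(\rho_\infty)<\infty$, a contradiction. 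Hence $\rho\in\Gamma_-$. The $-$ case is identical: if $\rho\notin\Gamma_+$ then $\varphi_{-T}(\rho)\in\mc U_-$ for some $T>0$, continuity gives $\varphi_{-T}(\rho_n)\in\mc E_-$, and the backward form of \eqref{e:directEscape1} forces $\r(\varphi_{-t_n}(\rho_n))\to\infty$, contradicting $\varphi_{-t_n}(\rho_n)\to\rho_\infty$; so $\rho\in\Gamma_+$.

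The only genuinely delicate point is the robustness used in the first step: one must flow to a time at which ``escaping'' has become the \emph{open} condition ``strictly outside $B(0,r_0)$ and strictly outgoing'', so that continuity of the fixed-time flow map $\varphi_T$ may be invoked to carry the escape from $\rho$ over to all nearby $\rho_n$. Granting this (together with the standard continuity of the generalized bicharacteristic flow for $C^\infty$ boundaries, as in \cite[\S24.3]{Ho:85}), both parts reduce to bookkeeping with \eqref{e:directEscape1}--\eqref{e:directEscape2}.
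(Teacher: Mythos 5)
Your proof is correct and follows essentially the same route as the paper: both arguments flow to a time at which ``directly escaping'' becomes an open condition, then use continuity of the fixed-time flow map together with \eqref{e:directEscape1}--\eqref{e:directEscape2} to transfer escape to nearby points. Your small refinement of working with the genuinely open set $\mc U_\pm$ (strict $\r>r_0$) rather than $\mc E_\pm^o$ is a bit cleaner than the paper, which achieves the same thing by choosing $t_2$ with $\r(\varphi_{t_2}(\rho_0))$ strictly large, but the content is identical.
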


\begin{proof}
\noindent (i) 
We show that $\Gamma_-$ is closed in $S_\Omega^*\mathbb{R}^d.$ Suppose that $\rho_0\in S_\Omega^*\mathbb{R}^d\setminus \Gamma_-$. Then $\r(\varphi_t(\rho_0))\to \infty$ as $t\to \infty$.  In particular, there are $0<t_1<t_2$ such that $\r(\varphi_{t_2}(\rho_0))\geq r_0$ and $\r(\varphi_{t_1}(\rho_0))\leq \r(\varphi_{t_2}(\rho_0))$. So, applying~\eqref{e:directEscape2} with $\rho=\varphi_{t_2}(\rho_0)$, we have $\varphi_{t_2}(\rho_0)\in\mc{E}_+^o$. Since $\mc{E}_{+}^o$ is open and $\varphi_{t_2}$ is continuous we have $\varphi_{t_2}(\rho)\in\mc{E}_+^o$ for all $\rho$ sufficiently close to $\rho_0$ and hence, by~\eqref{e:directEscape1}, $\rho\notin \Gamma_-$. 
Therefore $\Gamma_-$ is closed. By an identical argument $\Gamma_+$ and hence $\Gamma_-\cap\Gamma_+$ are closed.

Now, we show that $K\subset \{\r <r_0\}$. Note that $S^*_\Omega\mathbb{R}^d\cap \{\r\geq r_0\}\subset \mc{E}_+\cup\mc{E}_-$. But, $\mc{E}_+\cap \Gamma_-=\emptyset$ and $\mc{E}_-\cap \Gamma_+=\emptyset$ and hence $S^*_\Omega\mathbb{R}^d\cap \{\r\geq r_0\}\cap \Gamma_+\cap\Gamma_-=\emptyset$ as claimed.
\medskip

(ii) We {prove the result for} $t_n\to \infty$; the proof of the other case is similar. 
{Seeking a contradiction, assume that} $\rho\notin \Gamma_-$. Then there exists $T>0$ such that $\r (\varphi_T(\rho))\in \mathcal{E}^o_{{+}}$ and hence, since $\varphi_T$ is continuous, and $ \mathcal{E}^o_{+}$ is open, for $n$ large enough, $\varphi_{T}(\rho_n)\in  \mathcal{E}^o_{+}$. But then, by~\eqref{e:directEscape1} and~\eqref{e:directEscape2} for $t\geq T$ 
$\r(\varphi_{t}(\rho_n))\geq \sqrt{r_0^2+4(t-T)^2}$. In particular, for $n$ large enough, 
$$
\r(\varphi_{T_n}(\rho_n))\geq  \sqrt{r_0^2+4(T_n-T)^2}\to \infty
$$
which contradicts the fact that $\r(\varphi_{T_n}(\rho_n))\to \rho_\infty$.
\end{proof}

\subsection{Complex scaling}\label{sec:complexscaling}
We now review the method of complex scaling following \cite[\S4.5]{DyZw:19}. We first fix a small angle of scaling, $\theta>0$, and the radius, $r_1>r_0$, where the scaling starts; without loss of generality, we assume that $\Omega_1\Subset \{x : \r \leq r_1\}$. Let $f_\theta \in C^\infty([0,\infty)$ satisfy
\beqs
\begin{gathered}f_\theta(r)\equiv 0,\quad r\leq r_1;\qquad f_\theta(r)=r\tan \theta,\quad r\geq 2r_1;\\
f_\theta'(r)\geq 0,\quad r\geq 0;\qquad \{f_\theta'(r)=0\}=\{f_\theta(r)=0\}.
\end{gathered}
\eeqs
Then, consider the totally real submanifold (see \cite[Definition 4.28]{DyZw:19})
$$
\Gamma_\theta:=\left\{ x+ \ri f_\theta(|x|)\frac{x}{|x|}\,:\, x\in \mathbb{R}^d\right\}\subset \mathbb{C}^d
$$
and note that we identify $\Omega_-$ with its image on $\Gamma_\theta$. We define the complex scaled operator $P_\theta$ on $\Omega$ by the Dirichlet realization of
$$
P_\theta:=\Big(\frac{1}{1+\ri f_\theta'(r)}\h D_r\Big)^2-\frac{(d-1)\ri}{(r+\ri f_\theta(r))(1+\ri f_\theta'(r))}\h^2D_r-\frac{\h^2\Delta_\phi}{(r+\ri f_\theta(r))^2},\qquad \{\r\geq r_0\}.
$$
where $\Delta_\phi$ denotes the Laplacian on the round sphere $S^{d-1}$. Note that $P_\theta$ is a semiclassical differential operator of second order such that on $r\leq r_1$, $P_\theta=-\h^2\Delta$ with principal symbol, $p_\theta$, satisfying
$p_\theta(x,\xi)=|\xi|^2$ on $\{\r\leq r_1\},$ and in polar coordinates $x=r\phi$,
\beq\label{eq:symbolPtheta}
p_\theta(r,\phi,\xi_r,\xi_\phi)=\frac{\xi_r^2}{(1+\ri f_\theta'(r))^2}+\frac{|\xi_\phi|^2}{(r+\ri f_\theta(r))^2}.
\eeq
Now, by e.g.~\cite[Theorems 4.36,4.38]{DyZw:19}, for $\Im (\re^{\ri \theta}\lambda)>0$, 
\begin{equation}
\label{e:Fred1}
P_\theta-\lambda^2:H^2(\Omega_+)\cap H_0^1(\Omega_+) \to L^2(\Omega_+)\text{ is a Fredholm operator of index zero}.
\end{equation}
In particular, for $V\in L^\infty(\mathbb{R}^d)$, $\supp V\subset \{r< r_1\}$, this implies that 
\begin{equation}
\label{e:Fred2}
 P_\theta-\lambda^2+V:H^2(\Omega_+)\cap H_0^1(\Omega_+)\to L^2(\Omega_+)\text{ is a Fredholm operator of index zero}.
\end{equation}
Moreover, by \cite[Theorem 4.37]{DyZw:19}, $(P_\theta-\lambda^2+V)^{-1}$ has the same poles as $R_{V}(\lambda) $ and, for
$\chi\in C_c^\infty(\{x : \r\le r_1\})$ with $\supp \chi \Subset \Rea^d$,
\begin{equation}
\label{e:resolveAgree}
\chi(P_\theta-\lambda^2+V)^{-1}\chi= \chi R_{V}(\lambda) \chi.
\end{equation}

\subsection{Semiclassical pseudodifferential operators}

For simplicity of exposition, we begin by discussing semiclassical pseudodifferential operators on $\Rea^d$, and then 
outline below how to extend the results from $\Rea^d$ to a manifold $\Gamma$ (with these results then applied with $\Gamma=\Gamma_D$ or $\Gamma=\Gamma_\tr$).

A symbol is a function on $T^*\Rea^d:=\Rea^d \times (\Rea^d)^*$ that is also allowed to depend on $\h $, and thus can be considered as an $\h $-dependent family of functions.
Such a family $a=(a_\h )_{0<\h \leq\h _0}$, with $a_\h  \in C^\infty({T^*\mathbb R^d})$, 
is a \emph{symbol
of order $m$}, written as $a\in S^m(\Rea^d)$, if for any multiindices $\alpha, \beta$
\beq\label{eq:Sm}
| \partial_x^\alpha \partial^\beta_\xi a_\h (x,\xi) | \leq C_{\alpha, \beta} 
\langle \xi\rangle^{m -|\beta|}
\quad\tfa (x,\xi) \in T^* \Rea^d \text{ and for all } 0<\h \leq \h _0,
\eeq
where $\langle\xi\rangle:= (1+ |\xi|^2)^{1/2}$ and $C_{\alpha, \beta}$ does not depend on $\h $; see \cite[p.~207]{Zw:12}, \cite[\S E.1.2]{DyZw:19}. 

For $a \in S^m$, we define the \emph{semiclassical quantisation} of $a$, denoted by $\Oph(a):\mathcal{S}(\mathbb{R}^d)\to \mathcal{S}(\mathbb{R}^d)$, by  
\beq \label{eq:quant}
\Oph (a) v(x) := (2\pi \h )^{-d} \int_{\Rea^d} \int_{\Rea^d} 
\exp\big(\ri (x-y)\cdot\xi/\h \big)\,
a(x,\xi) v(y) \,d y  d \xi;
\eeq
\cite[\S4.1]{Zw:12} \cite[\S E.1 (in particular Page 543)]{DyZw:19}. The integral in \eqref{eq:quant} need not converge, and can be understood \emph{either} as an oscillatory integral in the sense of \cite[\S3.6]{Zw:12}, \cite[\S7.8]{Ho:83}, \emph{or} as an iterated integral, with the $y$ integration performed first; see \cite[Page 543]{DyZw:19}.

Conversely, if $A$ can be written in the form above, i.\,e.\ $A =\Oph(a)$ 
with $a\in S^m$, we say that $A$ is a \emph{semiclassical pseudo-differential operator of order $m$} and
we write $A \in \Psi_{\h }^m$. We use the notation $a \in \h ^l S^m$  if $\h ^{-l} a \in S^m$; similarly 
$A \in \h ^l \Psi_\h ^m$ if 
$\h ^{-l}A \in \Psi_\h ^m$. We write $\Psi_\h^{-\infty} = \cap_m \Psi_\h^{-m}$.


Let the quotient space $ S^m/\h  S^{m-1}$ be defined by identifying elements 
of  $S^m$ that differ only by an element of $\h  S^{m-1}$. 
For any $m$, there is a linear, surjective map
$$
\sigma^m_{\h }:\Psi_\h  ^m \to S^m/\h  S^{m-1},
$$
called the \emph{principal symbol map}, 
such that, for $a\in S^m$,
\beq\label{eq:symbolone}
\sigma_\h ^m\big(\Op_\h (a)\big) = a \quad\text{ mod } \h  S^{m-1};
\eeq
see \cite[Page 213]{Zw:12}, \cite[Proposition E.14]{DyZw:19} (observe that \eqref{eq:symbolone} implies that 
$\operatorname{ker}(\sigma^m_{\h }) = \h \Psi_\h  ^{m-1}$).
When applying the map $\sigma^m_{\h }$ to 
elements of $\Psi^m_\h $, we denote it by $\sigma_{\h }$ (i.e.~we omit the $m$ dependence) and we use $\sigma_{\h }(A)$ to denote one of the representatives
in $S^m$ (with the results we use then independent of the choice of representative).
Key properties of the principal symbol that we use below is that 
\beq\label{eq:thething}
\sigma_{\h }(AB)=\sigma_{\h }(A)\sigma_{\h }(B) \quad\tand\quad
 \h^{-1} \sigma_\h\big(\big[\operatorname{Op}_\h(a),\operatorname{Op}_\h(b)\big]\big) = -\ri \{a,b\},
\eeq
where (as in \S\ref{sec:flow}) $\{\cdot,\cdot\}$ denotes the Poisson bracket; see 
\cite[Proposition E.17]{DyZw:19} and \cite[Equation E.1.44]{DyZw:19}, \cite[Page 68]{Zw:12}.              

While the definitions above are written for operators on $\Rea^d$, semiclassical pseudodifferential operators and all of their properties above have analogues on compact manifolds  (see e.g.~\cite[\S14.2]{Zw:12},~\cite[\S E.1.7]{DyZw:19}). Roughly speaking, the class of semiclassical pseudodifferential operators of order $m$ on a compact manifold $\Gamma$, $\Psi_\h^m(\Gamma)$ are operators that, in any local coordinate chart, have kernels of the form~\eqref{eq:quant} where the function $a\in S^m$ modulo a remainder operator $R$ that has the property
\begin{equation}
\label{e:remainder}
\|R\|_{H_\h ^{-N}\to H_{\h }^N}\leq C_{N} \h ^N.
\end{equation}
We say that an operator $R$ satisfying~\eqref{e:remainder} is $O(\h ^\infty)_{\Psi_\h^{-\infty}}$.

Semiclassical pseudodifferential operators on manifolds continue to have a natural principal symbol map
$$
\sigma_{\h }:\Psi_\h^m\to S^m(T^*\Gamma)/\h  S^{m-1}(T^*\Gamma)
$$
where now $S^m(T^*\Gamma)$ is a class of functions on $T^*\Gamma$, the cotangent bundle of $\Gamma$ which satisfy the estimates~\eqref{eq:Sm}. Furthermore, \eqref{eq:thething} holds as before.

Finally, there is a noncanonical quantisation map $\Op_{\h }:S^m(T^*\Gamma)\to \Psi_\h^m(\Gamma)$ that satisfies
$$
\sigma_\h  (\Op_{\h }(a))=a.
$$
and for all $A\in \Psi_\h^m(\Gamma)$, there is $a\in S^m(T^*\Gamma)$ such that 
$$
A=\Op_{\h }(a)+O(\h ^\infty)_{\Psi_\h^{-\infty}}.
$$

\subsection{Defect measures}\label{sec:defect}
We say that a sequence $\{u_{\h_n}\}_{n=1}^\infty$ with $\|u_{h_n}\|_{L^2(\Rea^d)}\leq C$ for all $n$ (with $C$ independent of $n$)
has \emph{defect measure $\mu$} if for all $a\in C_c^\infty(T^*\mathbb{R}^d)$, 
$$
\big\langle \operatorname{Op_{\h_{n}}}(a) u_{\h_{n}},u_{\h_{n}}\big\rangle_{L^2(\mathbb{R}^d)}\to \int a\,d\mu,
$$
where $\operatorname{Op}_{\h}(a)$ is defined by \eqref{eq:quant}.
By, e.g., \cite[Theorem 5.2]{Zw:12}, $\mu$ is a positive Radon measure on $T^*\mathbb{R}^d$. We say that $u_{\h_n}$ and $f_{\h_n}$ have joint defect measure $\mu^j$ if 
\beq\label{eq:joint}
\big\langle \operatorname{Op_{\h_{n}}}(a) {u_{\h_n}},{f_{\h_n}}\big\rangle_{L^2(\mathbb{R}^d)}\to \int a\,d\mu^j.
\eeq

We usually suppress the $n$ in the notation and instead write that $u_{\h}$ has defect measure $\mu$ and $u_{\h}$ and $f_\h$ have joint defect measure $\mu^j$.

\begin{lemma}\mythmname{\cite[Theorem 5.3]{Zw:12}}
\label{l:ellipticDefect}
Let $\newP \in \Psi_\h^m(\mathbb{R}^d)$ and suppose that $u_\h$ has defect measure $\mu$ and satisfies
$$
\N{\newP  u_\h}_{L^2(\Rea^d)}=o(1).
$$
Then,
$
\supp \mu\subset \{\sigma_\h(\newP )=0\},
$
where $\sigma_\h(\newP )$ is the semiclassical principal symbol of $\newP $.
\end{lemma}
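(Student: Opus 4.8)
The plan is to prove the equivalent assertion that $\int a\,d\mu=0$ for every $a\in C_c^\infty(T^*\mathbb{R}^d)$ whose support is disjoint from $\{\sigma_\h(\newP)=0\}$. This suffices because, by \cite[Theorem 5.2]{Zw:12}, $\mu$ is a \emph{positive} Radon measure: once its integral against every nonnegative test function supported in the open set $\{\sigma_\h(\newP)\neq0\}$ vanishes, $\mu$ gives no mass to that set, i.e.\ $\supp\mu\subset\{\sigma_\h(\newP)=0\}$. Fix such an $a$; on $\supp a$ the symbol $\sigma_\h(\newP)$ is bounded below in modulus, uniformly in $\h$.

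First I would build an elliptic parametrix for $\newP$ microlocally over $\supp a$: choose $\chi\in C_c^\infty(T^*\mathbb{R}^d)$ with $\chi\equiv1$ on a neighbourhood of $\supp a$ and $\supp\chi\cap\{\sigma_\h(\newP)=0\}=\emptyset$, put $b:=\chi/\sigma_\h(\newP)\in S^{-m}$ and $B:=\Op_\h(b)\in\Psi_\h^{-m}$. By the symbol calculus \eqref{eq:thething}, $\sigma_\h(B\newP)=b\,\sigma_\h(\newP)=\chi$, so $B\newP=\Op_\h(\chi)+\h E$ with $E\in\Psi_\h^{-1}$; and since $\chi\equiv1$ near $\supp a$, every term of order $\h$ or higher in the composition $\Op_\h(a)\Op_\h(\chi)$ has symbol supported where both $a$ and a derivative of $\chi$ are nonzero, hence on the empty set, so that $\Op_\h(a)\Op_\h(\chi)=\Op_\h(a)+O(\h^\infty)_{\Psi_\h^{-\infty}}$. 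Combining these identities gives
\[
\Op_\h(a)=\Op_\h(a)\,B\,\newP-\h\,\Op_\h(a)\,E+O(\h^\infty)_{\Psi_\h^{-\infty}}.
\]

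Next I would pair this identity with $u_\h$. Since $\Op_\h(a)B\in\Psi_\h^{-m}\subset\Psi_\h^0$ and $\Op_\h(a)E\in\Psi_\h^{-1}\subset\Psi_\h^0$, both are bounded on $L^2(\mathbb{R}^d)$ uniformly in $\h$ (Calder\'on--Vaillancourt); hence, using $\|u_\h\|_{L^2(\mathbb{R}^d)}\leq C$ and the hypothesis $\|\newP u_\h\|_{L^2(\mathbb{R}^d)}=o(1)$,
\[
\big|\langle\Op_\h(a)u_\h,u_\h\rangle_{L^2(\mathbb{R}^d)}\big|\leq C\,\|\newP u_\h\|_{L^2(\mathbb{R}^d)}\,\|u_\h\|_{L^2(\mathbb{R}^d)}+O(\h)+O(\h^\infty)=o(1).
\]
On the other hand, by definition of the defect measure $\langle\Op_\h(a)u_\h,u_\h\rangle_{L^2(\mathbb{R}^d)}\to\int a\,d\mu$, so $\int a\,d\mu=0$, as required. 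This is the standard elliptic-regularity argument for defect measures, cf.\ \cite[Theorem 5.3]{Zw:12}.

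The argument is not conceptually hard; the work is in the bookkeeping, namely checking that each pseudodifferential factor eventually acting on $u_\h$ lies in $\Psi_\h^0$ (so the uniform-in-$\h$ $L^2$ bounds apply) and that the parametrix errors are genuinely of order $\h$ or $O(\h^\infty)$. The one genuinely non-formal ingredient is the passage from "$\int a\,d\mu=0$ for all $a$ supported off the characteristic set" to "$\supp\mu\subset\{\sigma_\h(\newP)=0\}$", which relies on the positivity of $\mu$.
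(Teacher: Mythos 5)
The paper does not prove this lemma; it cites it directly from \cite[Theorem 5.3]{Zw:12}, where the argument is exactly the elliptic-parametrix construction you give (cut off near $\supp a$, divide by the symbol, recover $\Op_\h(a)$ from $\Op_\h(a)B\newP$ up to $O(\h)$ and $O(\h^\infty)$ errors, then pair with $u_\h$ and pass to the limit). Your proposal is correct and matches that standard proof.
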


The following lemma is the defect-measure analogue of the propagation of singularities result \cite[Theorem E.47]{DyZw:19}.

\begin{lemma}
\label{l:propagateDefect}
Let $\newP \in \Psi_\h^m(\mathbb{R}^d)$ with $\Im \sigma_\h( \newP) \leq 0$. There exists $C>0$ such that the following holds: suppose that $u_\h$ has defect measure $\mu$ and satisfies
$$
\newP  u_\h=\h f_\h,
$$
where $\|f_\h\|_{L^2(\Rea^d)}\leq C_1$ and $u_\h$ and $ f_\h$ have joint defect measure $\mu^j$.
Then, for all  real valued $a\in C_c^\infty(T^*\mathbb{R}^d)$,
$$
\mu(H_{\Re \sigma_\h(\newP)} a^2)\geq -2\Im \mu^j(a^2) - C \mu\big(\langle\xi\rangle^{m-1} a^2\big).
$$
\end{lemma}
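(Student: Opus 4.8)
\emph{Proof proposal.} The plan is to run a positive-commutator (propagation-of-singularities) argument in the spirit of \cite[Theorem~E.47]{DyZw:19}, carried out at the level of defect measures. Fix the real-valued $a\in C_c^\infty(T^*\mathbb{R}^d)$, set $B:=\Op_\h(a)$ and $A:=B^*B\in\Psi_\h^{-\infty}$; then $A=A^*$, $\sigma_\h(A)=|a|^2=a^2$, and, since $a$ has compact microsupport, $Bu_\h$ is bounded in $H_\h^N$ for every $N$, so every pairing below makes sense even though $u_\h$ is only bounded in $L^2$. Write $\newP=\newP_1+\ri\newP_2$ with $\newP_1:=\tfrac12(\newP+\newP^*)$ and $\newP_2:=\tfrac1{2\ri}(\newP-\newP^*)$ self-adjoint, so that $\sigma_\h(\newP_1)=\Re\sigma_\h(\newP)=:p_1$ and $\sigma_\h(\newP_2)=\Im\sigma_\h(\newP)=:p_2\leq0$.

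First I would evaluate $\h^{-1}\langle(A\newP-\newP^*A)u_\h,u_\h\rangle$ in two ways. Using $\newP u_\h=\h f_\h$ and $A=A^*$ it equals $-2\ri\,\Im\langle Au_\h,f_\h\rangle$, and since $A=\Op_\h(a^2)+O(\h)_{\Psi_\h^{-\infty}}$, the joint-defect-measure hypothesis \eqref{eq:joint} gives $\langle Au_\h,f_\h\rangle\to\mu^j(a^2)$. On the other hand $A\newP-\newP^*A=[A,\newP_1]+\ri(A\newP_2+\newP_2A)$, and \eqref{eq:thething} gives $\h^{-1}[A,\newP_1]=\ri\,\Op_\h(H_{p_1}a^2)+O(\h)_{\Psi_\h^{-\infty}}$ with $H_{p_1}a^2\in C_c^\infty(T^*\mathbb{R}^d)$, so $\h^{-1}\langle[A,\newP_1]u_\h,u_\h\rangle\to\ri\,\mu(H_{\Re\sigma_\h(\newP)}a^2)$. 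Since the exact identity holds for each $\h$ in the sequence and two of the three terms converge, the limit $L:=\lim_{\h\to0}\h^{-1}\langle(A\newP_2+\newP_2A)u_\h,u_\h\rangle$ exists and (dividing the $\ri$-identity by $\ri$) $\mu(H_{\Re\sigma_\h(\newP)}a^2)=-2\Im\mu^j(a^2)-L$. It then remains to prove $L\leq C\,\mu(\langle\xi\rangle^{m-1}a^2)$.

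For that bound I would use the exact identity (valid because $\newP_2=\newP_2^*$ and $A=B^*B$)
\[
\langle(A\newP_2+\newP_2A)u_\h,u_\h\rangle
=2\langle\newP_2Bu_\h,Bu_\h\rangle+2\Re\langle[B,\newP_2]u_\h,Bu_\h\rangle .
\]
The cross term is $O(\h^2)$: by \eqref{eq:thething} and the composition calculus, $\h^{-1}B^*[B,\newP_2]=-\ri\,\Op_\h(\bar a\{a,p_2\})+O(\h)$ has a real principal symbol ($a$ and $p_2$ being real), so $\langle[B,\newP_2]u_\h,Bu_\h\rangle=\langle B^*[B,\newP_2]u_\h,u_\h\rangle$ is purely imaginary up to $O(\h^2)$. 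For the main term, $-\newP_2\in\Psi_\h^m$ has principal symbol $-p_2\geq0$, so the sharp G\aa rding inequality \cite{Zw:12} gives $\langle\newP_2v,v\rangle\leq C\h\,\N{v}_{H_\h^{(m-1)/2}}^2$ with $C$ depending only on $\newP$, $m$ and $d$; taking $v=Bu_\h$ this yields $\h^{-1}\langle(A\newP_2+\newP_2A)u_\h,u_\h\rangle\leq 2C\,\N{Bu_\h}_{H_\h^{(m-1)/2}}^2+O(\h)$. Finally $\N{Bu_\h}_{H_\h^{(m-1)/2}}^2=\langle B^*\langle\h D\rangle^{m-1}Bu_\h,u_\h\rangle\to\mu(\langle\xi\rangle^{m-1}a^2)$, because $B^*\langle\h D\rangle^{m-1}B\in\Psi_\h^{-\infty}$ has principal symbol $\langle\xi\rangle^{m-1}a^2\in C_c^\infty$. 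Hence $L\leq2C\,\mu(\langle\xi\rangle^{m-1}a^2)$, and with Step~1 and a relabelling of $C$ the lemma follows.

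I expect the main obstacle to be this last step. After dividing by $\h$, the operator $A\newP_2+\newP_2A$ is only $O(1)$ --- its principal symbol $2a^2p_2$ need not vanish identically off $\supp\mu$ --- so one cannot pass to the limit directly in $\Op_\h(2a^2p_2/\h)$; rewriting through $B^*B$ and invoking sharp G\aa rding is exactly what converts the sign hypothesis $\Im\sigma_\h(\newP)\leq0$ into a one-sided estimate, and one must check that the commutator cross term genuinely enters at order $\h^2$ (hence is negligible after dividing by $\h$) rather than at order $\h$. The remaining ingredients --- the two-way evaluation of the commutator, and the bookkeeping observation that every operator paired against $u_\h$ above is smoothing, so that no a priori $H_\h^m$-bound on $u_\h$ is needed --- are routine.
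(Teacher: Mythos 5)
Your proof is correct and takes essentially the same route as the paper's: both compute the commutator pairing two ways, isolate the Hamiltonian term via \eqref{eq:thething}, commute the absorbing term through to get $\langle\Im\newP\,\Oph(a)u_\h,\Oph(a)u_\h\rangle$ plus an $O(\h^2)$ cross term, and finish with sharp G\aa rding using $\Im\sigma_\h(\newP)\leq 0$. Working with $\h^{-1}\langle(A\newP-\newP^*A)u_\h,u_\h\rangle$ and dividing by $\ri$, rather than taking $\Im\langle A^*Au_\h,\newP u_\h\rangle$ as the paper does, is only a cosmetic rearrangement of the same computation.
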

\begin{proof}
Let $A=\Oph(a)$. Since $\sigma_\h(A^*)=a$ (by 
 \cite[Equation E.1.45]{DyZw:19})
and thus
$\sigma_\h(A^*A) =a^2$ (by \cite[Equation E.1.43]{DyZw:19}),
by the definition of the joint measure \eqref{eq:joint}, 
\beq
\label{e:joint}
2\h^{-1}\Im \big\langle A^*A{u_\h, \newP u_\h}\big\rangle=2\Im \mu^j(a^2)+o(1), 
\eeq
and, by \eqref{eq:thething} and \eqref{eq:Hpb},
\beqs
\h^{-1} \Im \big\langle [A^*A,\Re \newP ]u_\h,u_\h\big\rangle= \mu(H_{\Re \sigma_\h(\newP)}a^2).
\eeqs
Since $2 \Im z = \Im (z- \overline{z})$ and $\newP = \Re \newP  + \ri \Im \newP $ with $\Re \newP $ and $\Im \newP $ both self-adjoint,
\begin{align}\nonumber
 {-}2\h^{-1}&\Im\big\langle A^*A {u_\h,\newP  u_\h}\big\rangle,\\ \nonumber
&=\h^{-1} \Im\Big(\big\langle \newP  u_\h, A^*A u_\h\big\rangle-\big\langle A^*A u_\h,\newP  u_\h\big\rangle\Big),\\ \nonumber
&=\h^{-1} \Im\Big(\big\langle (A^*A\Re \newP -\Re \newP A^*A) u_\h, u_\h\big\rangle+\ri\big\langle( A^*A\Im \newP  +\Im \newP A^*A) u_\h, u_\h\big\rangle\Big),\\ \nonumber
&=\h^{-1} \Im\big\langle (A^*A\Re \newP -\Re \newP A^*A) u_\h, u_\h\big\rangle+2 \h^{-1}\Re\big\langle A^*A\Im \newP  u_\h, u_\h\big\rangle),\\ \nonumber
&= \mu(H_{\Re \sigma_\h(\newP)} a^2) +o(1) +2\h^{-1}\Re \big\langle \Im \newP  Au_\h,Au_\h\big\rangle +2\h^{-1}\Re \big\langle A^*[A,\Im \newP ]u_\h,u_\h\big\rangle,\\
&\leq \mu(H_{\Re \sigma_\h(\newP)}a^2)+o(1)+2\h^{-1}\Re \big\langle A^*[A,\Im \newP ]u_\h,u_\h\big\rangle + C \| Au\|^2_{H^{(m-1)/2}},
\label{e:commutator}
\end{align}
where the last line follows from the sharp Garding inequality (see, e.g., \cite[Proposition E.32]{DyZw:19}) and the fact that $\Im\sigma_\h( \newP) \leq 0$.
By \eqref{eq:thething},
$$
\Re \h^{-1}\sigma_\h \big(A^*[A,\Im \newP ]\big)=\Re \big(-\ri a\{a,\Im \sigma_\h(\newP)\}\big)=0,
$$
and therefore, since the kernel of $\sigma_\h: \Psi_\h^{-\infty}\to S^{-\infty}/\h S^{-\infty}$ is $\h\Psi_\h^{-\infty}$, $\h^{-1}\Re A^*[A,\Im \newP]\in \h\Psi_\h^{-\infty}$ and, in particular,
\beq\label{e:commutator2}
\Re \langle A^*[A,\Im \newP ]u_\h,u_\h\rangle=\cO(\h^2).
\eeq
The lemma follows from combining \eqref{e:commutator} with \eqref{e:commutator2} and \eqref{e:joint}, and sending $\h\tendo$.
\end{proof}

\begin{corollary}\label{cor:forwards}
Let $\Xi\geq 0$ and suppose the assumptions of Lemma \ref{l:propagateDefect} hold and, in addition, $\mu^j=0$. Then, with $\varphi_t$ the bicharacteristic flow corresponding to the symbol $\Re \sigma_\h(\newP)$, for any $B\subset T^*\Rea^d\cap \{|\xi|\leq\Xi\}$,
\beq\label{eq:forwards}
\mu\big( \varphi_t(B)\big) \leq e^{Ct\langle \Xi\rangle^{m-1}}\mu(B) \quad\tfor t\geq 0.
\eeq
\end{corollary}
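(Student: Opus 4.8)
The plan is to read the claimed bound as a Gr\"onwall estimate for the defect measure transported along the flow, with Lemma~\ref{l:propagateDefect} (applied with $\mu^j=0$) playing the role of a differential inequality. Write $p:=\Re\sigma_\h(\newP)$, so that $\varphi_t$ is the Hamilton flow of $p$ (complete in the situations where this corollary is used, so $\varphi_{-t}$ is a diffeomorphism of $T^*\Rea^d$ for every $t\ge0$). Fix a real-valued $a\in C_c^\infty(T^*\Rea^d)$ with $0\le a\le1$ and $\supp a\subset\{|\xi|\le\Xi\}$, and set
\[
g(t):=\mu\big((a\circ\varphi_{-t})^2\big)=\int (a^2\circ\varphi_{-t})\,d\mu,\qquad t\ge0.
\]
Note that $a\circ\varphi_{-t}\in C_c^\infty(T^*\Rea^d)$ with $\supp(a\circ\varphi_{-t})=\varphi_t(\supp a)$, and that, since $|\xi|$ is conserved along $\varphi_t$ for the symbols to which this corollary is applied (cf.\ \S\ref{sec:flow}), $\varphi_t$ preserves $\{|\xi|\le\Xi\}$; hence $\supp(a\circ\varphi_{-t})\subset\{|\xi|\le\Xi\}$ for all $t\ge0$.

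Differentiating $g$ under the integral sign --- legitimate because $t\mapsto a^2\circ\varphi_{-t}$ is $C^1$ with support in a fixed compact set for $t$ in bounded intervals and $\mu$ is finite --- and using that $H_p$ commutes with $\varphi_{-t}$ on functions (as $p$ is conserved along its own flow), so that $(H_p a^2)\circ\varphi_{-t}=H_p\big((a\circ\varphi_{-t})^2\big)$, we get
\[
g'(t)=-\int (H_p a^2)\circ\varphi_{-t}\,d\mu=-\,\mu\Big(H_p\big((a\circ\varphi_{-t})^2\big)\Big).
\]
Lemma~\ref{l:propagateDefect}, applied to the real-valued test function $a\circ\varphi_{-t}$ with $\mu^j=0$, gives $\mu\big(H_p(a\circ\varphi_{-t})^2\big)\ge -C\,\mu\big(\langle\xi\rangle^{m-1}(a\circ\varphi_{-t})^2\big)$; combined with $\langle\xi\rangle^{m-1}\le\langle\Xi\rangle^{m-1}$ on $\{|\xi|\le\Xi\}\supset\supp(a\circ\varphi_{-t})$ this yields
\[
g'(t)\le C\langle\Xi\rangle^{m-1}\,\mu\big((a\circ\varphi_{-t})^2\big)=C\langle\Xi\rangle^{m-1}\,g(t).
\]
Gr\"onwall's inequality then gives $g(t)\le e^{Ct\langle\Xi\rangle^{m-1}}g(0)=e^{Ct\langle\Xi\rangle^{m-1}}\mu(a^2)$ for all $t\ge0$.

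It remains to pass from test functions to sets. Let $B\subset T^*\Rea^d\cap\{|\xi|\le\Xi\}$ be open, and let $b\in C_c^\infty(T^*\Rea^d)$ be arbitrary with $0\le b\le1$ and $\supp b\subset\varphi_t(B)$. Setting $a:=b\circ\varphi_t$ we have $a\in C_c^\infty$, $0\le a\le1$, $\supp a=\varphi_{-t}(\supp b)\subset B\subset\{|\xi|\le\Xi\}$ and $b=a\circ\varphi_{-t}$, so $\mu(b^2)=g(t)\le e^{Ct\langle\Xi\rangle^{m-1}}\mu(a^2)\le e^{Ct\langle\Xi\rangle^{m-1}}\mu(B)$. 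Taking the supremum over such $b$ and invoking inner regularity of the finite positive Radon measure $\mu$ on the open set $\varphi_t(B)$ gives $\mu(\varphi_t(B))\le e^{Ct\langle\Xi\rangle^{m-1}}\mu(B)$, and the general Borel case follows by approximating $B$ from outside by open sets $U\subset\{|\xi|<\Xi+\e\}$ and letting $\e\downarrow0$. The argument is conceptually straightforward; the points requiring care are the differentiation under the integral together with the commutation identity $(H_pa^2)\circ\varphi_{-t}=H_p\big((a\circ\varphi_{-t})^2\big)$, and the weight bound $\langle\xi\rangle^{m-1}\le\langle\Xi\rangle^{m-1}$, which is available precisely because $\varphi_t$ does not transport mass out of $\{|\xi|\le\Xi\}$ --- this last being the main thing to get right.
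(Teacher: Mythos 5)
Your proof is correct and takes essentially the same route as the paper's: a Gr\"onwall estimate obtained by feeding the flowed test function $a\circ\varphi_{\pm t}$ into Lemma~\ref{l:propagateDefect} (with $\mu^j=0$), followed by the passage from smooth test functions to indicators using inner/outer regularity of the finite Radon measure $\mu$. You are in fact a bit more explicit than the paper about the point that $\{|\xi|\le\Xi\}$ must be preserved under the flow so that the weight bound $\langle\xi\rangle^{m-1}\le\langle\Xi\rangle^{m-1}$ applies on the support of the transported test function.
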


Corollary \ref{cor:forwards} shows that, under the assumptions of Lemma \ref{l:propagateDefect}, we have information about the defect measures of sets moving forward under the flow.

\

\bpf[Proof of Corollary \ref{cor:forwards}]
Let $a\in C_c^\infty(T^*\Rea^d\cap \{|\xi|\leq \Xi\})$
By \eqref{eq:Hpb},
\beqs
\begin{aligned}
\partial_t \left( e^{Ct\langle \Xi\rangle^{m-1}}\int (a^2 \circ \varphi_t)\, d \mu\right) & =\int \partial_t (a^2 \circ \varphi_t)+( C\langle \Xi\rangle^{m-1}a^2)\circ\varphi_t \, d \mu\\
&\geq\int \partial_t (a^2 \circ \varphi_t)+( C\langle \xi\rangle^{m-1}a^2)\circ\varphi_t \, d \mu\\
&= \mu\big(H_{\Re \sigma_\h(\newP)} a^2+C\langle \xi\rangle^{m-1}a^2\big) \geq 0,
\end{aligned}
\eeqs
and thus 
\beq\label{eq:inequality}
e^{Ct\langle \Xi\rangle^{m-1}}\int a^2 \, d \mu \geq \int (a^2 \circ \varphi_{-t})\, d \mu.
\eeq
Let $1_B$ be the indicator function of $B\subset  T^*\Rea^d\cap\{|\xi|\leq \Xi\}$. By approximating $1_B$ by squares of smooth symbols, compactly supported symbols \eqref{eq:inequality} holds with $a^2= 1_B$. Since $1_B \circ \varphi_{-t}= 1_{\varphi_t(B)}$ the result \eqref{eq:forwards} follows. More precisely, we first let $B$ open and $K_n\Subset B$ compact with $K_n\uparrow B$ and choose $a_n\in C_c^\infty(T^*\mathbb{R}^d\cap \{|\xi|\leq \Xi\})$ with $a_n\equiv 1$ on $K_n$ and $\supp a_n\subset B$. 
The result for $B$ open follows by monotonicity of measure from below; the result for general $B$ follows by outer regularity of $\mu$.
\epf

\

We now review some recent results from \cite{GaLaSp:21} about defect measures when $u_{\h}$ satisfies the Helmholtz equation.
Let $f_\h \in L^2_{\rm comp}(\Rea^d)$ be such that $\|f_\h\|_{L^2(\Rea^d)}\leq C$.

We use Riemannian/Fermi normal coordinates $(x_1,x')$ in which $\Gamma_D$ is given by $\{x_1=0\}$ and $\Omega_+$ is $\{x_1>0\}$. 
The conormal
and cotangent variables are given by $(\xi_1,\xi')$.
Recall the definition of the hyperbolic set 
\beqs
\mc{H}_{\Gamma_D}:= \big\{ (x',\xi')\in T^*\Gamma_D \, :\, |\xi'|_{g} <1\}\subset T^*\Gamma_D,
\eeqs
(where the metric $g$ is that induced by $\Gamma_D$) and the definition of the gliding set
$$
\mc{G}:=\Big\{x_1=H_px_1=0,\quad H_p^2x_1<0, \quad |\xi|=1\Big\} \subset S_{\Gamma_D}^*\mathbb{R}^d.
$$
Let $\mc{N}\in \Psi_\h^{m}(\Gamma_D)$ and $\mc{D}\in \Psi_\h^{m+1}(\Gamma_D)$ have real-valued principal symbols satisfying 
\begin{equation}
\label{e:boundaryCond}
\begin{gathered}
|\sigma_\h(\mc{N})|^2\langle \xi'\rangle^{-2m} +|\sigma_\h(\mc{D})|^2\langle \xi'\rangle^{-2m-2}>c>0\text{ on }T^*\Gamma_D,\\ 
\sigma_\h(\mc{N})\sigma_\h(\mc{D})>0\text{ on }\overline{B^*\Gamma_D},
\end{gathered}
\end{equation}
where $B^*\Gamma_D:= \{ (x',\xi') : |\xi'|_g< 1\}$ and $|\xi'|_g$ denotes the norm of $\xi'$ in the metric, $g$, induced on $\Gamma_D$ from $\mathbb{R}^d$.
Let $u\in L^2_{\rm loc}(\Omega_+)$ be a solution to
$$
(-\h^2\Delta-1)u_\h=\h f_\h\quad\text{ in }\Omega_+, \qquad (\mc{N}h D_\nu -\mc{D})u_\h|_{\Gamma_D}=o(1).
$$
Later we restrict attention to specific $\mc{N}$ and $\mc{D}$, but we consider more-general operators here because we believe some of our intermediate results (specifically Lemma \ref{lem:hayfever}) are of independent interest; see \cite{GaMaSp:21-2}.

Suppose that $1^{\rm ext}_{\Omega_+}u_{\h}$ 
has defect measure $\mu$ and $1^{\rm ext}_{\Omega_+}u_{\h}$ and $f_h$ have joint defect measure $\mu^j$.
On $\Gamma_D$, let $\nu_j$ be the joint measure associated with the Dirichlet and Neumann traces and $\nu_n$ be the measure associated with the Neumann trace; see \cite[Theorem 2.3]{GaLaSp:21}. 
In what follows, we only use the fact that $\dot{n}^j \nu_n=\nu_j$ where $\dot{n}^j=\sigma_\h(\mc{N})/\sigma_\h(\mc{D})$ is bounded (see \cite[Lemma 2.14 and \S 4]{GaLaSp:21}).

With $u$ as above, let $\mu^{\rm in/out}$ be the positive measures  on $T^*\Gamma_D $, supported in the hyperbolic set $\mathcal{H}_{\Gamma_D}$, and defined in \cite[Lemma 2.9]{GaLaSp:21}/\cite[Proposition 1.7, Part (ii)]{Mi:00}.

In the following lemma, $^bT^*\Omega_+$ denotes the $b$-cotangent bundle to $\Omega_+$ and $\pi: T^*\Omega_+\rightarrow \,^bT^*\Omega_+$ is defined in local coordinates 
by  $\pi(x_1,x',\xi_1,\xi'):= (x_1,x',x_1\xi_1,\xi')$ (for more details about $^bT^*\Omega_+$, see, e.g.,~\cite[Section 18.3]{Ho:85}, \cite[Section 4B]{GaSpWu:20}).

\begin{lemma}
\label{l:bvpDefect}
With $u_\h$, $\mu$, $\mu^j$, $\mu^{\rm in}$, $\mu^{\rm out}$, and $\dot{n}^j$ as above, 
\begin{itemize}
\item[(i)] $\supp \mu \subset S^*\Omega_+$
\item[(ii)] For all $\chi \in C_c^\infty(\mathbb{R}^d\setminus \Omega_-)$, $\lim_{\h\to 0}\|\chi u_\h\|^2_{L^2}=\mu(|\chi|^2)$
\item[(iii)] For all $a\in C_c^\infty(^b\!T^*\Omega_+)$, 
$$
\pi_*\mu(a\circ\varphi_t)-\pi_*\mu(a)=\int_0^t \Big(-2\Im \pi_*\mu^j+\delta(x_1)\otimes (\mu^{\rm in}-\mu^{\rm out})+\frac{1}{2}\dfrac{\sigma_\h(\mc{N})}{\sigma_\h(\mc{D})}H_p^2x_1\mu1_{\mc{G}}\Big)(a\circ\varphi_s)\,ds
$$
(where the integral is understood as the integral of distributions acting on smooth functions).
\item[(iv)]
On $\mathcal{H}_{\Gamma_D}$,
$\mu^{\rm out}= \alpha\,\mu^{\rm in}$, where 
\beq\label{eq:alpha}
\alpha:=\left|\frac{\sqrt{1-|\xi'|^2_g}\,\sigma_\h(\mc{N}(x',\xi')-\sigma_\h(\mc{D})(x',\xi')}{\sqrt{1-|\xi'|_g^2}\,\sigma_\h(\mc{N})(x
',\xi')+\sigma_\h(\mc{D})(x',\xi')}\right|^2.
\eeq
\end{itemize}
\end{lemma}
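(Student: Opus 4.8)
The plan is to prove the four parts as the defect-measure counterparts of, respectively, elliptic regularity, local conservation of mass, propagation of singularities through the boundary, and the computation of the boundary reflection coefficient, following closely the arguments of \cite{GaLaSp:21} (and, for the glancing analysis, \cite{Mi:00}). The only genuinely new input is that the boundary conditions treated there (Dirichlet, Neumann, impedance) are replaced by the operator condition $(\mc{N}\h D_\nu-\mc{D})u_\h|_{\Gamma_D}=o(1)$, the role of the positivity of the impedance being played by~\eqref{e:boundaryCond}; so part of the work is to check that~\eqref{e:boundaryCond} is exactly what makes those arguments go through in this generality.

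For \emph{(i)} and \emph{(ii)}: away from $\Gamma_D$ one has $(-\h^2\Delta-1)u_\h=\h f_\h$ with $\|\h f_\h\|_{L^2}=O(\h)$, so Lemma~\ref{l:ellipticDefect} applied to $-\h^2\Delta-1$ (after localising by a cutoff supported in the interior of $\Omega_+$) forces $\mu$ to be supported in $\{|\xi|=1\}$ there; near $\Gamma_D$, elliptic regularity up to the boundary shows $\mu$ and the boundary measures vanish in $\{|\xi'|_g>1\}$, since $\mc{N}\h D_\nu-\mc{D}$ is elliptic there by the first line of~\eqref{e:boundaryCond}, while over $\{|\xi'|_g\le1\}$ the characteristic variety of $-\h^2\Delta-1$ over $\overline{\Omega_+}$ is again $\{|\xi|=1\}$; together these give $\supp\mu\subset S^*\Omega_+$. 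For (ii) one notes that $\Oph(|\chi|^2)$ is multiplication by $|\chi|^2$ (its symbol is independent of $\xi$), so $\langle\Oph(|\chi|^2)(1^{\rm ext}_{\Omega_+}u_\h),1^{\rm ext}_{\Omega_+}u_\h\rangle=\|\chi u_\h\|_{L^2(\Omega_+)}^2$, and the defining property of $\mu$ gives the limit. Both are contained in \cite[Theorem~2.3]{GaLaSp:21}, whose proof uses the boundary condition only through ellipticity in $\{|\xi'|_g>1\}$.

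For \emph{(iii)}, the interior contribution is the defect-measure form of propagation of singularities, obtained as in Lemma~\ref{l:propagateDefect}/Corollary~\ref{cor:forwards}: testing the equation against $\tfrac{i}{\h}[\Oph(\pi^*b),-\h^2\Delta]u_\h$ for $b$ a symbol on $^bT^*\Omega_+$ and using Green's formula on $\Omega_+$ gives $\partial_t\,\pi_*\mu(a\circ\varphi_t)=-2\Im\,\pi_*\mu^j(a\circ\varphi_t)+(\text{net flux through }\Gamma_D)$, the flux being bilinear in $\gamma_0 u_\h$ and $\h D_\nu u_\h$. One then decomposes the boundary microlocally: on the hyperbolic set $\mc{H}_{\Gamma_D}$ the normal trace splits into incoming and outgoing pieces with boundary measures $\mu^{\rm in},\mu^{\rm out}$ and the flux equals $\delta(x_1)\otimes(\mu^{\rm in}-\mu^{\rm out})$; on the gliding set $\mc{G}$ a second-order analysis of the tangency of the bicharacteristic to $\Gamma_D$ (the Melrose--Sj\"ostrand argument, at the measure level as in \cite{Mi:00,GaLaSp:21}) produces $\tfrac12\,\tfrac{\sigma_\h(\mc{N})}{\sigma_\h(\mc{D})}H_p^2x_1\,\mu\,1_{\mc{G}}$, with the factor $\sigma_\h(\mc{N})/\sigma_\h(\mc{D})=\dot n^j$ entering through $\dot n^j\nu_n=\nu_j$ and through the continuity and positivity of $\dot n^j$ up to $\mc{G}$; on the elliptic region there is no contribution, by (i). Integrating in $t$ gives the identity.

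For \emph{(iv)}, I would work microlocally near a point of $\mc{H}_{\Gamma_D}$ and write, modulo $o(1)$, $u_\h=u_\h^{\rm in}+u_\h^{\rm out}$ with $\h D_\nu u_\h^{\rm in}|_{\Gamma_D}=\Oph(\sqrt{1-|\xi'|_g^2})u_\h^{\rm in}|_{\Gamma_D}$ and $\h D_\nu u_\h^{\rm out}|_{\Gamma_D}=-\Oph(\sqrt{1-|\xi'|_g^2})u_\h^{\rm out}|_{\Gamma_D}$ (the sign fixed to match the labelling of $\mu^{\rm in},\mu^{\rm out}$ and the direction of $\nu$); substituting into the boundary condition and equating principal symbols gives
\[
\big(\sqrt{1-|\xi'|_g^2}\,\sigma_\h(\mc{N})-\sigma_\h(\mc{D})\big)u_\h^{\rm in}|_{\Gamma_D}=\big(\sqrt{1-|\xi'|_g^2}\,\sigma_\h(\mc{N})+\sigma_\h(\mc{D})\big)u_\h^{\rm out}|_{\Gamma_D}+o(1),
\]
and passing to squared moduli of symbols yields $\mu^{\rm out}=\alpha\,\mu^{\rm in}$ with $\alpha$ as in~\eqref{eq:alpha}; the denominator of $\alpha$ does not vanish on $\mc{H}_{\Gamma_D}$ because there $\sqrt{1-|\xi'|_g^2}>0$ and $\sigma_\h(\mc{N})\sigma_\h(\mc{D})>0$ by~\eqref{e:boundaryCond}. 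I expect the main obstacle to be the gliding term in (iii): pushing the Melrose--Sj\"ostrand propagation law through the glancing region down to the level of defect measures while carrying the operator boundary condition, and verifying that~\eqref{e:boundaryCond} is exactly what is needed for the argument of \cite{GaLaSp:21} to run unchanged — in particular that $\mc{N}\h D_\nu-\mc{D}$ neither becomes elliptic nor degenerates along $\mc{G}$ in a way that would alter the tangential propagation. Parts (i), (ii), (iv) are essentially routine given the machinery of \S\ref{sec:defect} and~\eqref{e:boundaryCond}.
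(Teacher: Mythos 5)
The paper does not prove Lemma~\ref{l:bvpDefect} from scratch: its ``proof'' is literally a list of citations --- parts~(i) and~(ii) to \cite[Lemma~4.2]{GaSpWu:20}, part~(iii) to \cite[Theorem~2.15]{GaLaSp:21} (following \cite[Lemma~4.8]{GaSpWu:20}), and part~(iv) to \cite[Lemmas~2.12 and~2.18]{GaLaSp:21} (following \cite[Proposition~1.10]{Mi:00}). Your proposal reconstructs, in outline, exactly the arguments that sit behind those citations, and you correctly identify the key structural ingredients: elliptic regularity in the interior plus ellipticity of the boundary operator on $\{|\xi'|_g>1\}$ (granted by the first line of \eqref{e:boundaryCond}) for (i)--(ii); the commutator/flux identity with microlocal decomposition over the hyperbolic, gliding, and elliptic regions for (iii); and the incoming/outgoing splitting plus substitution into $(\mc{N}\h D_\nu-\mc{D})u_\h|_{\Gamma_D}=o(1)$ for (iv), where the non-vanishing of the denominator of $\alpha$ on $\mc{H}_{\Gamma_D}$ is exactly the second line of \eqref{e:boundaryCond}. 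So the route is the same as the paper's, just unpacked rather than cited.

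Two small observations. First, for (i)--(ii) you point to \cite[Theorem~2.3]{GaLaSp:21}, whereas the paper points to \cite[Lemma~4.2]{GaSpWu:20}; the former is cited in this paper only as the source of the boundary trace measures $\nu_j,\nu_n$. This is a bibliographic slip, not a mathematical one. Second, your justification of (ii) --- ``$\Oph(|\chi|^2)$ is multiplication, so the defining property of $\mu$ gives the limit'' --- glosses over the fact that $|\chi|^2$ is not compactly supported in $\xi$, so the defect-measure pairing is not directly applicable; one needs the absence of mass escaping to fiber infinity (which you get from (i) via $\supp\mu\subset S^*\Omega_+$) together with a no-concentration-at-the-boundary argument, which is precisely what \cite[Lemma~4.2]{GaSpWu:20} supplies. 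You are also right that the hardest piece is the gliding analysis in (iii): the paper avoids redoing this by citing \cite[Theorem~2.15]{GaLaSp:21}, and your acknowledgement that the main work is to check that \eqref{e:boundaryCond} lets that argument run unchanged is consistent with the paper's remark that it ``considers more-general operators here'' and verifies the needed hypotheses via \eqref{e:boundaryCond}.
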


\bpf[References for the proof]
Parts (i) and (ii) are proved in \cite[Lemma 4.2]{GaSpWu:20}. Part (iii) is proved in \cite[Theorem 2.15]{GaLaSp:21} 
(following \cite[Lemma 4.8]{GaSpWu:20}), 
and Part (iv) is proved in \cite[Lemmas 2.12 and 2.18]{GaLaSp:21} (following \cite[Proposition 1.10, Part (iii)]{Mi:00}).
\epf

\section{Parametrix for $(P_\theta-\lambda^2)$ via boundary complex absorption}\label{sec:parametrix}
We now find a parametrix for $(P_\theta-\lambda^2)$ using a complex absorbing potential on the boundary $\Gamma_D$. 
We then obtain by perturbation a parametrix for $(P_\theta-\lambda^2 -z1_{\Otr})$ for $z$ sufficiently small.

First, let 
$$
\mc{P}_\theta(\lambda):=\begin{pmatrix} P_\theta-\lambda^2\\\gamma_0^D\end{pmatrix}:H^{2}(\Omega_+)\to L^2(\Omega_+)\oplus H^{3/2}(\Gamma_D).
$$
Then let $E:H^{3/2}(\Gamma_D)\to H^2(\Omega_+)$ be an extension operator satisfying
$$
\gamma_0^D Eg=g,\qquad g\in H^\frac{3}{2}(\partial\Omega).
$$
Simple calculation then implies that
\begin{equation}
\label{e:invertP}
(\mc{P}_\theta(\lambda))^{-1}= \Big(\mc{R}_\theta(\lambda),\,E-\mc{R}_\theta(\lambda)(P_\theta-\lambda^2)E\Big),
\end{equation}
where $\mc{R}_\theta(\lambda):=(P_\theta-\lambda^2)^{-1}$ is the inverse of~\eqref{e:Fred1}.

\begin{lemma}\label{lem:Fred}
The operator $\mc{P}_\theta(\lambda)$ is Fredholm with index zero.
\end{lemma}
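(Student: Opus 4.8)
The plan is to reduce the Fredholm statement for $\mc{P}_\theta(\lambda)$ to the Fredholm statement \eqref{e:Fred1} for $P_\theta-\lambda^2$ acting on $H^2(\Omega_+)\cap H_0^1(\Omega_+)$, by using the extension operator $E$ to peel off the Dirichlet-trace component. Note that \eqref{e:Fred1} applies directly for real $\lambda>0$, since then $\Im(\re^{\ri\theta}\lambda)=\lambda\sin\theta>0$; more generally the range of $\lambda$ for which \eqref{e:Fred1} holds is the one supplied by the complex-scaling results recalled in \S\ref{sec:complexscaling}, and this is the range in which the lemma will be used.

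First I would introduce the bounded map
\[
T:H^2(\Omega_+)\longrightarrow \big(H^2(\Omega_+)\cap H_0^1(\Omega_+)\big)\oplus H^{3/2}(\Gamma_D),\qquad
Tu:=\big(u-E\gamma_0^D u,\ \gamma_0^D u\big).
\]
It is well defined because $\gamma_0^D(u-E\gamma_0^D u)=\gamma_0^D u-\gamma_0^D u=0$ (using $\gamma_0^D Eg=g$) and $E$ maps into $H^2(\Omega_+)$, and it is an isomorphism with two-sided inverse the bounded map $(v,g)\mapsto v+Eg$, as one checks directly from $\gamma_0^D v=0$ and $\gamma_0^D Eg=g$.

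Next I would compose $\mc{P}_\theta(\lambda)$ with $T^{-1}$: since $(P_\theta-\lambda^2)(v+Eg)=(P_\theta-\lambda^2)v+(P_\theta-\lambda^2)Eg$ and $\gamma_0^D(v+Eg)=g$, one gets the block operator
\[
\mc{P}_\theta(\lambda)\circ T^{-1}
=\begin{pmatrix}P_\theta-\lambda^2 & (P_\theta-\lambda^2)E\\ 0 & \Id\end{pmatrix}
=\begin{pmatrix}\Id & (P_\theta-\lambda^2)E\\ 0 & \Id\end{pmatrix}\begin{pmatrix}P_\theta-\lambda^2 & 0\\ 0 & \Id\end{pmatrix},
\]
where the top-left entry is $P_\theta-\lambda^2:H^2(\Omega_+)\cap H_0^1(\Omega_+)\to L^2(\Omega_+)$ and $(P_\theta-\lambda^2)E:H^{3/2}(\Gamma_D)\to L^2(\Omega_+)$ is bounded. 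The first factor on the right is an isomorphism (its inverse replaces $(P_\theta-\lambda^2)E$ by its negative); the second factor is the direct sum of the identity on $H^{3/2}(\Gamma_D)$ and of $P_\theta-\lambda^2:H^2(\Omega_+)\cap H_0^1(\Omega_+)\to L^2(\Omega_+)$, which is Fredholm of index zero by \eqref{e:Fred1}. Hence $\mc{P}_\theta(\lambda)\circ T^{-1}$ is Fredholm of index zero, and since $T$ is an isomorphism the same holds for $\mc{P}_\theta(\lambda)$.

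I do not expect a genuine obstacle here: the only points requiring (minor) care are checking that $T$ is an isomorphism of exactly the spaces written and that the block factorisation holds as stated; everything else is the standard stability of the Fredholm index under composition with isomorphisms and under direct sums of Fredholm operators. One should also make sure \eqref{e:Fred1} is invoked only for the values of $\lambda$ in which the lemma is needed, which the discussion of \S\ref{sec:complexscaling} covers.
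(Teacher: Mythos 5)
Your proof is correct and takes a genuinely different (and cleaner) route than the paper's. The paper argues element-by-element: it shows $\ker\mc{P}_\theta(\lambda)$ is finite dimensional by embedding it in $\ker\big(P_\theta-\lambda^2|_{H^2\cap H_0^1}\big)$, then constructs an explicit injection $\pi$ from the cokernel quotient $L^2(\Omega_+)\oplus H^{3/2}(\Gamma_D)/\mc{P}_\theta(\lambda)H^2(\Omega_+)$ into the cokernel quotient of $P_\theta-\lambda^2|_{H^2\cap H_0^1}$, and finally gets index zero by combining the explicit inverse formula \eqref{e:invertP} (valid for some $\lambda$) with constancy of the index in $\lambda$. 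Your approach instead exhibits the isomorphism $T:u\mapsto(u-E\gamma_0^D u,\gamma_0^D u)$ and the block-triangular factorisation of $\mc{P}_\theta(\lambda)\circ T^{-1}$, reducing everything to the Fredholm index-zero statement \eqref{e:Fred1} for the diagonal entry. This buys you Fredholm-ness and index zero simultaneously, avoiding the need for the separate continuity-of-index argument, and it is arguably the more transparent way to see \emph{why} the boundary component of $\mc{P}_\theta(\lambda)$ is ``harmless'' for the index. The only small point worth noting (and which you implicitly rely on) is that $(P_\theta-\lambda^2)E:H^{3/2}(\Gamma_D)\to L^2(\Omega_+)$ is bounded; this holds because $E$ is bounded into $H^2(\Omega_+)$ and $P_\theta$ has bounded coefficients, so the off-diagonal block really does define a bounded operator and the unitriangular factor is indeed an isomorphism.
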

\begin{proof}
Recall that the map \eqref{e:Fred1} is Fredholm with index zero. First, note that if $\mc{P}_\theta(\lambda)u=0$, then $u\in H_0^1(\Omega_+)\cap H^2(\Omega_+)$ and in particular, $u\in \ker (P_\theta-\lambda^2)$. Therefore,
since $P_\theta-\lambda^2: H_0^1(\Omega_+)\cap H^2(\Omega_+)\rightarrow L^2(\Omega_+)$ is Fredholm,
 $\ker \mc{P}_\theta(\lambda)$ is finite dimensional. To see that the cokernel 
$L^2(\Omega_+)\oplus H^{3/2}(\Gamma_D)\big/\mc{P}_\theta(\lambda)H^2(\Omega_+)$
is finite dimensional, define the map
\begin{gather*}
\pi:L^2(\Omega_+)\oplus H^{3/2}(\Gamma_D)\Big/\mc{P}_\theta(\lambda)H^2(\Omega_+)\to L^2(\Omega_+)\Big/(P_\theta-\lambda^2)\big(H_0^1(\Omega_+)\cap H^2(\Omega_+)\big),\\ (f,g)+\mc{P}_\theta(\lambda)H^2(\Omega_+)\mapsto f-(P_\theta-\lambda^2)Eg +(P_\theta-\lambda^2)\big(H_0^1(\Omega_+)\cap H^2(\Omega_+)\big).
\end{gather*}
First, observe that this map is well defined since if $(f_1,g_1)+\mc{P}_\theta(\lambda)H^2(\Omega_+)=(f_2,g_2)+\mc{P}_\theta(\lambda)H^2(\Omega_+)$ then there is $u\in H^2(\Omega_+)$ such that 
$$
(f_1-f_2,g_1-g_2)=\big((P_\theta-\lambda^2)u,\gamma_D u\big).
$$ 
In particular, 
$$
(f_1-f_2)-(P_\theta-\lambda^2)E(g_1-g_2)= (P_\theta-\lambda^2)(u-E(g_1-g_2))\in (P_\theta-\lambda^2)\big(H_0^1(\Omega_+)\cap H^2(\Omega_+)\big),
$$
so $\pi(f_1,g_1)= \pi(f_2,g_2)$.

Now, suppose that $\pi(f,g)=0$. Then, there is $u\in H_0^1(\Omega_+)\cap H^2(\Omega_+)$ such that
$$
f-(P_\theta-\lambda^2)Eg=(P_\theta-\lambda^2)u.
$$
Therefore,
$$
(f,g)-\mc{P}_\theta(\lambda)Eg=(f-(P_\theta-\lambda^2)Eg,0)=((P_\theta-\lambda^2)u,0)\in \mc{P}_\theta(\lambda)H^2(\Omega_+),
$$
and $\pi$ is injective. 
For an injective operator, $\dim (\text{domain})\leq \dim (\text{range})\leq \dim (\text{codomain})$; therefore
$$\dim \Big(L^2(\Omega_+)\oplus H^{3/2}(\Gamma_D)\Big/\mc{P}_\theta(\lambda)H^2(\Omega_+\Big)
\leq \dim \Big(L^2(\Omega_+)\Big/(P_\theta-\lambda^2)\big(H_0^1(\Omega_+)\cap H^2(\Omega_+)\big)\Big)<\infty$$ 
Since $P_\theta-\lambda^2: H_0^1(\Omega_+)\cap H^2(\Omega_+)\rightarrow L^2(\Omega_+)$ is Fredholm, $\mc{P}_\theta(\lambda)$ is Fredholm.
 To see that $\mc{P}_\theta(\lambda)$ has index zero, 
recall that the index is constant in $\lambda$ by, e.g., \cite[Theorem C.5]{DyZw:19}, and
 observe that the formula~\eqref{e:invertP} implies that the inverse exists for some $\lambda$. 
\end{proof}

\

We now define our complex absorbing operator. 
Let $\psi\in C_c^\infty(\mathbb{R};[0,1])$ with $\psi\equiv 1$ on $[-b,b]$ and $\supp\psi\subset [-2b,2b]$. 
It will be convenient to have a specific notation for the Neumann trace with the standard derivative operator replaced by $D:= -\ri \h \partial$. We therefore let  
 $\gamma_{1,\h}^D :=- \ri \h \gamma_1^D$.
Let 
$$
\mc{P}_{\theta,Q}(\lambda):=\begin{pmatrix} P_\theta -\lambda^2\\ Q_b \gamma_{1,\h}^D  +\gamma_0^D\end{pmatrix}:H^2(\Omega_+)\to L^2(\Omega_+)\oplus H^{3/2}(\Gamma_D).
$$
where $Q_b\in \Psi_\h^{\comp}(\Gamma_D)$ with symbol 
\beq\label{eq:Q_bsymbol}
\sigma_\h(Q_b)=-\psi(|\xi'|_g).
\eeq
Note that 
$$
\mc{P}_{\theta,Q}(\lambda)=\mc{P}_\theta(\lambda)+\begin{pmatrix}0\\Q_b \gamma_{1,\h}^D \end{pmatrix}
$$
and hence $\mc{P}_{\theta,Q}(\lambda)$ is a compact perturbation of $\mc{P}_\theta(\lambda)$. Therefore, by Lemma \ref{lem:Fred}, $\mc{P}_Q(\lambda)$ is Fredholm with index zero.

\begin{lemma}
\label{l:absorbingInverse}
Let $Q_b$ be as above and $0<a<b$ and $C_1>0$. Then there exists $C>0$ such that for all $\lambda \in [a,b]+\ri[-C_1\h,C_1\h]$, 
\beq\label{eq:toohot1}
\|\gamma_{1,\h}^Du\|_{L^2(\Gamma_D)}+\N{u}_{H_{\h}^2(\Omega_+)}\leq C\h^{-1}\N{(P_\theta-\lambda^2)u}_{L^2(\Omega_+)}+C\N{(Q_b \gamma_{1,\h}^D +\gamma_0^D)u}_{H_{\h}^{3/2}(\Gamma_D)}.
\eeq
In particular, since $\mc{P}_{\theta,Q}(\lambda)$ is Fredholm with index zero,
\beqs
\mc{R}_{\theta,Q}(\lambda):=(\mc{P}_{\theta,Q}(\lambda))^{-1}
\eeqs
exists and satisfies
\beq\label{eq:highlighter1}
\|\gamma_{1,\h}^D\mc{R}_{\theta,Q}(\lambda)(f,g)\|_{L^2(\Gamma_D)}+\|\mc{R}_{\theta,Q}(\lambda)(f,g)\|_{H_{\h}^2(\Omega_+)}\leq C\Big(h^{-1}\|f\|_{L^2(\Omega_+)}+\|g\|_{H_{\h}^{3/2}(\Gamma_D)}\Big).
\eeq
\end{lemma}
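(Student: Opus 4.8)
The plan is to prove the a priori bound \eqref{eq:toohot1}; the inverse bound \eqref{eq:highlighter1} then follows for free. Indeed $\mc{P}_{\theta,Q}(\lambda)$ is already known to be Fredholm of index zero (it is a compact perturbation of $\mc{P}_\theta(\lambda)$, cf.\ Lemma \ref{lem:Fred}), and \eqref{eq:toohot1} forces its kernel to be trivial (the right-hand side vanishes when $\mc{P}_{\theta,Q}(\lambda)u=0$); hence $\mc{P}_{\theta,Q}(\lambda)$ is bijective with bounded inverse $\mc{R}_{\theta,Q}(\lambda)$, and \eqref{eq:highlighter1} is just \eqref{eq:toohot1} applied to $u=\mc{R}_{\theta,Q}(\lambda)(f,g)$. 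I would prove \eqref{eq:toohot1} by contradiction and compactness, using the defect-measure calculus of \S\ref{sec:defect}. If \eqref{eq:toohot1} fails there are $\h_n\tendo$, $\lambda_n\in[a,b]+\ri[-C_1\h_n,C_1\h_n]$ and $u_n\in H^2(\Omega_+)$ with $\|\gamma_{1,\h_n}^D u_n\|_{L^2(\Gamma_D)}+\|u_n\|_{H_\h^2(\Omega_+)}=1$ while $\h_n^{-1}\|(P_\theta-\lambda_n^2)u_n\|_{L^2(\Omega_+)}\tendo$ and $\|(Q_b\gamma_{1,\h_n}^D+\gamma_0^D)u_n\|_{H_\h^{3/2}(\Gamma_D)}\tendo$; passing to a subsequence, $\lambda_n\to\lambda_0\in[a,b]$, which is real. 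The structural fact that makes the argument run is that $(P_\theta-\lambda_n^2)u_n=\h_n f_n$ with $f_n\tendo$ in $L^2$ — this is exactly what the factor $\h^{-1}$ in \eqref{eq:toohot1} provides, and without it no such estimate can hold (the loss-free version already fails for the free resolvent).

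The first step is localization. Since $\Re\lambda_0\ge a>0$ and $\theta>0$ we have $\Im(\re^{\ri\theta}\lambda_0)>0$, so $p_\theta-\lambda^2$ is semiclassically elliptic on $\{\r\ge r_1+\epsilon\}$ for small $\epsilon>0$ (by \eqref{eq:symbolPtheta} and the structure of $f_\theta$), and a standard elliptic estimate shows that, up to $o(1)$, all of the $H_\h^2$-mass of $u_n$ lies over the compact set $\{\r\le r_1+\epsilon\}$, on which $P_\theta$ equals $-\h^2\Delta$ near $\Gamma_D$ and is a perturbation of it with $\Im p_\theta\le0$ on the thin scaling collar. A Green's identity on $\Omega_+$ — substituting $\gamma_0^D u_n=(Q_b\gamma_{1,\h_n}^D+\gamma_0^D)u_n-Q_b\gamma_{1,\h_n}^D u_n$ for the Dirichlet trace, using that the scaled region contributes a term of favourable sign (because $\Im p_\theta\le0$), and using that $\Re\sigma_\h(Q_b)=-\psi(|\xi'|_g)\le0$ so that the sharp Gårding inequality makes the reflected boundary term dissipative — bounds $\|\gamma_{1,\h_n}^D u_n\|_{L^2(\Gamma_D)}$ microlocally on $\{\psi>0\}\subset T^*\Gamma_D$ and $\|u_n\|_{H_\h^1(\Omega_+)}$ by $C\|u_n\|_{L^2}+o(1)$; on the complement $\{\psi=0\}$, which lies in the region where $P_\theta-\lambda_0^2$ is elliptic at $\Gamma_D$, both $Q_b\gamma_{1,\h}^D+\gamma_0^D$ (whose symbol is just $\gamma_0^D$ there) and $P_\theta-\lambda_0^2$ are elliptic, contributing a further $o(1)$. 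Hence if $\|u_n\|_{L^2}\tendo$ then the whole left-hand side $\tendo$, contradicting the normalization; so after a further subsequence $1^{\rm ext}_{\Omega_+}u_n$ has a \emph{non-trivial}, compactly supported defect measure $\mu$, with boundary measures $\mu^{\rm in},\mu^{\rm out}$ on $\mathcal{H}_{\Gamma_D}$ as in Lemma \ref{l:bvpDefect} (whose hypotheses \eqref{e:boundaryCond} hold with $\mc{N}=Q_b$, $\mc{D}=-\Id$), and vanishing joint defect measure $\mu^j=0$ (since $f_n\tendo$).

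It remains to prove $\mu=0$, contradicting $\mu\ne0$; this is the heart of the argument. By Lemma \ref{l:ellipticDefect}, $\supp\mu$ lies on the energy surface $\{|\xi|=\lambda_0\}$ over the compact region, so $\mu$ is transported by the generalized bicharacteristic (billiard) flow $\varphi_t$; by Lemma \ref{l:bvpDefect}(iii) with $\mu^j=0$, the only changes to $\pi_*\mu$ along $\varphi_t$ occur at $\Gamma_D$: at transversal reflections, where Lemma \ref{l:bvpDefect}(iv) with $\mc{N}=Q_b$, $\mc{D}=-\Id$ gives, via \eqref{eq:alpha}, the reflection coefficient $\alpha=\big((1-t)/(1+t)\big)^2$ with $t=\psi(|\xi'|_g)\sqrt{1-|\xi'|_g^2}$ — strictly less than $1$ on the open hyperbolic set because $\psi\equiv1$ (hence $t>0$) there — and along gliding segments, where the relevant coefficient $\tfrac{1}{2}\tfrac{\sigma_\h(Q_b)}{\sigma_\h(-\Id)}H_p^2x_1=\tfrac{1}{2}\psi(|\xi'|_g)H_p^2x_1$ also has the dissipative sign, since $H_p^2x_1<0$ on $\mc{G}$ and $\psi\ge0$. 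The geometric key — the comparison of the (in principle trapping) billiard flow with the non-trapping free flow flagged in \S\ref{sec:idea} — is that between boundary hits the flow is a straight line in $\mathbb{R}^d$, which escapes to $\r=\infty$; hence every trajectory in the \emph{compact} support of $\mu$ must meet $\Gamma_D$ infinitely often, both as $t\to+\infty$ and as $t\to-\infty$. Integrating the balance law of Lemma \ref{l:bvpDefect}(iii) over a long time interval against a cutoff equal to $1$ on $\{\r\le r_1+\epsilon\}$, and using (as in Corollary \ref{cor:forwards}) that each boundary interaction diminishes the transported mass by a factor bounded away from $1$ on compact subsets of $\mathcal{H}_{\Gamma_D}$ — with the gliding term supplying the analogous loss along glancing segments — then forces $\pi_*\mu$, and hence $\mu$, to vanish: a finite total mass cannot survive infinitely many strict losses. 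This contradiction establishes \eqref{eq:toohot1}.

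The step I expect to be the main obstacle is this last one, and specifically the glancing and gliding rays. Transversal reflections are handled cleanly by the factor $\alpha<1$, but a trapped trajectory could touch $\Gamma_D$ only tangentially — a closed geodesic of $\Gamma_D$, i.e.\ a trapped gliding ray — and then one must fall back on the gliding term $\tfrac{1}{2}\psi(|\xi'|_g)H_p^2x_1\,\mu\,1_{\mc{G}}$; making this effective requires $\psi$ strictly positive on the whole closed hyperbolic set (which is in any case forced by the Lopatinski-type condition \eqref{e:boundaryCond} underlying Lemma \ref{l:bvpDefect}), together with non-degeneracy of $H_p^2x_1$ along such rays. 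A secondary technicality is the careful bookkeeping on the thin scaling collar $\{r_1\le\r\le r_1+\epsilon\}$, where $P_\theta\ne-\h^2\Delta$: there one uses only $\Im p_\theta\le0$, so that $\mu$ creates no mass in the collar and the boundary defect-measure calculus of Lemma \ref{l:bvpDefect} — which is stated for $-\h^2\Delta$ — may be invoked in the genuine neighbourhood of $\Gamma_D$ where it is actually used.
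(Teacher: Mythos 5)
Your skeleton — contradiction, compactness, defect measures via Lemma~\ref{l:bvpDefect}, ellipticity on the scaling region, boundary dissipation from $\alpha<1$ and the gliding term — is the paper's approach, and your decomposition of the problem into (i) producing a non-trivial defect measure $\mu$ and (ii) showing $\mu=0$ is the correct one. (A secondary remark: the paper does not need your Green's identity argument to rule out $\|u_n\|_{L^2}\to 0$. It simply proves the estimate first with $\|u\|_{L^2(\Omega_+)}$ on the left-hand side — that is exactly what the defect-measure contradiction yields — and then recovers the $H^2_\h$ norm and the Neumann trace at the very end by elliptic regularity and the trace estimate of \cite[Corollary 4.2]{GaLaSp:21}. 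Your proposed route is a detour and, as written, does not obviously close; the cleaner ordering avoids it.)

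The genuine gap is in the step you yourself flag as the obstacle, and your proposed fix does not close it. ``A finite total mass cannot survive infinitely many strict losses'' is only an argument if the loss per interaction is bounded below \emph{uniformly} along the trajectories that carry $\mu$; but $\alpha\to 1$ as $|\xi'|_g\to 1$, the gliding contribution $\tfrac{1}{2}\psi H_p^2x_1$ can be arbitrarily small near grazing, and $\supp\mu\subset\Gamma_+$ gives you backward trapping only (your claim that trajectories meet $\Gamma_D$ infinitely often in \emph{both} time directions is not available). A backward-trapped ray that accumulates near the glancing set can lose an arbitrarily small fraction of mass at each encounter, so the naive infinite-product argument stalls — and invoking a non-degeneracy hypothesis on $H_p^2x_1$, as you suggest, is neither assumed nor needed in the paper.

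What the paper actually proves (Lemma~\ref{lem:hayfever}, inequality~\eqref{e:rhoClaim}) is a \emph{quantitative, fixed-time} backward-propagation estimate: there are $T,c>0$, uniform over $\rho\in\Gamma_+\cap\{\r<r_0\}$, such that the total dissipation accumulated over $\varphi_{-t}(\rho)$, $t\in[0,T]$, is at least $\varepsilon_1>0$, hence $\mu(\varphi_{-T}(A))\geq \re^{c}\mu(A)$ for all $A\subset\Gamma_+\cap\{\r<r_0\}$; iterating gives $\mu(\varphi_{-nT}(A))\geq \re^{nc}\mu(A)$, contradicting $\mu(T^*\mathbb{R}^d)=1$. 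The mechanism that makes \eqref{e:rhoClaim} true — and which your proposal misses — is the comparison with the free flow: the deviation of a generalized bicharacteristic from the straight-line flow, whether through transversal reflections or gliding along $\mc{G}$, is controlled pointwise by the very dissipation terms appearing in the balance law (schematically, the angle turned and $\log\alpha$ are both $\asymp\sqrt{1-|\xi'|_g^2}$ near grazing, and $H_p^G - H_p$ is $O(H_p^2x_1)$). So either the accumulated loss over $[0,T]$ is $\geq\varepsilon_1$, or the backward trajectory is $O(\varepsilon_1)$-close to a free trajectory over time $T$ and therefore leaves $\{\r\leq r_0\}$ (take $T\gg r_0$), contradicting $\rho\in\Gamma_+$. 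This dichotomy handles grazing, gliding, and near-glancing rays in one stroke, with no non-degeneracy needed, and it is exactly the ``comparison of the billiard flow with the non-trapping free flow'' advertised in \S\ref{sec:idea}. Without this quantitative input, the qualitative ``infinitely many losses'' argument does not yield $\mu=0$.
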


Observe that the bound \eqref{eq:highlighter1} has the same $\h$-dependence as the standard non-trapping resolvent estimate.

Before proving Lemma \ref{l:absorbingInverse} we show how a parametrix for the operator $(P_\theta-\lambda^2 - z1_\Otr)$ can be expressed in terms of $\mc{R}_{\theta,Q}(\lambda)$.
Let 
$$
\mc{P}_{\theta,Q}(\lambda,z):=\begin{pmatrix} P_\theta-\lambda^2-z 1_\Otr \\Q_b \gamma_{1,\h}^D +\gamma_0^D\end{pmatrix}:H^2(\Omega_+)\to L^2(\Omega_+)\oplus H^{3/2}(\Gamma_D).
$$
By Lemma~\ref{l:absorbingInverse}, the bound \eqref{eq:highlighter1}, and inversion by Neumann series, for $|z|\leq \h/(2C)$ (where $C$ is the constant from Lemma~\ref{l:absorbingInverse})
\beqs
\mc{R}_{\theta,Q}(\lambda,z):=(\mc{P}_{\theta,Q}(\lambda,z))^{-1}
\eeqs
 exists and satisfies
\beq\label{eq:highlighter2}
\|\gamma_{1,\h}^D\mc{R}_{\theta,Q}(\lambda,z)(f,g)\|_{L^2(\Gamma_D)}+\|\mc{R}_{\theta,Q}(\lambda,z)(f,g)\|_{H^2_h(\Omega_+)}\leq 2C\Big(h^{-1}\|f\|_{L^2(\Omega_+)}+\|g\|_{H^{3/2}_h(\Gamma_D)}\Big).
\eeq

Next, let 
\beq\label{eq:Ptheta}
\mc{P}_\theta(\lambda,z):=\begin{pmatrix} P_\theta-\lambda^2-z 1_\Otr \\\gamma_0^D\end{pmatrix}:H^2(\Omega_+)\to L^2(\Omega_+)\oplus H_{\h}^{3/2}(\Gamma_D).
\eeq
If $\mc{R}_{\theta,Q}(\lambda,z)$ exists, then 
\beqs
\mc{P}_\theta(\lambda,z) = \big( I + K(\lambda,z)\big)\mc{P}_{\theta,Q}(\lambda,z),
\eeqs
where
\begin{equation}
\label{e:compactPerturb}
K(\lambda,z):=Q\mc{R}_{\theta,Q}(\lambda,z)\quad\tand\quad Q:=\begin{pmatrix} 0\\\ -Q_b \gamma_{1,\h}^D \end{pmatrix}.
\end{equation}
Since $K(\lambda,z):L^2(\Omega_+)\oplus H^{3/2}(\Gamma_D)\to L^2(\Omega_+)\oplus H^{3/2}(\Gamma_D)$ is compact,
$(I+K(\lambda,z))^{-1}$ is a meromorphic family of operators by \cite[Theorem C.8]{DyZw:19}.
Therefore, for $|z|\leq \h/(2C)$,
\beq\label{eq:Thursday3}
\mc{P}_{\theta}(\lambda,z)^{-1}=\mc{R}_{\theta,Q}(\lambda,z)\big(I+K(\lambda,z)\big)^{-1}.
\eeq
Let $R_\theta(\lambda,z)$ be the inverse of the map~\eqref{e:Fred2} with $V=-z1_{\Otr} $, i.e.
\beq\label{eq:Rtheta}
R_\theta(\lambda,z) := (P_\theta-\lambda^2 - z1_{\Otr})^{-1}.
\eeq
Then, for $|z|\leq \h/(2C)$,
\begin{equation}
\label{e:parametrix}
R_\theta(\lambda,z)=\mc{P}_\theta(\lambda,z)^{-1}\begin{pmatrix}I\\0\end{pmatrix}=\mc{R}_{\theta,Q}(\lambda,z)\big(I +K(\lambda,z)\big)^{-1}\begin{pmatrix}I\\0\end{pmatrix},
\end{equation}
which is the required parametrix.

\

\begin{proof}[Proof of Lemma \ref{l:absorbingInverse}]
Suppose that the estimate \eqref{eq:toohot1} fails with the left-hand side replaced by $\|u\|_{L^2(\Omega_+)}$, then there are $\h_n\to 0$, $\lambda_n\in [a,b]+\ri[-C\h,C\h]$, and $(\tilde{u}_n,\tilde{f}_n,\tilde{g}_n)\in H^2(\Omega_+)\oplus L^2(\Omega_+)\oplus H_{\h}^{3/2}(\Gamma_D)$ with 
$$
\big\|\tilde{f}_n\big\|_{L^2(\Omega_+)}+\N{\tilde{g}_n }_{H_{\h}^{3/2}(\Gamma_D)}=1,\qquad \N{\tilde{u}_n}_{L^2}=n,
$$
and with 
$$
\mc{P}_{\theta,Q}(\tilde{u}_n)=(\h_n \tilde{f}_n ,\tilde{g}_n ).
$$
In particular, renormalizing $u_n:=\tilde{u}_n/n$, $f_n:= \tilde{f}_n/n$, and $g_n:= \tilde{g}_n/n$,
\beqs
\N{f_n}_{L^2(\Omega_+)}= \h^{-1}\N{(P_\theta-\lambda_n^2)u_n}_{L^2(\Omega_+)}\leq \frac{1}{n}
\eeqs
and
\beqs
\N{g_n}_{L^2(\Gamma_D)}= \N{(Q_b \gamma_{1,\h}^D +\gamma_D)u_n}_{L^2(\Gamma_D)}\leq \frac{1}{n}.
\eeqs
Now, since $ 0<a\leq\Re \lambda_n\leq b$, we may rescale $\h_n$ to $\tilde{\h}:=\h_n/\Re \lambda_n$ and hence replace $\Re \lambda_n$ by $1$. Note that this rescaling does not cause any issues since $b^{-1} \h_n\leq \tilde{\h}_n\leq a^{-1}\h_n.$ Extracting a subsequence, we can assume that $1^{\rm{ext}}_{\Omega_+}u_n $ has defect measure $\mu$ (see e.g.~\cite[Theorem 5.2]{Zw:12}) and $\h_n^{-1}\Im \lambda_n\to \Im \beta_\infty$, and $ \Re \lambda _n=1$. Since $\|f_{\h_n}\|_{L^2} \tendo$, $\mu^j=0$.

Let $\chi,\chi_0 \in C_c^\infty(\mathbb{R}^d;[0,1])$ with $\supp \chi \Subset \Rea^d$ and $\chi,\chi_0 \equiv 1$ in a neighborhood of $\{\r \leq 2r_1\}$ and $\supp \chi_0\subset \{\chi \equiv 1\}$.  
We first show that 
\begin{equation}
\label{e:exterior}
\N{(1-\chi)u_n}_{L^2(\Omega_+)}=\cO(\h_n).
\end{equation}
 To do this, observe that, by \eqref{eq:symbolPtheta},
\beq\label{eq:elliptic_Friday2}
|\sigma_\h(P_\theta-\lambda_n^2)(x,\xi)|=\left|\frac{|\xi|^2}{(1+\ri\tan \theta)^2}-1\right|\geq c(|\xi|^2+1),\qquad \r(x,\xi)\geq 2r_1.
\eeq
Therefore, by ellipticity, for $W$ a neighborhood of $\supp \partial \chi$,
\beq\label{eq:elliptic_Friday}
\|u_n\|_{H_{\h}^2(W)}\leq C\big(\N{(P_\theta-\lambda^2)u_n}_{L^2(\Omega_+)}+\N{u_n}_{L^2(\Omega_+)}\big).
\eeq
Now, by \eqref{eq:elliptic_Friday2} and the definitions of $\chi$ and $\chi_0$,
$$
\Big|\sigma\Big(\Oph\big((1+|\xi|^2)^{-1}\big)(1-\chi_0)(P_\theta-\lambda_n^2)(1-\chi_0)-\ri \chi_0\Big)\Big|\geq c.
$$
Therefore, by~\cite[Theorem 4.29]{Zw:12},
\begin{align}\nonumber
&\|(1-\chi)u_n\|_{L^2(\Omega_+))}\\ \nonumber
&\qquad\leq C\N{\big[\Oph((1+|\xi|^2)^{-1})(1-\chi_0)(P_\theta-\lambda_n^2)(1-\chi_0)-\ri\chi_0\big](1-\chi)u_n}_{L^2(\mathbb{R}^d)}\\
&\qquad=C\N{\Oph((1+|\xi|^2)^{-1})(1-\chi_0)(P_\theta-\lambda_n^2)(1-\chi)u_n}_{L^2(\Rea^d)} .\label{eq:rain1}
\end{align}
But, 
\begin{align}\nonumber
&\N{\Oph((1+|\xi|^2)^{-1})(1-\chi_0)(P_\theta-\lambda_n^2)(1-\chi)u_n}_{L^2(\Rea^d)}, \\ \nonumber
&\qquad\leq C\N{(1-\chi)\h_nf_n}_{L^2(\Omega_+)}+\N{[P_\theta,\chi]u_n}_{H_{\h}^{-2}(\Omega_+)},\\
&\qquad\leq C\N{(1-\chi)\h_nf_n}_{L^2(\Omega_+)}+C\h_n \N{u_n}_{L^2(\Omega_+)}=\cO(\h_n).\label{eq:rain2}
\end{align}
where we have used that, by direct computation, $\|[P_\theta,\chi]\|_{H_{\h}^s(\Omega_+)\to H_{\h}^{s-1}(\Omega_+)}\leq C\h$ in the second inequality; \eqref{e:exterior} then follows from combining \eqref{eq:rain1} and \eqref{eq:rain2}.

We now show that $\mu(T^*\mathbb{R}^d)=1$. First, observe that
\beq\label{eq:reorder1}
(P_\theta-\lambda_n^2)\chi u_n= [P_\theta,\chi]u_n+o(\h_n)_{L^2}.
\eeq
Consequently, using \eqref{eq:elliptic_Friday} in \eqref{eq:reorder1} we find that 
\beqs
(P_\theta-\lambda_n^2)\chi u_n=\cO(\h_n)_{L^2}.
\eeqs
Since $(P_\theta -\lambda^2) = (-\h^2 \Delta -\lambda^2)$ on $\supp \chi$, we can now apply Lemma~\ref{l:bvpDefect} (with $u$ in that lemma replaced by $\chi u_n$ here) to find that
$$
\mu(\chi^2)=\lim_{\h\to 0}\N{\chi u_n}_{L^2(\Omega_+)}^2=\lim_{\h \to 0}\N{u_n}_{L^2(\Omega_+)}^2=1,
$$
where we have used \eqref{e:exterior} in the second equality. Moreover, 
$$
\mu(T^*\mathbb{R}^d)\leq \lim_{\h \to 0}\N{u_n}_{L^2(\Omega_+)}^2=1,
$$
so that in fact $\mu(T^*\mathbb{R}^d)=1$.

We now show that $\mu=0$ which is a contradiction. To do this, we start by observing that~\eqref{e:exterior} implies that $\mu(\{\r\geq 2r_1\})=0$. In fact, by  Lemma~\ref{l:ellipticDefect}, $\mu (\{\sigma_{\h}(P_\theta)\neq0\})=0$, and therefore, $\supp \mu \subset S^*_{\overline{\Omega_+}}\Rea^d\cap\{\r\leq  2r_1\} $.

 Now, Lemma~\ref{l:bvpDefect}, along with Lemma~\ref{l:propagateDefect} together with the fact that $\Im \sigma_\h( P_\theta)\leq 0$, allows us to propagate forward along the generalized bicharacteristic flow (in the sense of Corollary \ref{cor:forwards}), but not backward. In particular, since $\mu(\{\r\geq 2r_1\}=0)$, this implies that $\supp \mu\subset \Gamma_+$. Indeed, suppose that $A\subset S^*_{\overline{\Omega_+}}\mathbb{R}^d$ is compact and $A\cap \Gamma_+=\emptyset$. 
Then, by the definition of $\Gamma_+$~\eqref{e:trappedSets}, for each $\rho\in A$ there is $t_\rho>0$ such that ${\bf{r}}(\varphi_{-t_\rho}(\rho))> \max(2r_1,{\bf{r}}(\rho))$. Hence, by~\eqref{e:directEscape1} for $t\geq t_\rho$, ${\bf{r}}(\varphi_{-t}(\rho))>2r_1$ and by continuity of $\varphi_{-t_\rho}$, there is a neighborhood $U_\rho$ of $\rho$  such that $\varphi_{-t}(U_\rho)\subset \{{\bf{r}}> 2r_1\}$ for $t\geq t_\rho$. In particular, by compactness of $A$, there is $T>0$ such that $\varphi_{-T}(A)\subset \{{\bf{r}}>2r_1\}$. By compactness of $A$ in the $\xi$ variable and \eqref{eq:forwards}, there is $C>0$ such that $\mu(A)\leq \exp{(CT)}\mu(\varphi_{-T}(A))=0$. Now, by Lemma~\ref{l:trapping}, $\Gamma_+$ is closed and hence we may write $(\Gamma_+)^c=\cup_nA_n$ with $A_n$ compact. In particular, $\mu((\Gamma_+)^c)=0$ by monotonicity from below. 

Next, note that since $\Im \sigma_\h(P_\theta-\lambda^2)<0$ on $\{f_\theta\neq 0\}$,
$$
\supp \mu \subset \{f_\theta =0\}
$$
by Lemma \ref{l:ellipticDefect}.
In particular, by the definition of $f_\theta$,
$$
\supp \mu\subset \{\r < 2r_1\}.
$$
To complete the proof, we need to show that in fact $\mu(\Gamma_+)=0$. This is where the boundary term $Q_b$ is used.

We claim there are $T,c>0$ such that 
\begin{equation}
\label{e:backward}
\mu(\varphi_{-T}(A))\geq \re^c\mu(A)
\end{equation}
for all $A$.
 Once this is done, we have that $\mu\equiv 0$. To see this, observe that if $\mu(A)>0$, then by induction $\mu(\varphi_{-nT}(A))\geq \re^{nc}\mu(A)$. Taking $N> -(\log \mu(A))/c$, we have $\mu(\varphi_{-NT}(A))>1$, which is a contradiction to $\mu(T^*\Rea^d)=1$.
 
We now prove~\eqref{e:backward}. First, note that the statement is empty if $\mu(A)=0$. Therefore, we can assume that $\mu(A)>0$. 
Since $\supp \mu \subset \Gamma_+$, we assume that $A\subset \Gamma_+$; since $\Gamma_+$ is closed, we can assume that $A$ is compact. Now, by~\eqref{e:directEscape1}, \eqref{e:directEscape2}, and \eqref{e:trappedSets},
$$
\Gamma_+\cap \{r_0\leq \r\leq 2r_1\}\subset \bigcup_{t=0}^{{\sqrt{(2r_1)^2-r_0^2}}}\varphi_t(\Gamma_+\cap \{\r\leq  r_0\}).
$$
Therefore, increasing $T$ by ${\sqrt{(2r_1)^2-r_0^2}}$, we may assume that $A\subset \{\r< r_0\}\cap \Gamma_+$.

Letting $\mc{N}=Q_b$ and $\mc{D}=-1$ and recalling \eqref{eq:Q_bsymbol}, we see that $\mc{N}$ and $\mc{D}$ satisfy~\eqref{e:boundaryCond}. Therefore, the proof of~\eqref{e:backward} is completed by the next lemma.

\begin{lemma}\label{lem:hayfever}
Suppose that $\mc{N}$ and $\mc{D}$ are as in~\eqref{e:boundaryCond}, $\mu$ satisfies the conclusions of Parts (iii) and (iv) of Lemma \ref{l:bvpDefect} with $\mu^j=0$, and $A\subset \{\r<r_0\}\cap \Gamma_+$. Then there are $T,c>0$ such that~\eqref{e:backward} holds.
\end{lemma}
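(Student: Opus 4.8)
The plan is to use the transport identity of Lemma~\ref{l:bvpDefect}(iii) together with part~(iv) (and $\mu^j=0$), which together say that the generalized bicharacteristic flow \emph{dissipates} $\mu$-mass at $\Gamma_D$, and to combine this with the fact that trajectories in $\Gamma_+$ stay bounded as $t\to-\infty$ and so must keep returning to the boundary. First I would reduce to $A$ compact with $A\subset\Gamma_+\cap\{\r\le r_0-\eta\}$ for some $\eta>0$ (inner regularity of $\mu$), and --- since the conclusion only involves $\mu$ restricted to the sets $\varphi_{-s}(A)\subset\Gamma_+$, $0\le s\le T$ --- also assume $\supp\mu\subset\Gamma_+$; then $\mu(\mc{G})=0$, because a generalized bicharacteristic meeting the gliding set $\mc{G}$ either escapes as $t\to-\infty$ (hence is not in $\Gamma_+$) or lies on a closed gliding orbit, and a closed gliding orbit carries no mass by an analogous monotonicity-plus-periodicity argument applied to the gliding term (on $\mc{G}$ one has $\tfrac{1}{2}\,\sigma_\h(\mc{N})\,\sigma_\h(\mc{D})^{-1}\,H_p^2x_1<0$, by \eqref{e:boundaryCond} and the definition of $\mc{G}$). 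Thus the gliding term in Lemma~\ref{l:bvpDefect}(iii) may be taken to vanish. The goal then becomes: find $T_0>0$ and $\kappa>0$, depending only on $\mu$ and the geometry (not on $A$), with $\mu(\varphi_{-T_0}(A))\ge(1+\kappa)\mu(A)$; granting this, \eqref{e:backward} holds with $T=T_0$ and $c=\log(1+\kappa)$.

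\emph{Dissipation.} The first point is that $\alpha<1$ on the whole hyperbolic set $\mc{H}_{\Gamma_D}=\{|\xi'|_g<1\}$: by \eqref{eq:alpha}, $\alpha=1$ would force $\sqrt{1-|\xi'|_g^2}\,\sigma_\h(\mc{N})\,\sigma_\h(\mc{D})=0$, impossible for $|\xi'|_g<1$ since then $\sqrt{1-|\xi'|_g^2}>0$ while $\sigma_\h(\mc{N})\,\sigma_\h(\mc{D})>0$ on $\overline{B^*\Gamma_D}$ by \eqref{e:boundaryCond}; hence by compactness, for every $\delta\in(0,1)$ there is $\alpha_\delta<1$ with $\alpha\le\alpha_\delta$ on $\{|\xi'|_g\le1-\delta\}$. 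Next I would feed $\mu^{\rm out}=\alpha\mu^{\rm in}$, $\mu^j=0$ and $\mu(\mc{G})=0$ into the transport identity of Lemma~\ref{l:bvpDefect}(iii) and read it backward in time. Since $\supp\mu\subset S^*\Omega_+$ by Lemma~\ref{l:bvpDefect}(i), the projection $\pi$ is a bijection on $\supp\mu$ away from $\Gamma_D$, and testing against functions approximating the indicator of $A$ this yields $\mu(\varphi_{-t}(A))\ge\mu(A)$ for $t\ge0$, and, more precisely, that flowing a packet of $\mu$-mass \emph{backward} through a transversal reflection at a point of $\mc{H}_{\Gamma_D}$ multiplies that mass by $1/\alpha$ (which is $\geq1$) at that point, no mass being lost to spatial infinity because $A\subset\Gamma_+$. (The guiding picture is a bouncing-ball orbit between two pieces of absorbing boundary: a defect measure supported near it must have density $\propto\alpha^{-n}$ at the $n$-th reflection counted backward, so flowing a piece $A$ back over one period multiplies $\mu(A)$ by a power of $1/\alpha$ exceeding $1$.)

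\emph{Uniform transversal reflection.} I claim there exist $T_1>0$ and $\delta_0\in(0,1)$ such that for every $\rho\in\Gamma_+\cap\{\r\le r_0\}$ the backward trajectory $\{\varphi_{-s}(\rho):0\le s\le T_1\}$ meets $\mc{H}_{\Gamma_D}$ at a point with $|\xi'|_g\le1-\delta_0$. To prove this, for $T>0$ let $Z_T$ be the set of $\rho\in\Gamma_+\cap\{\r\le r_0\}$ whose backward trajectory on $[0,T]$ has no transversal reflection in $\{|\xi'|_g<1\}$; by continuity of the generalized bicharacteristic flow each $Z_T$ is closed, and they decrease as $T$ increases. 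A point of $\bigcap_{T>0}Z_T$ would have its entire backward trajectory consisting of straight segments together with grazing ($|\xi'|_g=1$) contacts with $\Gamma_D$, and by \eqref{e:directEscape1}--\eqref{e:directEscape2} any such trajectory has $\r(\varphi_{-s}(\rho))\to\infty$, contradicting $\rho\in\Gamma_+$. Hence $\bigcap_{T>0}Z_T=\emptyset$, so by compactness of $\Gamma_+\cap\{\r\le r_0\}$ some $Z_{T_1}$ is empty; since the backward flowout of $\Gamma_+\cap\{\r\le r_0\}$ over $[0,T_1]$ is compact and the reflection points depend continuously on $\rho$, a further compactness step gives the uniform $\delta_0$.

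\emph{Conclusion and main obstacle.} Combining the previous two paragraphs, every packet of $\mu$-mass in $A$, flowed backward for time $T_1$, passes through a transversal reflection in $\{|\xi'|_g\le1-\delta_0\}$ and is there multiplied by at least $1/\alpha_{\delta_0}>1$, so $\mu(\varphi_{-T_1}(A))\ge\alpha_{\delta_0}^{-1}\mu(A)$; this is the desired bound with $T_0=T_1$ and $\kappa=\alpha_{\delta_0}^{-1}-1$. \emph{The hard part is the dissipation step --- turning Lemma~\ref{l:bvpDefect}(iii)--(iv) into the precise lower bound just used}: this requires tracking $\mu$-mass (as opposed to the mere upper bounds of Corollary~\ref{cor:forwards}) through hyperbolic reflections, which means reconciling $\mu$ on $S^*\mathbb{R}^d$ with $\pi_*\mu$ on ${}^bT^*\Omega_+$ at the boundary and correctly accounting for the boundary term $\delta(x_1)\otimes(\mu^{\rm in}-\mu^{\rm out})$ (and, if one does not first arrange $\mu(\mc{G})=0$, the gliding term). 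A secondary difficulty is the dynamical uniformity in the preceding step, in particular ruling out backward-bounded trajectories that meet $\Gamma_D$ only tangentially; this is exactly where the hypothesis $A\subset\Gamma_+$ is used, through the escape estimates \eqref{e:directEscape1}--\eqref{e:directEscape2}.
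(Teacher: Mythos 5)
Your proposal separates the analysis into two independent steps — discard gliding by showing $\mu(\mc{G})=0$, then show every backward trajectory in $\Gamma_+\cap\{\r<r_0\}$ has a uniform transversal reflection — and that separation is where the argument breaks. The paper instead estimates the \emph{combined} contribution of the gliding term and the reflection term in a single integral (equation (3.14)/(e:rhoClaim)), and this turns out to be essential.

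The genuine gap is in your ``uniform transversal reflection'' step. You claim that a point of $\bigcap_{T>0}Z_T$ ``would have its entire backward trajectory consisting of straight segments together with grazing ($|\xi'|_g=1$) contacts'' and therefore escapes by \eqref{e:directEscape1}--\eqref{e:directEscape2}. But a point of $\bigcap_T Z_T$ can instead have a backward trajectory that \emph{glides} along a convex portion of $\Gamma_D$ for all backward time (or for arbitrarily long stretches, interrupted only by diffractive tangencies); such a trajectory stays in $\{\r\le r_0\}$, never has a transversal reflection, and the escape estimates do not apply — they are statements about straight-line travel, not about $H_p^G$-flow on the boundary. So $\bigcap_T Z_T$ need not be empty, and your compactness argument never gets off the ground. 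Relatedly, your dichotomy for showing $\mu(\mc{G})=0$ (``escapes or is a closed gliding orbit'') is false: a gliding trajectory trapped on a convex boundary piece need not be periodic, so the periodicity argument you sketch does not cover the general case. Even if one had $\mu(\mc{G})=0$, one would still need to control mass along trajectories that glide without carrying mass on any fixed time slice, and your framework has no mechanism for that.

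The paper avoids both problems by \emph{not} trying to rule out gliding. It proves a single uniform bound (e:rhoClaim): for every $\rho\in\Gamma_+\cap\{\r<r_0\}$, the sum of the backward-time integral of $-\tfrac{1}{2}\,\sigma_\h(\mc{N})\sigma_\h(\mc{D})^{-1}H_p^2x_1\,1_{\mc{G}}$ (which is negative on gliding segments) and of $\log\alpha$ at hyperbolic reflections (which is negative there) is $\le -\e_1$. The proof is by contradiction: if both contributions were small, then $\varphi_{-T}(\rho)$ would be within $\cO(\e_1)$ of the free flow $\exp(-TH_{|\xi|^2})(\rho)$ and hence far from $\{\r\le r_0\}$, contradicting $\rho\in\Gamma_+$. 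Gliding and hyperbolic reflection are treated on exactly the same footing, as two ways a trajectory can stick close to the obstacle, each of which is paid for by dissipation. Your ``dissipation'' step — the careful bookkeeping that converts Lemma~\ref{l:bvpDefect}(iii)--(iv) into multiplicative lower bounds on $\pi_*\mu$ across reflections and gliding — is indeed where much of the technical work lives, as you flag; but the more serious issue is architectural: without folding the gliding term into the same estimate as the reflection term, the lemma cannot be proved under the stated hypotheses, because nothing prevents $\Gamma_+$ from containing non-escaping purely-gliding backward trajectories.
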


\begin{proof}
We claim that there are $\e_1,T>0$ such that for all $\rho\in \Gamma_+$ with $\r(\rho)<r_0$.
\begin{equation}
\label{e:rhoClaim}
\int_0^T \left(-\dfrac{1}{2}\dfrac{\sigma_\h(\mc{N})}{\sigma_\h(\mc{D})}H_p^2x_1 1_{\mc{G}}(\varphi_{-t}(\rho))+|H_px_1(\varphi_{-t}(\rho))|^{-1}\delta\big(x_1(\varphi_{-t}(\rho))\big)\log \alpha\big(\pi_{\Gamma_D}(\varphi_{-t}(\rho))\big)
\right)d t
\leq -\e_1.
\end{equation}
where $\pi_{\Gamma_D}:S_{\Gamma_D}^*{\mathbb{R}^d}\to T^*\Gamma_D$ is the orthogonal projection and $\alpha$ is given by \eqref{eq:alpha}.

Once~\eqref{e:rhoClaim} is proved, we claim that Lemma~\ref{l:bvpDefect} implies~\eqref{e:backward} with $(c,T)=(\e_1,T)$. Indeed, suppose that~\eqref{e:rhoClaim} holds and that $\mu(A)>0$, $A\subset \Gamma_+\cap \{\r<r_0\}$ and $A$ is closed. Then, let $0\leq a\in C_c^\infty(^bT^*\mathbb{R}^d\setminus \Omega_-)$ with $a\equiv 1$ on $A$ and 
\begin{equation}
\label{e:backFlow}
\bigcup_{t\in[0,-T]}\varphi_t(\supp a)\subset \left\{\r<\frac{r_0+r_1}{2}\right\}.
\end{equation}

Now, let $\chi\equiv 1$ on $\{\r\leq \frac{r_0+r_1}{2}\}$ with $\supp \chi \subset \{\r <r_1\}$. Then, 
$$
(-\h^2\Delta-1)\chi u= [-\h^2\Delta,\chi]u+\h f,\qquad \chi u|_{\Gamma_D}=0
$$
with $f=o(1)_{L^2}$ and hence by Lemma~\ref{l:bvpDefect}
$$
\pi_*\mu(\chi^2 (a\circ\varphi_t))-\pi_*\mu(\chi^2 a)=\int_0^t \Big(-4\big\langle \xi,\partial\chi\big\rangle\mu+\delta(x_1)\otimes (\mu^{\rm in}-\mu^{\rm out})+\dfrac{\sigma_\h(\mc{N})}{2\sigma_\h(\mc{D})}H_p^2x_1\mu1_{\mc{G}}\Big)(\chi^2( a\circ\varphi_s))\,ds.
$$
But, by~\eqref{e:backFlow}, $\chi^2\equiv 1$ on $\supp a\circ\varphi_t$ for $t\in [0,T]$. In particular, for $t\in[0,T]$
\begin{equation*}
\pi_*\mu(a\circ\varphi_t)-\pi_*\mu( a)=\int_0^t \Big(\delta(x_1)\otimes (\mu^{\rm in}-\mu^{\rm out})+\dfrac{\sigma_\h(\mc{N})}{2\sigma_\h(\mc{D})}H_p^2x_1\mu1_{\mc{G}}\Big)( a\circ\varphi_s)\,ds.
\end{equation*}
Finally, since $A$ is closed we may approximate $1_{A}$ by smooth, compactly supported functions to obtain
\begin{equation}
\label{e:toStudy}
\pi_*\mu(
{\varphi_{-t}(A)}
)-\pi_*\mu(A)=\int_0^t \Big(\delta(x_1)\otimes (\mu^{\rm in}-\mu^{\rm out})+\dfrac{\sigma_\h(\mc{N})}{2\sigma_\h(\mc{D})}H_p^2x_1\mu1_{\mc{G}}\Big)( 1_A\circ\varphi_s)\,ds.
\end{equation}
Now, to study~\eqref{e:toStudy}, we first assume that $A$ and $t$ are such that for all $\rho\in A$ and $s\in[0,2t]$, $\varphi_{-s}(\rho)$ does not lie in the glancing region ($H_px_1=0$) and each trajectory intersects $\Gamma_D$ exactly once and does so for $s\in (0,t)$. Shrinking the support of $a$ further if necessary, we can find $\Sigma\subset ^bT^*\mathbb{R}^d\setminus \Omega_-$ transverse to the vector field $H_p$ such that 
$$
F:[-t,t]\times\Sigma\,\ni(s,\rho)\mapsto \varphi_{-s}(\rho)\in ^bT_{\Gamma_D}^*\mathbb{R}^d
$$
are smooth coordinates and $\varphi_{-s}(A)$ is in the image of $F$ for all $s\in[0,t]$. Then,~\eqref{e:toStudy} reads
\begin{equation*}
\begin{aligned}
&\pi_*\mu(
{\varphi_{-t}(A)}
)-\pi_*\mu(A)\\
&=\int_0^t\Big(\delta(x_1)\otimes (\mu^{\rm in}-\mu^{\rm out})\Big)(1_A\circ \varphi_{t'})\,dt'\\
&=\int_0^t\int_{-t}^t\int_{\Sigma}\Big(|H_px_1|(s,\rho)\delta(s)\otimes (1_A(s-t',\rho))\Big)d(\mu^{\rm in}-\mu^{\rm out})(\rho)\,ds \,dt'\\
&=\int_{\Sigma}\int_0^{t} \Big(|H_px_1|(0,\rho)(\alpha^{-1}(\rho)-1)1_A(-t',\rho)\Big)d\mu^{\rm out}(\rho)dt'
\end{aligned}
\end{equation*}
Now, arguing as in~\cite[Lemma 2.16]{GaLaSp:21}, we obtain that $\pi_*\mu=  |H_px_1|\mu^{\rm out}1_{s<0}ds+|H_px_1|\mu^{\rm in}1_{s>0}ds$ and hence, 
$$
\pi_*\mu(A)=\int_\Sigma\int_0^t |H_px_1|(0,\rho)1_A(-t',\rho)d\mu^{\rm out}(\rho)dt'.
$$
Therefore
\begin{align*}
\pi_*\mu(\varphi_{-t}(A))&\geq \inf_{F^{-1}(A)}(\alpha(\rho))^{-1}\pi_*\mu(A)\\
&= \inf_{A}\re^{-\int_0^t(|H_px_1|(\varphi_{-t'}(\rho))^{-1}\delta(x_1(\varphi_{-t'}(\rho)))\log \alpha(\pi_{\Gamma_D}(\varphi_{-t'}(\rho)))dt'}\pi_*\mu(A),
\end{align*}
where this last equality comes from evaluating the integral using the fact that $F$ is well-defined (since each trajectory intersects $\Gamma_D$ exactly once).
\beqs
A\subset\Big\{ \varphi_{s}\big(\{x_1=H_px_1=0\}\big)\setminus \big\{H_px_1\neq 0,\,x_1=0\big\} \, :\, s\in[0,t]\Big\},
\eeqs
 so that, in particular, trajectories from $A$ do not intersect the hyperbolic set. In this case, \eqref{e:toStudy} implies that
\begin{equation}
\label{e:ac}
\partial_s \pi_*\mu(\varphi_{-s}(A))=\Big(\dfrac{\sigma_\h(\mc{N})}{2\sigma_\h(\mc{D})}H_p^2x_11_{\mc{G}}\pi_*\mu\Big)\big(\varphi_{-s}(A)\big),
\end{equation}
In particular, shrinking $A$ if necessary, we may choose $\Sigma \subset \{x_1=H_px_1=0\}$ transverse to $H_p$ 
and work in coordinates
$$
[0,t]\times \Sigma \ni(s,\rho)\mapsto \varphi_{-s}(\rho)\in \Big\{ \varphi_{-s}\big(\{x_1=H_px_1=0\}\big)\,:\, s\in[0,t]\Big\}.
$$
In these coordinates,~\eqref{e:ac} implies that $\pi_*\mu$ is absolutely continuous with respect to $t$ in the sense that there is a family of measures, $t\mapsto \nu_t$  on $\Sigma$ such that $\nu_t(\Sigma)\in L^1$ and $\mu=\nu_tdt$. Moreover,
$$
\int_B d\nu_s(\rho)=\int_B \exp\left(\int \dfrac{\sigma_\h(\mc{N})}{2\sigma_\h(\mc{D})}H_p^2x_11_{\mc{G}}(\varphi_{-s}(\rho))ds\right)d\nu(\rho).
$$
In particular, 
$$
\pi_*\mu(\varphi_{-t}(A))\geq \inf_{A} \exp\left(\int \dfrac{\sigma_\h(\mc{N})}{2\sigma_\h(\mc{D})}x_11_{\mc{G}}(\varphi_{-s}(\rho))ds\right)\pi_*\mu(A)
$$
Putting everything together, we have for all $A$ and $0\leq t\leq T$, 
\begin{align*}
&\pi_*\mu(\varphi_{-t}(A))\\
&\geq \inf_A \exp\Big(-\int_0^t(|H_px_1|(\varphi_{-t}(\rho))^{-1}\delta(x_1(\varphi_{-t}(\rho)))\log \alpha(\pi_{\Gamma_D}(\varphi_{t}(\rho)))-\dfrac{\sigma_\h(\mc{N})}{2\sigma_\h(\mc{D})}x_11_{\mc{G}}(\varphi_{-s}(\rho)))ds\Big)\pi_*\mu(A)\\
&\geq \re^{\e_1}\pi_*\mu(A)
\end{align*}
as claimed.

Therefore, it is enough to prove~\eqref{e:rhoClaim}. Seeking a contradiction, we assume that for every $\e_1>0$ and $T>0$ there is $\rho\in \Gamma_+$ with $\r(\rho)<r_0$ such that 
\beq\label{eq:contradictclaim}
\int_0^T \left(-\dfrac{\sigma_\h(\mc{N})}{2\sigma_\h(\mc{D})}H_p^2x_11_{\mc{G}}(\varphi_{-t}(\rho))+|H_px_1(\varphi_{-t}(\rho))|^{-1}\delta(x_1(\varphi_{-t}(\rho)))\log \alpha(\pi_{\Gamma_D}(\varphi_{-t}(\rho)))
\right) d t\geq -\e_1.
\eeq
 Note that since both terms are non-positive (since $\alpha \leq 1$ and $\sigma_\h(\mc{N})\sigma_\h(\mc{D})>0$), this implies that each term is $\geq -\e_1$. 
 
Now, if $\varphi_{-t}(\rho)\in\mc{G}$ for $t\in [t_1,t_2]$, then, since the flow in $\mc{G}$ is given by the flow of the vector field
$$
H_p^G:=H_p+\frac{H^2_p x_1}{H_{x_1}^2p}H_{x_1},\qquad p=|\xi|^2-1,
$$
(see \cite[Def.~24.3.6]{Ho:85}),
we obtain, using that $\sigma_\h(\mc{N})/\sigma_\h(\mc{D})>c>0$ on $\mc{G}$ (since $\mc{G} \subset \overline{B^* \Gamma_D}$).
\begin{align*}
\varphi_{-t_2}(\rho)&=\exp(-(t_2-t_1)H_{|\xi|^2}(\rho)) +\cO\left(\int_{t_1}^{t_2}H_p^2x_1(\varphi_{-t}(\rho))dt\right)\\
&=\exp(-(t_2-t_1)H_{|\xi|^2}(\rho)) +\cO\left(\int_{t_1}^{t_2}\dfrac{\sigma_\h(\mc{N})}{2\sigma_\h(\mc{D})}H_p^2x_1(\varphi_{-t}(\rho))dt\right),
\end{align*}
where both here and in the rest of this proof we write $a= b + \mc{O}(c)$ if $|a-b| \leq C c$ for some $C>0$ depending only on $\sigma_\h(\mc{N})$ and $\sigma_\h(\mc{D})$.
On the other hand, if $\varphi_{-t}(\rho)\notin\mc{G}$ for $t\in [t_1,t_2]$, and has exactly one intersection with $\Gamma_D$, then
$$
\varphi_{-t_2}(\rho)=\exp(-(t_2-t_1)H_{|\xi|^2}(\varphi_{-t_1}(\rho))+\cO\left(|t_2-t_1|2\sqrt{1-|\xi_r'|_g^2}\right).
$$
where $|\xi_r'|_g$ is measured at the point of reflection. All together, since $\sigma_\h(\mc{N})\sigma_\h(\mc{D})>c>0 $ on $|\xi'|_{g}\leq 1$, and thus there is $c>0$ such that
$$
\log \alpha= -4\sqrt{1-|\xi'|_g^2}\frac{\sigma_\h(\mc{N})}{\sigma_\h(\mc{D})}+\cO\big(1-|\xi'|_g^2\big)\leq  -c\sqrt{1-|\xi'|_g^2}\frac{\sigma_\h(\mc{N})}{\sigma_\h(\mc{D})}+\cO\big(1-|\xi'|_g^2\big), 
$$
we obtain from \eqref{eq:contradictclaim} that
$$
\varphi_{-T}(\rho)=\exp(-TH_{|\xi|^2}(\rho))+\cO(\e_1)
$$
Therefore, choosing $T\gg r_0$, and $\e_1$ small enough, we obtain
$$
{\rm dist}(\projx(\varphi_{-T}(\rho)),\projx(\rho))>3r_0
$$
which is a contradiction to $\rho\in \Gamma_+\cap \{\r \leq r_0\}$.
\end{proof}

We have therefore proved that
\beq\label{eq:ruler1}
\N{u}_{L^2(\Omega_+)}\leq C\h^{-1}\N{(P_\theta-\lambda^2)u}_{L^2(\Omega_+)}+C\N{(Q_b \gamma_{1,\h}^D +\gamma_0^D)u}_{H_{\h}^{3/2}(\Gamma_D)}.
\eeq
where here, and in the rest of the proof, $C$ denotes a constant, independent of $\h$, $\lambda$, and $z$, whose value may change from line to line.
To complete the proof of Lemma \ref{l:absorbingInverse}, we now need to obtain a bound on the $H^2_h$ norm of $u$, as opposed to just the $L^2$ norm in \eqref{eq:ruler1}. 
By a standard elliptic parametrix construction, for $\chi _1\in C^\infty(\overline{\Omega_+})$ supported away from $\Gamma_D$, we have 
\begin{align*}
\N{\chi_1 u}_{H_{\h}^2(\Omega_+)}&\leq C\N{(P_\theta-\lambda^2)u}_{L^2(\Omega_+)}+C\N{u}_{L^2(\Omega_+)}\\
&\leq  C\h^{-1}\N{(P_\theta-\lambda^2)u}_{L^2(\Omega_+)}+C\N{(Q_b \gamma_{1,\h}^D +\gamma_0^D)u}_{H_{\h}^{3/2}(\Gamma_D)},
\end{align*}
by \eqref{eq:ruler1}.
Finally, using the trace estimate from \cite[Corollary 4.2]{GaLaSp:21} we have for $\chi_2 \in C^\infty(\{x:\r\leq r_0\})$ with $\supp \chi_2 \Subset \Rea^d$, 
$$
\N{ \gamma_{1,\h}^D  u}_{L^2(\Gamma_D)}\leq C\N{\chi_2 u}_{L^2(\Omega_+)}+\N{(-\h^2\Delta-1) \chi_2 u}_{L^2(\Omega_+)}.
$$
Elliptic regularity for the Laplacian 
then implies that
\begin{align*}
\N{\chi_2 u}_{H_{\h}^2(\Omega_+)}&\leq C\N{(-\h^2\Delta-\lambda^2)\chi_2 u}_{L^2}+ C\N{\chi u}_{L^2}+C\N{\gamma_0^D u}_{H_{\h}^{3/2}(\Gamma_D)}\\
&\leq C\h^{-1}\N{(P_\theta-\lambda^2) u}_{L^2}+C\N{(Q_b \gamma_{1,\h}^D +\gamma_0^D)u}_{H_{\h}^{3/2}(\Gamma_D)},
\end{align*}
where we have used \eqref{eq:ruler1}. Combining the bounds on $\|\chi_1 u\|_{H^2_h(\Omega_+)}$, $\|\chi_2 u\|_{H^2_h(\Omega_+)}$, {and $\| \gamma_{1,\h}^D  u\|_{L^2(\Gamma_D)}$}, we obtain \eqref{eq:highlighter1}.
\end{proof}

\section{Proof of Lemma \ref{l:mainEstimate}}\label{sec:proofbounds}

With $R(\lambda,z)$ defined by 
\eqref{eq:Rdef}, $R_\theta(\lambda,z)$ defined by 
\eqref{eq:Rtheta}, and $\chi\in C^\infty$ with $\supp\chi \subset  \{x : \r\le r_1\}$ and $\supp \chi \Subset \Rea^d$, \eqref{e:resolveAgree} implies that 

\begin{equation}\label{e:scaledUnscaled}
\chi R_\theta(\lambda,z)\chi=\chi R (\lambda,z)\chi.
\end{equation}
Recalling \eqref{eq:twoRs}, we see that to prove the bounds \eqref{eq:bound1new}, \eqref{eq:bound2new} it is sufficient to bound 
$$
\|R_\theta(\lambda,z)\|_{L^2(\Otr)\to L^2(\Otr)}.
$$
We first focus on proving the bound for $\Im z>0$ \eqref{eq:bound2new}. By the definitions of $\mc{P}_\theta(\lambda,z)$ \eqref{eq:Ptheta} and $R_\theta(\lambda,z)$ \eqref{eq:Rtheta}, the bound \eqref{eq:bound2new} follows if we can prove the following.

\begin{lemma}
\label{l:upperHalf}
There exists $C>0$ such that 
if $\Re\lambda>0$, $\Im\lambda =0$, 
\beq\label{eq:Thursday4}
\|\mc{P}_\theta(\lambda,z)^{-1}\|_{L^2(\Otr)\otimes H^{3/2}_\h(\Gamma_D) \rightarrow L^2(\Otr)}\leq C \langle z\rangle (\Im z)^{-1} \quad\tfor \Im z>0.
\eeq
Moreover, there exists $\e>0$ small enough such that if $\Re\lambda>0$, $\Im\lambda =0$, 
\beq\label{eq:Thursday4a}
\|\mc{P}_\theta(\lambda,z)^{-1}\|_{L^2(\Omega_+)\otimes H^{3/2}_\h(\Gamma_D) \rightarrow H_{\h}^2(\Omega_+)}\leq C (\Im z)^{-1} \quad\tfor \Im z >0 \tand |z|\leq \e \h.
\eeq
\end{lemma}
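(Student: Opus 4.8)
\emph{Overview and Step 1 (reduction to homogeneous Dirichlet data).}
The plan is to prove \eqref{eq:Thursday4} by a limiting‑absorption (Green's identity) argument for the \emph{unscaled} resolvent $R(\lambda,z)=R_V(\lambda)$, $V=-z1_{\Otr}$, and then to obtain \eqref{eq:Thursday4a} by bootstrapping with the non‑trapping parametrix estimate \eqref{eq:highlighter1}. First pick a semiclassical bounded extension $E:H^{3/2}_\h(\Gamma_D)\to H^2_\h(\Omega_+)$ with $\gamma_0^D Eg=g$ and $\supp Eg$ in an arbitrarily small neighbourhood of $\Gamma_D$; since $\operatorname{conv}(\Omega_-)\Subset\Omega_1$ we may take $\supp Eg\cap\overline{\Omega_+}\subset\overline{\Otr}$, where $P_\theta=-\h^2\Delta$. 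Setting $u:=\mc{P}_\theta(\lambda,z)^{-1}(f,g)$ and $v:=u-Eg\in H^2_\h(\Omega_+)\cap H^1_0(\Omega_+)$, we get $(P_\theta-\lambda^2-z1_{\Otr})v=\tilde f:=f-(P_\theta-\lambda^2-z1_{\Otr})Eg$ with $\supp\tilde f\subset\supp f\cup\overline{\Otr}$ and, using $\lambda^2\in[a,b]$, $\N{\tilde f}_{L^2(\Omega_+)}\le\N{f}_{L^2}+C\langle z\rangle\N{g}_{H^{3/2}_\h}$. Since also $\N{Eg}_{L^2(\Otr)}+\N{Eg}_{H^2_\h(\Omega_+)}\le C\N{g}_{H^{3/2}_\h}$, it suffices to estimate $v=R_\theta(\lambda,z)\tilde f$.

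\emph{Step 2 (the bound \eqref{eq:Thursday4}).}
Here $f\in L^2(\Otr)$, so $\tilde f$ is supported in $\overline{\Otr}$; by Lemma~\ref{l:inverseForm} and \eqref{e:scaledUnscaled} (with $\chi\equiv1$ on a neighbourhood of $\overline{\Omega_1}$) we have $1_{\Otr}R_\theta(\lambda,z)\tilde f=1_{\Otr}R(\lambda,z)\tilde f=R_{\Otr}(\lambda,z)\tilde f$. For $\Im\lambda=0$ and $\Im z>0$ dissipativity excludes real poles of $z\mapsto R(\lambda,z)$ (a null solution would be $\lambda/\h$‑outgoing with $\gamma_0^D w=0$, and the imaginary part of the Green's identity below, together with unique continuation, forces $w\equiv0$), so $w:=R(\lambda,z)\tilde f\in H^2_{\loc}(\Omega_+)$ is the unique $\lambda/\h$‑outgoing solution of $(-\h^2\Delta-\lambda^2-z1_{\Otr})w=\tilde f$, $\gamma_0^D w=0$. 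Applying Green's identity on $B_\rho\cap\Omega_+$, letting $\rho\to\infty$, and taking imaginary parts, the terms $\N{\h\nabla w}^2$ and $\lambda^2\N{w}^2$ are real, the Dirichlet term on $\Gamma_D$ vanishes, and the Sommerfeld condition turns the boundary term at radius $\rho$ into $-\h(\Re\lambda)\int_{S^{d-1}}|w_\infty|^2\le0$, so that
$$
(\Im z)\,\N{w}_{L^2(\Otr)}^2\le\big|\langle\tilde f,w\rangle_{L^2(\Otr)}\big|\le\N{\tilde f}_{L^2(\Otr)}\N{w}_{L^2(\Otr)},
$$
hence $\N{R_{\Otr}(\lambda,z)}_{L^2(\Otr)\to L^2(\Otr)}\le(\Im z)^{-1}$. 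Combining with Step~1 and $1\le\langle z\rangle/\Im z$ gives \eqref{eq:Thursday4}.

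\emph{Step 3 (the bound \eqref{eq:Thursday4a}).}
Now $f\in L^2(\Omega_+)$ and $|z|\le\e\h$; by density of $L^2_{\rm comp}(\Omega_+)$ and continuity of $\mc{P}_\theta(\lambda,z)^{-1}$ we may assume $f$ compactly supported. Writing $(P_\theta-\lambda^2)u=f+z1_{\Otr}u$ and $(Q_b\gamma_{1,\h}^D+\gamma_0^D)u=Q_b\gamma_{1,\h}^D u+g$, estimate \eqref{eq:highlighter1} gives $\N{u}_{H^2_\h(\Omega_+)}\le C\h^{-1}(\N{f}_{L^2}+|z|\N{u}_{L^2(\Otr)})+C\N{Q_b\gamma_{1,\h}^D u}_{H^{3/2}_\h}+C\N{g}_{H^{3/2}_\h}$. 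Since $Q_b\in\Psi_\h^{\comp}(\Gamma_D)$, the trace estimate of \cite[Corollary 4.2]{GaLaSp:21} applied with a cut‑off $\chi_2\equiv1$ near $\Gamma_D$, $\supp\chi_2\cap\overline{\Omega_+}\subset\overline{\Otr}$, together with $(-\h^2\Delta-\lambda^2)u=f+zu$ on $\supp\chi_2$, yields $\N{Q_b\gamma_{1,\h}^D u}_{H^{3/2}_\h}\le C\N{u}_{L^2(\Otr)}+C\N{f}_{L^2}+C\h\N{u}_{H^2_\h}$; absorbing $C\h\N{u}_{H^2_\h}$ and using $|z|\le\e\h$ gives $\N{u}_{H^2_\h(\Omega_+)}\le C\h^{-1}(\N{f}_{L^2}+\N{g}_{H^{3/2}_\h})+C\N{u}_{L^2(\Otr)}$. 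Finally one bounds $\N{u}_{L^2(\Otr)}$ by splitting $f=1_{\Otr}f+(1-1_{\Otr})f$, $u=u_1+u_2$: for $u_1$ apply \eqref{eq:Thursday4}, and for $u_2$ (source outside $\Otr$, $\gamma_0^D u_2=0$) run the Green's identity of Step~2 for $u_2$ together with the displayed bootstrap bound for $u_2$, completing the square and using $\h^{-1}\le\e^{-1}(\Im z)^{-1}$; this gives $\N{u_1}_{L^2(\Otr)}+\N{u_2}_{L^2(\Otr)}\le C(\Im z)^{-1}(\N{f}_{L^2}+\N{g}_{H^{3/2}_\h})$, and feeding this back yields \eqref{eq:Thursday4a}.

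\emph{Main obstacle.}
The soft part is \eqref{eq:Thursday4}; the real work is in Step~3. One must juggle the four resolvents ($R_{\Otr}$, $R$, $R_\theta$, $\mc{R}_{\theta,Q}=\mc{P}_{\theta,Q}^{-1}$), keep track of which source is supported inside $\Otr$, and — crucially — match powers of $\h$ so that the potential perturbation $z1_{\Otr}u$, the boundary term $Q_b\gamma_{1,\h}^D u$, and the imaginary part of the complex‑scaled operator $P_\theta$ (which lives outside $\overline{\Omega_1}$ and is a priori only $O(\h)$‑small on $H^1_\h$) can all be reabsorbed while still producing the stated factor $(\Im z)^{-1}$ and not a worse negative power of $\h$. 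In particular, handling the imaginary part of $P_\theta$ when the source lies outside $\Otr$ — via the divergence‑form structure of $P_\theta$ (so that $\langle\Im P_\theta v,v\rangle$ is nonpositive up to $O(\h^2)$) or via the exterior non‑trapping estimate provided by the scaling beyond $|x|=2r_1$ — is the point that requires care; everything else is routine elliptic and trace bookkeeping.
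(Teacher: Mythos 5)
Your Step 2 is correct and is essentially the argument the paper gives for \eqref{eq:Thursday4}, just packaged differently: you run Green's identity on $B_\rho\cap\Omega_+$ and use the Sommerfeld condition directly, whereas the paper integrates by parts only on $\Otr$ (where $P_\theta=-\h^2\Delta$) and invokes the separately-proved sign property $\Im\mc{D}(\lambda/\h)\geq 0$ on $\Gtr$ (itself established by the very Green's identity you use). These are equivalent; the paper's version avoids letting $\rho\to\infty$ and is slightly tidier because it never leaves the region where $P_\theta$ coincides with $-\h^2\Delta$. Your Step 3 also reproduces the paper's bootstrap: combine \eqref{eq:highlighter1}/\eqref{eq:highlighter2} for $\mc{R}_{\theta,Q}$, the semiclassical trace estimate from \cite[Cor.~4.2]{GaLaSp:21} for $\gamma_{1,\h}^Du$, and Calderon--Vaillancourt for $Q_b$, then absorb using $|z|\leq\e\h$ (note the direction of your final inequality is off: $|z|\leq\e\h$ gives $\h^{-1}\leq\e(\Im z)^{-1}$, not $\e^{-1}(\Im z)^{-1}$, but this only helps you).

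Where you genuinely diverge from the paper is in how you finish Step 3. The paper simply feeds the already-proved \eqref{eq:Thursday4} back in to bound $\langle z\rangle\|u\|_{L^2(\Otr)}$; you instead split $f=1_{\Otr}f+(1-1_{\Otr})f$ and try to treat the piece $u_2$ with source outside $\Omega_1$ via a second Green's identity. This is precisely where your argument does not close as stated: $u_2$ is the \emph{complex-scaled} solution, so the Green's-identity-plus-Sommerfeld argument of Step 2 (which works for the unscaled resolvent $R(\lambda,z)$) does not apply, and the boundary pairing on $\Gtr$ for $u_2$ has no sign because $\gamma_1^\tr u_2$ need not equal $\mc{D}(\lambda/\h)\gamma_0^\tr u_2$ when the source lies in the scaling region. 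Your ``Main obstacle'' paragraph correctly flags this, but the proposed fixes are not convincing: the claim that $\langle\Im P_\theta v,v\rangle$ is nonpositive up to $O(\h^2)$ is not obvious from the formula \eqref{eq:symbolPtheta} (the imaginary part of $p_\theta$ is $O(1)$ in the scaling region, and symmetrising the non-self-adjoint divergence-type structure only gives $O(\h)$ commutator errors, not $O(\h^2)$), while the exterior-non-trapping route needs the a priori $H^2_\h$ bound you are trying to establish, creating a circularity. The paper sidesteps the whole question by applying the DtN-sign inequality only on $\Otr$, where the boundary pairing on $\Gtr$ always carries the right sign once one knows $\gamma_1^\tr u=\mc{D}(\lambda/\h)\gamma_0^\tr u$; your route forces you to confront the imaginary part of $P_\theta$, which the paper never needs to. So: the soft part and the overall skeleton match the paper, but the final step of your Step 3 is a different (and as written, incomplete) argument, and you would do better to follow the paper's route of re-using \eqref{eq:Thursday4} and the DtN sign rather than trying to exploit sign information about $P_\theta$ in the scaling region.
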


To prove Lemma \ref{l:upperHalf}, we need the following result about the sign of the Dirichlet-to-Neumann map.
\begin{lemma}
For $\Re\lambda>0$, and $\Im \lambda\geq 0$ we have $\Im \mc{D}(\lambda/\h)\geq 0.$
\end{lemma}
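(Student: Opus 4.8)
The plan is to unwind the statement into the inequality $\Im\langle \DtN(\lambda/\h)g,g\rangle_{\Gtr}\ge 0$ for every $g\in H^{1/2}(\Gtr)$, and then prove this by a Green's-identity (Rellich-type) argument. Write $k:=\lambda/\h$, so that $\Re k>0$ and $\Im k\ge 0$, and let $v\in H^1_{\loc}(\Rea^d\setminus\overline{\Omega_1})$ be the $k$-outgoing solution of $(\Delta+k^2)v=0$ in $\Rea^d\setminus\overline{\Omega_1}$ with $\gamma_0^{\tr}v=g$, so that $\DtN(k)g=\gamma_1^{\tr}v$ with the normal pointing out of $\Omega_1$. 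The first step is: fix $R$ large enough that $\overline{\Omega_1}\Subset B_R$, apply Green's first identity to $v$ and $\overline v$ on $\Omega_R:=(\Rea^d\setminus\overline{\Omega_1})\cap B_R$, substitute $\Delta v=-k^2v$, and take imaginary parts. Keeping track of the orientation on $\Gtr$ (the outward normal of $\Omega_R$ there is the inward normal of $\Omega_1$, i.e.\ minus the normal used in $\gamma_1^{\tr}$), this should produce
\[
\Im\langle\DtN(k)g,g\rangle_{\Gtr}=\Im(k^2)\int_{\Omega_R}|v|^2\,dx+\Im\int_{\partial B_R}\overline v\,\partial_r v\,dS .
\]

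The case $\Im\lambda>0$ is then immediate: since $v$ is $k$-outgoing and $\Im k>0$, $v$ decays exponentially as $|x|\to\infty$, so the integral over $\partial B_R$ vanishes as $R\to\infty$ while $\int_{\Omega_R}|v|^2\uparrow\int_{\Rea^d\setminus\overline{\Omega_1}}|v|^2<\infty$; as $\Im(k^2)=2\Re k\,\Im k>0$, letting $R\to\infty$ gives the claim. The substantive case is $\Im\lambda=0$, i.e.\ $k>0$ real, where $\Im(k^2)=0$ and the displayed identity collapses to $\Im\langle\DtN(k)g,g\rangle_{\Gtr}=\Im\int_{\partial B_R}\overline v\,\partial_r v$ for every admissible $R$. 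Here I would expand the nonnegative quantity $\int_{\partial B_R}|\partial_r v-ikv|^2$, which for real $k$ equals $\int_{\partial B_R}(|\partial_r v|^2+k^2|v|^2)-2k\,\Im\int_{\partial B_R}\overline v\,\partial_r v$, and combine the two to obtain
\[
2k\,\Im\langle\DtN(k)g,g\rangle_{\Gtr}=\int_{\partial B_R}\big(|\partial_r v|^2+k^2|v|^2\big)-\int_{\partial B_R}|\partial_r v-ikv|^2\ \ge\ -\int_{\partial B_R}|\partial_r v-ikv|^2 .
\]
The Sommerfeld radiation condition \eqref{eq:src} forces $\int_{\partial B_R}|\partial_r v-ikv|^2\to 0$ as $R\to\infty$; since the left-hand side is independent of $R$, this yields $2k\,\Im\langle\DtN(k)g,g\rangle_{\Gtr}\ge 0$, hence the result because $k>0$.

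The routine points I would not dwell on are: the legitimacy of Green's identity (elliptic regularity makes $v$ smooth in the open exterior, and for $g\in H^{1/2}$ the identity holds with the boundary term read as a duality pairing, since $\Delta v=-k^2v\in L^2_{\loc}$); the exponential decay of $k$-outgoing solutions when $\Im k>0$; and the fact that $\DtN(\lambda/\h)$ is well defined and the above $v$ unique, which is standard for $\Re k>0$, $\Im k\ge 0$ (Rellich uniqueness on the real axis). The only genuine obstacle is controlling the boundary term at infinity in the real-$k$ case, and that is exactly what the Rellich-type identity above accomplishes, exploiting that $\Im\langle\DtN(k)g,g\rangle_{\Gtr}$ does not depend on $R$.
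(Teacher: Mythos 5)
Your proof is correct. Both your argument and the paper's start from the same observation: pairing the Dirichlet--to--Neumann data against $g$ via Green's identity on the exterior of $\Omega_1$ and taking imaginary parts. For $\Im\lambda>0$ the two proofs are essentially identical (you take $R\to\infty$ using the exponential decay of the outgoing solution; the paper works directly on the full exterior where $G(\lambda)g\in H^1$). The genuine difference is how the boundary case $\Im\lambda=0$ is reached. The paper gets the sign for $\Im\lambda>0$ and then invokes that $\lambda\mapsto\Im\langle\DtN(\lambda/\h)g,g\rangle_{\Gtr}$ extends continuously (analytically, with no real poles) down to $\Im\lambda=0$, so the closed inequality persists in the limit; this outsources the hard part to the meromorphic-continuation framework of \cite{DyZw:19}. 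You instead prove the real-$k$ case directly by a Rellich-type argument: working on the truncated annulus $\Omega_R$, expanding $\int_{\partial B_R}|\partial_r v-ikv|^2$, observing that the left side of your displayed inequality is independent of $R$, and letting $R\to\infty$ using the Sommerfeld condition. Your route is more elementary and self-contained -- it does not presuppose continuity of $\DtN$ across the real axis -- at the cost of a separate case analysis; the paper's route is shorter, given the continuation machinery already in place. The signs and the passage between the two normal orientations are handled correctly in your computation, and the routine points you wave at (regularity of $v$, duality interpretation of the boundary pairing, uniqueness of the outgoing solution) are indeed standard.
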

\begin{proof}
Let $G(\lambda)$ be the meromorphic continuation from $\Im \lambda>0$ of the solution operator satisfying
\begin{equation*}
(-\h^2\Delta-\lambda^2)G(\lambda)g=0\,\,\text{in }\mathbb{R}^d\setminus \overline{\Omega_1},\qquad G(\lambda)g|_{\Gamma_{\tr}}=g,
\end{equation*}
and $G$ is $\lambda/\h$-outgoing; then $\mc{D}(\lambda/\h)=\gamma_1^{\tr} G(\lambda)$. 
Note that for $\Im \lambda>0$, $G(\lambda):H^{1/2}(\Gamma_{\tr})\to H^{1}(\mathbb{R}^d\setminus \Omega_1)$. Therefore, for $\Re\lambda>0$ and $\Im \lambda>0$, by integration by parts, 
\begin{align*}
0&=\big\langle (-\h^2\Delta-\lambda^2)G(\lambda)g,G(\lambda)g\big\rangle_{\mathbb{R}^d\setminus \Omega_1}\\
&=\|h\nabla G(\lambda)g\|_{L^2(\mathbb{R}^d\setminus \Omega_1)}^2- \lambda^2\|G(\lambda)g\|^2_{L^2(\mathbb{R}^d\setminus \Omega_1)}+\h^2\big\langle \mc{D}(\lambda/\h)g,g\big\rangle_{\Gamma_{\tr}} .
\end{align*}
Therefore, taking imaginary parts
$$
2\Re \lambda \Im \lambda \|G(\lambda)g\|_{L^2(\mathbb{R}^d\setminus \Omega_1)}=\h^2\Im\langle \mc{D}(\lambda/\h)g,g\rangle_{\Gamma_{\tr}}
$$
and in particular, for $\Re \lambda>0$, $\Im\lambda>0$
$$
0\leq\Im\langle \mc{D}(\lambda/\h)g,g\rangle_{\Gamma_{\tr}}
$$
Now, since the right hand side continues analytically from $\Im \lambda>0$ to $\Im\lambda =0$, we have 
$$
\Im \langle \mc{D}(\lambda/\h)g,g\rangle_{\Gamma_{\tr}} \geq 0
$$
for $\Re\lambda>0$ and $\Im \lambda =0$.
\end{proof}

\

\begin{proof}[Proof of Lemma \ref{l:upperHalf}]
Let $u\in H_{\loc}^2(\Omega_+)$. Then, let $v=u-E\gamma_D u\in H_{\loc}^2(\Omega_+)\cap H_{0,\loc}^1(\Omega_+)$.  By integration by parts,
\begin{align*}
-\Im \big\langle (P_\theta -\lambda^2 -z1_{\Otr})v,v\big\rangle_{\Omega_{\tr}}&=-\Im \big\langle (-\h^2 \Delta-\lambda^2 -z1_{\Otr})v,v\big\rangle_{\Omega_{\tr}}\\
&=(\Im z)\|v\|_{L^2(\Otr)}^2+ \h^2\Im \big\langle \mc{D}(\lambda/\h)v,v\big\rangle_{\Gamma_{\tr}}\geq (\Im z)\|v\|_{L^2(\Otr)}^2.
\end{align*}
Therefore, there exists $C, C_1, C_2>0$ such that for $\Im z>0$,
\begin{align*}
\|u\|_{L^2(\Omega_{\tr})}&\leq \|v\|_{L^2(\Omega_{\tr})}+\|E\gamma_0^D u\|_{L^2(\Omega_{\tr})}\\
&\leq (\Im z)^{-1}\|(-\h^2\Delta-\lambda^2-z1_{\Otr})v\|_{L^2(\Omega_{\tr})}+C_1\|\gamma_0^D u\|_{H_{\h}^{3/2}(\Gamma_D)}\\
&\leq  (\Im z)^{-1}\|(P_\theta-\lambda^2-z1_{\Otr})u\|_{L^2(\Otr)}+C_2 \langle z\rangle(\Im z)^{-1}\|E\gamma_0^D u\|_{H_{\h}^2(\Otr)}+C_1\|\gamma_0^D u\|_{H_{\h}^{3/2}(\Gamma_D)} \\
&\leq C\langle z\rangle(\Im z)^{-1}\N{\mc{P}_\theta(\lambda,z)}_{L^2(\Otr)\oplus H_{\h}^{3/2}(\Gamma_D)},
\end{align*}
by the definition of $\mc{P}_\theta(\lambda,z)$ \eqref{eq:Ptheta}.
Having obtained the bound \eqref{eq:Thursday4} on $\|u\|_{L^2(\Otr)}$, we now prove the bound \eqref{eq:Thursday4a} on $\|u\|_{H^2_\h(\Otr)}$.
Using, e.g., the trace estimate from \cite[Corollary 4.2]{GaLaSp:21} (in a similar way to the end of the proof of Lemma \ref{l:absorbingInverse}), we have 
\beq\label{eq:R4}
\|\gamma_{1,\h}^Du\|_{L^2(\Gamma_D)}
\leq  C\h^{-1}\|(-\h^2\Delta-\lambda^2-z1_{\Otr})u\|_{L^2(\Otr)}+C\langle z\rangle\|u\|_{L^2(\Omega_{\tr})}.
\eeq
Furthermore, by \eqref{eq:highlighter2} there exists $\e>0$ small enough such that for $\Im z>0 $ and $|z|\leq \e \h$, $(\mc{P}_{\theta,Q}(\lambda,z))^{-1}$ exists, and then, by  Lemma~\ref{l:absorbingInverse} and reducing $\e$ further if necessary,
\begin{align*}
\N{u}_{H_{\h}^2(\Omega_+)}\leq C\h^{-1}\N{(P_\theta -\lambda^2 -z1_{\Otr})u}_{L^2(\Omega_+)}+C\N{(Q_b\gamma_{1,\h}^D+\gamma_0^D)u}_{H_{\h}^{3/2}(\Gamma_D)}.
\end{align*}
By \eqref{eq:Q_bsymbol} and the Calderon-Vaillancourt theorem (see, e.g., \cite[Proposition E.24]{DyZw:19}, \cite[Theorem 13.13]{Zw:12}), $\|Q_b\|_{L^2(\Gamma_D)\rightarrow H^{3/2}_\h(\Gamma_D)}\leq C$. Using this along with \eqref{eq:R4}, the fact that $P_\theta=-\h^2\Delta$ on $\Otr$, and \eqref{eq:Thursday4}, we obtain
\begin{align*}
\N{u}_{H_{\h}^2(\Omega_+)} \leq C\big(\h^{-1}+\langle z\rangle^2(\Im z)^{-1}\big)\N{(P_\theta -\lambda^2 -z1_{\Otr})u}_{L^2(\Omega_+)}+C\N{\gamma_0^Du}_{H_{\h}^{3/2}(\Gamma_D)},
\end{align*}
which implies \eqref{eq:Thursday4a}; the proof is complete.
\end{proof}

\

Having proved the bound \eqref{eq:bound2new}, we now prove the bound \eqref{eq:bound1new}. From~\eqref{e:parametrix}, 
\beq\label{eq:Thursday5}
R_\theta(\lambda,z)=\mc{R}_{\theta,Q}(\lambda,z)(I +K(\lambda,z))^{-1}\begin{pmatrix}I\\0\end{pmatrix}
\eeq
where $K(\lambda,z)$ is defined by~\eqref{e:compactPerturb}.
Since we have the bound \eqref{eq:highlighter2} on $\mc{R}_{\theta,Q}(\lambda,z)$, to bound $R_\theta(\lambda,z)$ we only need to bound $(I +K(\lambda,z))^{-1}$.

Let $\mc{H}:=L^2(\Omega_+)\oplus H_{\h}^{3/2}(\Gamma_D)$.
Recalling the definition of trace class operators (see  \cite[Definition B.17]{DyZw:19}) and \cite[Equation B.4.7]{DyZw:19}, since $\mc{R}_{\theta,Q}(\lambda,z)$ exists for $|z|\leq \e \h$, $K(\lambda,z)$ defined by  \eqref{e:compactPerturb} is trace class for $|z|\leq \e \h$ with 
\begin{align*}
\|K(\lambda,z)\|_{\tc(\mc{H};\mc{H})}
&\leq \|Q_b\|_{\tc(L^2(\Gamma_D); H^{3/2}(\Gamma_D))}\|\gamma_{1,\h}^DR_{\theta,Q}(\lambda,z)\|_{\mc{H}\to L^2(\Gamma_D)},\\
&\leq C\big\|\langle hD\rangle^{3/2}Q_b\big\|_{\tc(L^2(\Gamma_D))}\|\gamma_{1,\h}^DR_{\theta,Q}(\lambda,z)\|_{\mc{H}\to L^2(\Gamma_D)}.
\end{align*}
Then, using similar reasoning to that in \cite[Page 434]{DyZw:19} to bound the norm of $\langle hD\rangle^{3/2}Q_b$ together with the bound \eqref{eq:highlighter2} on
$\gamma_{1,\h}^DR_{\theta,Q}(\lambda,z)$, we have
\beq\label{eq:Thursday5a}
\|K(\lambda,z)\|_{\tc(\mc{H};\mc{H})}\leq C \h^{1-d} \h^{-1}\leq C\h^{-d}.
\eeq
Furthermore, by \cite[Equation B.5.21]{DyZw:19} and \cite[Equation B.5.19]{DyZw:19}, 
\begin{align}\nonumber
\N{(I+K(\lambda,z))^{-1}}_{\mc{H}\rightarrow \mc{H}}&\leq \det \big(I+K(\lambda,z)\big)^{-1}\det \big(I+[K(\lambda,z)^*K(\lambda, z)]^{1/2}\big),\\
\nonumber
&\leq \det \big(I+K(\lambda,z)\big)^{-1} \exp\big(\|[K(\lambda,z)^*K(\lambda, z)]^{1/2}\|_{\tc(\mc{H})}\big),\\
&\leq \det \big(I+K(\lambda,z)\big)^{-1} \exp\big(\|[K(\lambda,z)\|_{\tc(\mc{H})}\big),\label{eq:Thursday1}
\end{align}
where we have used the definition of the trace class norm $\|\cdot\|_{\tc}$ in terms of singular values (see \cite[Equation B.4.2]{DyZw:19}) to write 
$$
\big\|[K(\lambda,z)^*K(\lambda, z)]^{1/2}\big\|_{\tc(\mc{H})}= \N{K(\lambda, z)}_{\tc(\mc{H})}.
$$
Using \eqref{eq:Thursday5a}  in \eqref{eq:Thursday1}, we find that
\begin{equation}
\label{e:estimate1}
\N{(I+K(\lambda,z))^{-1}}_{L^2\rightarrow L^2}\leq \det \big(I+K(\lambda,z)\big)^{-1}
\exp(C\h^{-d})\quad \tfor |z|\leq \e \h.
\end{equation}
To estimate $ \det (I+K(\lambda,z))^{-1}$ we use the same idea used to prove the bound \eqref{eq:bound1}, namely the following complex-analysis result

\begin{lemma}\mythmname{\cite[Equation D.1.13]{DyZw:19}}\label{lem:complex}
Let $\Omega_0 \Subset \Omega_1 \Subset \mathbb{C}$, let $f$ be holomorphic in a neighbourhood of $\Omega_1$ with zeros $z_j, j=1,2,\ldots$, and let $z_0\in \Omega_1$.
There exists $C=C(\Omega_0,\Omega_1,z_0)$ such that for any $\delta>0$ sufficiently small
\beqs
\log |f(z)| \geq -C \log\big(\delta^{-1}\big)
 \left(\max_{z\in\Omega_1} \log |f(z)| - \log |f(z_0)|\right) \quad\tfor z \in \Omega_0\setminus \bigcup_j B(z_j,\delta).
\eeqs
\end{lemma}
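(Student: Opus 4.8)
The plan is to prove the estimate by a Blaschke--factorisation argument, reducing first to the case in which $\Omega_0$ and $\Omega_1$ are concentric disks (this is the only case needed in the paper, where $\Omega_1$ is a disk; the general case follows from it by a covering argument, using that $\Omega_1$ is connected). If $f(z_0)=0$ or $f\equiv 0$ the right-hand side is $-\infty$ and there is nothing to prove, so I assume $f(z_0)\neq 0$ and set $M:=\max_{\overline{\Omega_1}}\log|f|$ and $A:=M-\log|f(z_0)|\geq 0$. Throughout, the point to watch is that the constant $C$ must be uniform in $f$.

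For the disk case, after translating and rescaling take $\Omega_1=D(0,R)$, $\Omega_0=D(0,r)$ with $r<R$, and $z_0\in D(0,R)$; shrinking $R$ by an arbitrarily small amount, assume $f$ has no zeros on $\partial D(0,R)$. Let $w_j$ denote the zeros of $f$ in $D(0,R)$ and let $B(z):=\prod_j\frac{R(z-w_j)}{R^2-\overline{w_j}z}$ be the Blaschke product of the disk, so that $f=Bg$ with $g$ holomorphic and zero-free on $D(0,R)$, $|B|\leq 1$ there, and $|g|=|f|$ on $\partial D(0,R)$; hence $\sup_{D(0,R)}\log|g|=M$ by the maximum principle. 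Thus $M-\log|g|$ is a nonnegative harmonic function on $D(0,R)$, and since $\log|g(z_0)|=\log|f(z_0)|-\log|B(z_0)|\geq\log|f(z_0)|$ we have $M-\log|g(z_0)|\leq A$; Harnack's inequality (along a chain inside $D(0,R)$ from $z_0$, whose length depends only on $R$, $r$, $z_0$) then gives $\log|g(z)|\geq M-CA$ for $z\in D(0,r)$. This step is insensitive to the position of the zeros, precisely because $g$ is zero-free.

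It remains to bound $\log|B(z)|$ from below for $z\in D(0,r)$ with $|z-w_j|\geq\delta$ for all $j$. Each factor satisfies $\big|\frac{R(z-w_j)}{R^2-\overline{w_j}z}\big|\geq\frac{\delta}{2R}$, while the factors coming from zeros with $|w_j|$ close to $R$ are close to $1$; more precisely, using the identity $1-\big|\frac{R(z-w)}{R^2-\overline{w}z}\big|^2=\frac{(R^2-|z|^2)(R^2-|w|^2)}{|R^2-\overline{w}z|^2}$ one sees that the zeros with $|w_j|\geq r'$, for a suitable $r'\in(r,R)$, contribute at least $-C\sum_j(R^2-|w_j|^2)$ to $\log|B(z)|$. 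Now both $\sum_j(R^2-|w_j|^2)$ and the number of zeros with $|w_j|\leq r'$ are bounded by $CA$ by Jensen's formula, since $\sum_j\log\frac{R}{|w_j|}=\frac{1}{2\pi}\int_0^{2\pi}\log|f(Re^{i\theta})|\,d\theta-\log|f(0)|\leq A$ (taking the base point near the centre where $f$ does not vanish; in any case the number of zeros of $f$ in $\Omega_1$ being $\lesssim A$ is a standard consequence of Jensen's formula). Hence $\log|B(z)|\geq -CA\log(\delta^{-1})$ for $\delta$ small, and combining with the lower bound on $\log|g|$ gives $\log|f(z)|=\log|g(z)|+\log|B(z)|\geq -CA\log(\delta^{-1})$ on $\Omega_0$ away from the zeros (using $\log(\delta^{-1})\geq 1$ and $M\geq 0$, which holds in the applications; in general one gets the marginally stronger $\log|f(z)|\geq\log|f(z_0)|-CA\log(\delta^{-1})$). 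I expect the main obstacle to be keeping $C$ uniform in $f$ while losing only a single factor $\log(\delta^{-1})$: this is exactly what forces the use of the disk-adapted Blaschke product, so that $\sup_{\Omega_1}\log|g|$ equals $M$ on the nose, rather than an arbitrary monic polynomial cancelling the zeros of $f$ (for which the analogous supremum cannot be controlled uniformly), and it is why only the zero-free factor $g$ -- never the $\log(\delta^{-1})$-heavy Blaschke part -- is transported by Harnack's inequality.
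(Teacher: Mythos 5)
Since the paper cites \cite[Equation D.1.13]{DyZw:19} without reproducing a proof, the comparison is with that reference, and your Blaschke-factorisation plus Harnack argument is essentially the argument in \cite[Appendix D.1]{DyZw:19}: remove the zeros with the disk-adapted Blaschke product, apply Harnack to the zero-free factor, and control the Blaschke part by a zero count. There is, however, a gap in the zero-counting step as you have written it. The bound $\sum_j\log(R/|w_j|)\le A$ via Jensen at the centre would require $\log|f(0)|\ge\log|f(z_0)|$, which you do not know, and moving the base point ``near the centre where $f$ does not vanish'' does not help, since there is no a priori lower bound on $|f|$ near the centre. The fix uses only what you have already set up: the max principle gives $\log|g(z_0)|\le M$, hence $\log|B(z_0)|=\log|f(z_0)|-\log|g(z_0)|\ge -A$, i.e.\ $\sum_j\log|B_j(z_0)|^{-1}\le A$. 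Each summand is $-\tfrac12\log(1-t_j)$ with $t_j=(R^2-|z_0|^2)(R^2-|w_j|^2)/|R^2-\overline{w_j}z_0|^2$; since $z_0$ is a fixed interior point, $t_j\asymp R^2-|w_j|^2$ uniformly and $t_j\ge c>0$ whenever $|w_j|\le r'$, so both $\sum_j(R^2-|w_j|^2)\lesssim A$ and the number of inner zeros $\lesssim A$ follow at once. (This is precisely the Poisson--Jensen formula at $z_0$, which is what should replace Jensen at $0$.)

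Your closing observation is more than a ``marginal'' improvement: the argument produces $\log|f(z)|\ge\log|f(z_0)|-CA\log(1/\delta)$, and the inequality as stated in the lemma is actually false without that additive $\log|f(z_0)|$ --- take $f$ constant with $|f|<1$, so $A=0$ but $\log|f|<0$. The additive term is present in the cited source; its omission here is a harmless transcription slip, since in the one place the lemma is used (\S\ref{sec:proofbounds}) one also has $\log|f(z_0)|\ge -C\h^{-d}$, which is absorbed by $CA\log(1/\delta)$ once $\delta$ is small.
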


Applying this result with $f(z) = \det (I+K(\lambda,z))$, we see that to get an upper bound on $\log \det (I+K(\lambda,z))^{-1}$ we only need a 
 lower bound on $\det (I+K(\lambda,z_0))$ for some $|z_0|\leq \e \h$ and an upper bound on $\det (I+K(\lambda,z))$ for all $|z|\leq \e\h$.

To obtain the upper bound for all $|z|\leq \e\h$, we again use \cite[Equation B.5.19]{DyZw:19} and \eqref{eq:Thursday5a} to obtain
\begin{equation}
\label{e:estimate2}
|\det (I+K(\lambda, z))|\leq \exp(\|K(\lambda,z)\|_{\tc})\leq \exp(C\h^{-d})\quad \tfor |z|\leq \e \h.
\end{equation}
To obtain the lower bound for some $|z_0|\leq \e \h$, we first observe that, from \eqref{eq:Thursday3},
$$
\big(I+K(\lambda,z)\big)^{-1}=\mc{P}_{\theta,Q}(\lambda,z)\mc{P}_{\theta}(\lambda,z)^{-1}=I-Q\mc{P}_\theta(\lambda,z)^{-1}.
$$
so that 
$$
\big|\det \big(I +K(\lambda,z)\big)\big|^{-1}=\big|\det \big(I-Q\mc{P}_\theta(\lambda,z)^{-1}\big)\big|.
$$
Since $Q\mc{P}_\theta(\lambda,z)$ is trace class, we use \cite[Equation B.5.19]{DyZw:19}, \cite[Equation B.4.7]{DyZw:19}, \eqref{eq:Thursday5a}, and \eqref{eq:Thursday4a} to obtain
\begin{equation}
\label{e:lower}
\log \big|\det (I +K(\lambda,z_0))\big|^{-1}\leq \N{Q}_{\tc(H_{\h}^2(\Omega_+);\mc{H})}\N{\mc{P}_{\theta}(\lambda,z_0)^{-1}}_{\mc{H}\rightarrow H_{\h}^2(\Omega_+)}\leq C\h^{-d} \tfor z_0=\ri\e \h.
\end{equation}
Therefore, combining Lemma \ref{lem:complex}, \eqref{e:estimate2}, and \eqref{e:lower}, we have 
\beqs
\log \big|\det \big(I+K(\lambda,z )\big)^{-1}\big|\leq C\h^{-d}\log \delta^{-1},\qquad z\in B(0,\e_1 \h)\Big\backslash \bigcup_{z_j}B(z_j,\delta)
\eeqs
where $z_j$ are the poles of $(I+K(\lambda,z))^{-1}.$ Therefore, combining this last bound with~\eqref{eq:Thursday5}, \eqref{e:estimate1}, and \eqref{eq:highlighter2}, we have
\beqs
\N{R_{\theta}(\lambda,z)}_{L^2(\Omega_+)\rightarrow L^2(\Omega_+)}\leq \exp\Big(C\h^{-d}\log \delta^{-1}\Big)\quad\tfor z\in B(0,\e_1 \h)\Big\backslash \bigcup_{z_j}B(z_j,\delta).
\eeqs
where $z_j$ are the poles of $\mc{R}_{\theta}(\lambda,z)$.
The bound \eqref{eq:bound1new} and the fact that $z_j$ are the poles of $R_{\Otr}(\lambda,z)$ then follow from 
the relation \eqref{e:scaledUnscaled} and Lemma~\ref{l:inverseForm}.

\section{Proofs of Theorems \ref{thm:main1h} and \ref{thm:main2h}}
\label{sec:mainproof}

\subsection{Proof of Theorem \ref{thm:main1h}}
With Lemma \ref{l:mainEstimate} in hand, this proof is very similar to
\cite[Proof of Theorem 7.6]{DyZw:19}, except that now we work in the complex $z$ plane as opposed to the complex $\lambda$ plane. 
In addition, in this proof, the roles of $\eps_0$ and $\eps$ are swapped compared to \cite[Proof of Theorem 7.6]{DyZw:19}.

Let 
\beq\label{eq:eps0}
\eps_0(\h):= \h^{-\alpha} \eps(\h),
\eeq
with $\alpha>3(d+1)/2$ (we see later where this requirement comes from).
The lower bound \eqref{eq:lowerboundBurq} then implies that, given $\h_0$, there exists $C'$
(depending on $\h_0$ and $\alpha$) such that 
\beq\label{eq:log}
\log\left(\frac{2}{\eps_0(\h)}\right)\leq \frac{C'}{\h} \quad\tfa 0<\h\leq \h_0.
\eeq

Seeking a contradiction, we assume that when $\h=\h_j$ there are no eigenvalues in  $B(0,\eps_0(\h_j))$
(the exponential lower bound on $\eps_0(\h)$ leading to \eqref{eq:log} therefore limits how small this ball can be).
Our goal is to show that this assumption implies that 
\beq\label{eq:contradict}
\N{R(1,0)}_{L^2(\Otr)\rightarrow L^2(\Otr)} < \frac{1}{2} \big(\eps(\h_j)\big)^{-1}.
\eeq
Indeed, since $\supp u_\ell\Subset \Omega_1$, 
\beq\label{eq:contradict-1}
R(1,0) (-\h_j^2 \Delta -1) u_\ell = u_\ell.
\eeq
Then, by taking the norm of \eqref{eq:contradict-1} and using \eqref{eq:contradict}, we obtain that $\N{u_\ell}_{L^2(\Otr)}< 1/2$, which  contradicts $\N{u_\ell}_{L^2(\Otr)}=1$.
We prove \eqref{eq:contradict} by using Theorem \ref{thm:scmp} where $\Omega(\h)$ is a box (to be specified below) in $B(0,\eps_0(\h)/2)$ 
with Lemma \ref{l:mainEstimate} providing the bounds  \eqref{eq:expbound} and \eqref{eq:absbound}. 

We first use the bound \eqref{eq:bound1new} from Lemma \ref{l:mainEstimate}. 
This bound is valid for $z\in B(0,\eps_1 \h)$ and away from the poles. 
The definition of $\eps_0(\h)$ \eqref{eq:eps0} and the upper bound in \eqref{eq:epslowerbound} implies that $B(0,\eps_0(\h)/2)\subset B(0,\eps_1 \h)$ for $\h$ sufficiently small.
We then choose $\delta$ in \eqref{eq:bound1new} to equal $\eps_0(\h)/2$ {and use \eqref{eq:log}}
 so that, for all $\h_j$ sufficiently small,
\beq\label{eq:expbound2}
\N{R(1,z)}_{L^2(\Otr)\rightarrow L^2(\Otr)} \leq  \exp\Big({C_1 C'} \h_j^{-(d+1)}\Big) \quad\tfa z \in B(0, \eps_0(\h_j)/2),
\eeq
and thus for all $z\in \Omega(\h_j)$ (since $\Omega(\h_j) \subset  B(0, \eps_0(\h_j)/2)$).
We now let 
\beqs
Q(z,\h):= R_{\Otr}(1,z), \quad L:=d+1,\quad\tand C:=\max\big\{C_1 C'\,,\, C_2\, c\big\},
\eeqs
where $c = c(\h_0)$ is chosen large enough such that $\langle z\rangle \leq c$ for all 
$z\in B(0,\eps_0(\h)/2)$ and $\h \leq h_0$; these choices ensure that the right-hand sides of the bounds 
\eqref{eq:expbound2} and \eqref{eq:bound2new} are bounded by the right-hand sides of \eqref{eq:expbound} and \eqref{eq:absbound} respectively.
We then let 
\beqs
{w=0,}\quad 2\beta(\h)= \frac14 \eps_0(\h), \quad\tand\quad \delta(\h) = M \eps(\h)
\eeqs
with $M$ chosen (sufficiently large) later in the proof.
For the assumptions of Theorem \ref{thm:scmp} to hold at $\h=\h_j$, we need that (i) the box $\Omega(\h_j)$ defined by \eqref{eq:box} is inside $B(0,\eps_0(\h_j)/2)$ (so that the bound \eqref{eq:expbound} follows from \eqref{eq:expbound2})
and (ii) the second inequality in \eqref{eq:restrict1} is satisfied. The first requirement is ensured if 
\beqs
\delta(\h_j) \h_j^{-(d+1)} \ll \frac12 \eps_0(\h_j), \quad \text{ that is } \quad M\eps(\h_j) \h_j^{-(d+1)} \ll \frac12\h_j^{-\alpha}\eps(\h_j),
\eeqs
which is satisfied if $\h_j$ is sufficiently small since $\alpha >d+1$.
The second requirement is
\beqs
\frac18 h^{-2\alpha} \eps(\h)^2 \geq C \h^{-3(d+1)} M \eps(\h)^2;
\eeqs
given $M$, this inequality is satisfied when $\h$ is sufficiently small since $\alpha> 3(d+1)/2$.

Therefore, 
the assumptions of Theorem \ref{thm:scmp} are all satisfied at $\h=\h_j$ (for $\h_j$ sufficiently small), and the result is that the bound \eqref{eq:scmp1} holds for all $z\in [-\beta(\h_j),\beta(\h_j)]$, and thus, in particular, at $z=0$. Therefore, for all $\h_j$ sufficiently small,
\beqs
\N{R(1,0)}_{L^2(\Otr)\rightarrow L^2(\Otr)} \leq \frac{C}{M\eps(\h_j)} \exp(1+C).
\eeqs
We now choose
\beqs
M:= 2C\exp(1+C),
\eeqs
and obtain \eqref{eq:contradict}, i.e.~the desired contradiction to there being no eigenvalues in $B(0,\eps_0(\h_j))$.

\subsection{Proof of Theorem \ref{thm:main2h}}

We first recall the following lemma proved in \cite[Lemma 4]{St:99}; see also \cite[Lemma AII.20]{La:93}.
\ble\label{lem:linindep}
Let $f_1,\ldots, f_N$ be $N$ vectors in a Hilbert space $\mc{H}$ with 
\beqs
\big| \langle f_i, f_j \rangle_{\mc{H}} - \delta_{ij}\big| \leq \eps \quad\tfa\,\, i,j=1,\ldots,N.
\eeqs
If $\eps< N^{-1}$, then $f_1,\ldots, f_N$ are linearly independent.
\ele

We use Lemma \ref{lem:linindep} both in the proof of Theorem \ref{thm:main2h} below, and in the proof of the following preparatory result.

\ble\label{lem:preparation}
Let $m(\h_j)$ and $\eps(\h)$ be as in Theorem \ref{thm:main2h} (so that, in particular $\eps(\h)\ll \h^{(5d+3)/2}$ as $\h \tendo$).
Then there exists $C>0$ (independent of $\h_j$) such that
\beq\label{eq:Weyl1}
m(\h_j) \leq C \h_j^{-d}.
\eeq
\ele

\bpf
First observe that it is sufficient to prove the result for sufficiently small $\h_j$ (equivalently, sufficiently large $j$).
Let $P(\h_j) = -\h_j^2 \Delta$ with zero Dirichlet boundary conditions on $\Gamma_D$ and $\Gamma_{\tr}$. $P(\h_j)$ is therefore self-adjoint with discrete spectrum and, since $\supp u_{j,\ell} \subset \mc{K}\Subset \Omega_1$, 
\beqs
\N{\big(P(\h_j) - E_{j,\ell}\big)u_{j,\ell}}_{L^2(\Otr)} = \eps(\h_j) \quad \tfa j,\ell.
\eeqs

Let $\mu>c>0$, let $\Pi(\h_j)$ be the orthogonal projection on to the eigenspaces corresponding to all eigenvalues of $P(\h_j)$ in $[a_0-\mu, b_0+ \mu]$,
and let $M(\h_j)$ be the number of these eigenvalues (counting multiplicities). 
By the Weyl law (with no remainder term) 
on manifolds with boundary (see e.g.~\cite[Theorem 17.5.3]{Ho:85}),
\beqs
M(\h_j) \leq C \h_j^{-d}.
\eeqs
Furthermore, $\rank \Pi(\h_j)\leq M(\h_j)$ and thus to prove the result \eqref{eq:Weyl1} it is sufficient to prove that $m(\h_j)\leq \rank \Pi(\h_j)$. 
To keep expressions compact, we now write $P$ and $\Pi$ instead of $P(\h_j)$ and $\Pi(\h_j)$.

Since $\Pi$ commutes with $(P-E_{j,\ell})^{-1}$, and $(P-E_{j,\ell})$ is invertible on $(I-\Pi)L^2$,
\beq\label{eq:combine1}
\big(I-\Pi
\big) u_{j,\ell} = \big(P
- E_{j,\ell}\big)^{-1} \big(I- \Pi
\big) \big(P
- E_{j,\ell}\big) u_{j,\ell}.
\eeq
Since $P$ is self-adjoint, the spectral theorem (see, e.g., \cite[Theorem B.8]{DyZw:19}) implies that
\beq\label{eq:combine2}
\big\| \big(P
- E_{j,\ell}\big)^{-1} \big(I- \Pi
\big)\big\|_{L^2(\Otr)\rightarrow L^2(\Otr)} \leq \frac{1}{\mu}.
\eeq
Therefore, combining \eqref{eq:combine1} and \eqref{eq:combine2}, we have
\beqs
\N{\big(I-\Pi
\big) u_{j,\ell} }_{L^2(\Otr)\rightarrow L^2(\Otr)}\leq \frac{\eps(\h_j)}{\mu}
\eeqs
(compare to \cite[Equation 32.2]{La:93} and the first displayed equation in \cite[\S3]{St:99}).
Then, for $\ell_1, \ell_2 \in \{1,\ldots,m(\h_j)\}$,
\begin{align}\nonumber
\big| \langle\Pi u_{j, \ell_1}, \Pi u_{j,\ell_2}\rangle_{L^2(\Otr)} - \delta_{\ell_1 \ell_2}\big|
&\leq \big|  \langle u_{j,\ell_1}, u_{j,\ell_2}\rangle_{L^2(\Otr)} - \delta_{\ell_1 \ell_2}\big| \\ \nonumber
&\qquad+ 
\big| \langle u_{j,\ell_1}, (I-\Pi) u_{j,\ell_2} \rangle_{L^2(\Otr)} \big|
+\big| \langle(I-\Pi) u_{j,\ell_1},\Pi u_{j,\ell_2} \rangle_{L^2(\Otr)} \big|,\\
& \leq \h_j^{-2}\eps(\h_j) + \frac{2}{\mu} \eps(\h_j)\label{eq:orthog},\\
& {\ll \h_j^{(5d-1)/2} \quad\tas j\tendi},\nonumber
\end{align}
where we have used that $\N{\Pi}_{L^2(\Otr)\rightarrow L^2(\Otr)}\leq 1$ since $\Pi$ is orthogonal.
By Lemma \ref{lem:linindep}, any subset of $\{\Pi u_{j,\ell}\}_{\ell=1}^{m(\h_j)}$ with cardinality $\ll \h_j^{-(5d-1)/2}$ is linearly independent.
Seeking a contradiction assume that \eqref{eq:Weyl1} does not hold, i.e.~for all $C>0$ there exists $j$ such that $m(\h_j) > C \h_j^{-d}$.
Choose a subset of $\{\Pi u_{j,\ell}\}_{\ell=1}^{m(\h_j)}$ with cardinality $\lfloor C \h_j^{-d}+1 \rfloor$. By the above argument, this subset is linearly independent, and thus $\lfloor C \h_j^{-d}+1 \rfloor \leq \rank \Pi(\h_j)=  M(\h_j)\leq C \h_j^{-d}$ which is the required contradiction. 
\epf

\

\bpf[Proof of Theorem \ref{thm:main2h}]
The proof is similar to that of the corresponding ``quasimodes to resonances'' result \cite[Theorem 1]{St:99} (see also \cite[\S7.7, Exercise 1]{DyZw:19}), except that we use the semiclassical maximum principle in the $z$ plane  (as in the proof of Theorem \ref{thm:main1h}), and now we also work in an interval in $\lambda$ (as opposed to at $\lambda=1$ in the proof of Theorem \ref{thm:main1h}). To keep the expressions compact, we write $\h$ instead of $\h_j$ and write functions of the index $j$ as functions of $\hbar$; in particular, we drop the subscript $j$ on $\h_j, E_{j,\ell}$, and $u_{j,\ell}$.

Let 
\beqs
\mc{Z}:= \mc{Z}\big(\e_1(\h)\,,\,\e_0(\h)\,,\,a(\h),\,b(\h)\,;\,\h\big),
\eeqs
where $\mc{Z}(\e_1,\e_0,a,b;\h)$ is  defined by \eqref{eq:mathcalZ}, $\e_0(\h)$ is as in the statement of the theorem, and $\e_1(\h)\ll \h$ will be fixed later.
We assume throughout that  $|\mathcal{Z}|<\infty$, since otherwise the proof is trivial. Let $\Pi(\h)$ denote the orthogonal projection onto
$$
\bigcup_{p\in \mathcal{Z}}\Pi_{z_p}(L^2(\Otr)),
$$
where $\Pi_{z_p}$ is defined in \eqref{eq:projection1}.  Let 
$\widetilde{\mathcal{Z}}(\lambda)$ be the set of distinct values of $z_p(\h,\lambda)$ such that $p\in \mathcal{Z}$. 
{(While $\mc{Z}$ is independent of $\lambda$, $\widetilde{\mc{Z}}$ 
depends on $\lambda$ since 
the poles of $z\mapsto R_\Otr(z,\lambda)$ 
depend on $\lambda$.)}
Note that for $z_p\neq z_q$, $\rank (\Pi_{z_p}+\Pi_{z_q})=\rank \Pi_{z_p}+\rank\Pi_{z_q}$;
therefore
\beqs
\rank \Pi(\h) = \sum_{z_p\in \widetilde{\mathcal{Z}}(\lambda)} \rank \Pi_{z_p(\h,\lambda)} = \sum_{z_p\in\widetilde{\mathcal{Z}}(\lambda)} m_R\big(z_p(\h,\lambda)\big)=|\mathcal{Z}|,
\eeqs
where $m_R(z_0)$ is defined in \eqref{eq:projection1}. To prove the theorem, therefore, it is sufficient to show that 
$m(\h)\leq \rank \Pi(\h)$. 

Seeking a contradiction, we assume that $\rank \Pi(\h)< m(\h)$. 
By Lemma \ref{lem:singular}, near $z_p$, the singular part of $R_\Otr(\lambda ,z)$ is in the range of $\Pi_{z_p}(\h,\lambda)$, and therefore $z\mapsto (I-\Pi(\h))R_\Otr(\lambda ,z)$ is holomorphic on 
\beqs
\Omega(\h):= \big( -2\eps_1(\h), \, 2\eps_1(\h)\big) - \ri \big(0,  2\eps_0(\h)\big)
\eeqs
for all $\lambda^2\in[a(\h),b(\h)]$.
Let $\widetilde{\Omega}(\h)\subset \Omega(\h)$ be defined by
\beqs
\widetilde{\Omega}(\h):= \big( -\eps_1(\h), \, \eps_1(\h)\big) - \ri \big(0,  \eps_0(\h)\big).
\eeqs
Our goal is to apply the semiclassical maximum principle (Theorem \ref{thm:scmp}) in subsets of $\widetilde{\Omega}(\h)$ with $Q(z,\h) = (I-\Pi(\h)) R_\Otr(\lambda,z)$.

By Lemma \ref{l:mainEstimate}, the fact that $\max(\eps_0,\eps_1)\ll \h$, and the fact that $\Pi(\h)$ is orthogonal (and so $\|I-\Pi(\h)\|_{L^2(\Otr)\rightarrow L^2(\Otr)}\leq 1$),
\beq\label{eq:analytic1}
\big\|\big(I-\Pi(\h)\big)R_{\Otr}(\lambda,z)\big\|_{L^2(\Otr)\to L^2(\Otr)}\leq \exp\Big(C_1\h^{-d}\log \delta^{-1}\Big)\quad\tfor z\in \widetilde{\Omega}(\h)
\Big\backslash \bigcup_{m}B(z_m(\h,\lambda),\delta)
\eeq
{and for $\lambda^2 \in[a(\h),b(\h)]$}, 
where the $z_m(\h,\lambda)$ are the poles of $R_\Otr(\lambda,z)$ such that $B(z_m(\h,\lambda),\delta)\cap \widetilde{\Omega}(\h) \neq \emptyset$. 
If $\delta> \min\{ \eps_0(\h), \eps_1(\h)\}$, then these $z_m(\h,\lambda)$ might include poles that are not equal to $z_p(\h,\lambda)$ for some $p\in\mathcal{Z}$, but we restrict $\delta$ so that this is not the case. 
Indeed, we now choose $\delta>0$ so that the bound in \eqref{eq:analytic1} holds for all $z\in \widetilde{\Omega}(\h)$ {and for all $\lambda^2 \in [a(\h),b(\h)]$}.

If $\delta$ and $z_m$ are such that $B(z_m,\delta)\Subset \Omega(\h)$, then the bound in \eqref{eq:analytic1} holds on $\partial B(z_m,\delta)$, and then, since $z\mapsto (I-\Pi(\h))R_{\Otr}(\lambda ,z)$ is holomorphic in $\Omega(\h)$, the maximum principle implies that the bound in \eqref{eq:analytic1} holds in $B(z_m,\delta)$.
We now restrict $\delta$ so that there cannot be a connected union of $B(z_m,\delta)$ that intersects both $\widetilde{\Omega}(\h)$ and $\partial \Omega(\h)$. Once this is ruled out, the maximum principle and the fact that $z\mapsto (I-\Pi(\h))R_{\Otr}(\lambda,z)$ is holomorphic in $\Omega(\h)$ imply that the bound in \eqref{eq:analytic1} holds in $\widetilde{\Omega}(\h)$.
Since we have assumed that $\rank \Pi(\h)<m(\h)$, and $m(\h)\leq C\h^{-d}$ by~\eqref{eq:Weyl1}, there exist a maximum of $C\h^{-d}$ of balls of radius $\delta$. In particular, the maximum distance between any two points in such a connected union is bounded by $2C\delta\h^{-d}$ and hence, a connected union intersecting both $\partial\Omega(\h)$ and $\tilde{\Omega}(\h)$ is ruled out if 
\beq\label{eq:noconnected}
2C\delta \h^{-d} < \min\big\{\eps_0(\h ),\eps_1(\h )\big\}.
\eeq
We now assume that $\eps_0(\h )\leq \eps_1(\h )$ and set 
\beqs
\delta:= \frac{\eps_0(\h ) \, \h ^d}{4 C},
\eeqs
so that \eqref{eq:noconnected} holds.
The lower bound on $\eps_0(\h)$ in \eqref{eq:conditioneps0} and the lower bound on $\eps(\h)$ \eqref{eq:lowerboundBurq} imply that, given $\h_0$, there exists $C'$
(depending on $\h_0$) such that 
\beqs
\log \delta^{-1}\leq \frac{C'}{\h} \quad\tfa 0<\h\leq \h_0.
\eeqs
Therefore, the end result is that, if $\h$ is sufficiently small, 
\beqs
\big\|\big(I-\Pi(\h)\big)R_{\Otr}(\lambda,z)\big\|_{L^2(\Otr)\to L^2(\Otr)}\leq \exp\Big(C\h^{-d-1}\Big)\tfor z\in \widetilde{\Omega}(\h)\tand \lambda^2\in [a(\h),b(\h)],
\eeqs
where {$C:= \max\{C_1 C', c \,C_2\}$, 
where, as in the proof of Theorem \ref{thm:main1h}, $c = c(\h_0)$ is chosen large enough such that $\langle z\rangle \leq c$ for all 
$z\in \widetilde{\Omega}(\h)$ and $\h \leq h_0$.}

We apply the semiclassical maximum principle (Theorem \ref{thm:scmp}) with
\beqs
w=0,\quad \beta(\h)= \eps_1(\h), \quad\delta(\h) =\h ^{d+1}\eps_0(\h ),\quad \tand\quad L=d+1,
\eeqs
and we now fix $\eps_1(\h )$ as
\beqs
\eps_1(\h ):= \frac{\h ^{(d+1)/2} \eps_0(\h )}{C};
\eeqs
observe that this definition of $\eps(\h )$ satisfies both the second requirement in \eqref{eq:restrict1} and our previous assumption that $\eps_0(\h )\leq \eps_1(\h )$.
The result of Theorem \ref{thm:scmp} is that 
\beq\label{eq:boundlambdarange}
\begin{gathered} 
\big\|\big(I-\Pi(\h)\big)R_{\Otr}(\lambda ,z)\big\|_{L^2(\Otr)\to L^2(\Otr)}\leq  C \exp(C+1)\frac{\h ^{-(d+1)}}{\eps_0(\h )} \\ \tfor z \in \big[-\eps_1(\h ), \eps_1(\h )\big]\quad \tand\quad \lambda^2\in [a(\h),b(\h)].
\end{gathered}
\eeq
The definitions of $E_{\ell}$ and $u_{\ell}$ imply that
\beqs
\big(I-\Pi(\h)\big)R_\Otr\big(\sqrt{E_\ell}, 0\big)\big(-\h ^2 \Delta - E_{\ell}\big) u_{\ell} = \big(I-\Pi(\h)\big) u_{\ell},
\eeqs
for $\ell=1,\ldots,m(\h)$. Since $E_{\ell} \in [a(\h), b(\h)]$ for all $\ell$, {the fact that the bound \eqref{eq:boundlambdarange} holds for all $\lambda^2\in [a(\h),b(\h)]$} implies that 
\beqs
\big\|(I-\Pi(\h)) u_{\ell}\big\|_{L^2(\Otr)\to L^2(\Otr)}\leq C \exp(C+1)\h ^{-(d+1)}\frac{\eps(\h )}{\eps_0(\h )}
\eeqs
for $\ell=1,\ldots, m(\h )$.
Therefore 
\beqs
\Big| \big\langle \Pi(\h) u_{\ell_1}, \Pi(\h)  u_{\ell_2}\big\rangle_{L^2(\Otr)} - \delta_{\ell_1 \ell_2}\Big|
\leq \eps(\h ) + 2C \exp(C+1)\h ^{-(d+1)}\frac{\eps(\h )}{\eps_0(\h )}
\eeqs
(compare to \eqref{eq:orthog}, but note that now the projection $\Pi$ is different). 
Using the inequality \eqref{eq:qualitysmall} and the second inequality in \eqref{eq:conditioneps0}, we have 
\beqs
\Big| \big\langle \Pi(\h) u_{\ell_1}, \Pi(\h)  u_{\ell_2}\big\rangle_{L^2(\Otr)} - \delta_{\ell_1 \ell_2}\Big| 
\ll \h^{d}
\quad\text{ and thus } \quad 
\Big| \big\langle \Pi(\h) u_{\ell_1}, \Pi(\h)  u_{\ell_2}\big\rangle_{L^2(\Otr)} - \delta_{\ell_1 \ell_2}\Big| 
\leq \frac{\h^{d}}{C},
\eeqs
where $C$ is the constant in \eqref{eq:Weyl1}. 
By \eqref{eq:Weyl1} and Lemma \ref{lem:linindep}, $\{\Pi(h) u_{\ell}\}_{\ell=1}^{m(\h_j)}$ are linearly independent, and thus $\rank \Pi(\h)\geq m(\h)$, which is the desired contradiction to the assumption that $\rank \Pi(\h)< m(\h)$.
\epf

\section*{Acknowledgements}

EAS  gratefully acknowledges discussions with Alex Barnett (Flatiron Institute) that started his interest in eigenvalues of discretisations of the Helmholtz equation under strong trapping.
JG thanks Maciej Zworski (UC Berkeley) for bringing to his attention the paper \cite{St:00}.
PM thanks Pierre Jolivet (IRIT, CNRS) for his help with the software FreeFEM.
The authors thank the referees for their careful reading of the paper and constructive comments. This research made use of the Balena High Performance Computing (HPC) Service at the University of Bath.
PM and EAS were supported by EPSRC grant EP/R005591/1.

\appendix 

\section{From eigenvalues to quasimodes}\label{a:eToQ}

\begin{lemma}[From eigenvalues to quasimodes in $\h$ notation]\label{lem:evaluestoquasimodes}
Suppose that there exist $z=\cO(\h^\infty)$ and $u$ satisfying~\eqref{e:semiEigenvalue} with $\|u\|_{L^2(\Otr)}=1$. 
Let $\chi\in C_c^\infty(\Omega_1)$ with $\chi \equiv 1$ in a neighborhood of $\projx(K)$. 
Then $\chi u$ is a quasimode (in the sense of Definition \ref{def:quasimodesh}) of quality $\e(\h)=\cO(\h^\infty)$ satisfying
$$
\|u-\chi u\|_{H_{\h}^2(\Otr)}=\cO(\h^\infty).
$$
\end{lemma}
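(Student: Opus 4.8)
The plan is to extend $u$ to a genuine resonant state of $-\h^2\Delta$ in the exterior of $\Omega_-$, complex‑scale it to an honest $L^2$ function $u_\theta$, prove by propagation of semiclassical wavefront sets that $\WFh(u_\theta)\subset K$, and then read off the three assertions by cutting away from $\pi_{\Rea}(K)$. To build the resonant state I would argue exactly as in the proof of Lemma~\ref{l:inverseForm}: the relation $\gamma_1^{\tr}u=\DtN(\h^{-1})\gamma_0^{\tr}u$ lets one glue $u$ to the $\h^{-1}$‑outgoing solution of $(-\h^2\Delta-1)v=0$ on $\Rea^d\setminus\overline{\Omega_1}$ with matching Cauchy data on $\Gamma_{\tr}$, producing $\tilde u\in H^2_{\loc}(\Omega_+)$ with $\gamma_0^D\tilde u=0$, $\tilde u|_{\Otr}=u$, $\tilde u$ $\h^{-1}$‑outgoing, and
\[
(-\h^2\Delta-1)\tilde u=z\,1_{\Otr}\tilde u\ \text{ in }\Omega_+,\qquad \bigl\|z\,1_{\Otr}\tilde u\bigr\|_{L^2(\Omega_+)}\le|z|=\cO(\h^\infty).
\]
Since $z=\cO(\h^\infty)$ the associated resonance lies within $\cO(\h^\infty)$ of the real axis, so the complex scaling of \S\ref{sec:complexscaling} applies and the scaled function $u_\theta$ lies in $H^2(\Omega_+)\cap H^1_0(\Omega_+)$, solves $(P_\theta-1-z1_{\Otr})u_\theta=0$ with $\gamma_0^D u_\theta=0$, and agrees with $\tilde u$ on $\{\r\le r_1\}\supset\Omega_1$ by \eqref{e:resolveAgree}; in particular $u_\theta|_{\Otr}=u$ and $\|u_\theta\|_{L^2(\Otr)}=1$.

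Next I would record an a priori bound $\|u_\theta\|_{H^2_{\h}(\Omega_+)}\le C\h^{-N}$. Testing \eqref{eq:eigenfunction} against $u$ gives $|a(u,u)|=\h^{-2}|z|=\cO(\h^\infty)$, and since $\h^2\|\DtN(\h^{-1})\|_{H^{1/2}_{\h}\to H^{-1/2}_{\h}}\lesssim\h$ this forces $\|u\|_{H^1_{\h}(\Otr)}\lesssim1$; combining this with the non‑trapping estimate \eqref{eq:highlighter2} for $\mc{R}_{\theta,Q}(1,z)$, the trace bound of \cite[Corollary~4.2]{GaLaSp:21}, and elliptic regularity for $P_\theta-1$ (elliptic on $\{\r>r_1\}$ by \eqref{eq:symbolPtheta}) then propagates this to a polynomial $H^2_{\h}$ bound on all of $\Omega_+$.

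The core of the argument is $\WFh(u_\theta)\subset K$. On $\{\r>r_1\}$ the symbol $p_\theta-1$ is elliptic, $u_\theta$ solves the homogeneous equation, and the a priori bound gives $\WFh(u_\theta)\cap\{\r>r_1\}=\emptyset$, so $\WFh(u_\theta)\subset\{\r\le r_1\}$; there $P_\theta=-\h^2\Delta$ and $(-\h^2\Delta-1)u_\theta=z1_{\Otr}u_\theta$ is $\cO(\h^\infty)$ in $L^2$, hence has empty wavefront set, so $\WFh(u_\theta)\subset\{\r\le r_1\}\cap\{|\xi|=1\}$ and, by semiclassical propagation of singularities for $-\h^2\Delta-1$ with the Dirichlet condition on $\Gamma_D$ (along the generalised bicharacteristic flow $\varphi_t$ of \S\ref{sec:flow}; cf.\ \cite[Theorem~E.47]{DyZw:19} and Melrose--Sj\"ostrand), $\WFh(u_\theta)\cap\{\r<r_1\}$ is $\varphi_t$‑invariant. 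If some $\rho\in\WFh(u_\theta)$ failed to lie in $\Gamma_+\cap\Gamma_-$, then by \eqref{e:directEscape1} (once a trajectory enters $\mc{E}_+$ or $\mc{E}_-$ the function $\r$ is monotone) some $\varphi_{\pm t}(\rho)$ would lie in $\{\r>r_1\}$, contradicting $\WFh(u_\theta)\subset\{\r\le r_1\}$; hence $\WFh(u_\theta)\subset\Gamma_+\cap\Gamma_-=K$. (A softer version giving only $o(1)$ concentration follows instead from the defect‑measure machinery—Lemma~\ref{l:bvpDefect} and Corollary~\ref{cor:forwards}, with the Dirichlet reflection $\mu^{\rm in}=\mu^{\rm out}$—but the $\WFh$ version is what yields the $\cO(\h^\infty)$ rate.)

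Finally I would conclude. Since $\pi_{\Rea}(K)\Subset\Omega_1$ and $\chi\equiv1$ near $\pi_{\Rea}(K)$, pick $B=\Oph(b)\in\Psi_{\h}^0$ with $b\equiv1$ near $K$ and spatial support inside $\{\chi\equiv1\}$; then $(1-\chi)Bu_\theta=0$, while $\WFh(u_\theta)\subset\{b\equiv1\}$ together with the polynomial bound gives $(1-B)u_\theta=\cO(\h^\infty)$ in every $H^s_{\h}(\Omega_+)$, so $(1-\chi)u_\theta=\cO(\h^\infty)$ in every $H^s_{\h}$; restricting to $\Otr$ yields $\|u-\chi u\|_{H^2_{\h}(\Otr)}=\cO(\h^\infty)$. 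Consequently $\|\chi u\|_{L^2(\Otr)}=1+\cO(\h^\infty)$, and $(-\h^2\Delta-1)(\chi u)=-\chi z1_{\Otr}u-[-\h^2\Delta,\chi]u=\cO(\h^\infty)$ in $L^2(\Otr)$ because the commutator is supported in $\supp\nabla\chi\subset\Omega_1\setminus\pi_{\Rea}(K)$, where $u_\theta=\cO(\h^\infty)$; moreover $\supp(\chi u)\subset\supp\chi\Subset\Omega_1$ and $\chi u\in H^2(\Otr)\cap H^1_{0,D}(\Otr)$, so $\chi u/\|\chi u\|_{L^2(\Otr)}$ is a quasimode of quality $\cO(\h^\infty)$ in the sense of Definition~\ref{def:quasimodesh}. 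The hard part will be the wavefront‑set step: running the boundary propagation of singularities for the Dirichlet Laplacian carefully through the glancing and gliding parts of $S^*_{\Gamma_D}\Rea^d$, pasting it with the elliptic estimate for the complex‑scaled operator near $\r=r_1$, and making the resulting $\cO(\h^\infty)$‑smallness quantitative using the a priori polynomial bound.
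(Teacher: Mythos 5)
Your overall plan --- gluing $u$ to the exterior $\h^{-1}$-outgoing solution, complex scaling, and showing the scaled function is $\cO(\h^\infty)$ away from $\projx(K)$ --- is the one the paper follows, with the paper's $v:=R_\theta(1,0)\indicatorE z\,u$ playing the role of your $u_\theta$. The central step has a genuine gap. You assert $\WFh(u_\theta)\subset K=\Gamma_+\cap\Gamma_-$ on the grounds that $\WFh(u_\theta)\cap\{\r<r_1\}$ is $\varphi_t$-invariant and escaping trajectories reach $\{\r>r_1\}$. But propagation of singularities for $P_\theta-1$, with $\Im\sigma_\h(P_\theta-1)\le 0$, only carries control from the past $\varphi_{-T}(\rho)$ ($T\ge0$) to the present $\rho$; this yields $\WFh(u_\theta)\subset\Gamma_+$ but cannot exclude the outgoing portion $\Gamma_+\setminus K$, since doing so would require propagating control from the future, i.e.\ $\Im p\ge0$. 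Appealing to bidirectional propagation inside $\{\r<r_1\}$ does not repair this: the forward-escaping trajectory from $\rho\in\Gamma_+\setminus\Gamma_-$ reaches the elliptic set $\{\r\ge 2r_1\}$ only after crossing the absorbing region $\{r_1<\r<2r_1\}$ where $\Im p_\theta<0$, so the invariance you invoke fails exactly where you need it.

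The paper closes this gap with a positive-commutator argument rather than a wavefront-set containment. After establishing $\WFh(v)\subset\Gamma_+$, it takes a radially decreasing cutoff $\psi(\r)$ with $\psi'<0$ on $\{a<\r<b\}$, $r_0<a<b<r_1$; the commutator quantity $2\h^{-1}\Im\langle(-\h^2\Delta-1)v,\psi(\r)v\rangle$ has leading symbol $2\psi'(\r)\xi_r$, strictly negative on $\Gamma_+\cap\{a<\r<b\}$ because trajectories of $\Gamma_+$ in that shell are outgoing ($\langle x,\xi\rangle>0$). Combined with the microlocal Garding inequality and $\WFh(v)\subset\Gamma_+$, this gives $\|v\|^2_{L^2(a<\r<b)}\le C\h^{-N-2}\|(-\h^2\Delta-1)v\|^2 + C\h\,\|\psi_1 v\|^2+\cO(\h^N)$; the crucial factor of $\h$ on $\|\psi_1 v\|^2$, together with a propagation estimate bounding $\|\psi_1 v\|^2$ back by $\|v\|^2_{L^2(a<\r<b)}$, bootstraps to $\|v\|_{L^2(a<\r<b)}=\cO(\h^\infty)$. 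A final propagation step within the real region $\{\r<r_1\}$, using that every $\rho$ over $\{\r\le r_1\}\setminus\{\chi\equiv 1\}$ reaches $\{a<\r<b\}$ in bounded $\pm t$, then gives $\|v\|_{H^2_\h(\Otr\setminus\{\chi\equiv 1\})}=\cO(\h^\infty)$. Replacing your $\WFh\subset K$ step with this commutator-and-bootstrap argument (or an equivalent quantitative estimate) would render the remainder of your proof correct.
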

\begin{proof}
The proof is similar to the proof of the ``resonances to quasimodes'' result of \cite[Theorem 1]{St:00}, except that we avoid using results about $\DtN$ for strictly convex obstacles that are used in \cite{St:00} and instead use a commutator argument.

First observe that 
$$(-\h^2\Delta-1-z)u=0\qquad\text{ in }\Otr,$$
so that
$$u= \indicatorR R(1,0)\indicatorE \,z\, u.$$
Therefore,
$$u= \indicatorR R_\theta(1,0)\indicatorE \,z\,u$$
by \eqref{e:resolveAgree} and the definition of $R_\theta(\lambda,z)$ \eqref{eq:Rtheta}.
Let 
\beq\label{eq:defv}
v=R_\theta(1,0)\indicatorE z\,u,
\eeq 
and observe that $v= u$ on $\Otr$.

We now claim that, since $z=\cO(\h^\infty)$ and $\Otr\Subset \mathbb{R}^d$,  $\WFh(v)\subset \Gamma_+$ (defined by \eqref{e:trappedSets}). 
By the definition of the wavefront set \cite[Definition E.36]{DyZw:19}, this is equivalent to $A v= \cO(\h^\infty)$ for all $A$ with $\WFh(A)\subset (\Gamma^+)^c$.
This then follows by noting that $(P_\theta-1)v=\cO(\h^\infty)_{L^2_{\comp}}$ and applying~\cite[Theorem E.47]{DyZw:19},~\cite[Section 24.4]{Ho:85},~\cite[Theorem 8.1]{Va:08}
\footnote{Strictly speaking~\cite[Theorem E.47]{DyZw:19} is used away from the boundary and~\cite[Theorem 8.1]{Va:08} is written for the time dependent problem, but the semiclassical version can be easily recovered by applying the time dependent results to $\re^{\ri t/\h}v(x)$. It is then necessary to use the arguments in~\cite[Section 24.4]{Ho:85} to obtain the `diffractive improvement' i.e. that singularities hitting a diffractive point follow only the flow of $H_p$ rather than sticking to the boundary. A careful examination of \cite[Lemma 24.4.7]{Ho:85} shows that the norm on the error term on $(P_\theta-1)v$ is correct.}
(with, in the notation of \cite[Theorem E.47]{DyZw:19}, $B_1=I$, $B={\bf P}= P_\theta-1$),
together with the facts that $\sigma_\h(\Im (P_\theta-1))\leq 0$ and that $P_\theta-1$ is elliptic on $\{\r\geq 2r_1\}$ (so that if $(x_0,\xi_0)\in \WFh(A)$ then there exists $T\geq 0$ such that $\varphi_{-T}(x_0,\xi_0) \in \operatorname{ell}_\h (P_\theta-1)$).

Now let $\chi\in C_c^\infty(\Omega_1)$ with $\chi \equiv 1$ in a neighborhood of $\projx(K)$. We claim that $\chi v=\chi u$ is a quasimode with quality $\e(\h)=\cO(\h^\infty)$.
To prove this, since
\beq\label{eq:A0}
\N{u-\chi u}_{H^2_\h(\Otr)} = \N{(1-\chi)v}_{H^2_\h(\Otr)} = \N{(1-\chi)v}_{H^2_\h(\Otr\setminus \{\chi\equiv 1\})},
\eeq
it is sufficient to prove that $v$ is $\cO(\h^\infty)_{H_{\h,\loc}^2}$ outside a compact set.

{Our first step is to prove that,
with $r_0<a<b<r_1$,} for $\h$ sufficiently small,
\begin{equation}
\label{e:elliptic2}
\|v\|_{L^2(\r>a)}\leq C\h^{-1}\|(P_\theta-1)v\|_{L^2(\Omega_+)}+{C}\|v\|_{L^2(a<\r<b)},
\end{equation}
{where here, and in the rest of the proof, $C$ denotes a constant, independent of $\h$ and $z$, whose value may change from line to line.}
To prove \eqref{e:elliptic2}, first observe that, since $P_\theta-1$ is elliptic on $\r\geq 2r_1$, {by \cite[Theorem E.33]{DyZw:19} (more precisely its proof together with the calculus from~\cite[Chapter 4]{Zw:12}}),
$$
\|v\|_{L^2(\r>3r_1)}\leq C\|(P_\theta-1)v\|_{L^2(\Omega_+)}+C_Nh^N\|v\|_{L^2(\r>2r_1)}
$$
and hence
\begin{equation}
\label{e:elliptic1}
\|v\|_{L^2(\r>3r_1)}\leq C\|(P_\theta-1)v\|_{L^2(\Omega_+)}+C_Nh^N\|v\|_{L^2(2r_1<\r<4r_1)}.
\end{equation}
Next, observe that there exists $T>0$ such that for all $\rho\in \Gamma_+\cap \{\frac{a+b}{2}<\r<4r_1\}$, there exists $0\leq t\leq T$ such that $a<\r(\varphi_{-t}(\rho))<b.$ In particular, using~\cite[Theorem E.47]{DyZw:19} again, we have 
$$
\|v\|_{L^2(\frac{a+b}{2}<\r<4r_1)}\leq C\h^{-1}\|(P_\theta-1)v\|_{L^2(\Omega_+)}+\|v\|_{L^2(a<\r<b)} +C_Nh^N\|v\|_{L^2(\r>a)}.
$$
{Using this and \eqref{e:elliptic1} in 
\beqs
\|v\|_{L^2(\r > a)} \leq \|v\|_{L^2(a<\r < b)} +\|v\|_{L^2(\frac{a+b}{2}<\r<4r_1)} +\|v\|_{L^2(\r >3r_1)},
\eeqs
we obtain \eqref{e:elliptic2} for $\h$ sufficiently small.}

The next part of the proof involves using a commutator argument to control (up to $h^\infty$ errors) $\|v\|_{L^2(a<\r < b)}$ by the norm on a slightly bigger region and with a gain of $\h$ (see \eqref{eq:A1} below).
Let $\psi \in C_c^\infty (-r_1,r_1)$ with $\psi \equiv 1$ on $\{|x|\leq r_0\}$,  $x\psi'(x)\leq 0$, and $x\psi'(x)<0$ on $a\leq |x|\leq b$.   Then,
\begin{align*}
&2\h^{-1}\Im \big\langle (-\h^2\Delta-1)v,\psi(\r)v\big\rangle_{L^2(\Omega_+)}\\
&\quad\qquad=-\ri\h^{-1}\Big(\big\langle (-\h^2\Delta-1)v,\psi(\r)v\big\rangle_{L^2(\Omega_+)}-\big\langle \psi(\r)v,(-\h^2\Delta-1)v\big\rangle_{L^2(\Omega_+)}\Big)\\
&\quad\qquad=\ri\h^{-1}\big\langle [-\h^2\Delta, \psi(\r)]v,v\big\rangle_{L^2(\Omega_+)}\\
&\quad\qquad=\big\langle  \big(2\psi'(\r)\h D_r-\ri\h[\Delta(\psi(\r))]\big)v,v\big\rangle_{L^2(\Omega_+)}
\end{align*}
By the definition of $\Gamma_+$ \eqref{e:trappedSets}, $\sigma_\h(\psi'(\r)hD_r)=\psi'(\r)\langle \xi,\tfrac{x}{|x|}\rangle<-c<0$ on  $\Gamma_+\cap \{a\leq \r\leq b\}$. Therefore, 
since $\WFh(v)\subset \Gamma_+$, for $\psi_1\in C_c^\infty(r_0<\r<r_1)$ with $\psi_1\equiv 1$ in a neighbourhood of $\supp \partial \psi(\r)$
$$
2\h^{-1}\Im \big\langle (-\h^2\Delta-1)v	,\psi(\r)v\big\rangle_{L^2(\Omega_+)}
\leq -c\|v\|^2_{L^2(a<\r <b)}+C\h\|\psi_1v\|_{L^2(\Omega_+)}^2 +C_N\h^N\|v\|^2_{L^2(\Omega_+)},
$$  
by the microlocal Garding inequality \cite[Proposition E.34]{DyZw:19} (with $A= - \psi'(\r) \h D_r -c$ and $B$ supported 
in $\langle \xi, x/|x|\rangle < \epsilon$, i.e., away from $\Gamma^+$, and in $\{r_0\leq \r\leq r_1\}$).
Therefore, by Young's inequality,
\beq\label{eq:A1}
\|v\|^2_{L^2(a<\r<b)}\leq C\h^{-N-2}\|(-\h^2\Delta-1)v\|_{L^2(\r<r_1)}^2+C\h\|\psi_1 v\|^2_{L^2(\Omega_+)}+C_N\h^N\|v\|_{L^2(\Omega_+)}^2
\eeq
We now use the propagation estimate again to control (up to $\h^\infty$ errors) $\|\psi_1 v\|^2_{L^2(\Omega_+)}$ by $\|v\|^2_{L^2(a<\r<b)}$.
Suppose that $\rho\in\r^{-1}(\{  \supp \psi_1\})\cap \Gamma_+$. Then, there exists $|t|\leq \sqrt{r_1^2-r_0^2}$ such that $\varphi_t(\rho)\in \{a<\r<b\}$. 
Therefore, by standard propagation estimates ~\cite[Theorem E.47]{DyZw:19}, again using that $\WFh(v)\subset \Gamma_+$, we have 
\beq\label{eq:A2}
\|\psi_1 v\|^2_{L^2(\Omega_+)}\leq C\h^{-1}\|(-\h^2\Delta-1)v\|_{L^2(\r \leq r_1)}^2+C\|v\|^2_{L^2(a<\r<b)}
+C_N\h^N\|v\|_{L^2(\Omega_+)}^2.
\eeq

We next use the propagation estimate again to control $\| v\|_{L^2(\{\r \leq r_1\}\setminus \{\chi\equiv 1\})}$ by 
$\|v\|^2_{L^2(a<\r<b)}$. To do this, we need that
there exists $T>0$ such that for all $\rho\in S^*_{\Omega_+\setminus\{\chi\equiv 1\}}\Omega_+$ with $\r(\rho)\leq r_1$ there is $|t|\leq T$ with $a<\r(\varphi_t(\rho))<b$.  Suppose not; then there exist $\rho_n\in S^*_{\Omega_+\setminus\{\chi\equiv 1\}}\Omega_+$ with $\r(\rho_{n})\leq r_1$ and $T_n \to \infty$ {such that}
$$
\bigcup_{|t|\leq T_n}\varphi_t(\rho_{{n}})\cap \{a<\r<b\}=\emptyset.
$$
By~\eqref{e:directEscape1}, we have $\r(\rho_n)\leq r_0$ and also $\r(\varphi_{\pm T_n}(\rho_n))\leq r_0$.
In particular, we may assume that $\rho_n\to \rho\in \{\r\leq r_0\}\setminus K$ 
(since $\projx(K)\Subset \{\chi \equiv 1\}$)
and $\varphi_{\pm T_n}(\rho_n)\to \rho_\pm$. Then, by Lemma~\ref{l:trapping}, $\rho\in \Gamma_+\cap \Gamma_-=K$, which is a contradiction. 
Applying the propagation estimate {(using the existence of the uniform time $T$)}, we have
\begin{equation}
\label{e:propagateInOut}
\| v\|^2_{L^2(\{\r \leq r_1\}\setminus \{\chi\equiv 1\})}\leq C\h^{-1}\|(-\h^2\Delta-1)v\|_{L^2(\r \leq r_1)}^2+C\|v\|^2_{L^2(a<\r<b)}+C_N\h^N\| v\|_{L^2(\Omega_+)}^2.
\end{equation}

Finally, we control $\|v\|_{L^2(\Omega_+\setminus \Otr)}$. For this, note that, $v=u1_{\Otr}+v1_{(\Otr)^c}$ and by~\eqref{e:elliptic2} and~\eqref{e:propagateInOut} we have
\begin{equation}
\label{e:exteriorb}
\|v\|_{L^2(\Omega_+\setminus \Otr)}\leq C\h^{-1}\|(P_\theta-1)v\|_{L^2(\Omega_+)}+C\|v\|_{L^2(a<\r<b)}+C_N\h^N\|u\|_{L^2(\Otr)}.
\end{equation}

Now, using \eqref{eq:A2} in \eqref{eq:A1}, 
and then using the definition of $v$ \eqref{eq:defv} and that $v=u$ on $\Otr$, 
we have
\begin{align*}
\|v\|^2_{L^2(a<\r<b)}&\leq C\h^{-N-2}\|(-\h^2\Delta-1)v\|_{L^2(\r \leq r_1)}^2+C_{N}\h^N\|v\|_{L^2(\Omega_+)}^2\\
&=C\h^{-N-2}\|(P_\theta-1)v\|_{L^2(\Omega_+)}^2+C_{N}\h^N\|u\|_{L^2(\Otr)}^2+C_{N}\h^N\|v\|_{L^2(\Omega_+\setminus \Otr)}^2.
\end{align*}
Then, using \eqref{e:exteriorb},
\begin{align*}
\|v\|^2_{L^2(a<\r<b)}&\leq C_{N}\h^N\|u\|_{L^2(\Otr)}^2+ C_{N}\h^N\|v\|_{L^2(a<\r<b)},
\end{align*}
and, taking $\h$ small enough, we obtain
$$
\|v\|_{L^2(a<\r<b)}\leq C_{N}\h^N\|u\|_{L^2(\Otr)}\leq C_{N}\h^N,
$$
since $\|u\|_{L^2(\Otr)}=1$.
Therefore, using~\eqref{e:propagateInOut},~\eqref{e:exteriorb},  the definition of $v$ \eqref{eq:defv}, and the fact that $z= \cO(\h^\infty)$, we have
$$
\| \psi(\r)v\|^2_{L^2(\Omega_+\setminus \{\chi\equiv 1\})}=\cO(\h^\infty).
$$
so that, since
$\WFh(v)\subset S^*\Rea^d$ (which is fibre compact),
$$
\| \psi(\r)v\|^2_{H_\h^2(\Omega_+\setminus \{\chi\equiv 1\})}=\cO(\h^\infty);
$$
the result then follows from \eqref{eq:A0}.
\end{proof}

\section{Details of how the eigenvalues/eigenfunctions were computed in \S\ref{sec:num}}\label{sec:DtN}

When discretising the sesquilinear form \(a(\cdot, \cdot)\) defined by~\eqref{eq:sesqui}, 
we need to calculate the Dirichlet-to-Neumann map $\DtN(k)$. Instead of approximating $\DtN(k)$ using either a perfectly-matched layer (PML) or an absorbing boundary condition, we use boundary integral operators to find $\DtN(k)$ ``exactly'' (i.e.,~up to the discretisation of these integral operators).

Recall that the single-layer potential on $\Gtr$ is defined for $\varphi \in L^1(\Gamma)$ by
\begin{align*}
  \mathcal{S}_k \varphi (x) := \int_{\Gtr} \Phi_k (x,y) \varphi (y)\, ds (y) \quad \tfa x \in \mathbb{R}^d \setminus \Gtr,
\end{align*}
where, in 2-d, $\Phi_k(x,y):= \ri H_0^{(1)}(k |x-y|)/4$, where \(H^{(1)}_0\) is the order zero Hankel function of the first kind. 
The single-layer and adjoint-double-layer operators are then defined, respectively, by 
\(S_k := \gamma^{\tr}_0 \mathcal{S}_k\) and \( D'_k := \gamma^{\tr}_1 \mathcal{S}_k -I/2 \), where the traces are taken from inside $\Otr$.
With these definitions, for values of $k$ for which $S_k: H^{-1/2}(\Gamma)\rightarrow H^{1/2}(\Gamma)$ is invertible,
\beq\label{eq:DtN}
\DtN(k)= \left(-\frac{1}{2}I+D'_k\right)S_k^{-1};
\eeq
see, e.g., \cite[Page 136]{ChGrLaSp:12}.

To avoid the operator product in \eqref{eq:DtN}, we introduce the auxiliary variable \(\varphi_\ell = S_k^{-1} (\gamma_0^{\tr}(u_\ell)) \in H^{-1/2}(\Gtr)\). The eigenvalue problem~\eqref{eq:eigenfunction} can therefore be rewritten as: find $u_\ell \in H^1_{0,D}(\Otr)$ and $\varphi_\ell \in H^{-1/2}(\Gtr)$ such that
\begin{align}\nonumber
\big(\nabla u_\ell, \nabla v\big)_{L^2(\Otr)} - k^2 \big(u_\ell ,v\big)_{L^2(\Otr)}  
    - \left\langle \left(-\frac{1}{2}I+D'_k\right) \varphi_\ell,\gamma_0^{\tr} v\right\rangle_{\Gtr} =&\,\, \mu_\ell \big(u_\ell,v\big)_{L^2(\Otr)},\\
\tand\qquad  \big\langle \gamma_0^{\tr} u_\ell, \psi \big\rangle_{\Gtr} - \big\langle S_k \varphi_\ell, \psi \big\rangle_{\Gtr} =& \,\,0,
\label{eq:coupled}
\end{align}
for all \(v \in H^1_{0,D}(\Otr)\) and \(\psi \in H^{-1/2}(\Gtr)\). We note that this formulation is the transpose of the Johnson--N\'ed\'elec FEM-BEM coupling \cite{JoNe:80} applied to the eigenvalue problem \eqref{eq:eigenfunction}; see, e.g., \cite[Equation 9]{GaHsSa:12}.

We use continuous piecewise-linear basis functions to discretise \eqref{eq:coupled}
, and obtain the following generalised eigenvalue problem
\begin{equation}\label{eq:discretised_eigenproblem}
  \widetilde{\mathbf{A}}  \mathbf{u}_\ell
  =
  \begin{pmatrix}
    \mathbf{A}_k &  \dfrac{1}{2}(\mathbf{M}^{\tr})^T - \mathbf{D}'_k \\
    \mathbf{M}^{\tr} & -\mathbf{S}_k
  \end{pmatrix}
  \mathbf{u}_\ell
  =
  \mu_\ell 
  \begin{pmatrix}
    \mathbf{M} &0 \\
    0 & 0 
  \end{pmatrix}
  \mathbf{u}_\ell=: \mu_\ell \mathbf{B} \mathbf{u}_\ell,
\end{equation}
where $\mathbf{M}$ is the mass matrix on $\Otr$, $\mathbf{S}_k$ is a discretisation of the single-layer operator, and $\mathbf{A}_k$ is the Galerkin matrix corresponding to the discretisation of $(\nabla u_\ell, \nabla v)-k^2 (u_\ell, v)$. The matrices $\mathbf{M}^{\tr}$ and $\mathbf{D}'_k$ are defined by 
\begin{align*}
  (\mathbf{M}^{\tr})_{i,j} = \langle \gamma_0^{\tr} v_j, \psi_i \big\rangle_{\Gtr} \quad \textrm{and}\quad(\mathbf{D}'_k)_{j,i} = \left\langle D'_k \psi_i,\gamma_0^{\tr} v_j\right\rangle_{\Gtr},\quad \textrm{for }i=1\dots M\textrm{ and }j=1\dots N,
\end{align*}
where \(v_j\) and \(\psi_i\) are, respectively, continuous piecewise-linear basis functions of the Galerkin discretisations \(V_h(\Otr)\subset H^1_{0,D}(\Otr)\) and \(V_h(\Gtr)\subset H^{-1/2}(\Gtr)\); the dimensions of these spaces are denoted \(N\) and \(M\) respectively.

To build the matrices in \eqref{eq:discretised_eigenproblem} and solve this problem, we use PETSc~\cite{Balay1997,Balay2019,Balay2020} and the eigensolver SLEPc~\cite{slepc-manual,slepc-toms} via the software FreeFEM~\cite{Hecht2012}. Since we are interested in the eigenvalues near the origin, we use the shift-and-invert technique, i.e, we compute the largest eigenvalues of the problem \((\widetilde{\mathbf{A}})^{-1} \mathbf{B} \mathbf{u}_\ell = \nu_\ell \mathbf{u}_\ell\), and then set \(\mu_\ell=1/\nu_\ell\). To obtain the action of $(\widetilde{\mathbf{A}})^{-1}$, we use SuperLU~\cite{lidemmel03} to compute the LU factorisation of \(\widetilde{\mathbf{A}}\).

\footnotesize{
\newcommand{\etalchar}[1]{$^{#1}$}

}

\end{document}